\def\endproof{\relax\ifmmode\expandafter\endproofmath\else
  \unskip\nobreak\hfil\penalty50\hskip.75em\hbox{}\nobreak\hfil\bull
  {\parfillskip=0pt \finalhyphendemerits=0 \bigbreak}\fi}
\def\endproofmath$${\eqno\bull$$\bigbreak}
\def\bull{\vbox{\hrule\hbox{\vrule\kern3pt\vbox{\kern6pt}\kern3pt\vrule}\hrule}}
\newcommand\One{\mathbf 1}
\newcommand\chiEmb{\chi_{\mathrm{emb}}}
\newcommand\Ngr{\mathbf{N}}
\newcommand\LiftPartPerm{\mathfrak P}
\newcommand\op{\mathrm{op}}
\newcommand\CDisk{\mathbb D}
\newcommand\goesto{\mapsto}
\newcommand\Ymod{\mathcal Y}
\newcommand\inotin[1]{\overline{#1}}
\newcommand\ModFlow{\mathcal M}
\newcommand\CFBA{\widehat{\mathrm{CFPC}}}
\newcommand\States{\mathbf S}
\newcommand\HD{\mathcal H}
\newcommand\rhos{\boldsymbol{\rho}}
\newcommand\Quot[2]{Q(#1,#2)}
\newcommand\QuotP[2]{Q'(#1,#2)}
\newcommand{\QuotPong}{\mathcal Q}
\newcommand\HC{\mathrm{HC}}
\newcommand\HH{\mathrm{HH}}
\newcommand\gr{\mathrm{gr}}
\newcommand\Pong[2]{{\mathcal P}(#1,#2)}
\newcommand\PongP[2]{{\mathcal P}'(#1,#2)}
\newcommand\OneHalf{\frac{1}{2}}
\newcommand\weight{\mathfrak w}
\newcommand\Totweight{\overline{\weight}}
\newcommand\Cr{\mathrm{cr}}
\newcommand\Xij{\Xgen{i}{j}}
\newcommand\Xim{\Xgen{i}{m}}
\newcommand\Xoj{\Xgen{0}{j}}
\newcommand\Rij{\Rgen{i}{j}}
\newcommand\Lji{\Lgen{j}{i}}
\newcommand\Cobar{\mathrm{Cobar}}
\newcommand\Alg\AlgA
\newcommand\Blg\AlgB
\newcommand\Ainf{\mathcal A}
\newcommand\Ainfty\Ainf
\newcommand\Clg{\mathcal C}
\newcommand\IdempRing[2]{{\mathcal I}(#1,#2)}
\newcommand\alphas{\mathbf \alpha}
\newcommand\betas{\mathbf \beta}
\newcommand\Zmod[1]{{\mathbb Z}/{#1}{\mathbb Z}}
\newtheorem{thm}{Theorem}[section]
\newtheorem{cor}[thm]{Corollary}
\newtheorem{lemma}[thm]{Lemma}
\newtheorem{prop}[thm]{Proposition}
\newtheorem{defn}[thm]{Definition}
\newtheorem{example}[thm]{Example}
\newtheorem{remark}[thm]{Remark}
\numberwithin{equation}{section}
\newcommand{\smargin}[1]{\marginpar{\tiny{#1}}}
\newcounter{bean}
\newcommand\Source{\mathcal S}
\newcommand\Idemp[1]{{\mathbf{I}}_{#1}}
\newcommand\DT{\boxtimes}
\newcommand\x{\mathbf x}
\newcommand\y{\mathbf y}
\newcommand\lsup[2]{^{#1}{#2}}
\newcommand\lsub[2]{{}_{#1}{#2}}
\newcommand{\AlgA}{{\mathcal A}}
\newcommand{\AlgB}{{\mathcal B}}
\newcommand\z{\mathbf z}
 \newcommand{\Z}{\mathbb Z}  \newcommand{\Q}{\mathbb Q} \newcommand{\R}{\mathbb R}
\newcommand\HFKa{\widehat{HFK}}
\newcommand\HFKm{{HFK}^-}
\newcommand\Xgen[2]{X_{{#1},{#2}}}
\newcommand\Lgen[2]{L_{{#1},{#2}}}
\newcommand\Rgen[2]{R_{{#1},{#2}}}
\newcommand\Lgenx[2]{{\mathcal L}_{{#1},{#2}}}
\newcommand\Rgenx[2]{{\mathcal R}_{{#1},{#2}}}
\newcommand\Field{\mathbb F}
\newcommand\Ground{\mathfrak k}
\newcommand\AAmod{\mathcal Y}
\newcommand\DDmod{\mathcal X}
\newcommand\inv{\mathrm{inv}}
\newcommand\cross{\mathrm{cross}}
\newcommand\Cross{\mathrm{Cross}}
\newcommand\LiftS{\widetilde S}
\newcommand\Liftf{\widetilde f}
\newcommand\Liftg{\widetilde g}
\newcommand\sgn{\mathrm{sgn}}
\DeclareMathOperator{\Hom}{Hom}
\DeclareMathOperator{\Id}{Id}
\begin{document}
\title{The pong algebra}

\begin{abstract}
  In an earlier paper, we described bordered algebras for knot Floer
  homology.  In this paper, we introduce closely related differential
  graded algebra, the {\em pong algebra}, and compute the $A_{\infty}$
  structure on its homology.
\end{abstract}

\author[Peter S. Ozsv\'ath]{Peter Ozsv\'ath}
\thanks {PSO was partially supported by NSF grant number DMS-1708284, DMS-2104536, and the Simons Grant {\em New structures in low-dimensional topology}.}
\address {Department of Mathematics, Princeton University\\ Princeton, New Jersey 08544} 
\email {petero@math.princeton.edu}

\author[Zolt{\'a}n Szab{\'o}]{Zolt{\'a}n Szab{\'o}}
\thanks{ZSz was supported by NSF grant number DMS-1904628
  and the Simons Grant {\em New structures in low-dimensional topology}.}
\address{Department of Mathematics, Princeton University\\ Princeton, New Jersey 08544}
\email {szabo@math.princeton.edu}

\maketitle
\section{Introduction}

Knot Floer homology~\cite{Knots,RasmussenThesis} is a knot invariant
defined using methods from symplectic geometry. This invariant has
many variants, the most general being a module over $\Z[u,v]$. Its
$u=v=0$ specialization, $\HFKa$, is a categorification of the
 Alexander polynomial.

In~\cite{BorderedKnots,Bordered2} we developed an algebraic invariant for knots,
inspired by Floer homology. This invariant is the homology of a chain
complex associated to a knot projection. The homology is a module over
the ring $\Z[u,v]/uv=0$. In~\cite{HolKnot}, we identified  these
constructions with analogues in knot Floer homology with coefficients mod $2$;
specifically, the $u=v=0$ specialization computes $\HFKa$ with
$\Zmod{2}$ coefficients while the $v=0$ specialization computes
$\HFKm$ over $\Zmod{2}[u]$.

The construction of the bordered knot invariants associates an
algebra to a
generic horizontal slice of the knot diagram and modules
over this algebra to the upper and lower diagrams. Specifically,
versions of the bordered algebras are indexed by pairs of integers
integers $m$ and $k$ with $k<m$. These {\em bordered algebras}
$\Clg(m,k)$ have idempotents $\Idemp{\x}$ corresponding to {\em idempotent states}
$\x$, which are increasing sequences of integers $1\leq
x_1<\dots<x_k\leq m-1$. 
These idempotents generate a subalgebra $\IdempRing{m}{k}\subset \Clg(m,k)$.
The algebra also contains distinguished
elements $U_1,\dots,U_m$, $L_2,\dots,L_{m-1}$, and $R_2,\dots,R_{m-1}$
satisfying various relations, including  $R_i R_{i+1}=0$ and $L_i L_{i-1}=0$.
(See Section~\ref{sec:Clg}.)

Our goal here is to enhance the $\Clg(m,k)$ with an eye towards
constructing a knot invariant over $\Z[u,v]$. We introduce
here a differential graded algebra, the {\em pong algebra}. Like the
bordered algebras, the pong algebras are indexed by pairs of integers
$m$ and $k$; we denote them $\Pong{m}{k}$.  The algebra $\Pong{m}{k}$ is
an algebra over the polynomial algebra over $\Field = \Zmod{2}$ 
in $m$ variables
$v_1,\dots,v_m$. The definitions of these algebras is combinatorial,
and closely connected to the strands algebras which have appeared in
bordered Floer homology~\cite{InvPair}.  These algebras are defined in
Section~\ref{sec:DefPong}. By construction, the pong algebra
$\Pong{m}{k}$ is a differential graded algebra, with a preferred
$\Z$-grading (which drops by one under the boundary operator), and an
additional $({1\over 2}\Z)^m$-grading, which is preserved by the differential.

The bordered algebra 
$\Clg(m,k)$ also has 
a {\em weight grading} with values in $({1\over 2}\Z)^m$, whose
$i^{th}$ component $\weight_i$ is specified  by
\begin{align*}
         \weight_i(L_j)=\weight_i(R_j)&=\left\{\begin{array}{ll}
\OneHalf & {\text{if $i =j$}} \\        
0 &{\text{otherwise}} 
\end{array}        
\right. \\                    
  \weight_i(U_j)&=\left\{\begin{array}{ll}
1 & {\text{if $i =j$}} \\       
0 &{\text{otherwise}}   
\end{array}        
\right.
\end{align*}
We extend this to a grading on $\Clg(m,k)[t]$
with the convention that
$\weight_i(t)=1$ for $i=1,\dots,m$.
We also introduce a further integer-valued grading $\gr$
on $\Clg(m,k)[t]$ so that 
\begin{align*}
        \gr(\Clg(m,k))&= 0 \\
        \gr(t)&=2m-2k-2.
\end{align*}

The main result here identifies the homology of
the pong algebra with the bordered algebras, and computes the
 induced $A_{\infty}$ structure on the homology, as follows:

\begin{thm}
        \label{thm:HomologyPongAinf}
        When $m>1$ and $0<k< m-1$, 
        the homology of $\Pong{m}{m-k-1}$ is isomorphic, as an algebra, to 
        $\Clg(m,k)[t]$, equipped with the above gradings.
        The homology of $\Pong{m}{m-k-1}$, with its induced $A_\infty$ structure,
        is
        uniquely characterized up to isomorphism by the following properties:
        \begin{itemize}
        \item The underlying algebra is  $\Clg(m,k)[t]$.
              \item The actions respect the above gradings, in the sense that
                    \begin{align*}
                \weight_i(\mu_d(a_1,\dots,a_d)) &=
                 \sum_{j=1}^d \weight_i(a_j) \\
                \gr(\mu_d(a_1,\dots,a_d))&=d-2 +\sum_{j=1}^d \gr(a_j)
                \end{align*}                        
             \item the $\mu_d$ are $\Field[t]$-multilinear; i.e.
              \[          
               \mu_d(a_1,\dots,a_{j-1},t \cdot a_j, a_{j+1},\cdots,a_d)
                                 =       
                                 t \cdot        \mu_d(a_1,\dots,a_d).\]
        \item There is a non-zero operation $\mu_{2m-2k}$.
        \end{itemize}  
\end{thm}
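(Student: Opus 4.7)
The plan is to prove the result in three stages: identify the homology of $\Pong{m}{m-k-1}$ as a graded algebra, transfer the DG structure to an $A_\infty$ structure on the homology, and then establish uniqueness via a bigraded rigidity argument.

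First, for the homology computation, I would exploit the combinatorial strands-type description of $\Pong{m}{m-k-1}$ from Section~\ref{sec:DefPong}. The generators are diagrams whose differential resolves one crossing at a time; the goal is to write down an explicit deformation retraction of $\Pong{m}{m-k-1}$ onto a subcomplex whose generators are in bijection with an $\Field[t]$-basis of $\Clg(m,k)[t]$. Under this identification, $t$ appears as a distinguished cycle of weight $(1,\dots,1)$ and $\gr$-degree $2m-2k-2$ (a ``full loop'' configuration), while the remaining generators correspond to the usual $U_i$, $R_i$, $L_i$ and their idempotent-decorated products. Verifying that the retraction intertwines multiplication --- with the relations $R_iR_{i+1}=0$ and $L_iL_{i-1}=0$ appearing as crossing cancellations --- yields the algebra isomorphism $H_*(\Pong{m}{m-k-1})\cong \Clg(m,k)[t]$ with the claimed bigradings.

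Second, I would invoke Kadeishvili's homological perturbation lemma applied to the homotopy data from the first step to transfer the DG structure to an $A_\infty$ structure on $\Clg(m,k)[t]$. The weight preservation of $\mu_d$ and its $\Field[t]$-multilinearity follow from choosing the projection and homotopy to respect the weight splitting and to commute with the cycle representing $t$; both choices are available because these structures are preserved by the pong differential. The $\gr$-degree formula $\gr(\mu_d)=d-2+\sum_j\gr(a_j)$ is the standard shift in the tree-sum formula, since each of the $d-2$ internal edges contributes a homotopy of $\gr$-degree $+1$. For the non-vanishing of $\mu_{2m-2k}$, I would evaluate on a carefully chosen alternating sequence of $L$- and $R$-generators and identify a non-zero contribution in the tree expansion; the length $d=2m-2k$ is natural from the grading, since the $\gr$-shift $d-2$ must first reach $\gr(t)=2m-2k-2$ when applied to $\gr$-degree-zero inputs.

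Third, for uniqueness, suppose $\mu_\bullet$ and $\mu'_\bullet$ are two $A_\infty$ structures on $\Clg(m,k)[t]$ satisfying the listed properties. The $(\weight,\gr)$ constraints together with $\Field[t]$-multilinearity cut the difference $\mu_d-\mu'_d$ down to a very narrow bidegree component of the Hochschild cochain complex of $\Clg(m,k)[t]$. I would argue inductively in $d$ that each such component is a coboundary, producing $A_\infty$ homotopies that assemble into a quasi-isomorphism between $\mu_\bullet$ and $\mu'_\bullet$. The non-vanishing of $\mu_{2m-2k}$ is needed to fix the last rescaling ambiguity, since any rescaling $\mu_{2m-2k}\mapsto\lambda\mu_{2m-2k}$ would otherwise produce a family of inequivalent structures.

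The main obstacle is the last step: showing that all candidate obstruction and deformation classes in the relevant bidegree pieces of $\HH^*(\Clg(m,k)[t])$ vanish. This requires a length-by-length analysis of idempotent-decorated Hochschild cochains on $\Clg(m,k)[t]$, and is where the bulk of the work lies. By contrast, the homology computation and the verification of a single non-vanishing high operation should follow by direct inspection of the pong combinatorics once the strand-algebra setup is in place.
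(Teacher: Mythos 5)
Your overall architecture matches the paper's: compute $H_*(\Pong{m}{m-k-1})$ as an algebra, transfer the $A_\infty$ structure by homological perturbation, exhibit one non-zero $\mu_{2m-2k}$ by an explicit evaluation on an $L$/$R$-sequence, and prove uniqueness by showing the relevant bidegree pieces of Hochschild cohomology vanish. The first, second, and non-vanishing steps are all carried out in the paper essentially as you describe (the homology computation goes through the quotient $\Quot{m}{k}$, canonical cycles, and explicit homotopy operators killing ``excessive'' generators, and the non-trivial operation is computed on the sequence $v_1\cdots v_k, L_{k+1},\dots,L_{m-1},v_m,R_{m-1},\dots,R_{k+1}$).

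The genuine gap is in your final step, which you yourself flag as ``where the bulk of the work lies'' but then leave as a ``length-by-length analysis of idempotent-decorated Hochschild cochains on $\Clg(m,k)[t]$.'' A direct attack on $\HH^{*,*}(\Clg(m,k)[t])$ is not feasible: the bar-type Hochschild complex $\Hom(A_+^{\otimes n},A)$ is enormous in the lengths $n$ up to $2m-2k+1$ that you need, and there is no apparent filtration that collapses it by inspection. The paper's entire middle section exists to solve exactly this problem: it proves a Koszul duality $\Cobar(\Clg(m,k))\simeq\Quot{m}{k}$ by constructing a type $DD$ bimodule algebraically and its quasi-inverse type $AA$ bimodule via holomorphic curve counts on an $\alpha$-$\beta$-bordered Heegaard diagram, and then uses this to replace the Hochschild complex by the small model ${\mathcal S}=\bigoplus(\Idemp{\x}\Clg[t]\Idemp{\y})_{\vec w}\otimes(\Idemp{\x}\Quot{m}{k}\Idemp{\y})_{\vec w}$, whose homology in bidegrees $(n,-1)$ and $(n,-2)$ can be computed from Theorem~\ref{thm:HomologyQp}. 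Without this (or an equivalent substitute) your uniqueness argument does not get off the ground. A secondary point: over $\Field=\Zmod{2}$ there is no ``rescaling ambiguity'' $\mu_{2m-2k}\mapsto\lambda\mu_{2m-2k}$; the role of the non-vanishing hypothesis is that $\HH^{2m-2k,-1}\cong\Field$ is one-dimensional, so there are exactly two cohomology classes of candidate $\mu_{2m-2k}$, and the hypothesis selects the non-trivial one (and since $\HC^{2m-2k-1,0}=0$ for grading reasons, any two non-trivial choices actually coincide on the nose).
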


See Section~\ref{subsec:Extremes} for a few remarks on the extreme
        cases when $k=0$ or $m-1$.

Our proof of Theorem~\ref{thm:HomologyPongAinf} involves a blend
of algebraic and holomorphic techniques. In particular, the proof 
relies on the construction of modules associated to Heegaard diagrams
whose boundary is marked in a particular manner, the {\em
pong-bordered boundary}.

In this article, we first describe the pong algebra. Next, we compute
its homology as a $\IdempRing{m}{k}$-bimodule, in
Section~\ref{sec:HomologyPong}.
The unique characterization of the
${\mathcal A}_{\infty}$ structure involves a computation in Hochschild cohomology,
which we do in Section~\ref{sec:ComputeAinf}.

The Hochschild cohomology computation is facilitated by a certain
Koszul duality result. (Compare also~\cite{TorusAlg}.) 

Let $\Quot{m}{k}$ denote the quotient
\begin{equation}
        \label{eq:DefQuot}
        \Quot{m}{k}=\frac{\Pong{m}{k}}{v_1=\dots=v_m=0}.
        \end{equation} 
\begin{thm}
        \label{thm:KoszulDuality} 
        Given $0<k<m$, there is a 
        quasi-isomorphism \[ \Quot{m}{k}\simeq \Cobar(\Clg(m,k)).\]
\end{thm}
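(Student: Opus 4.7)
The plan is to exhibit an explicit DGA quasi-isomorphism $\Phi: \Cobar(\Clg(m,k)) \to \Quot{m}{k}$, taking advantage of the combinatorial control we have over both sides. Since $\Clg(m,k)$ is an ordinary associative algebra (with zero internal differential), the cobar differential is purely the dual of multiplication: on a single tensor factor $[a]$ one has $d[a] = \sum_{a=a'a''}[a'][a'']$, summed over non-trivial factorizations in the augmentation ideal. The pong algebra $\Pong{m}{k}$ was designed so that, after setting $v_1,\dots,v_m$ to zero, its generators align with the cobar generators and its differential produces sums of exactly this shape.

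The first step is to construct a twisting cochain $\tau: \Clg(m,k) \to \Quot{m}{k}$ of $\gr$-degree $-1$ by sending each basis element $a$ of the augmentation ideal of $\Clg(m,k)$ to a distinguished element of $\Quot{m}{k}$ read off from the strands-like picture of $\Pong{m}{k}$, and verifying the Maurer--Cartan identity $\partial \tau(a) = \sum_{a=a'a''}\tau(a')\tau(a'')$. This identity should be forced on the nose by the combinatorics of the pong differential; the relations $R_iR_{i+1}=0$ and $L_iL_{i-1}=0$ in $\Clg(m,k)$ are then what guarantees no spurious terms appear. Extending $\tau$ freely over the tensor algebra yields the desired weight-preserving DGA homomorphism $\Phi$.

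To promote $\Phi$ to a quasi-isomorphism, I would filter both sides by the weight grading $\weight \in ({1\over 2}\Z)^m$. Each weight-graded summand is finite dimensional, so the problem reduces to a weight-by-weight homological calculation. Introducing a further filtration by tensor length on the cobar side, matched by a corresponding length statistic on pong generators, reduces the remaining claim to an acyclicity statement for the two-sided bar complex comparing the two DGAs; I would prove this by producing an explicit contracting homotopy organized around a canonical leftmost-strand move on pong diagrams, paralleling the Koszul duality argument in \cite{TorusAlg} cited just before the theorem statement.

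The main obstacle is this final acyclicity. It is a concrete combinatorial identity among strands-like diagrams, sensitive to the precise relations of $\Clg(m,k)$ and in particular to the interactions of $U_i$ with the $L_j$ and $R_j$; the bookkeeping in the presence of idempotents will require care. Should a single global contracting homotopy prove elusive, I would fall back on an induction argument using the $({1\over 2}\Z)^m$-weight filtration to reduce to small cases that can be verified by direct inspection, then assemble the global statement from the resulting local quasi-isomorphisms.
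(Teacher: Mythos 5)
Your first two paragraphs reconstruct what the paper actually does to build the map: the twisting cochain $\tau\colon \Clg(m,k)_+\to\Quot{m}{k}$ is exactly the data packaged in Theorem~\ref{thm:DDmod} as the type $DD$ bimodule $\lsup{\Clg(m,k)}\DDmod^{\Quot{m}{k}}$, the Maurer--Cartan identity is the $DD$ structure relation (verified there via the product computations of Lemma~\ref{lem:DDmodProd}), and the induced DGA homomorphism $\Phi(\gamma)=\gamma(c\otimes\cdots\otimes c)\cdot q^n$ is Lemma~\ref{lem:DefPhi}. One point you should make explicit: by Proposition~\ref{prop:AtomicElements} and the grading constraint $\Ngr(\tau(a))=-1$, the cochain is forced to vanish on all pure basis elements of $\Clg(m,k)_+$ except the images $f(a)$ of atomic generators (the elements $L_{i+1}\cdots L_j$, $R_j\cdots R_{i+1}$, $U_{i+1}\cdots U_j\cdot\Idemp{\x}$), so "sending each basis element to a distinguished element" needs that qualification. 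Your weight-by-weight reduction is also legitimate, since each weight-graded piece of both sides is finite dimensional; this is why the paper defines $\Cobar$ as a direct sum over $(n,\vec w)$ rather than a completion.

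The gap is the acyclicity statement at the end, which is the entire content of the theorem and is left as a plan ("I would produce an explicit contracting homotopy\dots should that prove elusive, I would fall back on an induction"). This is precisely where the paper departs from a purely combinatorial argument: rather than contracting the twisted tensor complex, it constructs a type $AA$ bimodule $\lsub{\Quot{m}{k}}\AAmod_{\Clg(m,k)}$ by counting pseudo-holomorphic curves on a half-identity bordered diagram (Theorem~\ref{thm:ConstructY}), proves that $\DDmod$ and $\AAmod$ are quasi-inverses (Theorem~\ref{thm:QuasiInverses}) --- which in turn requires the full homology computation of $\Quot{m}{k}$ from Section~\ref{sec:HomologyPong} and the rigidity statement Proposition~\ref{prop:RigidQ} --- and then deduces that $\Phi$ is a quasi-isomorphism from the Kronecker-module calculus of Lemma~\ref{lem:Kfunctor}. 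A direct contracting homotopy may well exist, but it would have to handle the infinite-dimensionality of each idempotent block of $\Clg(m,k)$, the non-local ideal relations of Proposition~\ref{prop:Ideal}, and the crossing bookkeeping at the two walls; none of this is done, and the fallback "induction on weight reducing to small cases" is not a reduction at all, since the weight vectors are unbounded and the complexes do not obviously decompose into previously verified pieces. Until the acyclicity (or an equivalent inverse-bimodule construction) is actually established, the theorem is not proved.
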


The above Koszul duality is proved as follows. First, we construct a
type $DD$ bimodule $\lsup{\Clg{}(m,k)}\DDmod^{\Quot{m}{k}}$.
Next, we construct a bordered diagram whose boundary decomposes as a
union of a pong-bordered piece, and another piece which is bordered as
in~\cite{HolKnot}. Adapting methods of bordered Floer
homology~\cite{InvPair}, we associate to this diagram a type $AA$
bimodule, which we prove is the inverse to
$\lsup{\Clg{}(m,k)}\DDmod^{\Quot{m}{k}}$, to prove
Theorem~\ref{thm:KoszulDuality}.  (The content of 
Theorem~\ref{thm:KoszulDuality} is restated in  Theorem~\ref{thm:KoszulDual},
with more attention to gradings.)

Theorem~\ref{thm:HomologyPongAinf} is proved using Hochschild
cohomology.  The computations in Hochschild cohomology are facilitated
by the relationship between the cobar algebra and the much smaller
$\Quot{m}{k}$ from Theorem~\ref{thm:KoszulDuality}, much in the spirit
of~\cite{TorusAlg}.

This paper is organized as follows. In Section~\ref{sec:Basics}, we
introduce much of the algebraic background and notation used
throughout.  In Section~\ref{sec:Clg}, we recall the bordered algebra
$\Clg(m,k)$ from~\cite{HolKnot}. In Section~\ref{sec:DefPong}, we
define the pong algebra itself. In Section~\ref{sec:PongGen}, we
introduce some particular generators of the pong algebra which will be
used throughout and verify some of their properties. In
Section~\ref{sec:HomologyPong}, we compute the homology of the pong
algebra, obtaining a weak version of
Theorem~\ref{thm:HomologyPongAinf}, on the level of vector spaces
(rather than $\Ainfty_{\infty}$-algebras).
Sections~\ref{sec:DDmod}-\ref{sec:Duality} are aimed at establishing
Theorem~\ref{thm:KoszulDuality}. In Section~\ref{sec:DDmod}, we define
(algebraically) a type $DD$ bimodule which connects $\Clg(m,k)$ and
$\Quot{m}{k}$. In Section~\ref{sec:AAmod}, we define its inverse
bimodule, by adapting holomorphic techniques in the spirit
of~\cite{InvPair}; see also~\cite{HolKnot}.  In Section~\ref{sec:Inverses}
we verify these bimodules are quasi-inverses to one another. In
Section~\ref{sec:Duality}, we review Koszul duality (in the spirit
of~\cite{HomPairing}) to deduce Theorem~\ref{thm:KoszulDuality} from
the fact that the $DD$ bimodule constructed before is
quasi-invertible.  In Section~\ref{sec:ComputeAinf}, we use
deformation theory to prove that $\Clg(m,k)[t]$ can be given a unique
non-trivial ${\mathcal A}_{\infty}$ structure. In
Section~\ref{sec:ComputeAinfty}, we show that the induced ${\mathcal
A}_{\infty}$ structure on $H(\Pong{m}{m-k-1})\cong\Clg(m,k)[t]$ is
non-trivial, completing the proof of Theorem~\ref{thm:HomologyPongAinf}.

{\bf Motivation for the constructions.}

We pause to explain some of the geometric content of these algebras.
As explained in~\cite{HolKnot}, the bordered algebras $\Clg(m,k)$ can
be thought of as generated by $k$-tuples of Reeb chords, with products
in the algebra recording the collisions that occur in certain ends of
moduli spaces.  In Section~\ref{sec:AAmod}, we also interpret the pong
algebra (and especially its $v_1=\dots=v_m=0$ specialization) as an
algebra of Reeb chords. (Compare the ``strands algebras''
of~\cite{InvPair}; see
also~\cite{DouglasManolescu,BernsteinGelfandGelfand,PetkovaVertesi}.)

Both algebras can be thought of as part of a more general construction
which simultaneously generalizes both the pointed matched circles
of~\cite{InvPair} and the configuration of boundary circles
from~\cite{HolKnot}.
The Koszul duality result from Theorem~\ref{thm:KoszulDuality} can be
thought of as an analogue of the duality result
from~\cite{HomPairing}. That paper sets up a Koszul duality between
the algebra associated to a pointed matched circle (i.e. a handle
decomposition of a surface with a single $0$-handle and $2$-handle)
and its dual pointed matched circle.  One could envision a more
general, where one associates an algebra to a (more general) handle
decomposition of a surface, which is Koszul dual to the algebra
associated to its dual handle decomposition.

\begin{figure}[ht]
\input{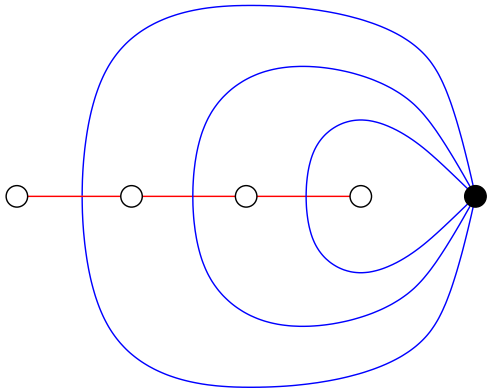}
\caption{\label{fig:DualCells}
{\bf{Dual cell decompositions of $S^2$.}}
The linear arrangement corresponds to $\Clg$ and its dual wedge of circles
correpsponds to the pong algebra.}
\end{figure}

In this interpretation, $\Clg(m,k)$ corresponds to the handle
decomposition of the sphere with $m$ zero-cells, $m-1$ one-cells, and
one $2$-cell. The zero-cells are arranged in a linear change,
connected by the $m-1$ one-cells.  The parameter $k$ specifies the
multiplicity of the symmetric product we are considering.  The algebra
$\Quot{m}{\ell}$ corresponds to the dual handle decomposition, with
one $0$-cell and $m-1$ one-cells arranged in a bouquet, and $m$
$2$-cells. See Figure~\ref{fig:DualCells}.  One expects these algebras
to be Koszul dual, when $k+\ell=m-1$. This Koszul duality is further
explored in~\cite{Tilings}.

Following Auroux~\cite{Auroux}, one can think of the generators of
pong algebra as morphisms in a wrapped Fukaya category, the
(specialized) $v_1=\dots=v_m=0$ product as product on the partially
wrapped Fukaya category of~\cite{AbouzaidSeidel}, and the
unspecialized algebra actions as products in a relative Fukaya
category. This latter identification is explored further
in~\cite{WrapPong}.

On an algebraic level, the algebras we consider here are closely
related to the algebras associated to the extended affine symmetric
groups of Manion and Rouquier~\cite{ManionRouquier}; see also
Remark~\ref{Diagrams} below.

Constructing a signed lift of the pong algebra (to $\Z$ coefficients)
is a straightforward task in the spirit of~\cite[Chapter~15]{GridBook}. Lifting
the holomorphic aspects would require more work.

{\bf Acknowledgements:} We owe a great debt of gratitude to
conversations with and work of Robert Lipshitz and Dylan Thurston.
The overall scheme of this paper follows very
closely~\cite{TorusAlg}. The holomorphic aspects of this paper come
from~\cite{InvPair} and~\cite{TorusMod}; and further algebra is drawn
from~\cite{HomPairing}.  The authors also wish to thank Andy Manion
for many interesting conversations, and to thank him and Rapha{\"e}l
Rouquier for freely sharing their work with us.

\section{Algebraic background}
\label{sec:Basics}

We will use the algebraic underpinnings from~\cite{Bimodules}.

\subsection{Type $D$ structures}

If $A$ is a DG algebra over a ring $I$, a {\em type $D$ structure} is a 
left $I$-module $X$, equipped with a map
\[ \delta^1\colon X \to A\otimes X,\]
satisfying the structural relation 
\begin{equation}
  \label{eq:TypeDStruct}
  (\mu_1\otimes \Id_X)\circ \delta^1 + (\mu_2\otimes\Id_X)\circ
(\Id_A \otimes \delta^1)\circ \delta^1=0.
\end{equation}

If $A$ and $B$ are two $DG$ algebras over $I$, a {\em type $DD$ bimodule}
$\lsup{A}X^B$ is an $I$-bimodule $X$, together with a structure map
\[ \delta^1\colon X \to A \otimes_I X \otimes_I B,\]
which in turn can be seen as a map 
\[ \delta^1\colon X \to (A\otimes B^{\op})_{I\otimes I^{\op}}\otimes X,\]
where here $\op$ denotes the ``opposite'' module.

For $A$ and $B$, we will also consider type $DA$ bimodules $\lsub{A}M^B$
and type $AA$ bimodules $\lsub{A}M_B$, as in~\cite[Section~.2.24]{Bimodules}.

\subsection{Grading conventions}
\label{sec:GradingConventions}

Gradings for our present purposes are simpler than the gradings
considered in ~\cite{InvPair,Bimodules}.

The $DA$ algebras we consider here will be equipped with
augmentations.  Specifically, let $\Ground$ be a ring (in our case,
this is the ring of idempotents).  A $\Ground$-algebra is
a
graded, associated algebra, which is also a $\Ground$-bimodule, so
that the multiplication map $\mu_2\colon A \otimes A \to A$ is a map
of $\Ground$-bimodules.

\begin{defn}
  \label{def:AugmentedAlgebra}
An {\em augmented} $\Ground$-algebra is an algebra, equipped with an
algebra map $\epsilon \colon A \to \Ground$, called the {\em
  augmentation}, whose kernel, the {\em augmentation ideal}, is
denoted $A_+$. Together, the unit (in $\Ground$) and the augmentation
provide a $\Ground$-$\Ground$-bimodule splitting $A\cong A_+\oplus
\Ground$.
\end{defn}

The algebras in this paper will all have the following special structure.

\begin{defn}
  \label{def:MaslovAlexAlgebra}
  A {\em Maslov/Alexander bigraded} algebra is a an augmented $DA$
  algebra $A$, equipped with two gradings, one (homological)
  $\Z$-grading, denoted $\gr^A$, and an Alexander grading, denoted
  $\vec{\weight}$, with values in $\OneHalf(\Z^{\geq 0})^m\subset
  \OneHalf \Z^m$.  Both gradings are additive under multiplication,
  and differential drops the homological grading by one while
  preserving the algebra grading.  The summand of $A$ with vanishing
  Alexander grading is the ground ring, and the augmentation ideal
  $A_+$ is the portion in non-zero Alexander grading. Finally, $A$
  will have the following boundedness hypothesis: for each
  $\vec{w}\in\OneHalf\Z^m$, the portion of $A$ with Alexander grading
  $\vec{w}$ is a finitely generated $\Ground$-bimodule.
\end{defn}

A Maslov/Alexander bigraded type $D$ structure $X$ is one which is
equipped with two gradings, one, denoted $\gr^X$, with values in $\Z$
and another, denoted $\vec{v}$, with values in $\OneHalf \Z^m$.  These
gradings are further required to be compatible with the structure map
$\delta^1$ in the sense that if $a\otimes y$ appears with non-zero
coefficient in $\delta^1(x)$, then
\begin{align*}
  \gr^X(x)-1=\gr^A(a)+\gr^X(y) \\
  \vec{v}(x)=\vec{\weight}(a)+\vec{v}(y).
\end{align*}

More generally, let $\lsub{A}M^B$ be a type $DA$ bimodule.
Recall that $DA$ bimodules are equipped with actions for all $p\geq 0$
\[ \delta^1_{p+1}\colon A^{\otimes(n-1)}\otimes M \to B\otimes M \]
satisfying a structure relation generalizing Equation~\eqref{eq:TypeDStruct}.
(See~\cite[Definition~2.2.43]{Bimodules}.)
We say that 
it is
{\em Maslov/Alexander bigraded} if it is equipped with $\gr^M$ and $\vec{v}$
so that if $y\otimes b$ appears in $\delta^1_{1+p}(a_1,\dots,a_p,x)$, then
\begin{align*}
  \gr^X(x)+\sum_{i=1}^p\gr^A(a_i) &= \gr^X(y)+\gr^B(b)+1-p \\
  \vec{v}(x)+\sum_{i=1}^p \vec{\weight}^A(a_i) &= \vec{v}(y)+\vec{\weight}^B(b).
\end{align*}
Furthermore, we say $\lsub{A}M^B$ is {\em (strictly) unital} if
$\delta^1_2(\One,x)=x\otimes \One$ and for all $p>1$, if $(a_1,\dots,a_p)$ is a sequence of algebra elements such that some $a_i=\One$, then 
$\delta^1_{p+1}(a_1,\dots,a_p,x)=0$.

Similarly, if $\lsub{A}M_B$ is a type $AA$ bimodule, we say that it is
{\em Maslov/Alexander bigraded} if 
for $y=m_{p|1|q}(a_1\otimes\dots\otimes a_p \otimes x \otimes b_1\otimes\dots\otimes b_q)$, then
\begin{align}
  \gr(y)^M &=
  \gr^M(x)+p+q-1+\sum_{i=1}^{p}\gr^A(a_i)+\sum_{i=1}^{q}\gr^B(b_i);   \label{eq:GradedBimodule}
  \\
\vec{v}(y)&=
  \left(\sum_{j=1}^p {\vec{\weight}}^A(a_j)\right) +  \vec{v}(x) + 
  \left(\sum_{j=1}^q {\vec{\weight}}^B(b_j)\right)
  \label{eq:AlexanderGrading}
\end{align}
The strict unitality condition in this case is that for all 
$x\in M$ and all algebra sequences $a_1,\dots,a_p$ and $b_1,\dots,b_q$
of algebra elements of length $p,q\geq 0$ with $p+q>1$,
with $\One\in \{a_1,\dots,a_p,b_1,\dots,b_q\}$, we have that
\begin{align*}
  m_{1|1|0}(\One,x)&=x \\
  m_{0|1|1}(x,\One)&=x \\
  m_{p,1,q}(a_1,\dots,a_p,x,b_1,\dots,b_q)&=0.
\end{align*}

\subsection{Quasi-inverses}

\begin{defn}
  Fix dg algebras $A$ and $B$, and let
  $\lsup{A}X^B$ and $\lsub{B}Y_A$ be a bimodules
  of type $DD$ and $AA$ respectively, as discussed above.
  We say that $X$ and $Y$ are {\em quasi-inverses}
  if
  \begin{align*}
  \lsup{A}X^B\otimes \lsub{B}Y_A &\simeq \lsup{A}\Id_A \\
  \lsub{B}Y_A\otimes\lsup{A}X^B &\simeq \lsub{B}\Id^B
  \end{align*}
\end{defn}

Let $A$ and $B$ be DG algebras over $\Ground$.  Let $\phi\colon A
 \to B$ be an $\Ainfty_\infty$ homomorphism.  There is an associated type $DA$
bimodule $\lsup{B}[\phi]_A$, which is rank one as as a $\Ground$ bimodule,
with actions specified by $\phi$.

The bimodules that arise in this way have a simple characterization:

\begin{lemma}
  \label{lem:BimoduleOfPhi}~\cite[Lemma~2.2.50]{Bimodules}
  Let $A$ and $B$ be a DG algebras over $\Ground$.
  Suppose that $\lsub{A}[M]^B$ is a type $DA$ bimodule with the following properties:
  \begin{itemize}
  \item $\delta^1_1=0$
  \item $M$ has rank $1$ as a $\Ground$ bimodule.
  \end{itemize}
  Then, there is an $\Ainf_\infty$ homomorphism $\phi\colon A\to B$ so that
  $\lsub{A}[M]^B=\lsub{A}[\phi]^B$.
\end{lemma}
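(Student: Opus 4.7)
The plan is to use the hypothesis that $M$ has rank one as a $\Ground$-bimodule to extract a sequence of multilinear maps $\phi_p\colon A^{\otimes p}\to B$ directly from the type $DA$ structure, and then to match the type $DA$ structure relation for $M$ term-by-term with the $\Ainfty_\infty$ homomorphism relation for $\phi=(\phi_p)_{p\geq 1}$.

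First I would fix a generator $x$ for $M$, so that $M=\Field\cdot x$ sits in the $(e,e')$-corner of $\Ground\otimes_\Field \Ground$ for some idempotents $e,e'\in\Ground$. For each $p\geq 0$, the element $\delta^1_{p+1}(a_1,\ldots,a_p,x)$ lies in $B\otimes_\Ground M$, which as a $\Field$-vector space is just $(e'\cdot B\cdot e)\otimes \Field\{x\}$, so there is a unique $\Field$-linear map $\phi_p\colon A^{\otimes p}\to B$ determined by
\[
\delta^1_{p+1}(a_1,\ldots,a_p,x)=\phi_p(a_1,\ldots,a_p)\otimes x.
\]
The hypothesis $\delta^1_1=0$ becomes $\phi_0=0$ (no curvature term), and the strict unitality built into the conventions translates into $\phi_p$ vanishing on any tuple containing $\One$.

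Next, I would expand the type $DA$ structure relation applied to $(a_1,\ldots,a_p,x)$. Every summand has one of three shapes: two nested applications of $\delta^1$ with $\mu_2^B$ applied to the outputs, a single $\delta^1_{p+1}$ followed by $\mu_1^B$, or a single $\delta^1$ after acting on the input tuple by $\mu_1^A$ or $\mu_2^A$. Substituting $\delta^1_{q+1}(a_1',\ldots,a_q',x)=\phi_q(a_1',\ldots,a_q')\otimes x$ into each term factors the generator $x$ out of the entire identity, leaving an equality in $B$ of the form
\begin{align*}
0 ={}& \mu_1^B\bigl(\phi_p(a_1,\ldots,a_p)\bigr) + \sum_i \phi_p\bigl(a_1,\ldots,\mu_1^A(a_i),\ldots,a_p\bigr) \\
 &{}+ \sum_i \phi_{p-1}\bigl(a_1,\ldots,\mu_2^A(a_i,a_{i+1}),\ldots,a_p\bigr) \\
 &{}+ \sum_{q=1}^{p-1} \mu_2^B\bigl(\phi_{p-q}(a_1,\ldots,a_{p-q}),\,\phi_q(a_{p-q+1},\ldots,a_p)\bigr),
\end{align*}
which is precisely the $\Ainfty_\infty$ homomorphism relation for a map between the DG algebras $A$ and $B$. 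Hence $\phi$ is a strictly unital $\Ainfty_\infty$ homomorphism, and by inspection the bimodule $\lsub{A}[\phi]^B$ built from $\phi$ has the same generator and the same structure maps as $M$, so $\lsub{A}[M]^B=\lsub{A}[\phi]^B$.

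The only real work is the bookkeeping in the previous paragraph, namely identifying each summand of the $DA$ structure relation with a corresponding summand of the $\Ainfty_\infty$ homomorphism relation. Since everything is over $\Field=\Zmod{2}$ there are no signs to track, and the rank-one hypothesis is exactly what guarantees that every contribution to $\delta^1_{p+1}(a_1,\ldots,a_p,x)$ has the form $b\otimes x$ for a uniquely determined $b\in B$, so the map $\phi_p$ is well-defined and the substitution is unambiguous.
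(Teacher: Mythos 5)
Your argument is correct and is exactly the standard proof of this fact; the paper itself gives no proof, only the citation to \cite[Lemma~2.2.50]{Bimodules}, where the same unpacking of the rank-one hypothesis (each $\delta^1_{p+1}(a_1,\dots,a_p,x)$ must be $\phi_p(a_1,\dots,a_p)\otimes x$ for a unique algebra element) and the same term-by-term matching of the $DA$ structure relation with the $\Ainf_\infty$-homomorphism relation is carried out. The only cosmetic point is that ``rank one as a $\Ground$-bimodule'' here means $M\cong\Ground$, so there is one generator per idempotent rather than a single one-dimensional corner, but your construction of the $\phi_p$ and the bookkeeping go through verbatim in that setting.
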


When $A$ and $B$ are Maslov/Alexander bigraded, an $\Ainfty_\infty$ homomorphism
respects this structure if
\begin{align*}
  \weight^B(\phi_p(a_1\otimes\dots \otimes a_p))&=\sum_{i=1}^p \weight^A(a_i) \\
  \gr^B(\phi_p(a_1\otimes\dots\otimes a_p)&= \sum_{i=1}^p \gr^A(a_i)+p-1.
\end{align*}
Note that $\phi$ is a Maslov/Alexander graded $\Ainf_\infty$ homomorphism if
its associated bimodule $\lsub{A}[\phi]^B$ is Maslov/Alexander
bigraded, supported in vanishing bigrading.

\newcommand\ClgHat{\widehat\Clg}
\newcommand\Wedge{\Lambda}
\newcommand\BlgZ{\Blg_0}
\section{The bordered algebras}
\label{sec:Clg}

We recall the bordered algebras of~\cite{BorderedKnots}; in fact we
use mostly notation from~\cite[Section 3.2]{HolKnot}.  The algebras we
are interested in are denoted $\Clg(m,k)$, where $(m,k)$ is a pair of
integers with $0\leq k\leq m-1$. These algebras are constructed by
first constructing an $\Field[U_1,\dots,U_m]$-algebra $\BlgZ(m,k)$,
forming the quotient $\Blg(m,k)$ by an ideal, and then considering a
subalgebra $\Clg(m,k)$. We recall the details presently, referring the reader
to~\cite{BorderedKnots} for more details.
(Note also that in~\cite{HolKnot}, we considered the case
where $m=2k$, since these were the algebras that arose naturally in
the case of ``upper Heegaard diagrams''; but this hypothesis is not
needed for the algebraic constructions.)

\subsection{The algebras $\BlgZ$ and $\Blg$}

First, we recall the construction of $\BlgZ(m,k)$, associated to a
pair of integers $(m,k)$ with $0\leq k \leq m-1$. (The
construction of $\BlgZ(m,k)$ also works in the cases where
$k=m$ or $k = m+1$; but in those cases, the subalgebra $\Clg(m,k)$ we are
interested in is zero.)

The base ring of $\BlgZ(m,k)$ is the polynomial
algebra $\Field[U_1,\dots,U_m]$. Idempotents correspond to $k$-element
subsets $\x$ of $\{0,\dots,m\}$ called {\em idempotent states}. 

Given idempotents states $\x,\y$, the $\Field[U_1,\dots,U_m]$-module
$\Idemp{\x}\cdot\BlgZ(m,k)\cdot\Idemp{\y}$ is identified with
$\Field[U_1,\dots,U_m]$, given with a preferred generator $\gamma_{\x,\y}$.
To specify the product, we proceed as follows. Each idempotent state
$\x$ has a {\em weight vector} $v^\x\in \Z^m$, with components $i=1,\dots,m$
given by
$v^\x_i=\#\{x\in \x\big|x\geq i\}$.
The multiplication is specified by 
\[ \gamma_{\x,\y}\cdot \gamma_{\y,\z}=U_1^{n_1}\cdots U_m^{n_m} \cdot \gamma_{\x,\z},\] where
\[ n_i = \frac{1}{2}(|v_i^\x-v_i^\y|+|v_i^\y-v_i^\z|-|v_i^\x-v_i^\z|).\]

\begin{defn}
  \label{def:DistAlgElts}
The algebra $\BlgZ(m,k)$ is equipped with distinguished elements $\{L_i\}_{i=1}^m$,
$\{R_i\}_{i=1}^m$, and $\{U_i\}_{i=1}^m$, defined as follows.
For $i=1,\dots,m$, let $L_i$ be the sum of $\gamma_{\x,\y}$, taken
over all pairs of idempotent states $\x, \y$ so that there is some
integer $s$ with $x_s= i$ and $y_s=i-1$ and $x_t=y_t$ for all $t\neq
s$. Similarly, let $R_i$ denote the sum of all the $\gamma_{\y,\x}$
taken over the same pairs of idempotent states as above.  We will
often think of $U_i$ as an element of $\BlgZ(m,k)$; explicitly, it is
the sum over all idempotents $\x$ of the element
$\gamma_{\x,\x}\cdot U_i$.  
With this definition, multiplication by
$U_i\in \BlgZ(m,k)$ corresponds to the action of
$U_i\in\Field[U_1,\dots,U_m]$ on $\BlgZ(m,k)$.
\end{defn}

Under the identification $\Idemp{\x}\cdot \BlgZ(m,k)\cdot
\Idemp{\y}\cong \Field[U_1,\dots,U_m]$, the elements that correspond to monomials
in the $U_1,\dots,U_m$ are called {\em pure algebra elements in $\BlgZ(m,k)$}.
These elements are  specified by their idempotents $\x$ and $\y$, and their
{\em relative weight vector} $w(b)\in
\Q^m$, which in turn is uniquely characterized by
\[
   \weight_i(\gamma_{\x,\y})=\OneHalf|v_i^\x-v_i^\y| \qquad
   \weight_i(U_j\cdot b) = \weight_i(b)+\left\{\begin{array}{ll}
      0 &{\text{if $i\neq j$}} \\
      1 &{\text{if $i=j$.}} 
      \end{array}\right.
\]

Let ${\mathcal J}\subset \BlgZ(m,k)$ be the two-sided ideal generated by
$L_{i+1}\cdot L_i$, $R_{i}\cdot R_{i+1}$ and, for all choices of
$\x=\{x_1,...,x_k\}$ with $\x\cap \{j-1,j\}=\emptyset$,
the element 
$\Idemp{\x}\cdot U_j$.
Then, 
\[ \Blg(m,k)=\BlgZ(m,k)/{\mathcal J}.\]
The pure algebra elements in $\Blg(m,k)$ are those elements that
are images of pure algebra elements in $\BlgZ(m,k)$ under the above quotient map.
We use the symbols $L_i$, $R_i$, and $U_i$ to denote the elements of $\Blg(m,k)$
which are the 
the images of the corresponding
elements of $\BlgZ(m,k)$ as defined in Definition~\ref{def:DistAlgElts}.

\begin{defn}
  \label{def:TooFar}
  Idempotents $\x=x_1<\dots<x_k$ and $\y=y_1<\dots<y_k$ are said to be {\em too far} 
if for some $t\in \{1,\dots,k\}$, $|x_t-y_t|\geq 2$. If $\x$ and $\y$ are too far,
then $\Idemp{\x}\cdot\BlgZ(m,k)\cdot\Idemp{\y}\subset {\mathcal J}$; i.e.
$\Idemp{\x}\cdot\Blg(m,k)\cdot\Idemp{\y}=0$. 
\end{defn}

We restate here the concrete description of the ideal ${\mathcal J}$ given in~\cite{BorderedKnots}:

\begin{prop}\cite[Proposition~3.7]{BorderedKnots}
  \label{prop:Ideal}
  Suppose that $b=\Idemp{\x}\cdot b\cdot \Idemp{\y}$ is a pure algebra
  element. The element $b$ is in ${\mathcal J}$
  if and only if
  $\x$ and $\y$ are too far,
  or there is a pair of integers $i<j$ so that
  \begin{itemize}
  \item $i,j\in\{0,\dots,m\}\setminus \x\cap\y$
  \item for all $i<t<j$, $t\in\x\cap\y$
  \item $\weight_t(b)\geq 1$ for all $t=i+1,\dots,j$
  \item $\#(x\in \x\big| x\leq i)=\#(y\in \y\big| y\leq i)$.
  \end{itemize}
\end{prop}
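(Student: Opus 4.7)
My plan is to prove this as a purely algebraic/combinatorial statement by analyzing pure algebra elements directly and arguing both directions of the equivalence using the defining relations of $\mathcal{J}$. Recall that a pure algebra element $b=\Idemp{\x}\cdot b\cdot \Idemp{\y}$ is determined by the triple $(\x,\y,\vec w)$, where $\vec w(b)_t=\OneHalf|v^\x_t-v^\y_t|+n_t$ and $(n_1,\dots,n_m)$ is the exponent vector of the accompanying $U$-monomial. The ideal $\mathcal{J}$ has three types of generators, so for each direction I will match these generators against the two conditions in the proposition.

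For the \emph{if} direction I treat the two cases separately. Suppose first $\x$ and $\y$ are too far, so $|x_t-y_t|\geq 2$ for some index $t$. I will write $\gamma_{\x,\y}$ as a product of elementary $L$- or $R$-moves (each shifting a single particle by one) through intermediate idempotents; using the fact that moves acting on disjoint particles commute past one another (which follows from the multiplication formula for the $\gamma_{\x,\y}$), I can arrange two successive moves of particle $t$ to appear adjacent in the product. This adjacent pair has the form $L_{s+1}L_s$ (if $x_t>y_t$) or $R_sR_{s+1}$ (if $x_t<y_t$) for some $s$, placing $\gamma_{\x,\y}$, and hence $b$, in $\mathcal{J}$. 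Suppose instead we have $i<j$ as in the statement. The cardinality condition together with $t\in \x\cap\y$ for $i<t<j$ forces $v_t^\x=v_t^\y$ for $t=i+1,\dots,j$, so the strand part $\gamma_{\x,\y}$ has $\weight_t(\gamma_{\x,\y})=0$ there; hence $\weight_t(b)\geq 1$ forces $n_t\geq 1$, i.e., $U_{i+1}\cdots U_j$ divides $b$. I will then factor $b$ through the intermediate idempotents produced by an elementary-move decomposition and exhibit some $U_t$ with $t\in[i+1,j]$ acting on an intermediate idempotent $\z$ satisfying $\z\cap\{t-1,t\}=\emptyset$; the no-flux statement at $i$ and at $j$ (both $i$ and $j$ lie outside $\x\cap\y$ but there is no net transfer of particles across either gap) is what guarantees such a $\z$ must appear.

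For the \emph{only if} direction I will show the contrapositive: if $\x,\y$ are not too far and no $(i,j)$ with the listed four properties exists, then $b\notin \mathcal{J}$. The cleanest approach is to produce a normal form for $\Blg(m,k)$: describe it as the free $\Field$-module spanned by pure algebra monomials $(\x,\y,\vec w)$ violating neither condition, and check that (i) the quotient map $\BlgZ(m,k)\to \Blg(m,k)$ is compatible with this description and (ii) the three generators of $\mathcal{J}$ send the spanning set to itself modulo already-forbidden monomials. The main obstacle is the second case of the \emph{if} direction: the interaction between the cardinality (flux) condition, the $\x\cap\y$ condition on the interior, and the pointwise weight bound $\weight_t\geq 1$ is subtle, and locating the intermediate idempotent where the $\Idemp{\z}U_t$ relation applies requires careful bookkeeping. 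In practice I would realize pure algebra elements as strand pictures, reinterpret the generators of $\mathcal{J}$ as local diagrammatic moves (a double crossing in a single column for $L_{i+1}L_i$ or $R_iR_{i+1}$, and a $U$-factor in an empty column for $\Idemp{\x}U_j$), and then argue at the level of these pictures, which makes the required factorization visible.
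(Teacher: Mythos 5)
First, a remark on context: the paper does not prove this proposition at all --- it is quoted verbatim from \cite[Proposition~3.7]{BorderedKnots} --- so there is no in-paper argument to compare yours against, and I can only assess your proposal on its own terms. As written it is a plan rather than a proof: the two steps you yourself flag as hard (locating the idempotent at which $\Idemp{\z}\cdot U_t$ applies, and the entire ``only if'' direction via a normal form) are deferred, and the first of these is not merely ``careful bookkeeping'' --- the mechanism you propose provably fails.

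Concretely, suppose $j\geq i+2$. The four conditions force $\{i+1,\dots,j-1\}\subseteq \x\cap\y$ and $v^\x_t=v^\y_t$ for $t=i+1,\dots,j$ (this part of your reduction is correct). Consequently every intermediate idempotent $\z$ occurring in an elementary-move factorization of $\gamma_{\x,\y}$ that introduces no extra $U$-powers must have $v^\z_t$ between $v^\x_t$ and $v^\y_t$, hence equal to it, for $t=i+1,\dots,j$; together with $v^\z_t-v^\z_{t+1}=\#(\z\cap\{t\})$ this forces $\{i+1,\dots,j-1\}\subseteq\z$. But when $j\geq i+2$, every pair $\{t-1,t\}$ with $t\in\{i+1,\dots,j\}$ meets $\{i+1,\dots,j-1\}$, so no generator $\Idemp{\z}\cdot U_t$ is ever applicable along such a factorization. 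The smallest instance is $b=U_1U_2\cdot\Idemp{\{1\}}$ in $\BlgZ(2,1)$ (take $i=0$, $j=2$): the only idempotent available is $\{1\}$, which meets both $\{0,1\}$ and $\{1,2\}$. This $b$ does lie in ${\mathcal J}$, but for a reason your plan does not contemplate:
\[ U_1U_2\cdot \Idemp{\{1\}}=(L_1R_1)(R_2L_2)=L_1\cdot (R_1R_2)\cdot L_2, \]
i.e.\ one must trade the $U_t$'s for products $L_tR_t$ or $R_tL_t$ and then invoke the generators $R_iR_{i+1}$, $L_{i+1}L_i$ (equivalently, use $U_t=L_tR_t$ to detour through an idempotent \emph{not} lying between $\x$ and $\y$ --- here $\{0\}$, where $\Idemp{\{0\}}\cdot U_2$ does apply). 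Your appeal to ``no flux at $i$ and $j$'' cannot rescue the direct mechanism. The ``too far'' case and the derivation that $U_{i+1}\cdots U_j$ divides $b$ are sound, and a spanning-set argument is the right general strategy for the converse, but the forward direction has a genuine hole for $j\geq i+2$ and the converse is entirely unexecuted.
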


\subsection{The algebra $\Clg$}
Having defined $\Blg(m,k)$, we turn now to the construction of $\Clg(m,k)$,
which is of primary interest here. To this end,
let 
\[ {\mathbf J}=\sum_{\{\x\mid \x\cap \{0,
    m\}=\emptyset\}} \Idemp{\x} \]
The subalgebra
$\Clg(m,k)\subset \Blg(m,k)$
is defined  by
\[ \Clg(m,k)= {\mathbf J}\cdot \Blg(m,k)\cdot {\mathbf J}.\]
There is a projection map
$\pi\colon \Blg(m,k)\to \Clg(m,k)$ defined by 
\[ \pi(b)={\mathbf J}\cdot b \cdot {\mathbf J}.\]
Equivalently, $\Clg(m,k)$ corresponds to the subalgebra
whose idempotent subalgebra correspond to those idempotent states
$\x\subset \{1,\dots,m-1\}$.

In Definition~\ref{def:DistAlgElts}, we defined distinguished algebra
elements $\{L_i\}_{i=1}^m$, $\{R_i\}_{i=1}^m$, and $\{U_i\}_{i=1}^m$
in $\BlgZ(m,k)$, which project to elements of $\Blg(m,k)$ (with the
same notation).  Note that $\pi(L_1)=\pi(R_1)=\pi(R_m)=\pi(L_m)=0$.
Let
$\{L_i\}_{i=2}^{m-1}$, $\{R_i\}_{i=2}^{m-1}$, and $\{U_i\}_{i=1}^m$
be the elements of $\Clg(m,k)$ obtained by projecting (under $\pi$) the
corresponding elements of $\Blg(m,k)$.

Given $i\in \{1,\dots,m-1\}$, let
\[ [i]=\sum_{\{\x\mid i\in\x\}}\Idemp{\x} \qquad\text{and}\qquad
[\inotin{i}]=\sum_{\{\x\mid i\not\in\x\}}\Idemp{\x}.\]
The following relations in $\Clg(m,k)$
hold for all $2\leq i\leq m-1$:
\begin{align*}
         ([i]\cdot [\inotin{i-1}])\cdot L_i \cdot [i-1]\cdot[\inotin{i}] = L_i 
         \qquad &\quad 
         ([i-1]\cdot [\inotin{i}])\cdot R_i \cdot [\inotin{i-1}]\cdot[i] = R_i \\
         L_i \cdot R_i = [\inotin{i-1}]\cdot [i]\cdot U_i \qquad 
         & \qquad
         R_i\cdot L_i = [i-1]\cdot [\inotin{i}]\cdot U_i.
\end{align*}
Moreover, for all $2\leq i \leq m-2$  
\[  R_i \cdot R_{i+1}=0 \qquad   \qquad L_{i+1} \cdot L_{i}=0.\]
(The first four relations hold even in $\BlgZ(m,k)$; whereas the last two hold in 
$\Blg(m,k)$.)

\subsection{Gradings}

The weight vector $\weight$ induces a grading on $\Clg$ with values in
$\OneHalf\Z^m$. It will sometimes be helpful to view $\Clg$ as a
Maslov/Alexander bigraded  $DG$ algebra, with trivial
differential.  When doing so, we will view the additional $\Z$-grading
as also trivial; i.e. the algebra is supported entirely in $\gr=0$.

\section{Definition of the pong algebra}
\label{sec:DefPong}

The aim of this section is to introduce the {\em pong algebra} with
$m$ {\em walls} and $k$ strands, denoted $\Pong{m}{k}$. Here
$(m,k)$ is a pair of integers with $0\leq k<m$ and $m >1$. The base ring
of the pong algebra is a polynomial ring $\Field[v_1,\dots,v_{m}]$,
and the algebra comes equipped with a preferred generating set, given by
the so-called {\em lifted partial permutations}, which we define
presently.

\subsection{Lifted partial permutations}

Let $r_t\colon \R\to\R$ be the reflection $r_t(x)=2t-x$; and consider
the subgroup $G_m$ of the reflection group of the real line generated
by $r_{\OneHalf}$ and $r_{m-\OneHalf}$. The quotient of the integral lattice by
this group of rigid motions is naturally an $m-1$ point set; generated
by $\{1,\dots,m-1\}$. Let
\[Q \colon \Z \to \{1,\dots,m-1\}\] denote this quotient map.
Explicitly, $Q(i)$ is the integer $1\leq t \leq m-1$ so that
$t\equiv i\mod{2m-2}$ or $t\equiv 1-i\pmod{2m-2}$.

Note that $G_m$ also acts on the set $\OneHalf+ \Z$. The quotient of $\OneHalf
+ \Z$ by $G_m$ is naturally the $m$-point set, $\{\OneHalf,\dots,
m-\OneHalf\}$.  We think of these points as being in one-to-one
correspondence with the underlying variables in the pong algebra,
where the point $j\in \{\OneHalf,\dots,m-\OneHalf\}$ corresponds to the 
variable $u_{\OneHalf+j}$.

A $G_m$ invariant subset $\LiftS$ of $\Z$ has a natural quotient $\LiftS/G_m$,
which is a subset of $\{1,\dots,m-1\}$. 

\begin{defn}
  A {\em lifted partial permutation on $k$ letters} 
  is a pair $({\widetilde S}, {\widetilde f})$ where:
  \begin{itemize}
  \item ${\widetilde S}\subset \Z$
    is a $G_m$-invariant subset 
  \item ${\widetilde f}\colon {\widetilde S} \to \Z$
    is a $G_m$-equivariant map;
  \end{itemize}
  subject to the following two conditions:
  \begin{itemize}
    \item  ${\widetilde S}/G_m$ consists of $k$ elements
    \item   the induced map ${\widetilde f}\colon {\widetilde S}/G_m\to \Z/G_m$.
      is injective.
  \end{itemize}
  Let $\LiftPartPerm$ denote the set of lifted partial permutations.
\end{defn}

The following data specify a lifted partial permutation:

\begin{defn}
  Given integers $(k,m)$, we say that {\em pong data of type $(k,m)$} consists of a pair $(S,f)$,
  where 
  \[S\subseteq \{1,\dots,m-1\}=\Z/ G_m\] is a $k$ element subset, and
  $f\colon S \to \Z$ is a map, 
  so that the induced map $Q\circ f\colon S \to \Z/G_m$ is injective.
\end{defn}

\begin{lemma}
  \label{lem:PongData}
  Given a lifted partial permutation $({\widetilde S},{\widetilde f})$,
  we can associate pong data $(S,f)$, where $S={\widetilde S}/G_m$
  and $f$ is the restriction of ${\widetilde f}$ to $\{1,\dots,m-1\}\subset \Z$.
  This restriction map induces a one-to-one correspondence betweeen
  lifted partial permutations (on $k$ letters) and pong data (of type $(k,m)$).
\end{lemma}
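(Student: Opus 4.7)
The plan is to construct an explicit inverse to the restriction map, which reduces the statement to a direct analysis of the $G_m$-action on $\Z$. First I would establish that $G_m$ acts freely on $\Z$ with $\{1,\dots,m-1\}$ as a set of orbit representatives. Since $G_m$ is the infinite dihedral group generated by the reflections $r_{\OneHalf}$ and $r_{m-\OneHalf}$, every element of $G_m$ is either a power of the translation $r_{m-\OneHalf}\circ r_{\OneHalf}$ (which translates by $\pm(2m-2)$ and hence moves every integer) or a reflection conjugate to $r_{\OneHalf}$ or $r_{m-\OneHalf}$ (which has half-integer center, and so fixes no integer). Consequently each $G_m$-orbit in $\Z$ meets $\{1,\dots,m-1\}$ in exactly one point, yielding a natural identification $\Z/G_m \cong \{1,\dots,m-1\}$ compatible with the map $Q$.

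Given this, the restriction map is injective: a $G_m$-invariant subset $\LiftS\subseteq\Z$ is recovered from $S := \LiftS\cap\{1,\dots,m-1\}$ as $\LiftS = G_m\cdot S$, and the $G_m$-equivariance of $\Liftf$ combined with freeness of the action forces $\Liftf(g\cdot s)=g\cdot f(s)$, unambiguously determining $\Liftf$ from its restriction $f$.

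For surjectivity, given pong data $(S,f)$ of type $(k,m)$, I would define $\LiftS := G_m\cdot S$ and $\Liftf(g\cdot s) := g\cdot f(s)$. Well-definedness of $\Liftf$ follows from freeness; the $G_m$-invariance of $\LiftS$ and $G_m$-equivariance of $\Liftf$ are built in; $\LiftS/G_m$ is in bijection with $S$ and thus has $k$ elements; and the induced map $\LiftS/G_m\to\Z/G_m$ agrees with $Q\circ f$ under the identifications above, and so is injective by the hypothesis on pong data. That the two compositions of restriction and extension are the identity is then immediate from the constructions. The only step demanding any care is the freeness of the $G_m$-action on $\Z$; every remaining verification is formal.
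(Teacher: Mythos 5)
Your proposal is correct and follows essentially the same route as the paper: the paper's proof also constructs the inverse by setting $\LiftS=Q^{-1}(S)=G_m\cdot S$ and extending $f$ equivariantly via the unique decomposition ${\widetilde s}=\gamma\cdot s$. Your explicit verification that $G_m$ acts freely on $\Z$ with $\{1,\dots,m-1\}$ as orbit representatives is exactly the uniqueness the paper invokes implicitly, so the two arguments coincide.
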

\begin{proof}
  Restriction gives a map from lifted partial permutations to pong data.
  Conversely,
  given pong data, let ${\widetilde S}=Q^{-1}(S)$; so that any 
  ${\widetilde
    s}\in{\widetilde S}$ can be written as ${\widetilde s}=\gamma\cdot s$,
  for a unique choice of $s\in\{1,\dots,m-1\}$ and a unique $\gamma\in G_m$.
  Define the corresponding lifted partial permutation
  ${\widetilde f}$ by 
  \[{\widetilde f}({\widetilde s})=\gamma\cdot f(s).\]
\end{proof}

If a lifted partial permutation $({\widetilde S},{\widetilde f})$
restricts to pong data $(S,f)$, we call $({\widetilde S},{\widetilde
  f})$ the {\em equivariant lift} of $(S,f)$.

Write $S=s_1<\dots<s_k$. The partial permutation will be labeled by
the $k$-tuple of ordered pairs $(s_1,f(s_1)),\dots,(s_k,f(s_k))$.

Pong data  $(S,f)$ can be represented graphically as
follows. Consider the infinite strip $\R\times [-1,0]$; where $[-1,0]$
is an interval which we think of as measuring the height. Draw a
collection of straight segments connecting $({\widetilde s},0)$ for
${\widetilde s}\in \LiftS$ to $(f({\widetilde s}),-1)$. If we further draw all the translates of these segments by the action of $G_m$, we obtain the {\em lifted pong diagram}.

All of the
information in the partial permutation is contained in the portion of
this picture in the finite rectangle $[\OneHalf,m-\OneHalf]\times
[-1,0]$. For each $s\in S$, the segment from $s$ to
$f(s)/G_m\in\{1,\dots,m-1\}$, called a {\em strand}, is transformed into
a continuous, piecewise linear curve connecting $(s,0)$ to
$(Q(f(s)),-1)$ whose slope has fixed absolute value. The curves fail
to be linear at the points where the horizontal coordinate is
$\OneHalf$ or $m-\OneHalf$.  Points of the first kind are called {\em
  left extrema} and those of the second are called {\em right
  extrema}.
This picture is called the {\em pong diagram} for the lifted partial
permutation.

\begin{remark}
  \label{Diagrams}
  The pong diagrams defined here can be viewed as a kind of $\Zmod{2}$
  quotient of the cylindrical diagrams considered
  in~\cite{ManionRouquier}.
\end{remark}

\begin{example}
  Let $m=4$, and consider the lifted partial permutation $f$ on $2$
  letters sending $1$ to $-2$ and $2$ to $1$, which we write
  $((1,-2),(2,1))$. The pong diagram is pictured in
  Figure~\ref{fig:PongPictures}, and its
  lift is shown in Figure~\ref{fig:LiftPongPictures}.
\end{example}

\begin{figure}[ht]
\input{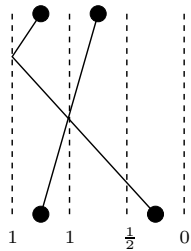}
\caption{\label{fig:PongPictures} {\bf{Pong diagrams.}}
This is a pong diagram for the lifted partial permutation
$((1,-2),(2,1))$ (where the vertical coordinate has been scaled up).
The dotted lines occur at horizontal positions $1/2$, $3/2$, $5/2$, and $7/2$ respectively.
There is one left extremum in the picture, occurring on the strand out of $1$;
and there are no right extrema.
The local multiplicities at these positions are recorded in the figure;
i.e. in the notation of Definition~\ref{def:LocalMult}, we have
$\weight(f)=(1,1,\OneHalf,0)$.}
\end{figure}

\begin{defn}
  \label{def:LocalMult}
  Let $(S,f)$ be pong data, and let ${\widetilde
    f}\colon \LiftS\to \Z$ be its equivariant lift.  For each
  $j\in\OneHalf+\Z$, the {\em local multiplicity of the lift of $f$ at
    $j$}, denoted $\weight_j(f)$, is given by
  \[ \weight_j(f)=\OneHalf \#\{i\in \LiftS\big|i < j < \Liftf(i)~\text{or}~
  \Liftf(i) < j < i\}.\]
  Note that for all $\gamma\in G_m$,
  $\weight_{j}(f)=\weight_{\gamma\cdot j}(f)$, so
  the local multiplicity descends to a function of $j\in\left(\OneHalf+\Z\right)/G_m=\{\OneHalf,\dots,
  m-\OneHalf\}$; or equivalently, it gives vector
  $\weight(f)\in(\OneHalf \Z)^{m}$, which we call the {\em vector of local multiplicities}.
\end{defn}

\begin{figure}[ht]
\input{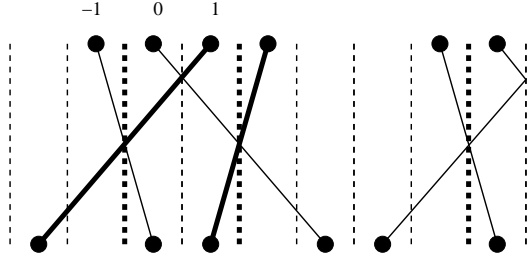}
\caption{\label{fig:LiftPongPictures} {\bf{Lifting the pong diagrams.}}
We have drawn here a lift of the pong diagram from Figure~\ref{fig:PongPictures};
drawing three copies of the fundamental domain. The darkened dashed lines represent the three translates of $3/2$, and the two darkened lines represent the two
strands. The two intersection points of these lines with the darkened dashed lines demonstrate that $\weight_{3/2}(S,f)=1$.}
\end{figure}

The local multiplicities of a lifted partial partial permutation
can be read off from the
pong diagram, as follows. The local multiplicity at $\OneHalf$ is the number of left extrema
in $s$; the local multiplicity at $m-\OneHalf$ is the number of right extrema; 
and the local multiplicity at $j$ with $\OneHalf<j<m-\OneHalf$ is 
half the intersection number of
the strand with the vertical segment at $j$.

Consider equivalence classes of pairs $(i,j)\in\Z\times \Z$ under the
equivalence relation generated by $(i,j)\sim (j,i)$ and $(i,j)\sim
(\gamma\cdot i,\gamma\cdot j)$ for $\gamma\in G_m$.  Let $\langle i,j\rangle$ denote
the equivalence class of $(i,j)\in\Z\times \Z$ under that equivalence
relation.

\begin{defn}
  A {\em lifted crossing} of $(S,f)$ is an element $(i,j)$ of
  ${\widetilde S} \times {\widetilde S}$ with the property that the
  sign of $i-j$ is opposite to the sign of ${\widetilde
    f}(i)-{\widetilde f}(j)$; i.e.  $\sgn(i-j)\neq
  \sgn(\Liftf(i)-\Liftf(j))$. Note that any $G_m$ orbit of a lifted
  crossing is a lifted crossing; also if $(i,j)$ is a lifted crossing,
  then so is $(j,i)$. A {\em crossing} is the equivalence class
  $\langle i,j\rangle$ of the crossing $(i,j)$ under the above
  equivalence relation.  The set of crossings of $(S,f)$ is denoted
  $\Cross(S,f)$, or simply $\Cross(f)$ when $S$ is clear from the
  context.
\end{defn}

See Figure~\ref{fig:LiftCrossing} for an illustration.
\begin{figure}[ht]
\input{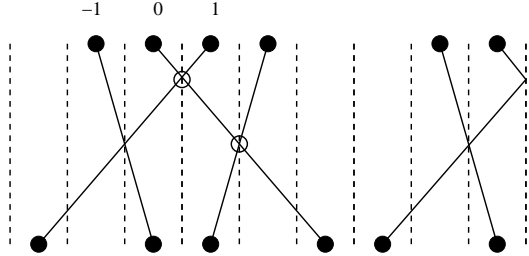}
\caption{\label{fig:LiftCrossing} {\bf{Crossings.}}  We have
  illustrated here part of the lifted pong diagram corresponding to
  Figure~\ref{fig:PongPictures}, consisting of three fundamental
  domains.  The two circled intersection points correspond to the two
  crossings in $\Cross(f)$.}
\end{figure}

To find all the crossings on a given strand connecting $(s,0)$ and
$(f(s),-1)$ in a lifted pong diagram, find all the intersection points
of the corresponding segment with all of the segments connecting
$({\widetilde t},0)$ and $({\widetilde f}({\widetilde t}),-1)$, with
${\widetilde t}\neq s$.  From this description, it is obvious that a
lifted partial permutation has only finitely many crossings.

Crossings in the lifted partial permutation are of two kinds: either
the two strands $i$ and $j$ are in the same $G_m$-orbit, or they are
not. In the pong diagram, crossings of the first kind correspond to
left or right extrema (i.e. whose $x$-coordinate is $\OneHalf$ or
$m-\OneHalf$), while crossings of the second kind can occur either at
these extrema, or they can occur in the interior.
Indeed, we can read off the crossing number from the pong diagram as follows.
The pong diagram contains a one-manifold away from finitely many points. 
In the interior, $t$ strands can meet, and 
such a point contributes $\binom{t}{2}$ crossings; at 
the $x=\OneHalf$ or $m-\OneHalf$ walls, $2t$ line segments meet,
and such points contribute $t^2$ to the crossing count. The validity of the
formula can be readily seen by lifting the pong diagram.

\begin{defn}
  \label{def:cross}
  Define the {\em crossing number} $\cross(f)$ to be the number of crossings in the lifted
  partial permutation $f$; i.e. $\cross(f)=|\Cross(f)|$.
\end{defn}

Alternatively, we can consider a more general kind of lifted pong
diagram, where the curves connecting $({\widetilde s},0)$ and
$({\widetilde f}({\widetilde s}),-1)$ are no longer required to be
straight, but we still require that the picture is $G_m$-equivariant.
We can arrange for this picture to be generic, in the sense that there
are no triple points.  Equivariance gives rise to a a picture in the
quotient, which we think of as a perturbed pong diagram. The quotient
of a generic pong diagram has no triple points; and no two strands
intersect at $\{\OneHalf\}\times [-1,0]$ or $\{m-\OneHalf\times
[-1,0]\}$. The number of crossings plus the number of (left and right)
extrema in any sufficiently small generic perturbation of a pong
diagram for $(f,S)$ agrees with the $\cross(f,S)$.  (Note that larger
perturbations can give rise to additional crossings; see
Figure~\ref{fig:HardToFind}.)

\begin{figure}[ht]
\input{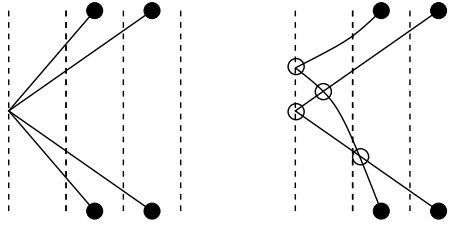}
\caption{\label{fig:PerturbPong} {\bf{Crossings at extrema.}}
The intersection at the wall in the left picture
(corresponding to a quadruple points in the lifted pong diagram) 
is perturbed to give four crossings in the
generic pong diagram in the right picture.}
\end{figure}

\begin{figure}[ht]
\input{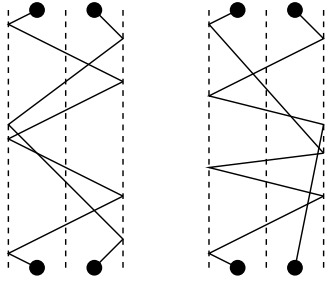}
\caption{\label{fig:HardToFind} {\bf{Generic perturbations of pong diagrams.}}
The pong diagram on the left, which has 12 crossings, 
has a perturbation, shown on the right, which has 14 crossings.}
\end{figure}

Pong data $(S,f)$ and $(T,g)$ with $T=Q\circ f(S)$ specify lifted
partial permutations, whose composition is represented by
$(S,\Liftg\circ f)$.

\begin{lemma}
  \label{lem:ComposePermutations}
  If $(S,f)$ and $(T,g)$ are two pong data,
  with $T=Q\circ f(S)$, then 
  \begin{align}
    \weight(\Liftg\circ f)\leq \weight(g)+\weight(f) \label{eq:ComposeMult} \\
    \cross(\Liftg\circ f)\leq \cross(g)+\cross(f). \label{eq:ComposeCross} 
  \end{align}
  Moreover, $\weight(\Liftg\circ f)-\weight(g)-\weight(f)\in \Z^{m}\subset (\OneHalf \Z)^{m}$.
\end{lemma}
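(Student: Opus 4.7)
The plan is to stack the pong diagrams for $f$ and $g$ vertically and then relate the stacked diagram to the straightened composition. Place the diagram for $f$ at heights in $[-1,0]$ and a rescaled diagram for $g$ at heights in $[-2,-1]$. The compatibility $T=Q\circ f(S)$ ensures that each lifted strand for $f$ ending at $(\Liftf(i),-1)$ matches up with a lifted strand for $g$ starting at the same point, producing, for each $i\in\LiftS$, a bent strand from $(i,0)$ through $(\Liftf(i),-1)$ to $(\Liftg\Liftf(i),-2)$. The composition $\Liftg\circ\Liftf$ is the straightening of this bent strand.

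For the multiplicity inequality and integrality, I would compute strand by strand. Set $a=i$, $b=\Liftf(i)$, $c=\Liftg(\Liftf(i))$; since $j\in\OneHalf+\Z$ while $a,b,c\in\Z$, none of these coincide with $j$. The contribution of this strand (before the $\OneHalf$ factor) to $\weight_j(f)+\weight_j(g)-\weight_j(\Liftg\circ f)$ equals
\[ [j\in(a,b)]+[j\in(b,c)]-[j\in(a,c)], \]
where $(x,y)$ denotes the open interval between $\min(x,y)$ and $\max(x,y)$. A short case analysis on the relative order of $a,b,c$ shows this equals $2\cdot[j\in(a,b)\cap(b,c)]$: when $b$ lies (weakly) between $a$ and $c$ one has $(a,b)\cap(b,c)=\emptyset$, and when $b$ is extremal the three intervals arrange so that $(a,b)\cap(b,c)$ is disjoint from $(a,c)$. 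In every case the contribution lies in $\{0,2\}$. Summing over $i\in\LiftS$ (only finitely many terms are non-zero since strand lengths are bounded) and dividing by two shows that $\weight(f)+\weight(g)-\weight(\Liftg\circ f)$ has non-negative \emph{integer} entries, which gives both \eqref{eq:ComposeMult} and the final integrality claim simultaneously.

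For the crossing inequality, for each equivalence class $\langle i,j\rangle\in\Cross$-type pairs of distinct lifted strands, consider the three signs
\[ \sigma_1=\sgn(i-j),\quad \sigma_2=\sgn(\Liftf(i)-\Liftf(j)),\quad \sigma_3=\sgn(\Liftg\Liftf(i)-\Liftg\Liftf(j)). \]
The pair is a crossing of $f$, of $g$, or of $\Liftg\circ f$ according to whether $\sigma_1\neq\sigma_2$, $\sigma_2\neq\sigma_3$, or $\sigma_1\neq\sigma_3$ respectively. Since $[\sigma_1\neq\sigma_3]=[\sigma_1\neq\sigma_2]\oplus[\sigma_2\neq\sigma_3]$, summing over all equivalence classes yields
\[ \cross(\Liftg\circ f)=\cross(f)+\cross(g)-2N, \]
where $N$ counts pairs that cross in both $f$ and $g$. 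In particular \eqref{eq:ComposeCross} follows.

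The main technical obstacle is really just the strand-by-strand case analysis for the multiplicity contribution; the inequality is geometrically obvious (straightening a bent path can only decrease intersections with a vertical line), but one must verify the \emph{parity} in order to get the integrality refinement. Once that parity is established the rest is formal.
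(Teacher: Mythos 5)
Your proof is correct and follows essentially the same route as the paper's: the crossing bound comes from the same exclusive-or observation on pairs of strands (the paper phrases it as a four-case dichotomy yielding an injection $\Cross(\Liftg\circ f)\to\Cross(f)\cup\Cross(g)$, you phrase it as $[\sigma_1\neq\sigma_3]=[\sigma_1\neq\sigma_2]\oplus[\sigma_2\neq\sigma_3]$), and the weight bound plus integrality come from the same strand-by-strand accounting (your condition $j\in(a,b)\cap(b,c)$ is exactly the paper's Type~\ref{type:Special} strands, each contributing $1$ to the deficit). The exact identities you record, such as $\cross(\Liftg\circ f)=\cross(f)+\cross(g)-2N$, are mild refinements of what the paper states but rest on the identical mechanism.
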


\begin{proof}
  Fix $(i,j) \in (\LiftS\times \LiftS)/G_m$. 
  The following is straightforward to verify:
  \begin{itemize}
    \item if $\langle i,j\rangle \not\in \Cross(f)$, and $\langle {\widetilde f}(i),{\widetilde f}(j)\rangle\not\in\Cross(g)$, then
      $\langle i,j\rangle\not\in\Cross(\Liftg\circ f)$.
    \item if $\langle i,j\rangle\in \Cross(f)$, and $\langle {\widetilde f}(i),{\widetilde f}(j)\rangle\not\in\Cross(g)$, then
      $\langle i,j\rangle\in\Cross(\Liftg\circ f)$.
    \item if $\langle i,j\rangle\not\in \Cross(f)$, and $\langle {\widetilde f}(i),{\widetilde f}(j)\rangle\in\Cross(g)$, then
      $\langle i,j\rangle\in\Cross(\Liftg\circ f)$.
    \item if $\langle i,j\rangle\in \Cross(f)$, and $\langle {\widetilde f}(i),{\widetilde f}(j)\rangle\in \Cross(g)$, then
      $\langle i,j\rangle\not\in\Cross(\Liftg\circ f)$.
  \end{itemize}
  Thus, if $\langle i,j\rangle\in \Cross(\Liftg\circ f)$, then exactly one of the following two conditions holds
  \begin{itemize}
  \item $\langle i,j\rangle\in \Cross(f)$
  \item $\langle {\widetilde f}(i),{\widetilde f}(j)\rangle\in \Cross(g)$.
  \end{itemize}
  This dichotomy gives an injection
  \[ \Cross(\Liftg\circ f)\to\Cross(g)\cup\Cross(f),\]
  establishing Inequality~\eqref{eq:ComposeCross}.

  To see Inequality~\eqref{eq:ComposeMult}, we argue similarly. For any 
  $t\in \OneHalf + \Z$, let 
  \begin{equation}
    \label{eq:DefXr}
    X_t(f)=\{i\in\LiftS\big| i < t < \Liftf(i)~\text{or}~
  \Liftf(i) < t < i \},
  \end{equation}
  so that 
  \[ \weight_t(f)=\OneHalf \# X_t(f).\]
  It is straightforward to verify that the $i\in\Z$ can be partitioned into the following four types: 
  \begin{enumerate}[label=(T-\arabic*),ref=(T-\arabic*)]
  \item \label{type:Special} if $i\in X_t(f)$ and $f(i)\in X_t(g)$, then $i\not\in X_t(\Liftg\circ f)$.
  \item if $i\in X_t(f)$ and $f(i)\not\in X_t(g)$, then $i\in X_t(\Liftg\circ f)$.
  \item if $i\not\in X_t(f)$ and $f(i)\in X_t(g)$, then $i\in X_t(\Liftg\circ f)$.
  \item if $i\not\in X_t(f)$ and $f(i)\not\in X_t(g)$, then $i\not\in X_t(\Liftg\circ f)$.
  \end{enumerate}
  Inequality~\eqref{eq:ComposeMult} follows. Moreover, each $i\in\Z$ of Type~\ref{type:Special} contributes
  $1$ to $\weight_t(f)+\weight_t(g)-\weight_t(\Liftg\circ f)$, and all other $i\in\Z$ contribute $0$.
\end{proof}

\begin{defn}
  Let $(S,f)$ be a pong data, and fix some crossing
  $\langle i,j\rangle $ for $f$. 
  The {\em lifted resolution of $f$ at $\langle i,j\rangle $}
  is the $G_m$-equivariant 
  map $\Liftf_{\langle i,j\rangle}\colon \LiftS \to \Z$ defined by
  \[
  \Liftf_{\langle i,j\rangle}(k) = 
  \left\{\begin{array}{ll}
      \Liftf(k) & {\text{if $[k]\neq [i],[j]$}} \\
      g\cdot \Liftf(j) & {\text{if $k=g\cdot i$}} \\
      g\cdot \Liftf(i) & {\text{if $k=g\cdot j$}}.
    \end{array}\right.
  \]
  The {\em resolution of $f$ at $\langle i,j\rangle$}, denoted
  $f_{\langle i,j\rangle}$, is the induced map
  $f_{\langle i,j\rangle}=\Liftf_{\langle i,j\rangle}/G_m$.
\end{defn}

To see that ${\widetilde f}_{\langle i,j\rangle}$ is indeed a lifted partial
permutation, note that if ${\overline f}\colon S \to \Z/G_m$ denotes
the map induced from $f$, then the map from $S\subset \Z/G_m\to
\Z/G_m$ induced by $f_{\langle i,j\rangle}$ is obtained by
pre-composing ${\overline f}$ with the automorphism of $S$ that
switches $[i]$ and $[j]$; i.e. it is also injective.

\begin{lemma}
  \label{lem:ResolvePermutation}
  Let $(S,f)$ be a lifted partial permutation, 
  and $\langle i,j\rangle \in \Cross(f)$ be a crossing.
  Then,
  \begin{align}
    \weight(f_{\langle i,j\rangle})&\leq \weight(f) \label{eq:ResolveMult} \\
    \cross(f_{\langle i,j\rangle})&\leq \cross(f)-1. \label{eq:ResolveCross}
  \end{align}
  Moreover, $\weight(f)-\weight(f_{\langle i,j\rangle})\in
  \Z^{m}\subset (\OneHalf\Z)^{m}$.
\end{lemma}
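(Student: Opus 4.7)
My plan is to mimic the structure of the proof of Lemma~\ref{lem:ComposePermutations}, exploiting the key fact that $\Liftf$ and $\Liftf_{\langle i,j\rangle}$ agree on $\LiftS \setminus ([i]\cup[j])$, where $[k]$ denotes the $G_m$-orbit of $k$.  As a preliminary, I would verify that $f_{\langle i,j\rangle}$ is indeed a lifted partial permutation, as already sketched in the paragraph following the definition.

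For the crossing inequality~\eqref{eq:ResolveCross}, I would partition the equivalence classes $\langle k,l\rangle$ into cases.  For pairs with $[k],[l] \notin \{[i],[j]\}$, the crossing status is identical under $f$ and $f_{\langle i,j\rangle}$, since $\Liftf$ is unchanged on these elements.  The class $\langle i,j\rangle$ itself lies in $\Cross(f)$ by hypothesis, but fails to lie in $\Cross(f_{\langle i,j\rangle})$, because swapping the $\Liftf$-values reverses the sign relationship between $\sgn(i-j)$ and $\sgn(\Liftf(i)-\Liftf(j))$.  For the remaining classes involving exactly one element of $[i]\cup[j]$ (and the residual pairs inside $[i]\cup[j]$ other than $\langle i,j\rangle$ when $[i]=[j]$), I would build a sign-preserving bijection between such crossings of $f_{\langle i,j\rangle}$ and such crossings of $f$, namely the one induced by the $G_m$-equivariant involution that swaps the orbits $[i]$ and $[j]$.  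Combining these cases yields $\cross(f_{\langle i,j\rangle}) \leq \cross(f) - 1$.

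For the multiplicity inequality~\eqref{eq:ResolveMult} together with its integrality refinement, I would compare $X_t(f)$ and $X_t(f_{\langle i,j\rangle})$ (in the notation of~\eqref{eq:DefXr}).  Elements $k \notin [i]\cup[j]$ contribute identically to both sets.  For $k \in [i]\cup[j]$, the image $\Liftf(k)$ is replaced by a $G_m$-translate from the other orbit, and a four-type dichotomy analogous to~\ref{type:Special} in Lemma~\ref{lem:ComposePermutations} — organized by the crossing condition on $\langle i,j\rangle$ — shows that the total contribution to $\#X_t$ cannot increase.  Because the elements of $[i]\cup[j]$ change their $X_t$-membership in $G_m$-symmetric pairs (each orbit-swap moves two elements at once), the discrepancy $\#X_t(f) - \#X_t(f_{\langle i,j\rangle})$ is even, so that $\weight_t(f) - \weight_t(f_{\langle i,j\rangle})$ is an integer.

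The main obstacle I anticipate is the edge case $[i]=[j]$, which occurs precisely when the crossing lies at one of the wall extrema $\OneHalf$ or $m-\OneHalf$ in the pong diagram.  In that case the orbit $[i]\cup[j]$ is a single $G_m$-orbit, each element of which must be interpreted simultaneously as ``$g\cdot i$'' and ``$g'\cdot j$'', and the swap prescription must be reconciled accordingly.  Carrying out the injections and the parity count in this degenerate case is the delicate bookkeeping step, but once resolved, the argument reduces to the same case analysis as in the generic setting.
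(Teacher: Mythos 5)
Your treatment of the weight inequality and its integrality refinement is essentially the paper's argument: compare $X_t(f)$ with $X_t(f_{\langle i,j\rangle})$, observe that strands outside $[i]\cup[j]$ contribute identically, and pair off the elements that leave $X_t$ under the $G_m$-equivariant involution $T$ swapping the two orbits; since $T$ restricts to a fixed-point-free involution on that set, the discrepancy in $\#X_t$ is even and the difference of weights is integral. That part, including your attention to the wall case $[i]=[j]$ (which is handled automatically once one works with the equivalence classes $\langle\cdot,\cdot\rangle$ throughout), matches the paper.

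The crossing count, however, contains a genuine error: the $G_m$-equivariant swap of $[i]$ and $[j]$ does \emph{not} induce a bijection between the crossings of $f_{\langle i,j\rangle}$ meeting $[i]\cup[j]$ and those of $f$ other than $\langle i,j\rangle$. If such a bijection existed you would conclude the equality $\cross(f_{\langle i,j\rangle})=\cross(f)-1$, which is false in general. Concretely, if $i<k<j$ and $\Liftf(j)<\Liftf(k)<\Liftf(i)$, then both $\langle i,k\rangle$ and $\langle j,k\rangle$ lie in $\Cross(f)$ but neither lies in $\Cross(f_{\langle i,j\rangle})$, so the crossing number drops by three. The swap fails to transport the crossing condition precisely when $k$ lies strictly between $i$ and $j$, since then $\sgn(k-i)\neq\sgn(k-j)$. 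The correct statement---and the one the paper proves---is the dichotomy that for each third strand $k$ the quantity $|\{\langle i,k\rangle,\langle j,k\rangle\}\cap\Cross(\cdot)|$ is either preserved (values $0$ or $1$) or drops from $2$ to $0$ under resolution; together with the loss of $\langle i,j\rangle$ itself this yields
\[ \cross(f)-\cross(f_{\langle i,j\rangle}) = 1+2\,\#\{k \mid \langle i,k\rangle~\text{and}~\langle j,k\rangle\in\Cross(f)\}\geq 1.\]
Replacing your claimed bijection with this case analysis (which furnishes only an injection of the relevant crossings, and that is all the inequality requires) repairs the argument.
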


\begin{proof}
  Clearly, $\langle k,\ell \rangle$ with
  $\{[k],[\ell]\}\cap\{[i],[j]\}=\emptyset$, 
  $\langle k,\ell\rangle\in\Cross(f)$ if and only if
  $\langle k,\ell\rangle\in\Cross(f_{\langle i,j\rangle})$. 
  Moreover, it is
  straightforward to verify the following:
  \begin{itemize}
    \item If 
      $|\{\langle i,k\rangle, \langle j,k\rangle\}\cap \Cross(f)|=0$
      then 
      $|\{\langle i,k\rangle, \langle j,k\rangle\}\cap \Cross(f_{\langle i,j\rangle})|=0$.
      \item If
        $|\{\langle i,k\rangle, \langle j,k\rangle\}\cap \Cross(f)|=1$,
        then
        $|\{\langle i,k\rangle, \langle j,k\rangle\}\cap \Cross(f_{\langle i,j\rangle})|=1$.
      \item If
        $|\{\langle i,k\rangle, \langle j,k\rangle\}\cap \Cross(f)|=2$,
        then
        $|\{\langle i,k\rangle, \langle j,k\rangle\}\cap \Cross(f_{\langle i,j\rangle})|=0$.
      \end{itemize}
      Obviously, $\langle i,j\rangle\in\Cross(f)$ and $\langle i,j\rangle \not\in\Cross(f_{\langle i,j\rangle})$.
      Thus,
      \[ \cross(f)-\cross(f_{\langle i,j\rangle})
      = 1 +  2 \#\{ k\big| \langle i,k\rangle~\text{and}~\langle j,k\rangle\in \Cross(f)\rangle\}, \]
      and Inequality~\eqref{eq:ResolveCross} follows.

      Fix $t\in \OneHalf+\Z$, and 
      let $X_t(f)$ be as in Equation~\eqref{eq:DefXr}.  
      Given $\langle i,j\rangle\in\Cross(f)$, define a map 
      $T\colon \Z \to \Z$ by
      \[T(k)=\left\{
      \begin{array}{ll}
        k &{\text{if $[k]\not\in\{[i],[j]\}$}} \\
        g\cdot j &{\text{if $k=g\cdot i$ for some $g\in G_m$}} \\
        g \cdot i &{\text{if $k=g\cdot j$ for some $g\in G_m$}}.
      \end{array}\right.\]
    Let 
    \[ A=\{k\in X_t(f)\big|T(k)\in X_t(f), k<t<T(k)\text{~or~}
    T(k)<t<k\} \]
    and $B=X_t(f)\setminus A$.
    It is elementary to see that $B=X_t(f_{\langle i,j\rangle})$.
    To see this, note that $k\in B$ if  $t$ lies between $k$ and $f(k)$, and one of the following three holds:
    \begin{itemize}
      \item  $[k]\not\in\{[i],[j]\}$,  in which case $f(k)=f_{\langle i,j\rangle}(k)$.
      \item $k=g\cdot i$ for some $g\in G_m$, and $t$ does not lie between $g\cdot i$ and $g \cdot j$.
        In this case, $t$ lies between $g \cdot j$ and $g \cdot f(i)$, i.e. $t\in X_t(f_{\langle i,j \rangle})$.
      \item $k=g\cdot j$ for some $g\in G_m$, and $t$ does not lie between $g \cdot i$ and $g \cdot i$.
        In this case, $t$ lies between $g\cdot i$ and $g \cdot f(j)$, so once again, 
        $t\in X_t(f_{\langle i,j \rangle})$.
    \end{itemize}
    Moreover,
    \[ \weight_t(f)-\weight_t(f_{\langle i,j\rangle})=\OneHalf|A|.\]
    The map $T$ is a fixed point free involution on $A$, showing that
    $\weight_{[t]}(f)-\weight_t(f_{\langle i,j\rangle})\in \Z^m$, as claimed.
\end{proof}

\begin{lemma}
  \label{lem:PreDSquared}
  Given any $x\in\Cross(f)$, there is an inclusion
  \[\Phi_{x}\colon \Cross(f_x)\to   \Cross(f)\]
  with the property that if $y=\Phi_x(y')$, then for some
  $x'\in\Cross(f_y)$, $x=\Phi_y(x')$; and
  \[ (f_{x})_{y'}=(f_{y})_{x'}.\]
\end{lemma}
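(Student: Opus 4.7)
The plan is to define $\Phi_x$ via the combinatorial case analysis already implicit in the proof of Lemma~\ref{lem:ResolvePermutation}, and then to verify the commutativity equation by direct computation of the permutations $(f_x)_{y'}$ and $(f_y)_{x'}$.

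Writing $x = \langle i, j\rangle$, for a crossing $y' = \langle k, \ell\rangle \in \Cross(f_x)$ with $\{[k], [\ell]\} \cap \{[i], [j]\} = \emptyset$ I would set $\Phi_x(y') = y'$; since $f$ and $f_x$ agree at the orbits of $k$ and $\ell$, this is automatically a crossing of $f$. For $y' = \langle i, k\rangle$ with $[k] \notin \{[i], [j]\}$ (the case $y' = \langle j, k\rangle$ being analogous), the bullet-list case analysis in the proof of Lemma~\ref{lem:ResolvePermutation} shows that exactly one of $\langle i, k\rangle$ and $\langle j, k\rangle$ lies in $\Cross(f)$, and I define $\Phi_x(y')$ to be that unique crossing. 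Injectivity is then immediate because the cases partition $\Cross(f_x)$, and distinct classes $[k]$ produce disjoint crossings.

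For the commutativity, I would split into cases based on how the orbits of $y$ and $x$ meet. If $y$ is disjoint from $x$, take $x' = x$; both $\Phi_y(x') = x$ and $(f_x)_{y'} = (f_y)_{x'}$ hold trivially since the two resolutions swap disjoint pairs of values. The substantive case is when $y$ shares one orbit with $x$; by symmetry we may assume $y = \langle j, k\rangle$ with $[k] \notin \{[i], [j]\}$. Computing the triple $((f_x)_{y'}(i), (f_x)_{y'}(j), (f_x)_{y'}(k))$ and matching it against $(f_y)_{x'}$ pins down $x'$ uniquely, as $\langle i, k\rangle$ when $y' = \langle j, k\rangle$ and as $\langle i, j\rangle$ when $y' = \langle i, k\rangle$; in both cases the resulting permutation cyclically rotates the values at the three orbits $[i], [j], [k]$ and fixes all others.

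The main obstacle is checking, in this mixed case, that the chosen $x'$ really is a crossing of $f_y$ and that $\Phi_y(x') = x$. This requires translating the hypotheses $x, y \in \Cross(f)$ and $y' \in \Cross(f_x)$ into sign relations among $\sgn(i - k), \sgn(j - k), \sgn(f(i) - f(k))$ and $\sgn(f(j) - f(k))$, and verifying that the ``identity versus swap'' dichotomy governing $\Phi_x$ pairs correctly with that of $\Phi_y$: concretely, the identity case for $\Phi_x$ (occurring when $k$ lies strictly between $i$ and $j$) must pair with the swap case for $\Phi_y$, and conversely. The $G_m$-equivariance introduces no essential new difficulty, since the entire discussion descends to equivalence classes and the resolutions are equivariant by construction; cases where $[k]$ coincides with $[i]$ or $[j]$ are handled by the same computation carried out in a suitable lift.
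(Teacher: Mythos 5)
Your construction of $\Phi_x$ in the one-shared-orbit case rests on the claim that if $y'\in\{\langle i,k\rangle,\langle j,k\rangle\}$ lies in $\Cross(f_x)$, then exactly one element of that pair lies in $\Cross(f)$, so that ``the unique crossing'' is a valid definition. This is false, and the trichotomy from the proof of Lemma~\ref{lem:ResolvePermutation} that you cite to justify it fails in the same configuration. Take three strands $k<i<j$ with $\Liftf(k)>\Liftf(i)>\Liftf(j)$ (for instance $1,2,3\mapsto 6,5,4$ with $m$ large, so no reflections intervene) and resolve $x=\langle i,j\rangle$. All three pairs are crossings of $f$, and a direct check shows that \emph{both} $\langle i,k\rangle$ and $\langle j,k\rangle$ remain crossings of $f_x$: the strand $k$ crosses both branches of the resolved crossing, since it meets them on the same side of their intersection point. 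So the count goes $2\mapsto 2$, not $2\mapsto 0$, ``the unique crossing of $f$ among the pair'' does not exist, and your injectivity argument (which only separates distinct classes $[k]$) has nothing to say about the two colliding candidates within this single class.

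The same configuration undermines the second half of your argument. Your dichotomy ``identity when $k$ lies strictly between $i$ and $j$, swap otherwise'' gives the wrong pairing here: with $f=(1\to 6,2\to 5,3\to 4)$ and $x=\langle 2,3\rangle$ one computes $(f_x)_{\langle 1,3\rangle}=(f_{\langle 1,2\rangle})_{\langle 2,3\rangle}$, so the crossing $y=\Phi_x(\langle 1,3\rangle)$ forced by the cancellation is $\langle 1,2\rangle$ --- yet $\Phi_x(\langle 1,2\rangle)$ must also be chosen among $\{\langle 1,2\rangle,\langle 2,3\rangle\}$, and keeping $\Phi_x$ injective requires routing one of the two inputs to $x$ itself via the trivial pairing $y=x$, $x'=y'$. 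None of this is visible from your case analysis, and your sign-relation verification as described would not produce it. (For what it is worth, the paper's own proof uses a purely positional rule --- $\langle i,k\rangle$ if $k$ lies between $i$ and $j$, $\langle j,k\rangle$ otherwise --- and is equally silent about this configuration; but your specific mechanism, uniqueness of the crossing in $\Cross(f)$, is demonstrably not available, so the well-definedness of $\Phi_x$ is a genuine hole that must be repaired before the commutativity check can even be set up.)
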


\begin{proof}
  Write $x=\langle i,j\rangle$ and $\langle k,\ell\rangle\in\Cross(f)\setminus\{x\}$.

  If $\{[i],[j]\}\cap \{[k],[\ell]\}=\emptyset$, let
  $\Phi_x(\langle k,\ell\rangle)=\langle k,\ell\rangle$.

  Otherwise, we can assume without loss of generality that $[j]=[\ell]$, i.e.
  $\ell=g\cdot j$. Replacing $k$ by $g\cdot i$, we have a triple of integers
  $(i,j,k)$ so that $\langle i,j\rangle=x$ and $\langle j,k\rangle =y$.
  In this case, let 
  \[ \Phi_x(\langle i,j\rangle)=\left\{\begin{array}{ll}
      \langle i,k\rangle &{\text{if $k$ is between $i$ and $j$.}} \\
      \langle j,k\rangle &{\text{otherwise.}}
      \end{array}\right.\]
    
    It is straightforward to check that $\Phi_x$ is a one-to-one
    correspondence, and that $(f_x)_{y'}=(f_y)_{x'}$, as claimed.
\end{proof}

\subsection{The pong algebra}
The pong algebra $\Pong{m}{k}$ is generated  over
$\Z[v_1,\dots,v_{m}]$ by lifted partial permutations on $k$ letters
in $\{1,\dots,m-1\}$. We extend $\weight(f)$ to a grading by
$({\OneHalf\Z})^{m}$, so that the grading of $v_i$ is the $i^{th}$ standard basis vector in $\Z^{m}$.

\begin{defn}
  \label{def:Mult}
  In view of Lemma~\ref{lem:ComposePermutations}
  (Equation~\eqref{eq:ComposeMult}) , if $(S,f)$ and $(T,g)$ are
  pong data with $T=\Liftf(\LiftS)/G_m$, then there is some monomial
  $v(f,g)\in\Z[v_1,\dots,v_{m}]$ with the property that 
  \begin{equation}
    \label{eq:PreMult}
    \weight(f)+\weight(g)=\weight(v(f,g)\cdot (\Liftg\circ f)).
  \end{equation}

  Similarly, if $\langle i,j\rangle\in\Cross(f)$, then
  there is a monomial $v(f,\langle i,j\rangle)$ with the property that
  \begin{equation}
    \label{eq:PreDeriv}
    \weight(f)=\weight (v(f,\langle i,j\rangle )\cdot f_{\langle i,j\rangle}).
  \end{equation}
\end{defn}

If $(S,f)$ and $(T,g)$ are two lifted partial permutations, we define
$\mu_2(g, f)=0$ unless $T=f(S)/G_m$ and
$\cross(\Liftg\circ f)=\cross(g)+\cross(f)$, in which case we let
\[ \mu_2(g, f) = v(f,g)\cdot \Liftg\circ f,\]
where $v(f,g)$ is as in Equation~\eqref{eq:PreMult}.

Given a partial permutation $(S,f)$, define the derivative $d(f)$ to
be given by
\[ \mu_1(S,f)=\sum_{\{c\in\Cross(f)\big|
\cross(f_{c})=\cross(f)-1\}}
v(f,c)\cdot f_{c},\]
where $v(f,c)$ as in Equation~\eqref{eq:PreDeriv}.

\begin{prop}
  \label{prop:PongIsAlg}
  The set $\Pong{m}{k}$ equipped with $\mu_1$ and $\mu_2$ is a differential graded algebra.
\end{prop}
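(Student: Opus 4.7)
My plan is to verify, in turn, that $\mu_2$ is associative, that $\mu_1^2=0$, and that the Leibniz rule holds; the weight/grading compatibility is then automatic from Equations~\eqref{eq:PreMult} and~\eqref{eq:PreDeriv}, which in fact \emph{define} the monomial coefficients $v(f,g)$ and $v(f,\langle i,j\rangle)$ so that they make the weight grading additive and drop it by nothing and by $1$-step-in-integer-coordinates respectively.

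For associativity, given three composable lifted partial permutations $f,g,h$, both $\mu_2(\mu_2(h,g),f)$ and $\mu_2(h,\mu_2(g,f))$ are either zero simultaneously, or both equal $v\cdot (\Liftf\tilde\circ \tilde g\tilde\circ f)$ for the same monomial $v$. Set-theoretic associativity is clear. The non-vanishing condition in each parenthesization is equivalent, via Inequality~\eqref{eq:ComposeCross} from Lemma~\ref{lem:ComposePermutations}, to saying that the injection $\Cross(\tilde h\circ\tilde g\circ f)\hookrightarrow\Cross(f)\cup\Cross(g)\cup\Cross(h)$ is actually a bijection — a symmetric statement that does not depend on the parenthesization. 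When this holds, the total exponent vector of $v$ equals $\weight(f)+\weight(g)+\weight(h)-\weight(\Liftf\tilde\circ\tilde g\tilde\circ f)$ on either side, and since the weight grading is injective on monomials in the $v_i$, the two monomials coincide.

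For $\mu_1^2=0$, I would use Lemma~\ref{lem:PreDSquared} to pair the terms in $\mu_1^2(f)=\sum_{x,y'} v(f,x)\,v(f_x,y')\cdot (f_x)_{y'}$ (ranging over $x\in\Cross(f)$ with $\cross(f_x)=\cross(f)-1$, and $y'\in\Cross(f_x)$ with $\cross((f_x)_{y'})=\cross(f_x)-1$). The inclusion $\Phi_x$ organizes these into pairs $(x,y')\leftrightarrow(y,x')$ with the same resolved permutation $(f_x)_{y'}=(f_y)_{x'}$; I need to check that the drop-by-one conditions are symmetric under $\Phi$, which follows from the explicit description in the proof of Lemma~\ref{lem:PreDSquared} of how crossings involving $[i],[j]$ are permuted (the ``no $k$ with both $\langle i,k\rangle,\langle j,k\rangle$ crossings'' criterion reorganizes consistently). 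The two monomial coefficients $v(f,x)v(f_x,y')$ and $v(f,y)v(f_y,x')$ then both have exponent vector $\weight(f)-\weight((f_x)_{y'})$ by iterating Equation~\eqref{eq:PreDeriv}, hence are equal, and the paired terms cancel over $\Field=\Zmod{2}$.

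For the Leibniz rule $\mu_1\mu_2(g,f)=\mu_2(\mu_1(g),f)+\mu_2(g,\mu_1(f))$, I would use the dichotomy established in the proof of Lemma~\ref{lem:ComposePermutations}: when $\cross(\tilde g\circ f)=\cross(g)+\cross(f)$, its crossing set is identified with $\Cross(f)\sqcup\Cross(g)$, and resolving a crossing of $\tilde g\circ f$ coming from $\Cross(f)$ (respectively $\Cross(g)$) yields the composition of $g$ with $f_{\langle i,j\rangle}$ (respectively of $g_{\langle i',j'\rangle}$ with $f$). Matching the $\cross$-drop conditions and comparing monomials via additivity of weights shows that the two sides agree term-by-term. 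The main obstacle is the bookkeeping in this last step: verifying that the ``drops by exactly one'' condition on $\tilde g\circ f$ corresponds precisely to the same condition on the appropriate factor, and that non-vanishing of $\mu_2$ on the resolved factor is equivalent to non-vanishing of $\mu_2$ on the composition's resolution; this is a direct, if somewhat tedious, application of the dichotomy above together with Inequalities~\eqref{eq:ComposeCross} and~\eqref{eq:ResolveCross}.
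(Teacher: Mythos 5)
Your proof is correct and follows essentially the same route as the paper's: associativity from associativity of composition of lifted partial permutations (with the non-vanishing conditions reconciled via subadditivity of $\cross$), $\mu_1\circ\mu_1=0$ from the pairing of Lemma~\ref{lem:PreDSquared}, and the Leibniz rule from the dichotomy sorting each crossing of $\Liftg\circ f$ into either $\Cross(f)$ or $\Cross(g)$ and identifying the corresponding resolutions. You supply more detail than the paper does on matching the vanishing conditions and the monomial coefficients, but the underlying argument is the same.
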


\begin{proof}
  Associativity of $\mu_2$ follows from the associativity of 
  the composition of lifted partial permutations.

  The property that $\mu_1\circ \mu_1=0$ is an easy consequence of
  Lemma~\ref{lem:PreDSquared}.
  
  A crossing $\langle i,j\rangle$ in ${\widetilde f}\circ g$ is a
  $G_m$-orbit of $i<j$ with ${\widetilde f}\circ {\widetilde
    g}(i)>{\widetilde f}\circ {\widetilde g}(j)$. For each crossing, either
  ${\widetilde g}(i)<{\widetilde g}(j)$, in which case 
  $\langle {\widetilde g}(i),{\widetilde g}(j)\rangle\in \Cross(f)$
  or ${\widetilde g}(i)>{\widetilde g}(j)$, in which case
  $\langle i,j\rangle\in \Cross(g)$.
  In the first case,
  \[ ({\widetilde f}\circ {\widetilde g})_{\langle i,j\rangle}
  = {\widetilde f}_{\langle {\widetilde g}(i),{\widetilde g}(j)\rangle}\circ {\widetilde g};\]
  while in the second case,
  \[ ({\widetilde f}\circ {\widetilde g})_{\langle i,j\rangle}
  = {\widetilde f}\circ {\widetilde g}_{\langle i,j\rangle}.\]
  The Leibniz rule, $\mu_1\circ
  \mu_2=\mu_2\circ (\Id\otimes \mu_1+\mu_1\otimes\Id)$ follows readily.
\end{proof}

We abbreviate $\partial a=\mu_1(a)$ and $a \cdot b=\mu_2(a,b)$.

For example, we have
\begin{align*}\partial ((1,-2),(2,1))&=v_1\cdot ((1,3),(2,1)) + 
v_2 \cdot ((1,0),(2,3)) \\
\partial((1,3)(2,-3))&=((1,4),(2,-2))+v_2 ((1,-3),(2,3)).
\end{align*}
(The pong diagram for $((1,-2),(2,1))$ is shown in Figure~\ref{fig:PongPictures};
pong diagrams for the four other terms are illustrated in Figure~\ref{fig:DiffPongPictures}.)

Informally, multiplication corresponds to stacking strands diagrams on
top of each other, pulling the strands taut, and multiplying by a
monomial in the $v_i$, to preserve the weight vector.

\begin{figure}[ht]
\input{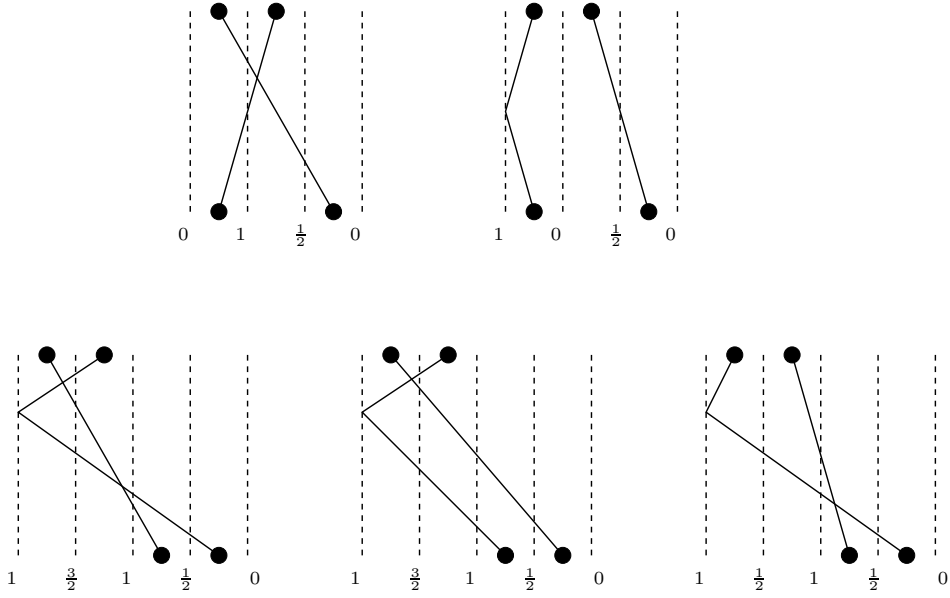}
\caption{\label{fig:DiffPongPictures} {\bf{Differentials of pong
      diagrams.}}.  The pong diagram from
  Figure~\ref{fig:PongPictures} has two crossings in it; the
  corresponding two terms in the differential are represented by the
  two pong diagrams in the top line: the first multiplied by $v_1$ and
  the second multiplied by  $v_2$.  The differential of pong diagram on the
  left of the second row has two non-zero terms in it (although it has
  $3$ crossings): the middle picture, and the
  right picture, multiplied by $v_2$.}
\end{figure}

When thinking of $(S,f)$ as an element of $a\in \Pong{m}{k}$, if $c\in
\Cross(f)$, define
\begin{equation}
  \label{eq:PartialCross}
 \partial_c(a)=\begin{cases}
v(f,c)\cdot (f_c,S)v(f,c)\cdot (f_c,S) & {\text{if $\cross(f_c)=\cross(f)-1$}} \\
0 & {\text{if $\cross(f_c)<\cross(f)-1$}}; 
\end{cases}
\end{equation}
so that 
\[ \partial (a)=\sum_{c\in \Cross(f)} \partial_c a. \]

\subsection{Alexander and Maslov gradings}
Idempotents in the pong algebra correspond to lifted partial
permutations which send $S\subset \Z$ to itself via the identity map.
In terms of their pong diagrams, these algebra elements are
collections of $k$ vertical segments. These $\binom{m-1}{k}$ elements
generate the idempotent subalgebra $\Ground$ of the pong algebra.

In Definition~\ref{def:LocalMult},
the weight $\weight$ of a lifted partial permutation $f$
was defined as 
a map from $(\OneHalf+ Z)/G_m$ to $\OneHalf \Z$. We prefer to think of it
henceforth as a vector $\vec{\weight}$, with components indexed by 
integers $i\in (1,\dots,m)$. With this normalization, $\weight_i$ corresponds
to the value of the function $\weight$ at $i-1/2$.

This vector $\vec\weight$ induces
a grading on $\Pong{m}{k}$ with values in $\OneHalf \Z^m$, with the
understanding that the element $v_i$ has grading $e_i$ (the $i^{th}$
basis vector in $\Z^m$).  This grading is additive under
multiplication; and differentiation preserves this grading.

Similarly, the function $\cross(f)$ on lifted partial permutations
induces a grading on $\Pong{m}{k}$ with values in $\Z$, with the
understanding that $v_i$ has grading $0$. The grading is additive
under multiplication, and differential drops it by $1$.  In the
language of Definition~\ref{def:MaslovAlexAlgebra}, $\Pong{m}{k}$ is a
Maslov/Alexander bigraded algebra.

Sometimes, we find it convenient to work with the following
renormalized $\Z$-grading $\Ngr$ on $\Pong{m}{k}$, specified by 
\begin{equation}
  \label{eq:Ngr}
  \Ngr(f)=\cross(f)-\left(2\sum_{i=1}^m \weight_i(f)\right).
\end{equation}

\begin{defn}
  \label{def:PurePong}
By analogy with $\Clg$, each element in $\Pong{m}{k}$ corresponding to
a lifted partial permutation is called a {\em pure algebra
  element}. Thus, the pong algebra is the free
$\Field[v_1,\dots,v_m]$-module generated by the pure algebra elements.
\end{defn}

\section{Generators of the pong algebra}
\label{sec:PongGen}

Idempotents in the pong algebra correspond to lifted partial
permutations which send $S\subset \Z$ to itself via the identity map.
In terms of their pong diagrams, these algebra elements are
collections of $k$ vertical segments. These $\binom{m-1}{k}$ elements
generate the idempotent subalgebra of the pong algebra.

Given a pair of integers $1\leq i<j\leq m-1$, let $\Xij$ denote the
algebra element consisting of two moving strands, which go from $i$ to
$j$ and $j$ to $i$, so that all positions $\ell$ with $i<\ell<j$ are occupied.
This is the element corresponding to the sum of all pong data $(S,f)$
with $|S|=k$, with the following further properties:
\begin{enumerate}[label=(X-\arabic*),ref=(X-\arabic*)]
\item $f(i)=j$, $f(j)=f(i)$
\item $\forall\ell\in S\setminus \{i,j\}$, $f(\ell)=\ell$.
\item \label{idempcondX}
  $\forall\ell$ with $i<\ell<j$, $\ell \in S$.
\end{enumerate}
We let ${\mathcal X}_{i,j}$ denote the algebra element
obtained by dropping Condition~\ref{idempcondX}.

We extend to $i=0$ or $j=m$ (but not both), so that $\Xoj$
has one strand from $j$ to itself with one left cusp and $X_{i,m}$
has one strand from $i$ to itself with one right cusp.
See Figure~\ref{fig:PictureOfX} for some examples.
The element $\Xoj$ corresponds to the sum of all pong data $(S,f)$
with $|S|=k$, with the following further properties:
\begin{enumerate}[label=(X-\arabic*),ref=(X-\arabic*)]
\item $f(j)=r_{1/2}(j)$
\item $\forall\ell\in S\setminus \{j\}$, $f(\ell)=\ell$.
\item \label{idempcondX0}$\forall\ell$ with $\ell<j$, $\ell\in S$.
\end{enumerate}
The element $\Xim$ is defined analogously, now using $r_{m+1/2}$ instead of $r_{1/2}$.
We define ${\mathcal X}_{0,j}$  and ${\mathcal X}_{i,m}$ again dropping
Condition~\ref{idempcondX0}

\begin{figure}[ht]
\input{PictureOfX.pstex_t}
\caption{\label{fig:PictureOfX} {\bf{The generators $\Xij$.}}
These pictures are in $\Pong{4}{2}$ and $\Pong{4}{3}$ respectively.}
\end{figure}

Given a pair of integers $1\leq i<j\leq m-1$, let $\Rij$ denote the algebra
element consisting of one moving strand which goes from $i$ to $j$,
so that all positions $\ell$ with $i<\ell<j$ are occupied. This is the element
corresponding to the sum of all pong data $(S,f)$ with $|S|=k$, with the following further properties:
\begin{enumerate}[label=(R-\arabic*),ref=(R-\arabic*)]
\item $f(i)=j$ 
\item $\forall\ell\in S\setminus\{i\}$, $f(\ell)=\ell$
\item\label{idempcondR} $\forall\ell$ with $i<\ell<j$, $\ell\in S$.
\end{enumerate}
Again, we have elements ${\mathcal R}_{i,j}$ obtained
by dropping Condition~\ref{idempcondR}.
We define elements $\Lji$, and ${\mathcal L}_{j,i}$,
analogously. (In that case, the second
condition above holds $\forall\ell\in S\setminus\{j\}$.)

\begin{defn}
  \label{def:Atomic}
  An {\em atomic} element in $\Pong{m}{k}$ is a non-zero element of the form
  $\Idemp{\x}\cdot \Xij$, $\Idemp{\x}\cdot \Rij$, and
  $\Idemp{\x}\cdot \Lji$.
\end{defn}

The terminology is justified by the following:

\begin{prop}
  \label{prop:PongGen}
  Every pure algebra element in $\Pong{m}{k}$ (i.e. those elements
  corresponding to lifted partial permutations) can be written as a
  product of idempotents and atomic generators.
\end{prop}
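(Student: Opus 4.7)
The plan is to prove this by induction on the total weight $|\vec\weight(f)| := \sum_{i=1}^m \weight_i(f) \in \OneHalf\Z_{\geq 0}$. The base case $|\vec\weight(f)| = 0$ forces every strand of the pong diagram of $(S,f)$ to be vertical, so $(S,f) = \Idemp{S}$ is itself an idempotent and the claim holds trivially.

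For the inductive step, assume $|\vec\weight(f)|>0$. I would place the pong diagram of $(S,f)$ in generic position and factor off an atomic generator corresponding to a feature at the bottom of the diagram, leaving a cofactor $(S',f')$ of strictly smaller total weight, to which induction applies. Three cases arise for the bottom feature: (i) an interior crossing between two adjacent strands at positions $p, p+1$, which peels off as $\Idemp{\x} \Xgen{p}{p+1}$ with $\x = S$; (ii) a drift of one moving strand from $p$ to $q$ whose intermediate positions are all occupied by stationary strands of $S$, which peels off as $\Rgen{p}{q}$ (if $p<q$) or $\Lgen{q}{p}$ (if $p>q$), suitably flanked by idempotents; and (iii) a wall bounce of a strand at position $p$ for which the requisite idempotent condition holds, which peels off as $\Idemp{\x}\Xgen{0}{p}$ or $\Idemp{\x}\Xgen{p}{m}$. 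In each case the factored atom $A$ has positive total weight, so Equation~\eqref{eq:PreMult} forces $|\vec\weight(f')| < |\vec\weight(f)|$; any $v$-monomial discrepancy between $\weight(A)+\weight(f')$ and $\weight(f)$ is absorbed into the $\Field[v_1,\dots,v_m]$-coefficient of the factorization.

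The main obstacle is the wall-bounce case (iii) when the required idempotent condition fails---say a left bounce at position $p$ in a state $\x$ with $\{1,\dots,p-1\}\not\subseteq\x$---because then $\Idemp{\x}\Xgen{0}{p}=0$ and the bounce slab is not itself an atomic generator. I would handle this via a subsidiary detour: slide the bouncing strand leftward via a sequence of $\Lgen{q}{q-1}$ atoms until it sits at position $1$, apply $\Xgen{0}{1}$ (whose idempotent condition is vacuous), and slide it back via matching $\Rgen{q-1}{q}$ atoms. For instance, in $\Pong{4}{2}$ the pure element with state $\{2,3\}$ consisting of a lone left bounce at position $2$ is realized as the product $\Rgen{1}{2} \cdot \Xgen{0}{1} \cdot \Lgen{2}{1}$; direct verification shows that the crossing counts add to the target's ($0+1+0=1$) and the weight vectors add to the target's ($(0,\OneHalf,0,0)+(1,0,0,0)+(0,\OneHalf,0,0)=(1,1,0,0)$), so by the definition of $\mu_2$ the product is nonzero and equals the target without any $v$-correction. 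Extending this device to general configurations---where intermediate positions may first need to be cleared via further auxiliary $L$ and $R$ moves, and right-wall bounces are handled symmetrically---requires a subsidiary induction on the mismatch between $\x$ and the atomic idempotent condition, but the basic mechanism is the same.
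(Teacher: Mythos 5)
Your overall strategy---induction on total weight, peeling an atomic generator off one end of the diagram---matches the paper's, and you have correctly isolated the crux: wall bounces whose occupancy condition fails. Your $\Pong{4}{2}$ computation $\Rgen{1}{2}\cdot\Xgen{0}{1}\cdot\Lgen{2}{1}$ is correct. The gap is in your general mechanism for that case. When a position \emph{between} the bounce point and the wall is occupied, your recipe (unit slides $\Lgen{q}{q-1}$ all the way to position $1$, with blockers ``cleared via further auxiliary $L$ and $R$ moves'') cannot produce the pure element itself. A blocking strand that is stationary in the target element can only be ``cleared'' by moving it away and back, and any such excursion strictly increases the total weight; since weights are additive in any nonvanishing product, the result is $v^{\vec n}\cdot a$ for some $\vec n\neq 0$, not $a$. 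Concretely, in $\Pong{6}{2}$ take $\x=\{2,4\}$ and let $a$ be the pure element in which the strand at $4$ bounces off the left wall while the strand at $2$ is stationary, so $\weight(a)=(1,1,1,1,0,0)$ and $\cross(a)=3$. Getting the bouncer past the occupied position $2$ by inserting $\Xgen{2}{3}$-type swaps yields a product with the correct crossing count but with weight exceeding $\weight(a)$ by $(0,0,1,0,0,0)$; the product is $v_3\cdot a$, which is nonzero but is not $a$. The proposition (and its use in Proposition~\ref{prop:AtomicElements}) requires the element itself, not a $v$-multiple.

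The correct device is the one your case (ii) already uses for drifts but your case (iii) abandons: the long-range atomic generators $\Lgen{j}{i}$, $\Rgen{i}{j}$ with $j>i+1$, whose occupancy condition (``all intermediate positions occupied'') is exactly complementary to the obstruction. If $q_1<\dots<q_r$ are the positions of $\{1,\dots,p-1\}$ \emph{not} in $\x$, the bouncing strand hops $\Lgen{p}{q_r},\Lgen{q_r}{q_{r-1}},\dots,\Lgen{q_2}{q_1}$, bounces via $\Xgen{0}{q_1}$ (whose condition $\{1,\dots,q_1-1\}\subset\x$ holds precisely because $q_1$ is the smallest unoccupied position), and returns via the matching $R$'s; in the example above this gives $a=\Rgen{3}{4}\cdot\Rgen{1}{3}\cdot\Xgen{0}{1}\cdot\Lgen{3}{1}\cdot\Lgen{4}{3}$, with both weights and crossings additive. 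This is essentially how the paper proceeds: it first factors every pure element into the relaxed generators ${\mathcal X}_{i,j}$, ${\mathcal R}_{i,j}$, ${\mathcal L}_{j,i}$ (via a case analysis on the extremal left- and right-moving strands, which also disambiguates the peeling order your ``bottom feature'' heuristic leaves vague), and then reduces each $\Idemp{\x}\cdot{\mathcal X}_{i,j}$ to atomic generators---the step your detour was meant to supply.
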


\begin{proof}
  We find it convenient to construct factorizations into ${\mathcal
    X}_{i,j}$, ${\mathcal R}_{i,j}$, and ${\mathcal L}_{j,i}$.  This
  suffices, since for any idempotent $\x$, the element
  $\Idemp{\x}\cdot {\mathcal X}_{i,j}$ can be written as a product of
  $\Idemp{\x}$ and elements of the form $X_{i',j'}$, $R_{i',j'}$ and
  $L_{i',j'}$. Similarly, $\Idemp{\x}\cdot {\mathcal R}_{i,j}$ can be
  written as a product of $\Idemp{\x}$ and elements of the
  form $R_{i',j'}$; and $\Idemp{\x}\cdot{\mathcal L}_{j,i}$ can be
  written as a product of $\Idemp{\x}$ and elements of the form
  $L_{j',i'}$.

  Suppose that $a=\Idemp{\x}\cdot a \cdot\Idemp{\y}$ for idempotent
  states $\x$ and $\y$. The factorization exists by induction on the
  total weight of $a$. Obviously, when the weight is zero, then $a$ is
  an idempotent. For the inductive step,  there are 3 cases.
  
  {\bf Case 1.} Suppose that for some $x_i\in\x$, the strand in $a$
  out of $x_i$ moves to the right. Choose $i$ maximal with this property.
  We divide this into two further
  subcases.

  {\bf Case 1a.} Suppose that there is some strand $j>i$
  with the property that the strand out of $x_j$ moves to the
  left. Choose $j$ minimal with this property.  
  If the strand out of $x_i$ reaches $x_j$ and the strand out of $x_j$ reaches
  $x_i$, we can  factor
  \[ a= \Idemp{\x}\cdot{\mathcal X}_{x_i,x_j} \cdot b;\]
  or if the strand out of $x_i$ terminates at some $y\in\x$ with $y<x_j$, 
  we can factor 
  \[ a = \Idemp{\x}\cdot{\mathcal R}_{x_i,y}\cdot b;\]
  or if the strand out of $x_j$ terminates at some $y\in\x$ with $x_i<y$,
  we can factor 
  \[ a = \Idemp{\x}\cdot{\mathcal L}_{x_j,y}\cdot b.\]

  {\bf Case 1b.} 
  If there is no strand $j>i$ so that the strand out of $x_j$ moves to the left,
  then there is a factorization
  $a=\Idemp{\x}\cdot{\mathcal X}_{x_i,m}\cdot b$ or a factorization of
  $a=\Idemp{\x}\cdot{\mathcal R}_{x_i,y}\cdot b$.
  The result now holds by induction on   the total weight.
  
  {\bf Case 2.} All strands in $a$ are stationary. This is clearly an idempotent.

  {\bf Case 3.} There is some strand in $a$ that starts out moving to the left,
  but none that starts moving to the right. Take the minimal $i$ so that there
  is a strand in $a$ starting at $x_i$ which moves to the left.
  If the strand bounces back past $x_i$, then we can factor
  \[ a= \Idemp{\x}\cdot{\mathcal X}_{0,x_i}\cdot b,\]
  or the strand terminates at some $y<x_i$, and we can factor
  \[ a =\Idemp{\x}\cdot{\mathcal L}_{x_i,y}\cdot b.\]
\end{proof}

We have the following characterization of the atomic generators:

\begin{prop}
  \label{prop:AtomicElements}
  For any non-zero homogeneous, non-idempotent element $a\in \Pong{m}{k}$, 
  $2\weight(a)-\Cr(a)\geq 1$, with equality precisely when $a$ is atomic.
\end{prop}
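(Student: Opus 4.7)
The plan is to combine Proposition~\ref{prop:PongGen} with the additivity of the renormalized grading $\Ngr(a) = \cross(a) - 2\sum_i \weight_i(a)$ from Equation~\eqref{eq:Ngr}. The claim is then equivalent to: $\Ngr(a) \leq -1$ for a non-zero homogeneous non-idempotent $a$, with equality iff $a$ is atomic.

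First, I compute $\Ngr$ on the generators. Idempotents give $\Ngr=0$ and each $v_i$ gives $\Ngr=-2$. For each atomic generator, a direct count in the lifted pong diagram yields $\Ngr=-1$: for interior $X_{i,j}$ ($1\leq i<j\leq m-1$), the two moving strands contribute $2(j-i)$ to $2\sum_i\weight_i$ and produce one self-crossing plus $2(j-i-1)$ strand--stationary crossings (two per stationary, one with each moving strand), so $\cross=2(j-i)-1$; for $R_{i,j}$ and $L_{j,i}$ the single moving strand gives $(j-i)$ versus $(j-i-1)$. For the boundary generators $X_{0,j}$ (resp.\ $X_{i,m}$), perform the same count in the lift: the single physical strand lifts to a strand $j\to 1-j$, contributing $2j$ to $2\sum\weight_i$ (both endpoints lie in $\LiftS$ and each half-integer coordinate in $\{1/2,\dots,j-1/2\}$ is straddled twice), while the crossings consist of one self-crossing $\langle j,1-j\rangle$ together with, for each $\ell\in\{1,\dots,j-1\}$, two distinct equivalence classes $\langle j,\ell\rangle$ and $\langle j,1-\ell\rangle$ (distinct because $j$ and $\ell$ lie in different $G_m$-orbits), giving $\cross=2j-1$.

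Second, I establish additivity of $\Ngr$. Whenever $\mu_2(g,f)\neq 0$, Definition~\ref{def:Mult} forces $\cross(\Liftg\circ f)=\cross(g)+\cross(f)$, and Equation~\eqref{eq:PreMult} forces $\weight(f)+\weight(g)=\weight(v(f,g)\cdot(\Liftg\circ f))$. Since $\cross$ is insensitive to the $v_i$'s while each $v_i$ contributes $1$ to $\weight_i$, combining these identities gives $\Ngr(\mu_2(g,f))=\Ngr(f)+\Ngr(g)$.

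Finally, given a non-zero non-idempotent pure element $a$, Proposition~\ref{prop:PongGen} provides a factorization $a=e_0\cdot a_1\cdot e_1\cdots a_n\cdot e_n$ as an honest product in the algebra, with each $e_i$ an idempotent and each $a_j$ an atomic generator. Because the product equals the pure element $a$ on the nose, no $v$-factor can hide, so by additivity $\Ngr(a)=-n$. Non-idempotency forces $n\geq 1$, so $2\sum_i\weight_i(a)-\cross(a)\geq 1$. The equality case $n=1$ reduces the factorization to $a=e_0\cdot a_1\cdot e_1$ with $a_1$ atomic, so $a$ is itself atomic by Definition~\ref{def:Atomic}. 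The main technical obstacle is Step~1 for the boundary atomic generators, where one must carefully separate $\langle j,\ell\rangle$ from $\langle j,1-\ell\rangle$ as distinct $G_m$-orbits and correctly account for the crossing of the strand with its own reflection across the extremum wall.
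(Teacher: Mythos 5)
Your argument is correct and follows essentially the same route as the paper's proof: additivity of $\weight$ and $\Cr$ under $\mu_2$ (so that $2\weight-\Cr$ is additive on nonzero products of pure elements), the computation that every atomic generator has $2\weight-\Cr=1$ (which you verify in considerably more detail than the paper, which dismisses it as ``easy to see'' -- your careful separation of $\langle j,\ell\rangle$ from $\langle j,1-\ell\rangle$ at the walls is the right check), and the factorization of Proposition~\ref{prop:PongGen} to get $2\weight(a)-\Cr(a)=n\geq 1$ with equality exactly for atomic elements. The only step you leave implicit is the final reduction from general homogeneous elements to pure ones: a homogeneous non-pure term is $v^{\alpha}\cdot f$ with $\alpha\neq 0$, and since each $v_i$-factor raises $2\weight-\Cr$ by $2$, such elements satisfy the inequality strictly and are never atomic, which is the one-sentence observation the paper uses to close the argument.
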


\begin{proof}
  Let $\gr(a)=2\weight(a)-\Cr(a)$.  Clearly, $\weight$ is additive
  under multiplication, in the sense that if $a$ and $b$ are pure
  algebra elements with $a\cdot b\neq 0$, then
  $\weight(a\cdot b)=\weight(a)+\weight(b)$. By construction, $\Cr$ is
  also additive under multiplication. It is easy to see that for all
  atomic elements $a$, $\gr(a)=1$. Thus, from
  Proposition~\ref{prop:PongGen}, it follows that that $\gr(a)\geq 1$
  for all pure algebra elements, with $\gr(a)=1$ if and only if $a$ is
  atomic. Having proved the proposition for pure algebra elements, the
  result for all algebra elements follows readily from the fact that
  for any homogenous element $a$ and $i\in\{1,\dots,m\}$.
  $\gr(v_i\cdot a)=2+\gr(a)$.
\end{proof}

We can describe the differential on the pong algebra in terms of its action on
atomic generators. Specifically:

\begin{lemma}
  \label{lem:dAtomic}
  The differentials of atomic generators are specified as follows.
  Given $i<j$, we have that
  \begin{align}
    d\Rgen{i}{j} &=\sum_{i<\ell<j}\Rgen{\ell}{j}\cdot\Rgen{i}{\ell} \label{eq:dRij}\\
    d\Lgen{j}{i} &=\sum_{i<\ell<j} \Lgen{\ell}{i}  \cdot \Lgen{j}{\ell}
    \label{eq:dLji}
  \end{align}
  Moreover, if $i\in \{0,\dots,m-1\}$, then 
  \[ d \Xgen{i}{i+1}=U_{i+1},\]
  while
  if $i+1<j$, then
  \[ d\Xgen{i}{j}=\sum_{i<\ell<j}\Xgen{\ell}{j}\cdot\Xgen{i}{\ell}
  + \Xgen{i}{\ell}\cdot\Xgen{\ell}{j}.\]
\end{lemma}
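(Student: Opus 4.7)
The plan is to apply the definition of the differential directly: for each atomic generator $a=(S,f)$, $\mu_1(a)$ is the sum, over crossings $c\in\Cross(f)$ with $\cross(f_c)=\cross(f)-1$, of the term $v(f,c)\cdot f_c$ from Equation~\eqref{eq:PartialCross}. For each type of atomic generator I will (i) enumerate its crossings in the lifted pong diagram, (ii) single out the resolutions whose crossing number drops by exactly one, (iii) identify each surviving resolution as a product of atomic generators, and (iv) check the weight match to read off $v(f,c)$. Throughout, $a\cdot b$ denotes the convention in which $a$ is applied first, so $a\cdot b$ is non-zero exactly when the target idempotent of $a$ equals the source idempotent of $b$, and the composition has source equal to the source of $a$ and target equal to the target of $b$.

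For $\Rij$ with $i<j$, the unique moving strand $i\to j$ meets each stationary strand at $\ell\in\{i+1,\dots,j-1\}$ in exactly one crossing, and these are the only crossings. Resolving the crossing at $\ell$ swaps the images of $i$ and $\ell$, producing the partial permutation with moving strands $i\to\ell$ and $\ell\to j$; a direct count shows this drops the cross number by exactly one, and the weights automatically agree, so $v(f,c)=1$. A straightforward identification shows the resolved element equals $\Rgen{\ell}{j}\cdot\Rgen{i}{\ell}$. The argument for $\Lji$ is symmetric and produces the $\Lgen{\ell}{i}\cdot\Lgen{j}{\ell}$ terms.

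For $\Xgen{i}{i+1}$ the only crossing is the central crossing of the two moving strands (for interior $i$) or the single bounce of the sole strand off a wall (for $i=0$ or $i=m-1$). Its resolution is the identity partial permutation, the cross number drops from $1$ to $0$, and the weight of $\Xgen{i}{i+1}$ forces $v(f,c)=v_{i+1}$; summing over admissible source idempotents gives $U_{i+1}$. For $\Xij$ with $i+1<j$, the crossings fall into three families: the central crossing between the two moving strands, and for each $\ell\in\{i+1,\dots,j-1\}$, one crossing with the stationary strand at $\ell$ for each of the two moving strands. Resolving the central crossing replaces both moving strands by stationaries at $i$ and $j$, destroying every one of the $2(j-i-1)+1$ crossings of the diagram at once; since this drop is strictly greater than one, the central crossing does not contribute. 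Each crossing with a stationary strand, by contrast, drops the cross number by exactly one: resolving the crossing of $i\to j$ with stationary $\ell$ yields the three-strand partial permutation $i\to\ell$, $\ell\to j$, $j\to i$, which equals $\Xgen{\ell}{j}\cdot\Xgen{i}{\ell}$, while resolving the crossing of $j\to i$ with stationary $\ell$ yields $i\to j$, $\ell\to i$, $j\to\ell$, which equals $\Xgen{i}{\ell}\cdot\Xgen{\ell}{j}$; in both cases the weights match with $v(f,c)=1$. The same analysis applies in the wall-extended cases $\Xgen{0}{j}$ with $j>1$ and $\Xgen{i}{m}$ with $i<m-1$, where the diagram has additional bounce crossings at a wall, but resolving any such bounce drops the cross number by more than one and so contributes nothing.

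The main bookkeeping lies in the cross-number count: the resolution at a central or wall-bounce crossing eliminates every crossing involving the two affected strands and thus gives a drop strictly greater than one whenever more than one such crossing was present, while the resolution at an interior crossing with a stationary strand trades only that single crossing for none and so drops the cross number by exactly one. Both claims are elementary finite checks in the lifted pong diagram, using the formula of Definition~\ref{def:cross} together with the observation that swapping the images of the two strands at a crossing preserves every crossing not involving those two strands.
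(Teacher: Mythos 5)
Your proof is correct and follows the only available route, which is the one the paper itself takes: the paper's entire proof is ``this follows immediately from the definition of the boundary map,'' and your write-up simply supplies the routine verification (enumeration of crossings, identification of which resolutions drop $\cross$ by exactly one, recognition of the resolved permutations as products, and the weight bookkeeping for $v(f,c)$). Your explicit choice of product convention (left factor applied first, so that $\Rgen{\ell}{j}\cdot\Rgen{i}{\ell}$ is the permutation $i\mapsto\ell$, $\ell\mapsto j$) is the one consistent with the relations in Lemma~\ref{lem:DDmodProd} and with the statement being proved, so that step is fine as well.
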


\begin{proof}
 This follows immediately from the definition of the boundary map.
\end{proof}

 \section{Homology of the pong algebra}
 \label{sec:HomologyPong}

 Consider the pong algebra $\Pong{m}{k}$, over $\Field[v_1,\dots,v_{m}]$.
 The aim of the present section is to compute its homology.

 Let
 \[ \Omega=\sum_{0<i<m} [{\mathcal X}_{0,i},{\mathcal X}_{i,m}].\]

 \begin{lemma}
   $\Omega$ is an element with degree $2k$, $\weight_i(\Omega)=1$ for
   all $i=1,\dots,m$, and  $d\Omega=0$.
 \end{lemma}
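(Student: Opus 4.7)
The plan is to check the three properties in turn: the weight and degree by direct combinatorial counts in the pong diagram, and $d\Omega=0$ via a Leibniz expansion together with a cancellation argument.

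For the weight, I would compute directly using Definition~\ref{def:LocalMult}: the bouncing moving strand of $\mathcal X_{0,i}$ crosses each vertical line at $x=j-\tfrac12$ with $1\le j\le i$ exactly twice in the lifted picture (once via the lift at $i$ and once via its reflection at $1-i$), giving $\weight(\mathcal X_{0,i}) = e_1+\cdots+e_i$; symmetric reasoning handles $\mathcal X_{i,m}$. Their sum equals $(1,\dots,1)$, so every cubic monomial appearing in $\Omega$ carries weight $(1,\dots,1)$ by additivity of $\weight$ under nonzero multiplication. For the degree, I fix an idempotent $S\ni i$ with $|S|=k$ and let $a=\#(S\cap\{1,\dots,i-1\})$; direct counting gives $\cross(\mathcal X_{0,i}|_S)=2a+1$ (one wall self-crossing plus two per stationary strand to the left of $i$) and $\cross(\mathcal X_{i,m}|_S)=2(k-1-a)+1$. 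Since the two bouncing strands in the product occupy the disjoint regions $[\tfrac12,i]$ and $[i,m-\tfrac12]$ of the pong diagram, no additional crossings arise and $\cross$ is additive, yielding $2k$; the reverse product gives the same count.

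The main step is $d\Omega=0$, via Leibniz in characteristic $2$:
\[ d\Omega=\sum_{0<i<m}\bigl([d\mathcal X_{0,i},\mathcal X_{i,m}]+[\mathcal X_{0,i},d\mathcal X_{i,m}]\bigr). \]
Extending Lemma~\ref{lem:dAtomic} from $X$ to the $\mathcal X$-variants, for $i>1$,
\[ d\mathcal X_{0,i}=\sum_{0<\ell<i}(\mathcal X_{\ell,i}\mathcal X_{0,\ell}+\mathcal X_{0,\ell}\mathcal X_{\ell,i})+W_i,\]
where $W_i$ is a wall-resolution term (a monomial $v_1\cdots v_i$ times a sum of idempotents that happen to be the identity on $\mathcal X_{i,m}$), and the boundary case reads $d\mathcal X_{0,1}=v_1\sum_{S\ni 1}\Idemp S$; a symmetric formula holds for $d\mathcal X_{i,m}$. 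The commutators $[W_i,\mathcal X_{i,m}]$ vanish immediately because the relevant idempotent sum acts as the identity on $\mathcal X_{i,m}$ from both sides, giving $\mathcal X_{i,m}+\mathcal X_{i,m}=0$. Reindexing the remaining ``interior'' contributions with $a<b$, $d\Omega$ reduces to a sum over $0<a<b<m$ of eight cubic monomials in $\mathcal X_{0,a}$, $\mathcal X_{a,b}$, $\mathcal X_{b,m}$; two of these monomials appear identically in both subsums and cancel directly in characteristic $2$, and the remaining four collapse via the commutation $\mathcal X_{0,a}\mathcal X_{b,m}=\mathcal X_{b,m}\mathcal X_{0,a}$ into two further cancelling pairs. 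This commutation holds because, for $a<b$, the bouncing strands of $\mathcal X_{0,a}$ and $\mathcal X_{b,m}$ occupy disjoint regions $[\tfrac12,a]$ and $[b,m-\tfrac12]$, so both stacking orders yield the same lifted partial permutation with trivial monomial factor.

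The main technical obstacle will be the third step: carefully extending Lemma~\ref{lem:dAtomic} to $\mathcal X_{0,i}$ and $\mathcal X_{i,m}$ with precise tracking of the wall-resolution terms $W_i$, and verifying that the disjoint-support commutation $\mathcal X_{0,a}\mathcal X_{b,m}=\mathcal X_{b,m}\mathcal X_{0,a}$ for $a<b$ holds with no $v$-monomial correction.
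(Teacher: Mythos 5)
Your proposal is correct, and it supplies exactly the computation the paper dismisses as ``straightforward'': the weight and crossing counts for $\mathcal X_{0,i}\cdot\Idemp{\x}$ and $\mathcal X_{i,m}\cdot\Idemp{\x}$, the Leibniz expansion, the reindexing over pairs $a<b$, and the cancellation using $\mathcal X_{0,a}\mathcal X_{b,m}=\mathcal X_{b,m}\mathcal X_{0,a}$ all check out. One small imprecision: the idempotent sum in $W_i$ (over states $\x$ with $i\in\x$ and $\x\cap\{1,\dots,i-1\}=\emptyset$) is not the identity on $\mathcal X_{i,m}$; the commutator $[W_i,\mathcal X_{i,m}]$ vanishes rather because $\mathcal X_{i,m}$ has equal left and right idempotents (so it commutes with every $\Idemp{\x}$) and the monomial $v_1\cdots v_i$ is central.
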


 \begin{proof}
   This is straightforward.
 \end{proof}

 \begin{thm}
   \label{thm:HomologyPong}
   Fix $k< m$. The homology of $H_*(\Pong{m}{k})$
   is isomorphic to a polynomial algebra in $\Omega$ over
   $\Clg(m,m-k-1)$; i.e.
   \[ H_*(\Pong{m}{k}) \cong \Clg(m,m-k-1)[\Omega].\]
 \end{thm}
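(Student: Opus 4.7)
The plan is to construct an explicit algebra quasi-isomorphism $\phi \colon \Clg(m,m-k-1)[\Omega] \to \Pong{m}{k}$, using the complementation bijection between idempotents: a $k$-subset $\x \subset \{1,\dots,m-1\}$ (pong idempotent) corresponds to its $(m-k-1)$-subset complement $\inotin{\x}$ (a $\Clg(m,m-k-1)$-idempotent). Heuristically, $\x$ records where strands sit while $\inotin{\x}$ records where particles sit, so a particle move in $\Clg$ corresponds to the complementary empty-slot move in $\Pong$. I would set $\phi(L_i^\Clg) = R_{i-1,i}^\Pong$, $\phi(R_i^\Clg) = L_{i,i-1}^\Pong$, $\phi(U_i) = v_i$, and $\phi(\Omega) = \Omega$. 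By Lemma~\ref{lem:dAtomic} these are cycles (the sums $\sum_{i<\ell<i+1}$ in the differentials are empty), and $d\Omega = 0$ is already established above the theorem.

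Verifying $\phi$ respects the relations of $\Clg(m,m-k-1)$: the obstruction relations $R_iR_{i+1}=L_{i+1}L_i=0$ lift to explicit exact chains in $\Pong$ via $dR_{i-1,i+1}=R_{i,i+1}R_{i-1,i}$ and its analogue for $L$. The vanishing $\Idemp{\x^\Clg}U_j=0$ when $\x^\Clg\cap\{j-1,j\}=\emptyset$ translates under complementation to $\{j-1,j\}\subset\x^\Pong$, which is precisely the idempotent range where the pong element $X_{j-1,j}$ is defined; hence $dX_{j-1,j}=v_j\Idemp{\x^\Pong}$ realizes the needed vanishing of $v_j$ in homology. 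The quadratic identities $L_iR_i = [\inotin{i}][i-1]U_i$ and $R_iL_i=[i][\inotin{i-1}]U_i$ hold on the nose after composition in $\Pong$ by weight-balancing: $R_{i-1,i}\cdot L_{i,i-1}$ is a zig-zag that pulls taut to an idempotent strand at position $i$ multiplied by $v_i$. Centrality of $\phi(\Omega)$ in $H_*(\Pong{m}{k})$ follows directly from its definition as a sum of commutators, together with a short computation on each atomic generator in the image of $\phi$.

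To prove $\phi$ is a quasi-isomorphism, I would use the fact that the differential preserves $\vec\weight$ to decompose $\Pong{m}{k}$ as a direct sum of finite-dimensional complexes, and within each weight construct an acyclic matching in the sense of discrete Morse theory: pair each pure algebra element containing an ``innermost'' cusp $X_{j-1,j}$ (a pair of adjacent strands meeting at a wall, or two adjacent strands meeting at each other) with the element obtained by replacing the cusp with a factor of $v_j$, using $dX_{j-1,j}=v_j$ as the matching edge. The surviving unmatched generators are then pong diagrams with no adjacent cusps; via Proposition~\ref{prop:PongGen} they factor as products of the atomic elements $R_{i-1,i}$, $L_{i,i-1}$, and the elementary bouncing blocks $\mathcal{X}_{0,i}\mathcal{X}_{i,m}$ assembling $\Omega$, matching a basis of $\Clg(m,m-k-1)[\Omega]$. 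The main obstacle is verifying the acyclicity of the matching and ensuring the normal forms of surviving generators biject with pure algebra elements of $\Clg(m,m-k-1)[\Omega]$ across all idempotents and weights simultaneously; this requires a careful combinatorial argument tracking how cusps propagate under the differential. An inductive alternative would reduce to a smaller pong algebra, e.g.\ by ``opening a wall'' or decreasing $k$, where the homology can be computed directly.
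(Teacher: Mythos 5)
Your first paragraph sets up essentially the same identification the paper ultimately uses: the complementation of idempotents, $L_i\mapsto \Rgen{i-1}{i}$, $R_i\mapsto\Lgen{i}{i-1}$, $U_j\mapsto v_j$, with $\Idemp{\x}U_j$ becoming exact via $d\Xgen{j-1}{j}=v_j\Idemp{\x^c}$ exactly when $\{j-1,j\}\subset\x^c$. (Two caveats: since relations like $L_{i+1}L_i=0$ only become exact, not zero, in $\Pong{m}{k}$, your $\phi$ is a map into $H_*(\Pong{m}{k})$ rather than a chain-level algebra map, and you still need a presentation of $\Clg(m,m-k-1)$ by these generators and relations, which is not in this paper; also centrality of $[\Omega]$ does not follow from $\Omega$ being a sum of commutators and must be checked against the multiplicative generators.)

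The genuine gap is in the second half. Your proposed matching only pairs generators along differential terms of the form $d\Xgen{j-1}{j}=v_j$, i.e.\ resolutions that produce a factor of $v_j$. But most crossings are not of this type. For example $\Rgen{i}{j}$ with $j>i+1$ contains no such cusp, so it is critical in your scheme, yet $d\Rgen{i}{j}=\sum_\ell \Rgen{\ell}{j}\cdot\Rgen{i}{\ell}\neq 0$, and each of these targets is also critical. Hence the critical cells do \emph{not} biject with a basis of $\Clg(m,m-k-1)[\Omega]$ (which has zero differential); the Morse complex retains a nontrivial differential and you are left with essentially the original computation. The same problem recurs for the monomial relations: the ideal of Proposition~\ref{prop:Ideal} forces specific \emph{products} $V_s$ of the $v_i$ to become exact in each idempotent pair (via resolutions of longer elements $\Xgen{i}{j}$ and their iterates), and your matching does not see these. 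This is precisely where the paper does its real work: it filters by weight, passes to the quotient $\Quot{m}{k}$ with the cone on $\Omega$ adjoined, builds an explicit homotopy operator on ``excessive'' generators whose perturbation of the identity is nilpotent (Lemma~\ref{lem:Excessive}), isolates the canonical cycles, and only then reassembles the $v_i$-module structure via the model complex of Lemma~\ref{lem:ModelComplex} and Theorem~\ref{thm:HomologyPong2}. To salvage your route you would need an acyclic matching (or contraction) that also cancels all the non-cusp crossings — in effect reconstructing that homotopy operator — together with the weight-by-weight identification of which monomials survive; as written, the proposal defers exactly the hard part.
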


 We will also sometimes consider $\PongP{m}{k}$, which is obtained by adjoining
 to $\Pong{m}{k}$ an element $X$ with $dX=\Omega$. 

 \subsection{A quotient}

 As a stepping-stone, we will find it convenient to consider the quotient
 $\Quot{m}{k}$ of $\Pong{m}{k}$ by the ideal
 generated by all the $v_i$, and the analogous 
 $\QuotP{m}{k}$ obtained by adjoining $X$ with $dX=\Omega$.
 We will typically abbreviate $Q=\Quot{m}{k}$ and $Q'=\QuotP{m}{k}$.

 For fixed weight vector $w=(w_1,\dots,w_m)$, let $Q(w)\subset Q$ and
 $Q'(w)\subset Q'$ be the subcomplexes with weight $w$; and let
 $Q(\x,\y;w)=\Idemp{\x}\cdot Q(w) \cdot \Idemp{\y}$ and
 $Q'(\x,\y;w)=\Idemp{\x}\cdot Q'(w)\cdot \Idemp{\y}$.
 Note that $Q(\x,\y;w)$ has a further grading 
 \[ Q_*(\x,\y;w)=\bigoplus_{d\geq 0} Q_d(\x,\y;w),\]
 where $d$ counts the number of crossings.

 To state the homology of $\Quot{m}{k}$, it helps to introduce some notions.

 \begin{defn}
   A {\em compatible triple} $(\x,\y,w)$ consists of 
   a vector $w=(w_1,\dots,w_m)\in (\OneHalf \Z)^{m}$ and
   two idempotent states $\x$ and $\y$, satisfying the condition that
   for each $i=1,\dots,m$,
   \[ 2 w_i+ \#(x_t<i)+\#(y_t<i)\] is an even
   integer. 
 \end{defn}

Clearly, $Q(\x,\y;w)=0$ unless $(\x,\y;w)$ is a compatible triple.
    
\begin{thm}
   \label{thm:HomologyQp}
   Given a compatible triple $(\x,\y;w)$, we have that
   $H_*(Q'(\x,\y;w))=0$ unless all of the following conditions holds:
   \begin{enumerate}[label=(Q-\arabic*),ref=(Q-\arabic*)]
     \item 
       \label{item:Interleaved}
       For each $s=1,\dots,k-1$, $\max(x_s,y_s)<\min(x_{s+1},y_{s+1})$
     \item 
       \label{item:BoundedWeight}
       Each weight $w_i$ satisfies the inequality
       $0\leq w_i \leq 1$.
   \end{enumerate}
   If all of the above conditions holds, then
   $H_*(Q'(\x,\y;w))\cong \Field$.
 \end{thm}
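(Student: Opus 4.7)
The plan is to work directly with the finite complex $Q'(\x,\y;w)$, whose generators are lifted partial permutations from $\x$ to $\y$ of local multiplicity $w$, together with $X$-multiples of lifted partial permutations of local multiplicity $w-(1,\dots,1)$. In the quotient $Q$ the differential preserves the weight grading exactly (since every term with a factor of some $v_i$ vanishes), and on each nonzero summand the crossing number $\cross$ drops by one; the complex $Q'(\x,\y;w)$ is therefore finite-dimensional, and the goal is to contract it explicitly either to zero or to a one-dimensional subspace by combinatorial chain homotopies.

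For necessity of the two conditions I would construct null-homotopies by hand. If $\x$ and $\y$ fail to interleave at some least index $s$, so that $\max(x_s,y_s)\geq\min(x_{s+1},y_{s+1})$, then every pong diagram contributing to $Q(\x,\y;w)$ contains a canonical crossing between the strands pairing the $s$-th and $(s+1)$-st indices; switching which strand goes to which endpoint defines a free involution on generators which, together with the fact that $d$ preserves weight in $Q$, upgrades to a chain contraction. If instead some $w_i\geq 2$, then at the wall labelled $i-\OneHalf$ there is an ``excess bounce'' in every contributing diagram that can be removed via the element $X$: the relation $dX=\Omega=\sum_{0<j<m}[\mathcal{X}_{0,j},\mathcal{X}_{j,m}]$ says exactly that multiplication by $X$ furnishes the needed homotopy pairing each redundant-bounce generator with an $X$-multiple of a diagram of lower weight. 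The case $w_i<0$ is automatic since $\weight_i\geq 0$ on every pong diagram.

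When both conditions hold, the interleaving of $\x$ and $\y$ lets us pair each $x_s$ with $y_s$ and draw a strand between them without forcing any crossings, while the bound $w_i\leq 1$ means the prescribed wall bounces can all be realized on a single crossing-free diagram. Call this distinguished generator $f_0\in Q(\x,\y;w)$; then $df_0=0$ trivially, and I would show $[f_0]$ generates $H_*(Q'(\x,\y;w))$. The main obstacle will be constructing the chain homotopy in a globally consistent way, handling configurations that mix interior crossings with left or right extrema at both walls. My plan is to induct simultaneously on $\sum_i w_i$ and on $\cross$, using the atomic-generator factorization from Proposition~\ref{prop:PongGen} to peel off an atomic factor $\Xij$, $\Rij$, or $\Lij$ from each complicated diagram and reduce to a homotopy already produced by the inductive hypothesis on the smaller complex; the two kinds of null-homotopies from the previous paragraph then have to be checked to commute with this peeling procedure, which I expect to be the most delicate bookkeeping step of the argument.
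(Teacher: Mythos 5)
Your overall strategy (explicit combinatorial contractions of the finite complex) is the right one, but each of your three key steps has a genuine gap. First, in the non-interleaved case, the ``free involution'' obtained by crossing/uncrossing two distinguished strands is \emph{not} a chain contraction on the nose: it fails to commute with the differential because the other crossings of the diagram get shuffled when you swap endpoints, and some resolutions become zero in $Q$ (they drop weight or create double crossings). The actual argument must compare the crossings of $a$ and of $h(a)$ term by term and show that the error terms $\partial h(a)+h\partial(a)+a$ strictly increase an auxiliary integer filtration (in the paper, the sum of the initial points of the two chosen strands), so that $\Id+\partial h+h\partial$ is nilpotent rather than zero. Moreover the correct hypothesis is not ``every diagram has a forced crossing between the $s$-th and $(s{+}1)$-st strands'' (the pairing of indices is not canonical and the two chosen strands need not cross at all); it is an ``excessiveness'' count of non-left-moving strands meeting a fixed position $p$, which is what the failure of interleaving actually guarantees.

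Second, the case $w_i\geq 2$ cannot be dispatched by saying that $dX=\Omega$ ``furnishes the needed homotopy.'' Note that $H_*(Q(\x,\x;(c,\dots,c)))\neq 0$ for every $c$ (it contains $\Omega^c$), so the acyclicity of $Q'$ in large weight genuinely depends on how the $X$-summand cancels against the rest, and beyond the single cancellation $X\Omega^{c-1}\leftrightarrow\Omega^c$ you give no mechanism. In the paper this is the hardest part of the theorem: one first reduces to the subcomplex generated by a canonical cycle $z$, identifies $z\cdot Q'(\y,\y)$ with a quotient of $\Quot{k+1}{k}$, and then kills everything by a degree count combined with the excessiveness criterion. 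Third, your distinguished generator $f_0$ need not exist: the generator of $H_*(Q'(\x,\y;w))$ is a canonical cycle whose crossing number equals the number of indices where the selected weight is $1$, so it is usually \emph{not} crossing-free. For example, when $\x=\y$ and $w\equiv 1$ the generator is ${\mathcal X}_{0,x_1}\cdots{\mathcal X}_{x_k,m}$, which has positive crossing number; the homology sits in positive degree, and any inductive ``peeling'' must peel off these $\mathcal X$-type cycles (which is what the paper's factorization through the subcomplexes $L_p$ accomplishes), not reduce to a degree-zero diagram.
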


The proof will occupy the most of the rest of the present subsection.

 For cases where $Q'(\x,\y;w)$ has trivial homology, it will be useful
 to use the following notion:

 \begin{defn}
   \label{def:Excessive}
   Fix an integer $p$ with $1\leq p\leq m-1$.
   The algebra element $a$ is is called {\em (right-)excessive} at $p$
   if there are at least two non-left-moving lifted strands with
   inequivalent initial points that meet (i.e. cross, terminate at, or
   start at) $p$.  Equivalently,  let ${\mathcal E}_p$
   denote the set of (unlifted) strands 
   in the algebra element $a$ satisfying at least one of the following conditions:
   \begin{itemize}
   \item $s$ is stationary at $p$
   \item $s$ starts out at $p$, moving to the right
   \item $s$ terminates at $p$, moving from the left
   \item $s$ crosses $p$ from left to right.
   \end{itemize}
   Then $a$ is excessive at $p$ precisely when ${\mathcal E}_p$ has at least two elements in it.
 \end{defn}

\begin{figure}[ht]
\input{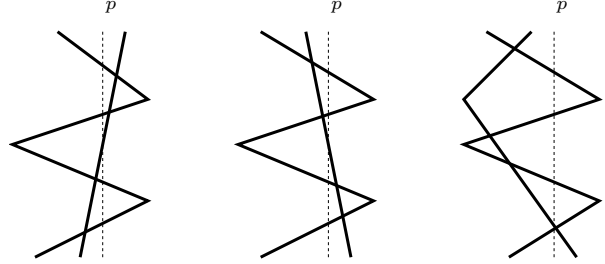}
\caption{\label{fig:Excessive} 
  The generator on the left is not (right)-excessive (at $p$);
  the other two are.
  }
\end{figure}

 \begin{lemma}
   \label{lem:Excessive}
   Fix $p\in\{1,\dots,m-1\}$. 
   If all the  pure algebra generators in $Q_d(\x,\y;w)$
   and $Q_{d-1}(\x,\y;w)$ are excessive, then
   $H_d(Q_*(\x,\y;w))=0$. 
 \end{lemma}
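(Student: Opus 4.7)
The plan is to construct a contracting homotopy on the restricted subcomplex $Q_d(\x,\y;w)\oplus Q_{d-1}(\x,\y;w)$ by canonically matching each pure algebra generator with a partner of adjacent degree, the partner being obtained via a local modification at~$p$. Because every generator in these degrees is excessive at~$p$, from each such generator $a$ (corresponding to pong data $(S,f)$) I can canonically select two strands $s_1(a),s_2(a)\in {\mathcal E}_p$, say the two with smallest initial positions in a fixed fundamental domain near~$p$. Both are non-left-moving at~$p$, hence locally parallel there, so there is a well-defined involution $\iota$ on pure algebra generators that toggles whether $s_1,s_2$ cross each other via a local swap of endpoints near~$p$.

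The three geometric properties of $\iota$ that I would verify are: (i)~the toggle changes $\cross(\cdot)$ by exactly one, because canonically placed $s_1,s_2$ interact in a single crossing near $p$ with no interfering third strand; (ii)~it preserves $\weight$, because near~$p$ the two strands are locally parallel and contribute the same $X_t$ counts regardless of the crossing pattern; and (iii)~$\iota^2=\Id$. Together, these imply that $\iota$ bijects the pure generators of $Q_d(\x,\y;w)$ with those of $Q_{d-1}(\x,\y;w)$.

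Next, I would introduce a total order $\prec$ on pure algebra generators, fine enough to record the interaction of $s_1,s_2$ with the remaining strands, and arranged so that $a\prec\iota(a)$ when $a$ is the ``crossed'' element of the pair. With respect to this order, I expect
\[ \partial a \;=\; \iota(a) \;+\; \bigl(\text{terms strictly }\prec \iota(a)\bigr) \]
whenever $s_1(a),s_2(a)$ cross in $a$. This follows from the definition of $\partial_c$ in~\eqref{eq:PartialCross}: the canonical crossing has $v(f,c)=1$ and $\cross(f_c)=\cross(f)-1$ by (i) and (ii) above, so it contributes $\iota(a)$, while all other crossings are controlled using Lemma~\ref{lem:PreDSquared} and the weight computation in the proof of Lemma~\ref{lem:ResolvePermutation}. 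A standard cancellation argument over $\Field$ then produces a contracting homotopy on the paired subcomplex, yielding $H_d(Q_*(\x,\y;w))=0$.

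The main obstacle is property~(i): isolating the canonical crossing between $s_1,s_2$ so that its resolution genuinely drops $\cross$ by exactly one. This is where the canonical choice of $s_1,s_2$ as the ``leftmost pair'' is essential, because only for such a choice can one show that no third strand threads through both $s_1$ and $s_2$ in a way that would, by Lemma~\ref{lem:ResolvePermutation}, force the crossing number to drop by more than one. A case analysis across the four strand types in ${\mathcal E}_p$ (stationary at $p$, starting at $p$, terminating at $p$, and crossing $p$ from left to right) then handles the remaining bookkeeping.
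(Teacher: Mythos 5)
Your overall strategy is the right one, and is in the same family as the paper's argument: select two strands of ${\mathcal E}_p$ by an extremal condition on their initial points, toggle the crossing between them to get a homotopy operator, and control the error terms by an ordering. But two of your key claims are genuine gaps rather than bookkeeping. First, properties (i) and (ii) of the toggle are false for a general pair of non-crossing strands: if a third strand starts between the initial points of $s_1,s_2$ and ends between their terminal points, the swap creates \emph{three} new crossings, not one; and if one of the pair lies entirely to the left of some wall position while the other lies entirely to its right, the swap changes the weight vector there. Both pathologies are excluded only because $s_1$ and $s_2$ both meet $p$ (so their initial points are $\leq p$ and terminal points $\geq p$) \emph{and} because the extremal choice forbids a third member of ${\mathcal E}_p$ from starting between them; "locally parallel near $p$" is not by itself a proof, and the multi-drop phenomenon in Lemma~\ref{lem:ResolvePermutation} is exactly what you must rule out. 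Also, $\iota$ does not biject generators of $Q_d(\x,\y;w)$ with those of $Q_{d-1}(\x,\y;w)$: a degree-$d$ generator whose selected pair is uncrossed partners with a degree-$(d+1)$ generator, so what you have is a matching across all degrees, not a bijection between the two named graded pieces.

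The more serious gap is the displayed identity $\partial a=\iota(a)+(\text{terms}\prec\iota(a))$ together with the unspecified total order. The obstruction is that a resolution at a crossing \emph{not} involving $s_1,s_2$ can change which strands belong to ${\mathcal E}_p$, hence which pair is canonically selected for the resulting generator; such terms are matched under a different pairing and are not comparable to $\iota(a)$ in any order you have defined. One needs a potential function that is strictly monotone under precisely these resolutions, and then a nilpotency (equivalently, acyclicity-of-matching) argument: the paper uses $f(a)=i_1+i_2$, the sum of the initial points of the selected pair, chosen with \emph{maximal} (not minimal) initial points $\leq p$, and shows every uncancelled term of $\partial h(a)+h\partial(a)+a$ strictly increases $f$, so $\Id+\partial h+h\partial$ is nilpotent yet homotopic to the identity. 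Your "leftmost pair" convention is the one adapted to left-excessiveness; with it, a resolution that swaps a strand into ${\mathcal E}_p$ can \emph{decrease} the minimal initial point, so the corresponding potential is not monotone and the "standard cancellation argument" does not go through as stated. Identifying the correct extremal convention and the monotone potential is the actual content of the lemma, and it is missing from your proposal.
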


 \begin{proof}
   Fix an excessive generator $a$ in $Q'(\x,\y;w)$.  By hypothesis,
   there must be at least two right-moving lifted strands $s_1$ and
   $s_2$ with inequivalent initial points that are either stationary at
   $p$ or that meet $p$.  Choose the strand $s_1$ so that its initial
   point $i_1$ (which is $\leq p$) is maximized.  Choose $s_2$ so that
   its initial point $i_2$ is maximized among the remaining strands.

   If $s_1$ and $s_2$ cross, let $h(a)=0$.  Otherwise, let $h(a)=a'$ be
   the pure algebra element obtained replacing the strands $s_1$ and $s_2$,
   which start at $i_1$ and $i_2$ and end at $j_1$ and $j_2$,
   by a strand $s_1'$ from $i_1$ to $j_2$ and a strand $s_2'$ from $i_2$ to $j_1$.
   Note that $s_1'$ and $s_2'$ intersect once.

   Let $f(a)=i_1+i_2$.  Given two pure algebra elements, we write
   $a<b$ if $f(a)<f(b)$. Given two algebra elements $a$ and $b$ we
   write $a<b$ if we can write $a=\sum_{i} a_i$ and $b=\sum_{i} b_i$ as
   sums of pure algebra elements with $f(a_i)<f(b_j)$ for all $i,j$.

\begin{figure}[ht]
\input{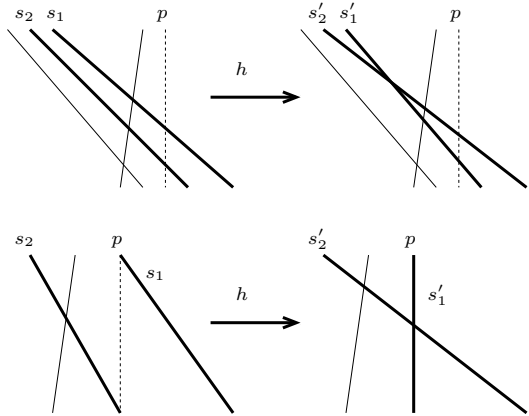}
\caption{\label{fig:HomotopyOperator} {\bf{Homotopy operator.}}
  The dashed line represents position $p$. The bottom line represents a degenerate
  case, where the strand $s_1'$ is stationary.
  }
\end{figure}

   We claim that if $a\in Q_d(\x,\y;w)$, 
   \begin{equation}
     \label{eq:HomotopyFormula}
     \partial h(a) + h\partial(a) + a > a
   \end{equation}
   Note that $f$ is an integer-valued function.
   Since $w$ is fixed, $f$ is bounded below; also, $f$ is bounded above by
   $2p-1$.
   It follows that 
   \[ \Id + \partial h + h \partial\colon Q_d(\x,\y;w)\to Q_d(\x,\y;w) \]
   is nilpotent.
   On the other hand, this nilpotent map is an
   isomorphism on homology, since it is chain homotopic to the
   identity; thus, the homology is trivial.

   We verify Equation~\eqref{eq:HomotopyFormula} as follows.
   Suppose that $a$ has no crossing between $s_1$ and $s_2$, so that
   $a'=h(a)$ is non-zero.  Terms in $\partial h(a)$ correspond to
   various resolutions of the crossings in $a'$; terms in
   $h \partial(a)$ correpond to resolutions of the crossings in $a$. 

\begin{figure}[ht]
\input{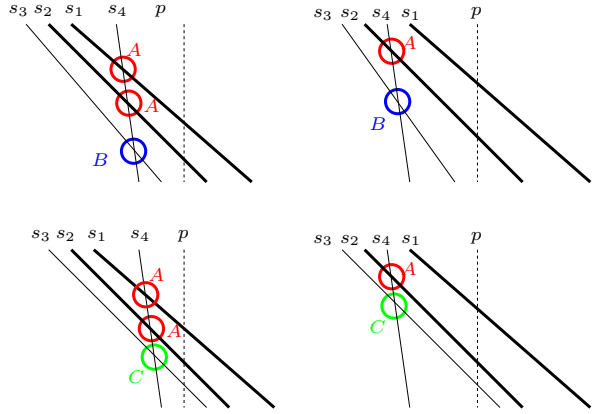}
\caption{\label{fig:CrossingTypes} {\bf{Types of crossings.}}
  The dashed lines represent the position $p$}
\end{figure}

   We partition the set of crossings in $a$ into the following types:
   \begin{itemize}
   \item the set $A$ of crossings in $a$ that involve one of the
     strands $s_1$ and $s_2$; 
   \item the set $B$  of crossings in $a$ do not involve
     either $s_1$ or $s_2$ and which leave $i_1$ and $i_2$
     unchanged; 
   \item the set $C$ of crossings in
     $a$ that do not involve either $s_1$ or $s_2$ but which change $i_1$
     and $i_2$. Such a crossing occurs between strands $s_3$ and $s_4$ with the following properties:
     $s_3$ crosses $p$ and its initial point is smaller than $i_2$; and $s_4$ 
     does not cross $p$ and its initial point is greater than $i_2$.
   \end{itemize}

   See Figure~\ref{fig:HomotopyOperator} for illustrations.  Let $A'$,
   $B'$, and $C'$ be analogous crossings in $a'$, with the
   understanding that $A'$ does not include the distinguished crossing
   $q_0'$ between $s_1'$ and $s_2'$. The resolution of $a'$ at $q_0$, of course, gives back $a$;
   i.e. $\partial_{q_0'} h(a)=a$.
   
   There is a bijection between $A$ and
   $A'$ which gives a cancellation of terms in $\partial h(a)$
   involving $s_1$ or $s_2$ with corresponding terms in $A'$. We
   construct this bijection $\phi\colon A \to A'$ explicitly, as
   follows. Suppose there is some strand $s_3$ which meets $s_1\cup
   s_2$, so that the intersections $s_3\cap (s_1\cup s_2)$ are of type
   $A$. There are two subcases. Either $s_3\cap (s_1\cup s_2)$
   consists of one point, call it $q$; in which case $s_3\cap
   (s_1'\cap s_2')$ consists of one point, and we define $\phi(q)$ to
   be that point. Or, $s_3\cap (s_1\cup s_2)$ consists of two points,
   denoted $q_1$ and $q_2$; and also $s_3\cap (s_1\cap s_2)$ conists
   of two points, as well, which we denote $q_1'$ and $q_2'$. We can
   label $\{q_1, q_2\}$ so that the resolution of $a$ at $q_1$ is in
   the kernel of $h$, while the resolution at $q_2$ is
   not. Similarly, we can label $\{q_1',q_2'\}$ so that the resolution
   of $h(a)$ at $q_1'$ contains a double-crossing, but the resolution
   at $q_2'$ does not. See Figure~\ref{fig:CrossingCancellation}.  Clearly,
   \[ \partial_{\phi(q)} \circ h(a)=h\circ \partial_q(a),\]
   where $\partial_q$ is as in Equation~\eqref{eq:PartialCross}.
   (Indeed, for $q=q_1$, both terms vanish.)

\begin{figure}[ht]
\input{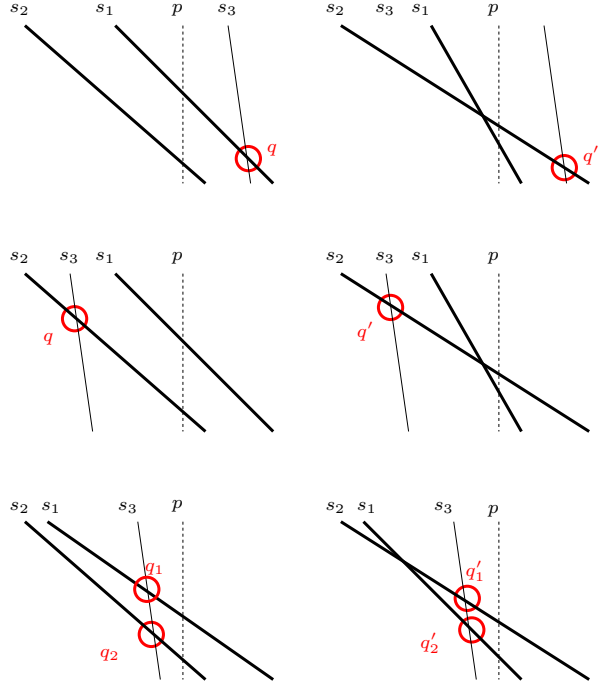}
\caption{\label{fig:CrossingCancellation}
  On the left, we have intersection points of type $A$, and on the right we
  have their corresponding intersection points in $A'$.
  The top two rows represent cases where $|s_3\cap (s_1\cup s_2)|=1$,
  while the third represents the case where $|s_3\cap (s_1\cup s_2)|=2$.}
\end{figure}

There is a similar one-to-one correspondence between elements in $B$
and those in $B'$, giving a further cancellation of terms
$\partial_{q'}h(a)$ with those in $h\circ \partial_{q}(a)$ (with $q\in B$).

The remaining terms in $\partial\circ h(a)$ and $h\circ\partial (a)$ all correspond
to crossings in $C$ or $C'$. 

   Note that the only way a resolution at a crossing between strands
   other than $s_1$ and $s_2$ can change $i_1$ or $i_2$ is if that resolution
   properly increases $f$; i.e. all the terms $c$ in $\partial h(a) +
   h\partial(a)$ corresponding to the resolutions at $C$ or $C'$ have
   $f(c)>f(a)=f(a')$. This completes the verification of
   Equation~\eqref{eq:HomotopyFormula} when $s_1$ and $s_2$ do not
   cross. 

   The formula where they do cross follows similarly.
 \end{proof}

 \begin{lemma}
   \label{lem:Interleaved}
   If Condition~\ref{item:Interleaved} from
   Theorem~\ref{thm:HomologyQp} is violated, then $H(Q(\x,\y;w))=0$.
 \end{lemma}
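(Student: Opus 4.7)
The plan is to apply Lemma~\ref{lem:Excessive} at a suitable position $p\in\{1,\dots,m-1\}$. A failure of Condition~\ref{item:Interleaved} forces, by strict monotonicity of $\x$ and $\y$, either $y_s\geq x_{s+1}$ or $x_s\geq y_{s+1}$ for some $s$. The pong algebra carries a left-right reflection symmetry (induced by $i\mapsto m-i$) which preserves Condition~\ref{item:Interleaved} and interchanges these two subcases, so I may assume $y_s\geq x_{s+1}$ and set $p=x_{s+1}$; in particular $p\leq y_s$.

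Fix any pure generator in $\Idemp{\x}\cdot Q(\x,\y;w)\cdot \Idemp{\y}$, given by pong data $(S,f)$ with $S=\{x_1,\dots,x_k\}$, and let $y_{j(i)}=Q(\tilde f(x_i))$ for the induced bijection $j$ of $\{1,\dots,k\}$. I claim that whenever $x_i\leq p$ and $y_{j(i)}\geq p$, the $i$-th strand of the fundamental pong diagram lies in $\mathcal{E}_p$. Indeed, by continuity this strand attains the $x$-coordinate $p$ at some time $t\in[0,1]$. If the strand is stationary, then $x_i=y_{j(i)}=p$, giving the first condition of Definition~\ref{def:Excessive}. Otherwise, the walls at $1/2$ and $m-1/2$ are not integers, so the strand cannot bounce at $p$; a short case analysis on the last time the strand is at position $p$ shows that it either starts at $p$ moving right, or terminates at $p$ arriving from the left, or crosses $p$ from left to right at an intermediate moment.

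A pigeonhole count now produces two such indices. The set $\{i:x_i\leq p\}$ equals $\{1,\dots,s+1\}$ and has size $s+1$; the set $\{j:y_j\geq p\}$ contains $\{s,s+1,\dots,k\}$ and so has size at least $k-s+1$. Since $j$ is a bijection on $\{1,\dots,k\}$, inclusion-exclusion inside this universe yields at least $(s+1)+(k-s+1)-k=2$ indices $i$ satisfying both $x_i\leq p$ and $y_{j(i)}\geq p$. The two corresponding strands lie in $\mathcal{E}_p$, so the generator is right-excessive at $p$. Since this argument is independent of $w$ and of the homological degree, Lemma~\ref{lem:Excessive} forces $H_*(Q(\x,\y;w))=0$.

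The main technical obstacle is making the last-moment continuity argument precise in the presence of arbitrarily many wall bounces: one must verify that any strand in the fundamental pong diagram running from a starting point $\leq p$ to an ending point $\geq p$ must meet $p$ in a non-left-moving way. The fact that walls occur only at half-integer positions, combined with an intermediate-value argument tracing the last time the strand is at position $p$, handles this. Once that is settled, the pigeonhole count and the left-right reduction between the two violation types are both immediate.
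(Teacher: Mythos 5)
Your proof is correct and follows essentially the same route as the paper's: both reduce to Lemma~\ref{lem:Excessive} by showing that a violation of Condition~\ref{item:Interleaved} forces every pure generator to be excessive at a suitable position $p$, and both invoke the reflection symmetry $Q(\x,\y;w)\cong Q(r(\x),r(\y),r(w))$ to dispose of one of the two subcases. The only differences are presentational: the paper argues contrapositively via the equivalent inequality on $\#\{x\le p\}-\#\{y\le p\}$ and leaves the counting bound $\#\{x\in\x\mid x\le p\}-\#\{y\in\y\mid y<p\}\le|{\mathcal E}_p|$ as ``elementary to verify,'' whereas you fix the specific $p=x_{s+1}$ and supply exactly that verification (the intermediate-value argument plus the pigeonhole count).
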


 \begin{proof}
   Condition~\ref{item:Interleaved} is equivalent to the following
   condition for all $p=1,\dots,m-1$:
   \begin{equation}
     \label{eq:Interleaved2}
     \Big|\#\{x\in\x \big|x\leq p\}-\#\{y\in\y
     \big|y\leq p\}\Big |
     \leq 
     \begin{cases}
       1 &{\text{if $p\not\in\x\cap\y$}} \\
       0 &{\text{if $p\in\x\cap \y$}}
     \end{cases}
   \end{equation}

   Let ${\mathcal E}_p$ denote the set of 
   (unlifted) strands as in Definition~\ref{def:Excessive}.
   It is elementary to verify that
   \[ 
     \#\{x\in\x \big|x\leq p\}-\#\{y\in\y
     \big|y< p\}\leq |{\mathcal E}|.\]
     Thus, Lemma~\ref{lem:Excessive}, shows that 
     if 
     $H_*(Q(\x,\y;w))\neq 0$, then
     \begin{align*}
     \{x\in \x \big|x \leq p\} &-\#\{y\in\y\big|y\leq p\}
     +\begin{cases}
       1 &{\text{if $p\in\x\cap\y$}} \\
       0 &{\text{otherwise}}
       \end{cases}\Bigg\}  \\
       &\leq      \#\{x\in\x \big|x\leq p\}-\#\{y\in\y \big|y< p\} \leq 1.
       \end{align*}
       There is an isomorphism of chain complexes $Q(\x,\y;w)\cong
       Q(r(\x),r(\y),r(w))$, where $r$ is reflection of the interval.
       This gives the other bound
       \[ 
     -\{x\in \x \big|x \leq p\} +\#\{y\in\y\big|y\leq p\}
     +\begin{cases}
       1 &{\text{if $p\in\x\cap\y$}} \\
       0 &{\text{otherwise}} \leq 1
       \end{cases}\Bigg\}\leq 1
       \]
       needed to complete  the verification of Equation~\eqref{eq:Interleaved2}.
 \end{proof}

 Let $r_p$ resp. $\ell_p$ denote the number of lifted right-moving
 resp. left-moving lifted strands that cross $p+\frac{1}{2}$.

 \begin{lemma}
   \label{lem:LeftRight}
   Suppose that $a\in Q(\x,\y;w)$ represents a non-trivial homology
   class in $H_*(Q(\x,\y;w))$. Then, $a$ is a sum of pure algebra
   elements with $|r_p-w_p|\leq \OneHalf$ for all $p=1,\dots,m$.
 \end{lemma}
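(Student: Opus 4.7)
Proof plan. The strategy is to refine $Q(\x,\y;w)$ according to the statistic $r_p$ for the fixed $p$, and then apply Lemma~\ref{lem:Excessive} to show that the pieces with imbalanced $r_p$ are acyclic.

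I will first verify that the differential on $Q$ preserves $r_p$ (and hence $\ell_p$) individually at every $p$. In the quotient $Q$, only those resolutions $\partial_c$ with $v(f,c)=1$ survive, i.e., only those strictly preserving the weight vector at every half-integer. From the proof of Lemma~\ref{lem:ResolvePermutation}, this forces the set $A$ defined there to vanish at every $t$, which in turn forces the crossing $\langle i,j\rangle$ to involve two strands of the same type (both right-moving or both left-moving) away from the walls; crossings at walls or between a right-mover and a left-mover strictly decrease some weight and so drop out in $Q$. A direct pointwise count then shows that such same-type interior resolutions preserve $r_p$ and $\ell_p$ at every $p$. Consequently $Q(\x,\y;w)$ splits as a direct sum of subcomplexes indexed by the value of $r_p$, so it suffices to show that the summands with $|r_p - w_p|>\OneHalf$ are acyclic.

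Next I will argue that every pure algebra element in the summand with $r_p\geq \ell_p+2$ is excessive at $p$ in the sense of Definition~\ref{def:Excessive}, so by Lemma~\ref{lem:Excessive} this summand is acyclic. Each lifted right-moving strand crossing $p+\OneHalf$ has its starting point weakly left of $p$ and its terminal point strictly right of $p$; the corresponding unlifted strand therefore either starts at $p$ moving right, crosses $p$ from left to right, or terminates at $p$ coming from the left, and so lies in $\mathcal{E}_p$. A pigeonhole-style count, using that the stabilizer in $G_m$ of an interior half-integer is trivial, shows that the imbalance $r_p-\ell_p\geq 2$ produces at least two unlifted strands in $\mathcal{E}_p$ with inequivalent initial points. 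The symmetric argument (with right and left interchanged) handles $\ell_p-r_p\geq 2$. Combined with the splitting of the previous step, every nontrivial cycle is represented by a sum of pure generators satisfying $|r_p-w_p|\leq\OneHalf$; iterating over $p$ yields the lemma.

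The principal obstacle is the combinatorial bookkeeping in the second step: one must rule out the possibility that the whole imbalance $r_p-\ell_p\geq 2$ is absorbed by a single bouncy unlifted strand whose multiple lifts all happen to cross $p+\OneHalf$ as right-movers. I would handle this by bounding the multiplicity with which any single unlifted strand can contribute to $r_p-\ell_p$ in terms of how many copies of the $G_m$-orbit of $p+\OneHalf$ lie between its endpoints, and comparing with its contribution to $|\mathcal{E}_p|$, so as to secure at least two inequivalent strands in $\mathcal{E}_p$ whenever the net imbalance reaches $2$.
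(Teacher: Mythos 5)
Your overall strategy can be made to work, but it is built around a splitting that turns out to be vacuous, and its combinatorial crux is left as a sketch. The observation that collapses the whole problem is that $r_p-\ell_p$ is an invariant of the idempotent pair alone: a signed count of intersections of the lifted strands with the vertical line at $p+\OneHalf$ gives $r_p-\ell_p=\#\{x\in\x\mid x< p\}-\#\{y\in\y\mid y< p\}$, independent of which pure algebra element of $Q(\x,\y;w)$ you look at. Since also $r_p+\ell_p=2w_p$ by the definition of the weight, $r_p$ itself is \emph{constant} on all of $Q(\x,\y;w)$, and $|r_p-w_p|=\tfrac12|r_p-\ell_p|$. So your first step (showing the differential preserves $r_p$ and decomposing $Q(\x,\y;w)$ accordingly) is proving something that is true for trivial reasons — the decomposition has a single summand — and the lemma reduces to the single inequality $|r_p-\ell_p|\leq 1$, which is exactly the interleaving condition~\ref{item:Interleaved} already established in Lemma~\ref{lem:Interleaved} under the hypothesis that the homology is nonzero. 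That is the paper's three-line proof.

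Your second step is, in effect, a re-proof of Lemma~\ref{lem:Interleaved}, and it is precisely there that your write-up has a genuine gap: the claim that $r_p-\ell_p\geq 2$ forces two strands of $\mathcal{E}_p$ with inequivalent initial points is the whole content, and you defer it to a ``pigeonhole-style count'' whose key quantitative input — that a single unlifted strand contributes at most $1$ to the net imbalance $r_p-\ell_p$, because the $G_m$-orbit of an interior half-integer alternates between orientation-preserving and orientation-reversing images along $\R$, so successive crossings by lifts of one strand alternate in sign — is only promised, not proved. Without that bound, a single bouncing strand could in principle absorb the entire imbalance and leave $|\mathcal{E}_p|=1$, and the appeal to Lemma~\ref{lem:Excessive} would fail. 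Either carry out that alternation argument explicitly, or (better) quote Lemma~\ref{lem:Interleaved} and the idempotent-invariance of $r_p-\ell_p$ and be done.
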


 \begin{proof}
   Clearly, $r_p-\ell_p=\#(x\in \x <p)-\#(y\in \y <p)$.
   Lemma~\ref{lem:Interleaved} implies that if $H_*(Q(\x,\y;w))\neq 0$, then 
   $r_p-\ell_p\leq 1$ for all $p$. The stated inequality follows from the fact that $r_p+\ell_p=2 w_p$.
 \end{proof}

 \begin{lemma}
   \label{lem:HomologyVanish}
   Let $w$ be a non-constant vector with the following properties:
   \begin{itemize}
     \item there is some integer
       $c>0$ so that $w_i>c$ for some $i$.
     \item $H_d(Q(w))\neq 0$.
   \end{itemize}
   Then, for all $i$,
   $c\leq w_i$, and $d\geq 2 k c$.
 \end{lemma}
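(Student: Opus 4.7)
The plan is to combine Lemma~\ref{lem:LeftRight} with the $G_m$-equivariant structure of lifted partial permutations, first to establish the weight bound and then the degree bound.

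By Lemma~\ref{lem:LeftRight}, some pure algebra element $a = (\LiftS, \Liftf) \in Q(w)$ appears in a cycle representing a nontrivial class in $H_d(Q(w))$ with $|r_p - w_p| \leq 1/2$ for every $p = 1, \dots, m$.  The hypothesis $w_{i_0} > c$, together with the fact that $r_{i_0}$ is a non-negative integer and $c \in \Z_{>0}$, forces $r_{i_0} \geq c$, and symmetrically $\ell_{i_0} \geq c$.

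To obtain $w_p \geq c$ for every $p$, I would propagate these inequalities across all positions.  Since $\Liftf$ is injective on $\LiftS$ modulo $G_m$, at most one lifted strand starts and at most one ends at each integer, so $r_{p+1} - r_p \in \{-1, 0, 1\}$ and similarly for $\ell_p$.  The walls at $1/2$ and $m-1/2$ couple right- and left-moving strands via $G_m$-reflection: a strand that ``exits'' $r$ by bouncing off a wall re-enters as a contribution to $\ell$ on the mirror side.  Combining this wall-coupling with the step bound and the non-constancy of $w$ (which rules out degenerate single-peak profiles consistent with the step bound and wall-coupling alone) yields $r_p + \ell_p \geq 2c$ at every $p$, hence $w_p \geq c$.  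In particular $\sum_p w_p \geq mc$.

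For the degree bound $d \geq 2kc$, I would count inversions in $a$ directly.  A right-moving and a left-moving lifted strand that share a common crossing position automatically form an inversion in the sense of the paper's definition of a crossing.  With $r_p, \ell_p \geq c$ at every position and $k$ strands per fundamental domain, a careful accounting of the distinct equivalence classes of such inversions modulo $G_m$ and swap should yield at least $2kc$ of them, matching the bound realized by $\Omega^c$ and giving $d = \Cr(a) \geq 2kc$.

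The main obstacle is the weight-propagation in the second paragraph: neither the step bound $r_{p+1} - r_p \in \{-1,0,1\}$ nor the wall-coupling alone is sufficient to pin $r_p \geq c$ across all positions, and the non-constancy hypothesis must be used in a global argument coupling the two ingredients.  Once the weight bound is in place, the degree bound should follow from a direct combinatorial count of inversions in the lift.
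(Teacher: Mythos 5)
Your proposal has a genuine gap, and it is exactly the one you flag yourself: the propagation of $r_p\geq c$ from the positions where $w_p>c$ to all positions. Neither the step bound $r_{p+1}-r_p\in\{-1,0,1\}$ nor the wall-coupling can close it, because for a general pure algebra element nothing prevents right-moving strands from simply terminating, so $r_p$ can drop below $c$ as soon as $w_p$ does. The missing ingredient is the excessiveness criterion of Lemma~\ref{lem:Excessive}: if every generator of $Q_d(\x,\y;w)$ and $Q_{d-1}(\x,\y;w)$ is excessive at some $p$, the homology vanishes. The paper's proof chooses $p$ on the \emph{boundary} of the super-level set $\{w>c\}$ (exactly one of $w_{p-1},w_p$ exceeds $c$, which exists precisely because $w$ is non-constant and some $w_i>c$), so that a right-moving strand starts or terminates at $p$ while, by Lemma~\ref{lem:LeftRight}, at least $c$ further lifted strands cross $p$ from the left. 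Non-excessiveness at $p$ then forces all of these to have $G_m$-equivalent initial points, i.e.\ to be pieces of a \emph{single} unlifted strand that bounces back and forth across the entire interval $[1,m]$ with multiplicity at least $c$. That one structural conclusion delivers both bounds at once: this strand alone contributes $\geq c$ to every $w_i$, and it must cross each of the $k$ strands at least $2c$ times, giving $d\geq 2kc$.

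Your degree-count in the last step has the same weakness: even granting $r_p,\ell_p\geq c$ everywhere, a pair consisting of a right-mover and a left-mover that both cross $p$ need not cross \emph{near} $p$, and the accounting of crossing classes modulo $G_m$ and the swap is not carried out. The paper sidesteps this entirely, since the crossings are produced by the single winding strand rather than by a position-by-position tally. So the fix is not a ``global argument coupling the two ingredients'' you name, but the substitution of excessiveness for propagation; with Lemma~\ref{lem:Excessive} in hand, the rest of your first paragraph (the use of Lemma~\ref{lem:LeftRight} to produce the $c$ crossing strands at the chosen $p$) is exactly right and matches the paper.
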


 \begin{proof}
   Let $a$ be any representative of $Q_d(\x,\y;w)$.  Choose $p$ so that
   exactly one of $w_{p-1}$ or $w_p$ is greater than $c$.  By symmetry,
   we can assume that there is a right-moving lifted strand $s$ that
   either starts (when $w_p>w_{p-1}$) or terminates at $p$.  Moreover,
   by Lemma~\ref{lem:LeftRight}, there are at least $c$ additional
   lifted strands that cross position $p$ from the left.  If $a$ fails
   to be excessive, which it must by Lemma~\ref{lem:Excessive}, all $c$
   of those lifted strands, and the additional strand $s$, must have
   equivalent starting points; i.e. there is an unlifted, right-moving
   strand that either starts or ends at $p$, and that covers the entire
   interval $[1,m]$ with multiplicity at least $c$. The bounds on $d$
   and $c$ follow readily.
 \end{proof}

\begin{defn}
  \label{def:CanonicalCycle}
  Let $(\x,\y,w)$ be a compatible triple, and suppose that there is some
  integer $c$ so that $c\leq w_i\leq c+1$ for all $i=1,\dots,m$. 
  The {\em associated canonical cycle} is the element $z=z(\x,\y;w)\in Q(\x,\y;w)$ characterized by the following properties:
  \begin{itemize}
  \item $\weight_i(z)=w_i-c$
  \item If $s$ is any right-moving strand in $z$, then the terminal position
    of $s$ is not the initial position of any other right-moving strand in $z$.
  \end{itemize}
  An element $z\in Q(\x,\y)$ is called a {\em canonical cycle} if there is a vector $w$ 
  for which $z$ is the canonical cycle associated to $(\x,\y;w)$.
\end{defn}

For example, if all the components of $w_i$ are integers, then $z$ is
the product of those ${\mathcal X}_{x_i,x_{i+1}}$ for which  $w_{x_i}=c+1$, with the factors
are arranged so that $i$ is increasing. See also Figure~\ref{fig:CanonicalCycles}.
\begin{figure}[ht]
\input{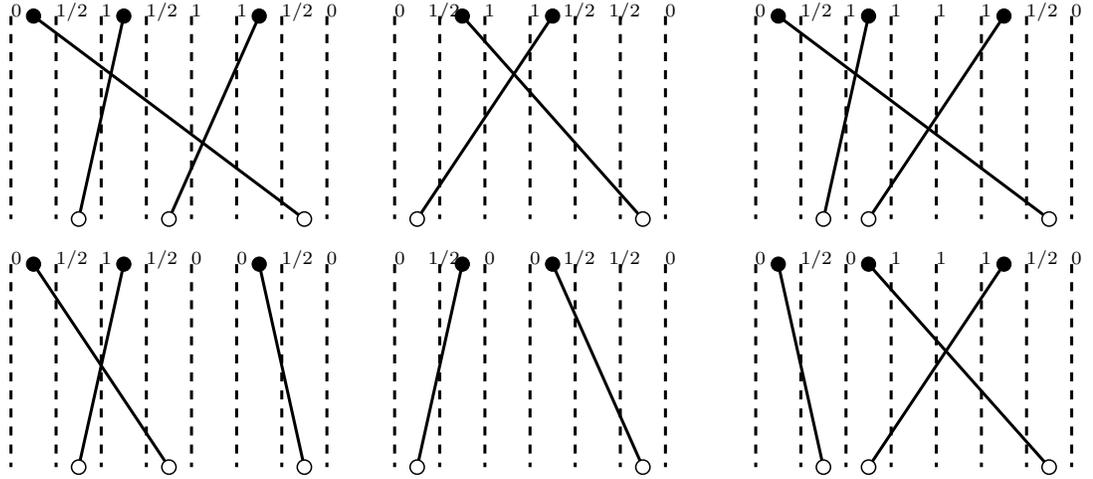}
\caption{\label{fig:CanonicalCycles} {\bf{Canonical cycles.}}
  Here are six canonical cycles; the multiplicities are indicated at the dotted lines.
  (Note that there is another cycle homologous to the third on the top line with the same multiplicities and idempotents.) Cycles in the second row are obtained as resolutions of crossings on the first row.}
\end{figure}

 \begin{lemma}
   \label{lem:TwoValues}
   Let $(\x,\y;w)$ be a compatible triple. Suppose 
   that there is an integer $c$ with the property that
   $c\leq w_i\leq c+1$ for all $i$ and $w_i$ is not constant.
   Let $z=z(\x,\y;w)$ be the associated canonical cycle.
   $Q'(\x,\y;w)$ is quasi-isomorphic  to the subcomplex
   $(z Q')(\x,\y;w)=z\cdot Q'(\y,\y;(c,\dots,c))\subset Q'(\x,\y;w)$.
   In particular, if $w$ is a non-constant vector
   with $w_i\leq 1$ for all $i$, 
   then $H(Q'(\x,\y;w))$ is one-dimensional.
 \end{lemma}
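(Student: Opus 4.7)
The plan is to show the inclusion $z\cdot Q'(\y,\y;(c,\dots,c))\hookrightarrow Q'(\x,\y;w)$ is a quasi-isomorphism by proving the quotient has vanishing homology, adapting the homotopy operator of Lemma~\ref{lem:Excessive}. First I verify that $z = z(\x,\y;w)$ is a cycle, so that $zQ':=z\cdot Q'(\y,\y;(c,\dots,c))$ is a subcomplex: the defining property that no right-moving strand in $z$ terminates at the initial position of another right-moving strand (together with the analogous statement for left-movers, which follows from the weight and idempotent data) forces any internal crossings of $z$ to resolve in cancelling pairs.

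Next I classify the generators. The hypothesis $c\le w_i\le c+1$ together with Lemma~\ref{lem:LeftRight} gives $r_p,\ell_p\in\{c,c+1\}$ at each position $p$, and Lemma~\ref{lem:Excessive}, applied repeatedly, forces $c$ of each to be parallel ``bulk'' strands (i.e., $c$ lifted translates of the same unlifted strand). The residual data in a generator $a$ beyond $z$ is the pairing of endpoints among the $(c+1)$st ``outer'' strands. On the quotient $Q'(\x,\y;w)/zQ'$ I construct a homotopy $h$ exactly as in the proof of Lemma~\ref{lem:Excessive}: for a non-canonical generator $a$, select a rightmost pair of outer right-moving strands whose endpoint matching violates the canonical form of $z$, and define $h(a)$ by swapping their endpoints. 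The bijection-of-crossings argument of Lemma~\ref{lem:Excessive} then yields $\partial h(a) + h\partial(a) + a > a$ with respect to the filtration given by the sum of initial positions of the outer strands, which is bounded above; so $\Id + \partial h + h\partial$ is nilpotent on $Q'(\x,\y;w)/zQ'$. Being chain homotopic to the identity, this map induces both the identity and zero on homology of the quotient, forcing it to vanish.

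For the special case $w_i\le 1$ non-constant we have $c=0$, and $Q'(\y,\y;(0,\dots,0))=\Field\cdot\Idemp{\y}$: the only pure algebra element of zero weight is an idempotent, and the adjoined generator $X$ has weight $(1,\dots,1)$ (since $dX=\Omega$ has weight $(1,\dots,1)$), so contributes nothing to $Q'$ at total weight zero. Hence $zQ'=\Field\cdot z$ and $H(Q'(\x,\y;w))\cong\Field$, as claimed.

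The main obstacle is the homotopy verification in the middle step: the bijection-of-crossings argument from Lemma~\ref{lem:Excessive} must be refined to the quotient setting, so that cancellations that would land in $zQ'$ are handled correctly, and must treat outer left-moving strands as well as right-moving ones (or argue that a single direction suffices by a reflection symmetry). Additional care is needed at the walls, where outer strands of type $\mathcal{X}_{0,j}$ or $\mathcal{X}_{j,m}$ form cusps whose interaction with the endpoint swap differs from the interior case; and the adjoined generator $X$ must be tracked through $\partial$, but since it appears only via an idempotent-compatible weight shift, it factors harmlessly through the factor to the right of $z$.
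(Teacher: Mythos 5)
Your overall strategy (show the inclusion $zQ'\hookrightarrow Q'(\x,\y;w)$ is a quasi-isomorphism by killing the homology of the quotient with a nilpotent homotopy) is the right goal, and your treatment of the case $c=0$ at the end is fine. But the middle step, which you yourself flag as ``the main obstacle,'' is a genuine gap rather than a routine refinement, and the paper avoids it by taking a more incremental route.

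Concretely: the homotopy operator of Lemma~\ref{lem:Excessive} is anchored to a \emph{fixed} position $p$ --- $s_1,s_2$ are the two right-moving strands meeting $p$ with maximal initial points, the filtration is $f(a)=i_1+i_2$ for that $p$, and the partition of crossings into types $A,B,C$ and the inequality $\partial h(a)+h\partial(a)+a>a$ all depend on that choice. Your $h$ instead selects ``a rightmost pair of outer right-moving strands whose endpoint matching violates the canonical form of $z$,'' a different selection rule for which none of the cancellation bookkeeping has been verified; different generators of $Q'/zQ'$ would implicitly use different positions $p$, and nothing guarantees a single filtration makes $\Id+\partial h+h\partial$ nilpotent globally. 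Relatedly, your ``classification of generators'' imports Lemmas~\ref{lem:Excessive} and~\ref{lem:LeftRight}, which constrain \emph{homology representatives}, to conclude structure of \emph{chain-level} generators; the quotient complex contains many generators without the bulk-plus-outer form, and $h$ must be defined and the homotopy identity checked on all of them. Finally, even granting a working homotopy on the non-excessive locus, you still need to know that every generator not lying in $zQ'$ is in its domain --- equivalently, that the elements for which the strand out of each relevant $p$ is left-moving all factor through $z$; this is the explicit factorization argument occupying the last paragraph of the paper's proof, which your proposal asserts rather than proves. The paper sidesteps the global homotopy entirely: it introduces the subcomplexes $L_p$ (strand out of $p$ left-moving) for $p$ in $S=\{p\mid w_{p-1}=w_p=c+1,\ p\in\x\cap\y\}$, shows each quotient $L_T/L_{T\cup\{p\}}$ is generated by excessive elements so that the single-position homotopy $h_p$ of Lemma~\ref{lem:Excessive} applies (after checking $h_p(L_T)\subset L_T$), concludes by induction that $L_S\hookrightarrow Q'$ is a quasi-isomorphism, and only then identifies $L_S$ with $zQ'$ by factorization. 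One small additional point: your reason that $z$ is a cycle (crossings resolving in cancelling pairs) is not the operative one; resolving a crossing of a canonical cycle produces a canonical cycle times some $v_i$ (Lemma~\ref{lem:ModelComplex}), and these terms vanish in $Q'$ because the $v_i$ are set to zero there.
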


 \begin{proof}
   Fix some $p$ so that $w_{p-1}=w_p=c+1$ and $p\in \x\cap \y$, 
   consider the subspace $L_p\subset Q'(\x,\y;w)$ generated by 
   pure algebra elements $a$ with the property that the 
   strand out of $p$ is left-moving. It is
   easy to see that $L_p\subset Q'(\x,\y;w)$ is a subcomplex.

   We claim that the inclusion map $L_p \to Q'(\x,\y;w)$ is a
   chain homotopy equivalence. To verify this, consider the map
   \[ h_p\colon Q'(\x,\y;w)\to Q'(\x,\y,w) \]
   which is defined to vanish if $a$ is not excessive; otherwise,
   $h_p(a)$ is as defined in the proof of Lemma~\ref{lem:Excessive}.
   Observe that if the strand out of $p$ in $a$ is left-moving, then
   the strand out of $p$ in $h_p(a)$ is also left-moving; i.e.
   $h_p(L_p)\subset L_p$.  Observe also that the quotient
   $Q'(\x,\y;w)/L_p$ is generated by excessive elements: the strand
   out of $p$, the strand into $p$, and the $c$ additional strands
   that cross position $p$ cannot all coincide, for otherwise, we
   would get an inequality $w_i\geq c+1$, violating our hypotheses.
   It follows now from the proof of Lemma~\ref{lem:Excessive} that
   $H(Q'(\x,\y;w)/L_p)=0$, verifying that $L_p\to Q'(\x,\y;w)$ is a
   quasi-isomorphism.

   More generally, let
   \[ S=\{p\in \{1,\dots,m\}\big| w_{p-1}=w_p=c+1~\text{and}~p\in\x\cap \y\}.\]
   Given any $T\subset S$, let $L_T\subset Q'(w)$ denote the
   intersection $\cap_{t\in T} L_t$.  We claim that for any $p\in
   S\setminus T$, $h_p$ maps $L_T$ into $L_T$.  (The homotopy operator
   $h$, introduces a crossing between two right-moving strands in the
   lift; These strands could start out left-moving or right-moving at
   their initial points in the projection; but the homotopy operator
   never changes left-moving to right-moving or vice versa. In a
   special case, for the strand through $p$, it might turn a
   right-moving strands into a stationary one; but here we are
   specifically considering $p\in S\setminus T$.)  
   It follows that the inclusion $L_{\{p\}\cup T} \to L_T$ is a
   quasi-isomorphism.  Thus, by induction, $L_S\subset Q'(\x,\y;w)$ is
   a quasi-isomorphism.

   It remains to show that $L_S= (z Q')(w)$. This can be seen as
   follows.  Choose $p$ minimal so that $w_p=c+1$ and $p\in \x$. If $p=x_1$, the strand
   out of $p$ moves to the left, so we can factor $a=z_0\cdot a'$,
   where $z_0={\mathcal X}_{0,p}$.  Assume that $p=x_i>x_1$.
   Let $x_{i+1}\in\x$ be the successor of $p=x_i\in \x$,
   and choose the maximal $y_j\in\y$ so that $x_i<y_j$. If
   $y_j<x_{i+1}$, we can write $a=z_0\cdot a'$, where $z_0$ is the
   algebra element associated to a strand from $x_i$ to $y_j$.  If
   $y_j\geq x_{i+1}$, the strand out of $q=x_{i+1}$ necessarily moves
   to the left (because either $w_{q}>w_{q-1}$; or if $w_q=w_{q-1}$,
   but in that case
   $a\in L_q$). Thus, we can factor $a=z_0\cdot a'$, where $z_0$ is the
  algebra element consisting of two strands, one of which goes (to the
  right) from $x_i$ to $x_{i+1}$, and the other leaves $x_{i+1}$,
  going to the left, and terminating at $y_{j-1}$. In cases where the
  weight vector of $w'=\weight(a')$ is constant, we have the desired
  factorization. Otherwise, we can use the induction on the support of
  $z$, noting that $z(a)=z_0\cdot z(a')$.
\end{proof}

\begin{lemma}
  \label{lem:TwoValues2}
  Let $w$ be a non-constant vector with the property that $\max
  w_i> 1$, then $H(Q'(\x,\y;w))=0$.
\end{lemma}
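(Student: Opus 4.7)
The plan is to split the argument on $\min_i w_i$, treating $\min_i w_i < 1$ as an easy consequence of Lemma~\ref{lem:HomologyVanish} and reducing $\min_i w_i \geq 1$ via the $X$-filtration on $Q'$ to Lemma~\ref{lem:TwoValues}.

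When $\min_i w_i < 1$: since weights lie in $\tfrac{1}{2}\Z_{\geq 0}$, this actually forces $\min_i w_i \leq \tfrac{1}{2}$, so every shifted weight vector $w - (k,\ldots,k)$ with $k \geq 1$ contains a strictly negative entry. Consequently the corresponding summand $Q(\x,\y; w-(k,\ldots,k))\cdot X^k$ of $Q'(\x,\y;w)$ vanishes, and $Q'(\x,\y;w)$ reduces to $Q(\x,\y;w)$. Lemma~\ref{lem:HomologyVanish} applied with $c = 1$ (its hypotheses are met, since $w$ is non-constant with $\max_i w_i > 1$) then gives $H(Q(\x,\y;w)) = 0$.

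For $\min_i w_i \geq 1$: I would analyze the filtration of $Q'(\x,\y;w)$ by $X$-degree. Because the spread $\max_i w_i - \min_i w_i$ is invariant under the shifts $w \mapsto w - (k,\ldots,k)$, an iterated application of Lemma~\ref{lem:HomologyVanish} at an integer threshold $c$ chosen just below $\max_i w_i$ forces every $H(Q(\x,\y; w-(k,\ldots,k)))$ appearing in the filtration to vanish as soon as the spread exceeds $1$; the filtration argument then yields $H(Q'(\x,\y;w)) = 0$. In the remaining sub-case the spread is at most $1$ and all $w_i$ lie in an interval $[c, c+1]$ with integer $c \geq 1$, so Lemma~\ref{lem:TwoValues} supplies a quasi-isomorphism $Q'(\x,\y;w) \simeq z \cdot Q'(\y,\y;(c,\ldots,c))$ to the subcomplex built from the canonical cycle.

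The main obstacle is to show that this subcomplex $z \cdot Q'(\y,\y;(c,\ldots,c))$ has vanishing homology for $c \geq 1$. My plan here is to exploit the identity $\Omega = dX$ in $Q'$, from which one obtains $\Omega^c = d\bigl(X \cdot \Omega^{c-1}\bigr)$. The distinguished cycles in $Q(\y,\y;(c,\ldots,c))$ that can carry its homology each incorporate a factor of $\Omega^c$ coming from $c$ units of global winding, and so are null-homologous in $Q'$. Propagating these null-homotopies through the left factor $z$ and through the $\Idemp{\y}$-bimodule structure of $Q'(\y,\y;(c,\ldots,c))$ will require a careful choice of representatives, but once arranged it delivers the required vanishing.
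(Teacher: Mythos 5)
Your reduction steps are fine and essentially reproduce the paper's opening move: Lemma~\ref{lem:HomologyVanish} disposes of all weight vectors except those with $c\leq w_i\leq c+1$ for an integer $c>0$, and Lemma~\ref{lem:TwoValues} then replaces $Q'(\x,\y;w)$ by the subcomplex $z\cdot Q'(\y,\y;(c,\dots,c))$. (One small hole: ``spread $\leq 1$'' does not by itself force the $w_i$ into an integer window $[c,c+1]$ --- take $w=(3/2,5/2)$ --- but the exceptional subcase is again killed by Lemma~\ref{lem:HomologyVanish}, so this is cosmetic.)

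The real gap is in your final step. Your mechanism for killing $z\cdot Q'(\y,\y;(c,\dots,c))$ rests on the claim that every homology-carrying cycle of $Q(\y,\y;(c,\dots,c))$ contains a factor $\Omega^c=d(X\cdot\Omega^{c-1})$ and hence dies in $Q'$. That claim is false: by Lemma~\ref{lem:ConstantVector}, $H(Q'(\y,\y;(1,\dots,1)))\cong\Field$, generated by ${\mathcal X}_{0,y_1}\cdots{\mathcal X}_{y_k,m}$, which is not a multiple of $\Omega$ and survives the adjunction of $X$; so for $c=1$ your null-homotopies kill nothing that needs killing. If your argument were sound it would prove $H(Q'(\x,\x;(1,\dots,1)))=0$, contradicting that lemma. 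The class that must die only dies \emph{after} left multiplication by $z$, and here a second problem appears: $a\mapsto z\cdot a$ is not injective, so ``propagating null-homotopies through the left factor $z$'' is not meaningful --- one must understand the kernel. This is exactly what the paper's proof does: it identifies $z\cdot Q'(\y,\y;(c,\dots,c))$ with a quotient of $\Quot{k+1}{k}$ by the differential right ideal generated by the $X_{i,t}$, $i\in L$, and $X_{s,j}$, $j\in R$ (Equation~\eqref{eq:IdentifyQ}), where $L$ and $R$ record where the canonical cycle $z$ turns; the non-constancy of $w$ makes $L\cup R\neq\emptyset$, which kills the would-be top-degree survivors $z\cdot(X_{0,i}X_{i,k+1})^c$ and $z\cdot(X_{i,k+1}X_{0,i})^c$, and the degrees below $d_0+2ck$ are then handled by the excessiveness/homotopy-operator argument of Lemma~\ref{lem:Excessive}. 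None of that analysis is present in your proposal, and without it the vanishing does not follow.
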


\begin{proof}
  By Lemma~\ref{lem:HomologyVanish}, we can assume that there is some integer $c$
  so that $c\leq w_i\leq c+1$; by hypothesis, $c>0$.
  Let $z=z(\x,\y,w)$ be the canonical cycle.

  We wish to analyze the chain complex 
  \[ (zQ')(\x,\y;w)=z\cdot
  Q'(\y,\y;w-w(z)).\]  Note that
  there is an integer $c$ so that for all $i$, $(w-w(z))_i=c$.
  Let $Q^0$ denote the algebra $\Quot{k+1}{k}$.
  There is an identification of $Q(\y,\y;w-w(z))$ with the portion of
  $Q^0$ whose support is the constant vector $c$ with $k+1$ coordinates,
  which we write $Q^0(c)$.

  Indeed,
  let 
  \[ L=\{i\big| i\in\y, \weight_i(z)>0\} \qquad
  {\text{and}}\qquad
  R =\{i\big| i\in\y, \weight_{i-1}(z)>0, \weight_i(z)=0\}.\]
  At each position $i\in L$, the strand out of 
  $z$  is left-moving; at each position $i\in R$, the strand out
  of $z$ is right-moving.

  We claim that $(z Q')(\x,\y;w)$ is the the quotient complex of
  $Q^0$ by the differential right ideal generated by
  $X_{i,t}$ with $i\in L$ and $X_{s,j}$ with $j\in R$. i.e. 
  \begin{equation}
    \label{eq:IdentifyQ}
    z\cdot Q(\y,\y)\cong \left(\frac{Q^0}{\sum_{i\in L}X_{i,t}Q^0
      + \sum_{j\in R}X_{s,j}Q^0}\right)_{c}.
  \end{equation}

  Let $d_0$ be the dimension (number of crossings) in $z$.
  It follows from Equation~\eqref{eq:IdentifyQ} that
  $(zQ(w))_d=0$ if $d>d_0+2 c k$; and $(zQ(w))_{d_0+2ck}$ is
  spanned by the elements $\{ z \cdot (X_{0,i}\cdot X_{i,k+1})^c,
  z\cdot (X_{i,k+1}\cdot X_{0,i})^c\}_{0<i<k+1}$, with the understanding that
  \[ z\cdot (X_{0,i}\cdot X_{i,k+1})^c = 0 \]
  exactly when $i\in R$; and 
  \[ z\cdot (X_{i,k+1} \cdot X_{0,i})^c = 0 \]
  exactly when $i\in L$.
  It follows that $H((zQ'(w))_{d})=0$ for all $d\geq d_0+2ck$.

  It remains to show that $H((zQ(w))_d)=0$ for all $d<d_0+ck$.
  Since $w$ is non-constant, there is some $p\in\x\cup\y$ with the property that
  \begin{equation}
    \label{eq:pChange}
    \min\{w_{p-1},w_p\}\leq c <\max\{w_{p-1},w_p\}.
  \end{equation}
  Our verification is subdivided into four (very similar) cases, according to whether 
  or not $p\in \x$ and whether or not $w_{p-1}<w_p$.

  Suppose first that there is some
  $p\in\x$ so that $w_{p}>c$, $w_{p-1}< w_p$. There are $c$
  right-moving strands that cross position $p$; and an additional
  right-moving strand that leaves position $p$.  If all the elements
  of $Q_d(\x,\y;w)$ are excessive at $p$, then $H_d(Q_*(\x,\y;w))=0$ by
  Lemma~\ref{lem:Excessive}.  If these elements are not all excessive,
  the (unlifted) strand leaving $p$ covers all of the interval with
  multiplicity at least $c$. Thus, all $k$ of the strands leaving $z$
  must cross this moving strand with multiplicity at least $c$,
  giving the inequality $d\geq d_0+2ck$, provided that there is some $p\in\x$ with
  $w_p>c$ and $w_{p-1}< w_p$.

  The same argument applies when there is some $p\in \x$ so that $w_{p-1}> w_p$ and
  $w_{p-1}>c$, only now using the strand entering $p$ (rather than othe one leaving $p$).

  Finally, there is no $p\in \x$ so that 
  Equation~\eqref{eq:pChange} holds, there must be such a $p\in \y$; and the result follows from symmetry.
\end{proof}

\begin{lemma}
  \label{lem:ConstantVector}
  Suppose that there is a $c$ so that $w_i=c$ for all $i$.
  Fix any idempotent state $\x$.
  Then, \[ H(Q'(\x,\x;w))=\begin{cases}
    \Field &{\text{if $c=0$  or $1$}} \\
    0 &{\text{otherwise}}.
    \end{cases}     \]
\end{lemma}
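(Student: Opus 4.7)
The case $c=0$ is immediate: no moving strand can have zero weight, so $Q'(\x,\x;(0,\dots,0))=Q(\x,\x;(0,\dots,0))=\Field\cdot\Idemp{\x}$ with trivial differential, and the homology is $\Field$.

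For $c\ge 1$, set $e=(1,\dots,1)$. The decomposition $Q'(w)=Q(w)\oplus X\cdot Q(w-e)$ yields a short exact sequence of chain complexes
\[0\to Q(\x,\x;ce)\to Q'(\x,\x;ce)\to X\cdot Q(\x,\x;(c-1)e)\to 0,\]
and in the associated long exact sequence in homology the connecting homomorphism is multiplication by $\Omega$: if $q\in Q(\x,\x;(c-1)e)$ is a cycle, then $d(X\cdot q)=\Omega\cdot q+X\cdot dq=\Omega\cdot q$. The plan is to show
\[H_*(Q(\x,\x;ce))=0 \quad\text{for every } c\ge 1.\]
Granting this, the long exact sequence collapses to $H_*(Q'(\x,\x;ce))\cong H_*(Q(\x,\x;(c-1)e))$, which equals $\Field$ for $c=1$ (by the base case $c=0$) and $0$ for $c\ge 2$ (by applying the key claim at level $c-1\ge 1$), matching the statement.

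To verify $H_*(Q(\x,\x;ce))=0$ for $c\ge 1$, I plan to apply Lemma~\ref{lem:Excessive} at a carefully chosen position $p\in\x$. Because $\x=\y$, we have $r_p-\ell_p=\#\{x\in\x:x<p\}-\#\{y\in\y:y<p\}=0$, and combined with $r_p+\ell_p=2w_p=2c$ this forces $r_p=\ell_p=c\ge 1$. Thus every pure algebra element of weight $ce$ at $(\x,\x)$ has at least $c\ge 1$ right-moving lifted strands crossing $p+\OneHalf$, in addition to the strand leaving $p$ itself. The key subclaim is that these contributions must come from at least two distinct unlifted strands, so $|\mathcal{E}_p|\ge 2$ in the sense of Definition~\ref{def:Excessive} and the element is right-excessive at $p$.

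The main obstacle is ruling out the degenerate scenario in which a single zigzagging unlifted strand accounts for all right-crossings of $p+\OneHalf$ simultaneously with the strand leaving $p$. Since $\x=\y$, such a strand would have to return to its starting position after bouncing enough times between the two walls, tracing a loop covering the interval $[1,m-1]$ with multiplicity $c$; a short combinatorial argument, using that the other strands must permute $\x$ compatibly with the constant-weight condition, then produces either a second unlifted strand interacting with $p$ or a direct contradiction. With excessiveness at $p$ verified for all generators in the relevant degrees, Lemma~\ref{lem:Excessive} delivers $H_*(Q(\x,\x;ce))=0$. In edge cases (small $k$) where this combinatorics is delicate, the alternative is to treat $c=1$ separately by constructing an explicit cycle generator of $H(Q'(\x,\x;e))$ along the lines of Definition~\ref{def:CanonicalCycle}, and then to invoke the long exact sequence only for $c\ge 2$.
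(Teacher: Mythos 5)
Your reduction via the short exact sequence $0\to Q(\x,\x;ce)\to Q'(\x,\x;ce)\to X\cdot Q(\x,\x;(c-1)e)\to 0$ and the identification of the connecting map with multiplication by $\Omega$ are fine, but the claim on which everything rests --- that $H_*(Q(\x,\x;ce))=0$ for all $c\geq 1$ --- is false. The element $\Omega^c\cdot\Idemp{\x}$, represented by the pure generator $({\mathcal X}_{x_1,m}\cdot{\mathcal X}_{0,x_1})^c\cdot\Idemp{\x}$ (one unlifted strand based at $p=x_1$ that bounces between the two walls $c$ times, together with stationary strands at $x_2,\dots,x_k$), is a cycle of weight $ce$ that is not a boundary in $Q$. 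This is exactly the ``degenerate scenario'' you flag as the main obstacle and hope to rule out by a ``short combinatorial argument'': it cannot be ruled out, because the element genuinely exists, has ${\mathcal E}_q$ of size one at $q=x_1$ and at every $q\notin\x$, and so is not excessive at any single $p$ simultaneously with the analogous elements based at the other $x_j$. Its nontriviality in homology is forced by Theorem~\ref{thm:HomologyPong} (the homology of the pong algebra is a \emph{polynomial} algebra in $\Omega$) and is made explicit in the proof of Corollary~\ref{cor:QmSpecial}, where multiplication by $\Omega$ is shown to be injective on $H(Q)$. Indeed, the entire reason the lemma is stated for $Q'$ rather than $Q$ is that the adjoined element $X$ with $dX=\Omega$ is needed to kill this tower of classes; with your claim true, one would get $H(Q'(\x,\x;ce))\cong H(Q(\x,\x;(c-1)e))=0$ already for $c=2$, but also $H(Q(\x,\x;e))=0$, contradicting the two-dimensionality forced by $H(Q'(\x,\x;e))=\Field$ together with the injectivity of $\Omega$.

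For contrast, the paper's proof does not attempt to show any $Q(\x,\x;ce)$ is acyclic. It takes $p=x_1$ minimal in $\x$, lets $L\subset Q'(\x,\x;w)$ be the subcomplex spanned by generators whose strand out of $p$ is left-moving, and observes that the quotient $(Q/L)(w)$ is spanned by excessive elements together with the single non-excessive element $({\mathcal X}_{p,m}\cdot{\mathcal X}_{0,p})^c$ --- precisely the representative of $\Omega^c$ above --- so that $H((Q/L)(w))\cong\Field\cdot[\Omega^c]$ and hence $H((Q'/L)(w))=0$ after $X$ is adjoined. This makes $L\hookrightarrow Q'(\x,\x;w)$ a quasi-isomorphism; then $L\cong{\mathcal X}_{0,p}\cdot Q'(\x',\x;w')$ for a \emph{non-constant} weight vector $w'$, and the non-constant-weight Lemmas~\ref{lem:TwoValues} and~\ref{lem:TwoValues2} finish the argument. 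If you want to salvage your outline, you would need to replace the false vanishing claim by a computation of $\mathrm{coker}\bigl(\Omega\colon H(Q((c-1)e))\to H(Q(ce))\bigr)$, which is essentially the content of the paper's argument anyway.
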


\begin{proof}
  The case $c=0$ is obvious.  Since $w$ is constant, it follows that
  $\x=\y$.  Let $p=x_1\in \x$ be the smallest element, and let $L\subset
  Q'=Q'(\x,\y;w)$ be the subcomplex generated by all pure algebra
  elements for which the strand out of $p$ is left-moving.  Consider
  the short exact sequence
  \[
  \begin{CD}
    0@>>> L(w)@>>> Q(w)@>>> (Q/L)(w) @>>> 0.
    \end{CD}
  \]
  Observe that $(Q/L)(w)$ is generated by excessive elements and the single
  non-excessive element $({\mathcal X}_{p,m}\cdot {\mathcal X}_{0,p})^c$
  (which in fact is the image of $\Omega$ under the quotent map).  It
  follows at once that $H_*((Q/L)(w))$ is one-dimensional, generated by
  $\Omega^c$.  Thus, $H((Q'/L)(w))=0$. 
  
  As in the proof of Lemma~\ref{lem:TwoValues}, $L= {\mathcal
    X}_{0,p}\cdot Q$. We claim if $w'$ is obtained from $w$ by
  subtracting $1$ from its first $p$ coordinates, then the map
  $Q'(w')\to Q'(w)$ given by $a\mapsto {\mathcal X}_{0,p} \cdot a$ is
  an isomorphism of chain complexes.
  Lemma~\ref{lem:TwoValues2} gives the stated result when $c>1$;
  while Lemma~\ref{lem:TwoValues} gives the result when $c=1$.
  Note that in this case,
  $z(\x,\x;w')={\mathcal X}_{x_1,x_2}\cdots {\mathcal X}_{x_k,m}$,
  so the non-trivial homology class in $Q'(w)$ is represented by
  ${\mathcal X}_{0,x_1}\cdots {\mathcal X}_{x_k,m}$.
\end{proof}

\begin{proof}[of Theorem~\ref{thm:HomologyQp}]
  Condition~\ref{item:Interleaved} was verified in
  Lemma~\ref{lem:Interleaved}; Condition~\ref{item:BoundedWeight} is a
  combination of Lemmas~\ref{lem:HomologyVanish} (when the vector is
  non-constant) and Lemma~\ref{lem:ConstantVector} (when the vector is
  constant).
  
  The fact that the homology has rank one in the remaining cases
  follows from Lemma~\ref{lem:ConstantVector} and~\ref{lem:ConstantVector}.
\end{proof}

Theorem~\ref{thm:HomologyQp} has the following quick corollary:

\begin{cor}
  \label{cor:QmSpecial}
  There is an isomorphism of algebras
  \begin{equation}
    \label{eq:QmSpecial}
    H_*(\Quot{m}{m-1})\cong \Field[X_1,\dots,X_{m},\Omega]/(X_i^2=0)_{i=1}^m.
  \end{equation}
\end{cor}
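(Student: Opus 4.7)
Since $k = m - 1$ forces the unique idempotent state to be $\x = \{1, \dots, m-1\}$, the algebra $Q = \Quot{m}{m-1}$ is supported in a single idempotent, Condition~\ref{item:Interleaved} of Theorem~\ref{thm:HomologyQp} holds automatically, and the compatibility condition for $(\x, \x, w)$ forces $w \in \Z^m$. Applying Theorem~\ref{thm:HomologyQp} gives
\[
  H_*(Q'(w)) \cong
  \begin{cases}
    \Field & \text{if } w \in \{0, 1\}^m, \\
    0 & \text{otherwise.}
  \end{cases}
\]

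The plan is to extract $H_*(Q)$ from $H_*(Q')$ by filtering $Q'$ by powers of $X$. At each weight $w$, the subcomplex $F_1(w) = Q(w) + X \cdot Q(w - \mathbf{1})$ fits into a short exact sequence $0 \to Q(w) \to F_1(w) \to X \cdot Q(w - \mathbf{1}) \to 0$, inducing a long exact sequence
\[
  \cdots \to H_n(Q(w)) \to H_n(F_1(w)) \to H_{n - \deg X}(Q(w - \mathbf{1})) \xrightarrow{\Omega \cdot} H_{n-1}(Q(w)) \to \cdots
\]
whose connecting map is multiplication by $\Omega$, because $d(Xq) = \Omega q$ for a cycle $q \in Q$. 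Since $F_1(w) = Q'(w)$ whenever $\min_i w_i \leq 1$, the middle term is determined by Theorem~\ref{thm:HomologyQp} in this range. I would then proceed by induction on $\min_i w_i$, using analogous subcomplexes $F_c(w) \subseteq Q'(w)$ generated by monomials in $X$ of degree at most $c$ to handle the higher cases. The base case $\min_i w_i = 0$ gives $Q(w - \mathbf{1}) = 0$ and hence $H_*(Q(w)) \cong H_*(Q'(w))$, matching the claim. For $w$ with $w + \mathbf{1} \notin \{0, 1\}^m$, the vanishing of the middle term forces $\Omega \cdot$ to be an isomorphism, transporting classes from $H_*(Q(w - \mathbf{1}))$ up to $H_*(Q(w))$. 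The exceptional weight $w = \mathbf{1}$ receives an additional class from $H_*(Q'(\mathbf{1})) = \Field$, represented by $\Xgen{0}{1} \Xgen{1}{2} \cdots \Xgen{m-1}{m}$, giving $H_*(Q(c\mathbf{1}))$ dimension two for $c \geq 1$.

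To identify the algebra structure, I would take $X_i := \Xgen{i-1}{i}$ for $i = 1, \dots, m$; by Lemma~\ref{lem:dAtomic}, $d \Xgen{i-1}{i} = v_i$, so these are cycles in $Q$ of weight $e_i$ representing the generator of $H_*(Q(e_i))$. The relation $X_i^2 = 0$ follows immediately from Proposition~\ref{prop:PongIsAlg}: the composition $\Xgen{i-1}{i} \cdot \Xgen{i-1}{i}$ produces a lifted partial permutation with strictly fewer crossings than $\cross(\Xgen{i-1}{i}) + \cross(\Xgen{i-1}{i}) = 2$, so $\mu_2$ vanishes. Commutativity among the $X_i$'s and with $\Omega$ then follows from the fact that each weight class has homology of dimension at most two, forcing any two elements of matching weight to agree up to scalar. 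The main technical subtlety will be verifying that products $\prod_{i \in T} X_i$ for non-empty $T \subseteq \{1, \dots, m\}$ are non-vanishing in $H_*(Q)$; this reduces to a combinatorial check that crossing numbers add under the composition of the atomic generators $\Xgen{i-1}{i}$, which can either be verified directly or bootstrapped from the $\Omega$-isomorphism established via the long exact sequence.
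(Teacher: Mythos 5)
Your proposal is correct and follows essentially the same route as the paper: both start from the computation of $H_*(Q'(w))$ via Theorem~\ref{thm:HomologyQp} and then transfer to $H_*(Q)$ through the long exact sequence whose connecting map is multiplication by $\Omega$ (the paper packages this as the short exact sequence $0\to H(Q)\xrightarrow{\;\Omega\;}H(Q)\to H(Q')\to 0$, obtained from the observation that $Q\to Q'$ is surjective on homology because each $H(Q'(w))$ is generated by a canonical cycle coming from $Q$, whereas you run the same sequence weight-by-weight with an induction on $\min_i w_i$). The one point you defer --- non-vanishing of $\prod_{i\in T}X_i$ --- is exactly what the paper settles by identifying the product, taken in decreasing order of the indices, with the canonical cycle generating $H_*(Q'(w))$; note that your dimension-count argument for commutativity genuinely depends on that non-vanishing (two elements of a one-dimensional $\Field$-space agree only if both are nonzero or both zero), which is why the paper instead uses the chain-level relation $d\Xgen{i-1}{i+1}=X_iX_{i+1}+X_{i+1}X_i$.
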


\begin{proof}
  Abbreviate $Q=\Quot{m}{m-1}$ and $Q'=\QuotP{m}{m-1}$.
  Clearly, $X_i$ are homologically non-trivial elements in
  $H_*(Q')$.  They satisfy $X_i^2=0$ (since they do so on
  the chain level).  They also satisfy $[X_i][X_j] = [X_j]\cdot [X_i]$. When
  $j\neq i\pm 1$, the corresponding relation holds on the chain level; 
  $[X_i][X_{i+1}]=[X_{i+1}][X_i]$, since 
  \[ d X_{i-1,i+1}= X_i \cdot X_{i+1} + X_{i+1} X_i.\]
  Thus, there is a tautological map of rings
  \[\tau'\colon \Field[X_1,\dots,X_{m}]/(X_i^2=0)_{i=1}^m\to    H_*(Q').\]
  
  If $w$ is any weight vector with $\max(w_i)\leq 1$, then $Q'(w)$ is
  one-dimensional, by Theorem~\ref{thm:HomologyQp}.  (In the notation,
  we suppress idempotents now, since there is only one.)  In fact, by
  Lemma~\ref{lem:TwoValues} when $w$ is non-constant, and
  Lemma~\ref{lem:ConstantVector} when $w$ is constant, $H_*(Q'(w))$ is
  generated by the canonical cycle from
  Definition~\ref{def:CanonicalCycle}. Let $w$ be a weight vector with
  $w_i\leq 1$. Let $I=\{i\in \{1,\dots,m\}\big| w_i=1\}$. Let $\{n_1,\dots,n_\ell\}$ be the sequence of elements of $I$, arranged in decreasing order.
  It is straightforward to see that 
  $\prod_{i=1}^{\ell} X_{n_i}$ is the canonical cycle of $w$.

  Let $q\colon Q\to Q'$ denote the natural inclusion map. By
  construction, the cycles from $H_*(Q')$ come from cycles in
  $H_*(Q)$; thus $q$ induces a surjection on homology. Thus, the long
  exact sequence of the mapping cone becomes the short exact sequence
  \[\begin{CD}
  0@>>> H(Q)@>{\Omega}>> H(Q)@>>> H(Q')@>>> 0,
  \end{CD}\]
  The corollary follows readily.
\end{proof}

\subsection{The Pong algebra}

Given $\x$ and $\y$, let $Z(\x,\y)$ denote the set of canonical cycles $z$
for compatible triples $z(\x,\y;w)$, for various choices of $w$.
Each canonical cycle is determined by its weight vector, which we can take
to have $0\leq w_i\leq 1$ for all $i$.

\begin{defn}
  Let $(\x,\y;w)$ be a compatible triple, and suppose that $\x$ and
  $\y$ satisfy
  \begin{equation}
    \label{eq:Interleaved}
    \max(x_s,y_s)< \min(x_{s+1},y_{s+1}). 
  \end{equation}
  The {\em selected weight vector of $(\x,\y;w)$}, denoted $s(z)\in \Z^{k+1}$, is given
  by the components
  \[ (w_0,w_{\max(x_1,y_1)}, w_{\max(x_2,y_2)},\dots w_{\max(x_k,y_k)},w_m). \]
\end{defn}

\begin{lemma}
  Fix two idempotent states $\x$ and $\y$ satisfying
  Equation~\eqref{eq:Interleaved}, the map from $Z(\x,\y)\to \Z^{k+1}$
  that maps $z=z(\x,\y;w)\in Z(\x,\y)$ to the selected 
  weight vector of $w$ identifies $Z(\x,\y)$
  with the set of vectors $\xi\in \Z^{k+1}$ with
  $\xi_i\in\{0,1\}$.
  Under this correspondence, the dimension of $z$ is given by
  $\xi_i \sigma_i$.
\end{lemma}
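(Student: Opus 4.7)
The plan is to show the stated bijection in two steps, first proving the map is well-defined with image in $\{0,1\}^{k+1}$, then constructing an inverse, and finally computing the dimension.

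First, I would analyze which weight vectors $w$ can arise from canonical cycles in $Z(\x,\y)$. By the normalization in the hypothesis we assume $0\le w_i\le 1$. The compatibility condition that $2w_i+\#(x_t<i)+\#(y_t<i)$ be even, combined with the interleaving assumption~\eqref{eq:Interleaved}, forces $w_i\in\Z$ exactly when $i$ lies in one of the $k+1$ ``plateau'' intervals $[0,\min(x_1,y_1)]$, $[\max(x_s,y_s),\min(x_{s+1},y_{s+1})]$ for $s=1,\dots,k-1$, and $[\max(x_k,y_k),m]$, and $w_i\in\OneHalf+\Z$ on the ``transition'' intervals $(\min(x_s,y_s),\max(x_s,y_s)]$. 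Under $0\le w_i\le 1$ this pins the transition values at $\OneHalf$ and the plateau values to $\{0,1\}$.

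Next I would show that on each plateau the value of $w_i$ is \emph{constant} for a canonical cycle. The key input is Definition~\ref{def:CanonicalCycle} combined with Lemma~\ref{lem:LeftRight}: on a plateau the number $r_p-\ell_p$ of right- minus left-moving strands across $p+\OneHalf$ equals $\#(x\in\x\mid x\le p)-\#(y\in\y\mid y\le p)$, which is locally constant, and the second bullet of Definition~\ref{def:CanonicalCycle} forbids back-to-back right-moving strands. Together these force $w_i$ to remain constant across the plateau. Thus the selected weight vector $s(z)\in\{0,1\}^{k+1}$ determines $w$ and hence (by uniqueness of canonical cycles for a given weight) determines $z$, proving injectivity.

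For surjectivity, given $\xi\in\{0,1\}^{k+1}$ I would construct a canonical cycle explicitly. Starting from the ``minimal'' strand configuration prescribed by the interleaving (one strand pairing $\x_s$ and $\y_s$ for each $s$), attach, in each plateau indexed by $s$ with $\xi_s=1$, an extra $\mathcal X$-type generator (of the form ${\mathcal X}_{0,\cdot}$, ${\mathcal X}_{\cdot,m}$, or ${\mathcal X}_{\cdot,\cdot}$ depending on whether $s=0$, $s=k$, or $0<s<k$) which raises the local multiplicity by $1$ throughout that plateau and by $\OneHalf$ on the adjacent transitions. Verify both defining properties of a canonical cycle: the weight is by construction $w$, and the bumps can be composed in an order making no right-moving strand terminate at the initial point of another. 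This gives a preimage, completing the bijection.

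Finally, for the dimension formula, I would compute the crossing number of the constructed representative directly. Each $\xi_s=1$ bump, being an $\mathcal X$-element spanning some plateau, contributes a fixed number $\sigma_s$ of crossings with the base strands and with the transition half-weights (and with any other bumps lying to its left/right, which by the chosen composition order yields the linear combination $\sum_s \xi_s\sigma_s$). Since $\Cr$ is additive under products of algebra generators whose compositions achieve the equality case of~\eqref{eq:ComposeCross} (which is exactly how the canonical representative was built), this gives the claimed formula. The main obstacle is the bookkeeping in step three: verifying that there \emph{is} a composition order of the chosen bumps realizing the equality $\cross(gf)=\cross(g)+\cross(f)$ at each stage, so that the product lands in the pure algebra element $z(\x,\y;w)$ rather than in a syzygy term, and that the resulting crossing total matches $\sum_s \xi_s\sigma_s$ rather than something larger.
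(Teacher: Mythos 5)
Your outline follows essentially the same route as the paper's (very terse) proof, and the bijection part is correct, but two points deserve tightening. First, the constancy of $w$ on each plateau does not need the canonical-cycle condition or Lemma~\ref{lem:LeftRight}: for \emph{any} pure element with idempotents $\x,\y$, the local multiplicity $\weight_j$ can only jump as $j$ crosses a lifted strand endpoint, and those project exactly to $\x\cup\y$; by the interleaving hypothesis no such point lies in the interior of a plateau, so the value is automatically constant there. Your argument via $r_p-\ell_p$ being locally constant does not by itself control $r_p+\ell_p=2w_p$, so it is better replaced by this direct observation. Second, and more substantively, your degree computation stops short of the one claim the paper's proof actually makes: that the crossings of $z$ are in bijection with the nonzero components of the selected weight vector, i.e.\ each $\sigma_s=1$. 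Your phrasing allows each bump to contribute an unspecified number of crossings, including crossings ``with any other bumps,'' which would produce quadratic cross-terms rather than the stated linear formula. What needs to be checked is that the base configuration (one strand from $x_s$ to $y_s$ for each $s$) has no crossings, that each bump is an ${\mathcal X}$-type piece contributing exactly one crossing (a single left or right extremum, or the single interior intersection of its two strands), and that bumps supported in distinct plateaus do not cross each other or the base strands; with that in place the dimension is the number of nonzero components of $\xi$, which is what Lemma~\ref{lem:ModelComplex} subsequently uses.
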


\begin{proof}
  This is mostly straightforward. For the identification of the
  degree, note that the crossings in a canonical cycle $z=z(\x,\y;w)$ are in
  one-to-one correspondence with the non-zero components of the
  associated selected weight vector.
\end{proof}

For $s=1,\dots,k+1$, let
\begin{equation}
  \label{eq:DefVs}
  V_s = \prod_{i=\max(x_{s-1},y_{s-1})}^{\min(x_{s},y_{s})-1} v_i\in\Field[v_1,\dots,v_m],
\end{equation}
with the conventions that $x_0=y_0=1$ and $x_{k+1}=y_{k+1}=m$.

\begin{lemma}
  \label{lem:ModelComplex}
  The $\Field[v_1,\dots,v_{m}]$-submodule of $P'(\x,\y)$ generated by $Z(\x,\y)$
  is a subcomplex, which is quasi-isomorphic to $P'(\x,\y)$. Moreover,
  its differential is specified
  by 
  \[ \partial \xi = \sum_{\{i\big|s_i=i\}} V_i \cdot (\xi-e_i),\]
  where $e_i\in \Z^{k+1}$ is the $i^{th}$ basis vector.
\end{lemma}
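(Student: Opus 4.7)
The proof proceeds in two stages: first I verify that $K:=\Field[v_1,\dots,v_m]\cdot Z(\x,\y)\subset P'(\x,\y)$ is a subcomplex carrying the advertised differential, then I show the inclusion is a quasi-isomorphism.

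For the first stage, I would compute $\partial z$ directly for a canonical cycle $z=z(\x,\y;w)$. By Definition~\ref{def:CanonicalCycle}, each such $z$ is a product of atomic generators $\mathcal{X}_{a_s,b_s}$, one factor for each active index $s$ (i.e.\ those $s$ with $\xi_s=1$). Applying the Leibniz rule together with Lemma~\ref{lem:dAtomic}, $\partial z$ is a sum of terms obtained by resolving crossings within $z$. Most crossings admit paired resolutions that cancel because of the symmetric form $\partial\mathcal{X}_{i,j}=\sum_{i<\ell<j}(\mathcal{X}_{\ell,j}\mathcal{X}_{i,\ell}+\mathcal{X}_{i,\ell}\mathcal{X}_{\ell,j})$; the only surviving contribution at each active index $s$ collapses the $s$-th factor to yield the canonical cycle with selected weight $\xi-e_s$. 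The weight-restoring monomial $v(z,c)$ required by Equation~\eqref{eq:PreDeriv} to preserve the total weight is precisely $V_s$ from Equation~\eqref{eq:DefVs}, since this particular resolution locally cancels the weight over the interval $[\max(x_{s-1},y_{s-1}),\min(x_s,y_s)-1]$, which is exactly the range defining $V_s$.

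For the quasi-isomorphism I work within a fixed total weight $w$: since the pong differential preserves weight, $\iota$ decomposes as a direct sum of maps $\iota_w\colon K(w)\to P'(\x,\y;w)$, and both sides are finite-dimensional (weight components of canonical cycles lie in $\{0,1\}$, so the sum of $v^\alpha\cdot z$ contributing to weight $w$ is finite). Filter both sides by $v$-degree, $F^n=\{v^\alpha\cdot g : |\alpha|\geq n\}$; this filtration is preserved by $\partial$ (resolutions require non-negative $v$-degree monomials) and bounded on each weight slice. The associated graded of $P'(\x,\y;w)$ is $\bigoplus_\alpha v^\alpha\cdot Q'(\x,\y;w-\alpha)$ with the $Q'$-differential, and Theorem~\ref{thm:HomologyQp} identifies its $E_1$-page with the $\Field$-span of the canonical cycles $z(\x,\y;w-\alpha)$ for admissible $\alpha$. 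For $K(w)$, every $V_s$ has $v$-degree $\min(x_s,y_s)-\max(x_{s-1},y_{s-1})\geq 1$ thanks to the interleaving condition, so the $E_0$-differential on $K(w)$ vanishes and $E_1 K(w)=K(w)$. Under the natural identification both $E_1$-pages are the $\Field$-span of canonical cycles, and $E_1\iota_w$ is the identity; the spectral sequence comparison theorem (applicable since both filtrations are bounded) implies $\iota_w$ is a quasi-isomorphism for every $w$, and hence $\iota$ is a quasi-isomorphism.

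The main challenge lies in the first stage: carefully enumerating the pairwise cancellations in $\partial z$ when $z$ has several factors, and verifying that the lone surviving weight-restoring monomial attached to each active index matches $V_s$ exactly. The spectral sequence argument in the second stage is then largely formal, relying on Theorem~\ref{thm:HomologyQp} as a black box.
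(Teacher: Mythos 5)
Your second stage is fine, and is in fact a more explicit justification of the quasi-isomorphism than the paper records: filtering by $v$-degree, identifying the associated graded of $P'(\x,\y;w)$ with $\bigoplus_\alpha v^\alpha\cdot Q'(\x,\y;w-\alpha)$, quoting Theorem~\ref{thm:HomologyQp} for the $E_1$-page, and noting that each $V_s$ has positive degree (by interleaving) so that the $E_0$-differential on the submodule vanishes, is a correct comparison argument.

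The gap is in your first stage, and it is a gap of mechanism rather than of conclusion. A canonical cycle factors as a product of elements $\mathcal{X}_{a,b}$ (and, in the half-integral case, single-strand $\mathcal{R}/\mathcal{L}$-type pieces) for which \emph{no position strictly between $a$ and $b$ is occupied} in the relevant idempotent. For such a factor the formula $d\Xgen{i}{j}=\sum_{i<\ell<j}\Xgen{\ell}{j}\Xgen{i}{\ell}+\Xgen{i}{\ell}\Xgen{\ell}{j}$ from Lemma~\ref{lem:dAtomic} simply does not apply --- those terms exist only when the intermediate positions carry stationary strands. Instead each factor has exactly one crossing (the interior crossing of its two moving strands, or the wall crossing for $\mathcal{X}_{0,j}$, $\mathcal{X}_{i,m}$), and resolving it collapses the factor to the idempotent times a monomial, exactly as in $d\Xgen{i}{i+1}=U_{i+1}$. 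Hence $\partial z$ has precisely one term for each nonzero entry of the selected weight vector, namely $V_s\cdot z(\xi-e_s)$, and \emph{nothing cancels}; this bijection between crossings of $z$ and nonzero entries of $\xi$ is the entire content of the paper's (one-line) proof. Your plan to ``carefully enumerate the pairwise cancellations'' would therefore stall: the terms you propose to pair off are not present, so the enumeration you defer as the ``main challenge'' cannot be carried out as described, and as written the first stage does not verify the formula. Two further points need attention when you repair it: (i) the identification of the surviving monomial with $V_s$ should be checked against the shift between weight indices and positions $i-\OneHalf$; and (ii) for non-integral $w$ the canonical cycle is not a product of $\mathcal{X}$'s alone, so you must also handle the single-strand factors appearing in the factorization constructed in the proof of Lemma~\ref{lem:TwoValues}.
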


\begin{proof}
  Again, we use the fact that crossings in a canonical cycle $z$ are
  in one-to-one correspondence with the non-zero components of the
  selected weight vector. It is also clear that resolving any of those
  crossings also gives a canonical cycle, times $U_i$.
\end{proof}

\begin{thm}
  \label{thm:HomologyPong2}
  The homology of $P'(\x,\y;w)$ is zero unless for 
  each $s=1,\dots,k-1$, 
  \begin{equation}
    \label{eq:InterleavedP}
    \max(x_s,y_s)<\min(x_{s+1},y_{s+1});
  \end{equation}
  and in that case,
  \begin{equation}
    \label{eq:IdentifyPongHomology}
    H_*(P'(\x,\y;w))\cong \frac{\Field[v_1,\dots,v_m]}{(V_1,\dots,V_{k+1})}.
  \end{equation}
\end{thm}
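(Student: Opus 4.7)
The plan is to leverage Lemma~\ref{lem:ModelComplex}, which reduces the computation of $H_*(P'(\x,\y;w))$ to the homology of the $R$-submodule $M\subseteq P'(\x,\y)$ generated by the canonical cycles $Z(\x,\y)$, where $R:=\Field[v_1,\ldots,v_m]$. Assuming the interleaving condition~\eqref{eq:InterleavedP}, the selected weight vector identifies $Z(\x,\y)$ with $\{0,1\}^{k+1}$, so $M$ is the free $R$-module with basis $\{\xi\mid \xi\in\{0,1\}^{k+1}\}$ and differential $\partial\xi=\sum_{i:\,\xi_i=1}V_i\,(\xi-e_i)$. This is precisely the Koszul complex $K_\bullet(V_1,\ldots,V_{k+1};R)$: the homological degree of $\xi$ is its number of $1$'s, and over $\Field=\Zmod 2$ the signs in the usual Koszul contraction are irrelevant, matching the differential above.

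The main step is then to verify that $V_1,\ldots,V_{k+1}$ is a regular sequence in $R$. Using the boundary conventions $x_0=y_0=1$, $x_{k+1}=y_{k+1}=m$ from~\eqref{eq:DefVs}, each $V_s$ is a product of variables $v_i$ with $i$ in the interval
\[ I_s=[\max(x_{s-1},y_{s-1}),\,\min(x_s,y_s)-1]. \]
The top of $I_s$, namely $\min(x_s,y_s)-1$, is strictly less than $\max(x_s,y_s)$, the bottom of $I_{s+1}$; hence the $I_s$ are pairwise disjoint. Monomials in disjoint sets of variables always form a regular sequence in a polynomial ring, so $K_\bullet(V_1,\ldots,V_{k+1};R)$ is a free resolution of $R/(V_1,\ldots,V_{k+1})$ and $H_*(M)=R/(V_1,\ldots,V_{k+1})$, concentrated in homological degree $0$. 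Combined with the quasi-isomorphism $M\hookrightarrow P'(\x,\y)$ (and reading off the weight-$w$ graded piece on each side), this gives~\eqref{eq:IdentifyPongHomology}.

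For the vanishing statement, suppose~\eqref{eq:InterleavedP} fails at some $s\in\{1,\ldots,k-1\}$, so $\max(x_s,y_s)\geq\min(x_{s+1},y_{s+1})$. Then $I_{s+1}$ is empty and $V_{s+1}$ is the empty product $1\in R$; the ideal $(V_1,\ldots,V_{k+1})$ equals $R$ and the quotient vanishes, yielding $H_*(P'(\x,\y;w))=0$. As an alternative argument, one can filter $P'(\x,\y;w)$ by total $v$-degree; the associated graded is a sum of $Q'(\x,\y;w')$'s, each acyclic by Theorem~\ref{thm:HomologyQp} when interleaving fails, giving the same conclusion via the resulting spectral sequence. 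The only real work above is the disjointness of the $I_s$ and the identification of $M$ with a Koszul complex; everything afterwards is classical homological algebra for a regular sequence.
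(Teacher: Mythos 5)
Your proposal follows the paper's route: the identification~\eqref{eq:IdentifyPongHomology} via the model complex of Lemma~\ref{lem:ModelComplex}, and the vanishing via the filtration by $v$-degree whose associated graded is $Q'(\x,\y;w')$, acyclic by Theorem~\ref{thm:HomologyQp}. Your write-up usefully fills in what the paper dismisses as ``follows quickly'': the explicit identification of the model complex with the Koszul complex $K_\bullet(V_1,\dots,V_{k+1};R)$ and the observation that the $V_s$ are monomials in pairwise disjoint variable sets, hence a (weakly) regular sequence, so the homology is $R/(V_1,\dots,V_{k+1})$ in degree zero. One caution on your \emph{primary} vanishing argument: when~\eqref{eq:InterleavedP} fails, the selected weight vector and the identification of $Z(\x,\y)$ with $\{0,1\}^{k+1}$ are not defined (they presuppose interleaving), so you cannot simply read off ``$V_{s+1}=1$, hence the quotient is zero'' from Lemma~\ref{lem:ModelComplex}; but your alternative argument via the filtration and Theorem~\ref{thm:HomologyQp} is exactly the paper's and closes this case correctly.
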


\begin{proof}
  If $\max(x_s,y_s)\geq \min(x_{s+1},y_{s+1})$ for some $s$, then
  $H(Q'(\x,\y))=0$ by Theorem~\ref{thm:HomologyQp}. Now,
  $H(P'(\x,\y))$ has a filtration by $U_i$-powers, whose associated
  graded object is copies of $Q'(\x,\y)$; so if the latter has trivial
  homology, then so does $H(P'(\x,\y))$.

  Otherwise, 
  the verification of Equation~\eqref{eq:IdentifyPongHomology}
  follows quickly from  the model chain complex provided by
  Lemma~\ref{lem:ModelComplex}.
\end{proof}

\begin{proof}[of Theorem~\ref{thm:HomologyPong}]
  If $\x$ is an idempotent state (with $k$ components), let
  \[ \x'=\{1,\dots,m-1\}\setminus \x.\]  The
  condition that Equation~\eqref{eq:InterleavedP} holds for $s=1,\dots
  m-1$ is equivalent to the condition that $\x'$ and $\y'$ are not
  too far, in the sense of Definition~\ref{def:TooFar}.
  
  Moreover, the quotient by the ideal generated by the
  $(V_1,\dots,V_s)$ appearing in Theorem~\ref{thm:HomologyPong2}
  corresponds exactly to the quotient described in
  Proposition~\ref{prop:Ideal}, which gives
  $\Idemp{\x'}\cdot \Clg(m,m-k-1)\cdot \Idemp{\y'}$. Thus, the theorem follows from
  Theorem~\ref{thm:HomologyPong2}.
\end{proof}

\section{A type $DD$ bimodule}
\label{sec:DDmod}

In Section~\ref{sec:Duality}, we establish a Koszul duality,
identifying the cobar algebra of $\Clg(m,k)$ with the quotient of the
pong algebra $\Quot{m}{k}$ (from Equation~\eqref{eq:DefQuot}).

This duality is achieved by tensoring a type $AA$ bimodule defined
using holomorphic methods in Section~\ref{sec:AAmod} and a $DD$
bimodule which is constructed using the following fairly
straightforward algebraic considerations. 

Let ${\mathfrak A}$ be the set of atomic generators for for $\Quot{m}{k}$
(as in Definition~\ref{def:Atomic}). 
Let $f\colon {\mathfrak A}\to \Clg(m,k)$ be the map given by 
\[
  f(\Lji)=L_{i+1}\cdots L_{j}
  \qquad
  f(\Rij)=R_{j}\cdots R_{i+1}
  \qquad
  f(\Xij\cdot\Idemp{\x})=U_{i+1}\cdot \dots U_j\cdot \cdot \Idemp{\x},
\]
provided $\Xij\cdot \Idemp{\x}\neq 0$; i.e. this is the map which maps
an atomic generator to the element of $\Clg(m,k)$ with complementary 
left idempotent and the same
weight.

In Section~\ref{sec:PongGen}, we introduced certain atomic generators
\[ \{\Xij\}_{\substack{0\leq i<j\leq m\\ (i,j)\neq (0,m)}}, \qquad
\{\Lji\}_{1\leq i<j\leq m-1} \qquad\{\Rij\}_{1\leq i<j\leq m-1}.\]
These induce corresponding elements of $\Quot{m}{k}$.

\begin{thm}
  \label{thm:DDmod}
  There is a type $DD$ bimodule
  $\lsup{\Clg(m,k)}\DDmod^{\Quot{m}{k}}$
  with the following properties:
  \begin{itemize}
    \item $\DDmod$ is isomorphic to $\IdempRing{m}{k}$ as a
      $\IdempRing{m}{k}$-bimodule; i.e. 
      there is a set of generators
      of $\DDmod$ which is in one-to-one correspondence with the idempotent states $\x$, $\gamma_\x$, 
      so that
      \[ (\Idemp{\x}\cdot \gamma_{\x} \cdot \Idemp{\x})=\gamma_\x.\]
    \item
      \begin{align}
         \delta^1(\gamma_\x) =&
      \left(\sum_{1\leq i<j\leq m-1} f(\Rij)\otimes \gamma_\x\otimes \Lji
      + f(\Lji) \otimes \gamma_{\x}\otimes \Rij\right)  \nonumber \\
      & +
      \left(\sum_{0\leq i<j\leq m} f(\Xij)\otimes \gamma_{\x}\otimes \Xij \right).
      \label{eq:DDmodDef}
      \end{align}
  \end{itemize}
\end{thm}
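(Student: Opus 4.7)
The plan is to set $\DDmod=\IdempRing{m}{k}$ with generators $\gamma_\x$ indexed by idempotent states, define $\delta^1$ by the formula in~\eqref{eq:DDmodDef}, and then verify the type $DD$ structure relation. Because $\Clg(m,k)$ carries the trivial differential, this relation collapses to the single identity
\[
(\mu_2^{\Clg}\otimes \Id_{\DDmod}\otimes \mu_2^{\Quot{m}{k},\op})\circ (\Id\otimes \delta^1\otimes \Id)\circ \delta^1 \;+\; (\Id\otimes \Id\otimes d^{\Quot{m}{k}})\circ \delta^1 \;=\; 0,
\]
where $\mu_2^{\op}$ reflects the fact that in $\delta^1\circ \delta^1$, the algebra factor produced by the second application multiplies on the left of the first in $\Quot{m}{k}$.

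I would first check idempotent compatibility, which is built into the definition of $f$: an atomic generator $\alpha\in\Quot{m}{k}$ with left/right idempotents $(\x,\y)$ maps to $f(\alpha)\in\Clg(m,k)$ carrying the complementary idempotents, and the generator $\gamma_\x$ is supported on $\Idemp{\x}\cdot\DDmod\cdot\Idemp{\x}$; so each term in \eqref{eq:DDmodDef} sits in the correct component of $\Clg(m,k)\otimes\DDmod\otimes\Quot{m}{k}$. The substance of the proof is a case analysis of $(\Id\otimes\delta^1)\delta^1(\gamma_\x)$ partitioned by the types of the two atomic factors produced (nine combinations among $R$, $L$, $X$), matching each class against the corresponding part of $(\Id\otimes\Id\otimes d)\delta^1(\gamma_\x)$ computed via Lemma~\ref{lem:dAtomic}. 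The homogeneous pairings $RR$, $LL$, $XX$ match $dR_{i,j}=\sum R_{\ell,j}R_{i,\ell}$, $dL_{j,i}=\sum L_{\ell,i}L_{j,\ell}$, and $dX_{i,j}=\sum (X_{\ell,j}X_{i,\ell}+X_{i,\ell}X_{\ell,j})$ directly, using the easily verified identities $f(R_{\ell,j})f(R_{i,\ell})=f(R_{i,j})$ and their analogues. The six mixed pairings ($RL$, $LR$, $RX$, $LX$, $XR$, $XL$) cancel among themselves using the relations in $\Clg(m,k)$, namely $L_iR_i=[\inotin{i-1}][i]U_i$, $R_iL_i=[i-1][\inotin{i}]U_i$, $R_iR_{i+1}=L_{i+1}L_i=0$, together with the identity $f(X_{i,i+1})=U_{i+1}$ and the fact that $dX_{i,i+1}=0$ in $\Quot{m}{k}$ (since $U_{i+1}$ specializes to zero after passing to the quotient $v_1=\cdots=v_m=0$).

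The main obstacle will be the combinatorial bookkeeping across these nine cases: tracking which idempotents allow each composition to be nonzero, and accounting for the fact that strand compositions in $\Pong{m}{k}$ may introduce extra $v_i$-factors that vanish in $\Quot{m}{k}$, so that certain identities one needs hold only after the quotient. Once the conventions are fixed, however, the checks are essentially formal, reflecting the fact that $f$ has been engineered to be the Koszul-dual pairing on atomic generators, so that the quadratic products on the two sides of the bimodule exactly compensate each other's differentials.
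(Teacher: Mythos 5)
Your proposal is correct and follows essentially the same route as the paper: the paper packages your nine-case analysis of pairs of atomic generators as Lemma~\ref{lem:DDmodProd}, then matches the surviving products against $(\Id\otimes\Id\otimes\partial)\circ\delta^1$ via Lemma~\ref{lem:dAtomic}, exactly as you describe, including the observation that $\Rgen{a}{a+1}\cdot\Lgen{a+1}{a}=U_{a+1}$ dies in $\Quot{m}{k}$. The only shift of emphasis is that the mixed pairings cancel primarily because their products \emph{in $\Quot{m}{k}$} coincide in pairs (e.g.\ $\Rgen{a}{c}\cdot\Lgen{b}{a}=\Xgen{a}{b}\cdot\Rgen{b}{c}$), with the $\Clg(m,k)$ relations you cite serving to confirm that the corresponding coefficients agree.
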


More conceptually, in view of Proposition~\ref{prop:AtomicElements}, we
can interpret $\delta^1$ as the sum over all atomic elements $a$ of
pong tensored with the element of $\Clg$ with the same weight as $a$.

\begin{lemma}
  \label{lem:DDmodProd}
  If $\alpha$ and $\beta$ are two atomic generators (for $\Quot{m}{k}$).
  Then, one of the following three properties holds:
  \begin{itemize}
  \item $\alpha\cdot\beta=0$.
  \item There is another pair $\alpha'$ and $\beta'$ of atomic generators with
    $\alpha\cdot\beta=\alpha'\cdot\beta'$.
  \item $\alpha\cdot\beta$ is of the form $\Rgen{a}{b}\cdot \Rgen{b}{c}$ or
    $\Lgen{c}{b}\cdot\Lgen{b}{a}$.
  \item $\alpha\cdot\beta$ is of the form $\Rgen{b}{c}\cdot\Rgen{a}{b}$,
    $\Lgen{b}{a}\cdot\Lgen{c}{b}$, $\Xgen{a}{b}\cdot\Xgen{b}{c}$, or
    $\Xgen{b}{c}\cdot\Xgen{a}{b}$.
  \end{itemize}
\end{lemma}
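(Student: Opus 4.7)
The plan is to prove Lemma~\ref{lem:DDmodProd} by a case analysis on the types of the atomic generators $\alpha$ and $\beta$, each of which is of the form $\Rgen{i}{j}$, $\Lgen{j}{i}$, or $\Xgen{i}{j}$ (multiplied by a specific idempotent), together with the relative position of their ``moving'' strand supports inside $\{0,\dots,m\}$.

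First I would compute $\alpha\cdot\beta$ geometrically by stacking the pong diagram of $\beta$ below that of $\alpha$; the product is non-zero in $\Quot{m}{k}$ only if (a) the right idempotent of $\alpha$ matches the left idempotent of $\beta$, and (b) the lifted composition has weight exactly $\weight(\alpha)+\weight(\beta)$, so that no $v_i$-correction is forced (which would kill the product in $\Quot{m}{k}$). The idempotent and weight checks already rule out many combinations, landing them in case~(1); for instance, stacking $\Rgen{i}{j}$ immediately above another right-moving strand whose output position clashes with the input idempotent of $\Rgen{i}{j}$ typically vanishes, as does any combination whose stacking produces a collision that can only be resolved by introducing a $v_i$-factor.

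For non-zero products, the core of the argument is a geometric classification. The combined diagram has at most two ``moving components,'' and the behaviour at their meeting points determines the case. When the moving strands of $\alpha$ and $\beta$ share an endpoint $b$ in a rigid way, we obtain one of the special forms listed in (3) or (4): in case~(4), the two moving strands merge into a single chain (e.g.\ $\Rgen{b}{c}\cdot\Rgen{a}{b}$ yields a single strand from $a$ to $c$ passing through $b$, which is precisely a term in $d\Rgen{a}{c}$ by Lemma~\ref{lem:dAtomic}); in case~(3), they sit side-by-side with $b$ in the middle idempotent (e.g.\ $\Rgen{a}{b}\cdot\Rgen{b}{c}$ produces two parallel right-moving strands). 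Analogous statements account for the $\Lgen{}{}\cdot\Lgen{}{}$ and $\Xgen{}{}\cdot\Xgen{}{}$ forms. When no such rigid endpoint-sharing occurs---either the supports are disjoint, or they overlap in a ``separable'' way that does not force a chain---the resulting diagram can be re-expressed as another product $\alpha'\cdot\beta'$ of atomic generators, typically by swapping the order of the two moving components or by re-cutting the composite at a different intermediate idempotent, placing us in case~(2).

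The main obstacle will be the sheer bookkeeping: nine type combinations, each with subcases for disjoint, nested, meeting, and staggered supports, and careful tracking of idempotents and weight contributions. Mixed-type products such as $R\cdot L$, $L\cdot R$, and $X\cdot R$ are particularly delicate---most either vanish in $\Quot{m}{k}$ once the $v_i$-factor analysis is carried out, or they admit an explicit rewriting as a different atomic-pair product. With cases~(3) and~(4) identified as exactly the endpoint-sharing rigid configurations and all other non-zero products falling into~(2), the four cases exhaust all possibilities.
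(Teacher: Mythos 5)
Your approach is essentially the paper's: its proof is precisely the case-by-case list of relations among atomic generators that you outline --- commutation identities for disjoint or nested supports (your case (2), realized e.g.\ by $\Rgen{a}{c}\cdot \Lgen{b}{a} = \Xgen{a}{b}\cdot \Rgen{b}{c}$ and its variants), the side-by-side and chain configurations of cases (3)--(4), and vanishing for same-support, interleaved, or idempotent-incompatible pairs. One detail to get right when executing the bookkeeping: several of the vanishings (e.g.\ $\Rgen{a}{c}\cdot\Rgen{b}{d}=0$ for interleaved supports, or $\Rgen{a}{b}\cdot\Lgen{b}{a}=0$ when $b>a+1$) are forced by the double-crossing condition $\cross(\Liftg\circ f)<\cross(g)+\cross(f)$ in the definition of $\mu_2$, already in $\Pong{m}{k}$, rather than by a $v_i$-factor; only products such as $\Rgen{a}{a+1}\cdot\Lgen{a+1}{a}=U_{a+1}$ die specifically because of the passage to $\Quot{m}{k}$.
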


\begin{proof}
  The atomic generators satisfy the following relations.
  For $a<b<c<d$,
  \begin{align*}
    \Xgen{a}{d}\cdot \Xgen{b}{c}&=\Xgen{b}{c}\cdot\Xgen{a}{d} \\
    \Xgen{a}{b}\cdot \Xgen{c}{d}&=    \Xgen{c}{d}\cdot \Xgen{a}{b} 
  \end{align*}
  Also for any $a\leq b\leq c$
  \begin{align}
    \Xgen{a}{b}\cdot \Xgen{a}{c}&= \Xgen{a}{c}\cdot\Xgen{a}{b} \label{eq:BorderlineXrel1} \\
    \Xgen{a}{c}\cdot \Xgen{b}{c}&= \Xgen{b}{c}\cdot\Xgen{a}{c}; \label{eq:BorderlineXrel2}
  \end{align}
  and indeed both sides of Equation~\eqref{eq:BorderlineXrel1} vanish if $0<a$;
  similarly, both sides of Equation~\eqref{eq:BorderlineXrel2} vanish if $c=m$.
  For $a<b<c<d$,
  \begin{align*}
    \Rgen{a}{b}\cdot \Rgen{c}{d} &= \Rgen{c}{d}\cdot \Rgen{a}{b} \\
    \Rgen{a}{c}\cdot \Rgen{b}{d} &= 0 \\ 
    \Rgen{b}{d}\cdot \Rgen{a}{c} &=0 \\ 
    \Rgen{a}{d}\cdot \Rgen{b}{c}&= 0 =\Rgen{b}{c}\cdot\Rgen{a}{d} \label{eq:NestedR}
  \end{align*}
  ($\Rgen{a}{c}\cdot\Rgen{b}{c}=0$ because the juxtaposition contains a double-crossing; $\Rgen{b}{c}\cdot\Rgen{a}{c}=0$ because of the idempotents).
  Corresponding identities hold with $\Lgen{j}{i}$ in place of $\Rgen{i}{j}$.
  For any $a\leq b\leq c$, we have the following:
  \begin{align*}
    \Rgen{a}{c}\cdot \Lgen{b}{a} &= \Xgen{a}{b}\cdot \Rgen{b}{c} \\
    \Lgen{c}{b}\cdot\Rgen{a}{c} &=\Rgen{a}{b}\cdot \Xgen{b}{c}  \\
    \Rgen{a}{c}\cdot \Lgen{b}{a} &= \Xgen{b}{c}\cdot \Lgen{b}{a} \\
    \Rgen{a}{b}\cdot \Lgen{c}{a}&= \Lgen{c}{b}\cdot \Xgen{a}{b} 
  \end{align*}
  For any $a<b$, if $\xi_{a,b},\eta_{a,b}\in\{\Xgen{a}{b},\Rgen{a}{b},\Lgen{b}{a}\}$, we have that
  \[ \xi_{a,b}\cdot \eta_{a,b}=0. \]
  (When $\xi$ is of type $R$, this holds for idempotent reasons unless
  $\eta$ is of type $L$, in which case it holds because of a
  double-crossing if $b>a+1$. Moreover, $\Rgen{a}{a+1}\cdot\Lgen{a+1}{a}=U_{a+1}$
  in $\Pong{m}{k}$, therefore the product vanishes in $\Quot{m}{k}$.)
\end{proof}

  \begin{proof}[of Theorem~\ref{thm:DDmod}]
    According to Lemma~\ref{lem:DDmodProd}, the terms from
    $(\mu_2\otimes\Id\otimes \mu_2)\circ (\Id_{\Clg}\otimes \delta^1\otimes\Id_{\mathcal P})\circ\delta^1$
    has possibly non-zero terms of the following types:
    \begin{itemize}
      \item 
        $\alpha\otimes \gamma_\x\otimes \Rgen{a}{b}\cdot\Rgen{b}{c}$;
        but in this case $\alpha=0$.
      \item 
        $\alpha\otimes \gamma_\x\otimes \Lgen{c}{b}\cdot\Lgen{b}{a}$;
        but in this case $a=0$.
      \item
        terms of the form
        $\alpha\otimes\gamma_\x\otimes \Rgen{b}{c}\cdot \Rgen{a}{b}$
      \item
        terms of the form
        $\alpha\otimes\gamma_\x\otimes \Lgen{b}{a}\cdot \Lgen{c}{b}$
      \item
        terms of the form
        $\alpha\otimes\gamma_\x\otimes (\Xgen{a}{b}\cdot \Xgen{b}{c}+\Xgen{b}{c}\cdot \Xgen{a}{b})$
    \end{itemize}
    These terms, in turn, are precisely those that appear in
    $(\Id\otimes \Id\otimes\partial)\circ\delta^1$. (See Lemma~\ref{lem:dAtomic}.)
  \end{proof}

\subsection{Gradings}

Endow $Q$ with the $\Z$-grading $\Ngr$ inherited from the $\Z$-grading
on $\Pong{m}{k}$ specified in Equation~\eqref{eq:Ngr}.

\begin{prop}
  \label{prop:GradedDD}
  Consider the type $DD$ bimodule $X=~\lsup{C}X^Q$ constructed above.
  Suppose that there are generators $x$ and $y$ so that $\delta^1(x)$
  contains $a\otimes y \otimes b$ with non-zero multiplicity. Then,
  $\weight(a)=\weight(b)$ and $\Ngr(b)=-1$.
\end{prop}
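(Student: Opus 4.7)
The plan is to read both statements directly off the explicit formula~\eqref{eq:DDmodDef}. Every nonzero summand of $\delta^1(\gamma_\x)$ has the form $f(b)\otimes\gamma_\x\otimes b$, where $b$ runs over the atomic generators of $\Quot{m}{k}$ (the $R_{i,j}$, $L_{j,i}$, and $X_{i,j}$ compatible with $\x$) and $a=f(b)$. So it suffices to verify both equalities when $b$ is atomic and $a=f(b)$.

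For the weight equality, I would simply note that the map $f\colon{\mathfrak A}\to\Clg(m,k)$ was defined precisely so as to preserve the weight vector: $f(R_{i,j})=R_j\cdots R_{i+1}$ and $f(L_{j,i})=L_{i+1}\cdots L_j$ both have $\vec\weight$ equal to $\OneHalf$ on coordinates $i+1,\ldots,j$ and $0$ elsewhere, while $f(X_{i,j})=U_{i+1}\cdots U_j$ has $\vec\weight$ equal to $1$ on those coordinates. Matching these against the weight vectors of the atomic pong generators from Section~\ref{sec:PongGen} gives $\weight(a)=\weight(b)$ in each case.

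For the grading claim, I would observe that by the definition~\eqref{eq:Ngr}, the equation $\Ngr(b)=-1$ is equivalent to $2\weight(b)-\Cr(b)=1$, which is exactly the equality case of Proposition~\ref{prop:AtomicElements} characterizing atomic elements. Hence $\Ngr(b)=-1$ on every atomic $b$, completing the verification.

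There is essentially no obstacle here; both pieces unpack to definitions together with the characterization of atomic elements that is already in hand. The only step requiring any genuine bookkeeping is the weight-preservation of $f$ in the cusped cases $X_{0,j}$ and $X_{i,m}$, where one must correctly track contributions of the reflected lift and of the cusp itself to both $\cross$ and $\vec\weight$; but this bookkeeping was effectively carried out when these generators were introduced in Section~\ref{sec:PongGen}, and after accounting for it the match with $U_1\cdots U_j$ (resp.\ $U_i\cdots U_m$) is immediate.
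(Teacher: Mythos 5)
Your proposal is correct and matches the paper's argument, which is simply "straightforward from the definitions": the paper itself notes right after Theorem~\ref{thm:DDmod} that $\delta^1$ is the sum over atomic elements $b$ of $b$ tensored with the element of $\Clg$ of the same weight, and your identification of $\Ngr(b)=-1$ with the equality case of Proposition~\ref{prop:AtomicElements} is exactly the intended reading of Equation~\eqref{eq:Ngr}. Nothing is missing.
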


\begin{proof}
  This is straightforward from the definitions.
\end{proof}

In the language of Section~\ref{sec:GradingConventions}, the above
proposition states that $X$ is a Maslov/Alexander graded type $DD$
bimodule supported in grading $0$, where $C=\Clg(m,k)$ is given the
trivial $\Z$-grading and $\OneHalf\Z^m$-grading specified by the weight vector;
and $\Quot$ is given the $\Z$-grading $\Ngr$ and $\OneHalf \Z^m$-grading
given by $-1$ times the weight vector.

\newcommand\bSource{\overline\Source}
\newcommand\westpunctures{\mathbf{P}}
\newcommand\eastpunctures{\mathbf{Q}}
\newcommand\westrhos{\vec{\rhos}}
\newcommand\sigmas{\boldsymbol\sigma}
\newcommand\eastrhos{\vec{\sigmas}}
\newcommand\ind{\mathrm{ind}}

\newcommand\bdyA{\partial^{\partial_\alpha}}
\newcommand\bdyB{\partial^{\partial_\beta}}

\section{Holomorphic aspects}
\label{sec:AAmod}

Fix integers $m$ and $k$ with $0<k<m$.  

We endow $\Clg(m,k)$ with a trivial grading -- every element is
supported in grading $0$. Endow $\Quot{m}{k}$ with the renormalized
grading $\Ngr(a)=\#\cross(a)-2\left(\sum_i \weight_i(a)\right)$; while
$\Clg(m,k)$ is thought of as having the trivial (i.e. vanishing) $\Z$-grading.

Our aim is to prove the following:

\begin{thm}
  \label{thm:ConstructY}
  There is an Maslov/Alexander bigraded, strictly unital type $AA$ bimodule
  $\lsub{\Quot{m}{k}}\Ymod_{\Clg(m,k)}$ with the following properties:
  \begin{enumerate}[label=(Y-\arabic*),ref=(Y-\arabic*)]
  \item
    \label{Y:Idempotents}
    $\Ymod$ is isomorphic to $\IdempRing{m}{k}$ as an
      $\IdempRing{m}{k}$-bimodule; i.e. 
      there is a set of generators
      of $\Ymod$ which is in one-to-one correspondence with the idempotent states $\x$, 
      so that
      \[ \Idemp{\x}\cdot Y_\x\cdot \Idemp{\x}=Y_\x.\]
    \item
      \label{Y:LR}
      If $i\in \x$ and $i+1\not\in\x$, 
      let $\x'=(\x\cup \{i+1\})\setminus\{i\}$, then 
      \[        m_{1|1|1}(R_i,Y_{\x'},L_i)=Y_{\x} \qquad{\text{and}}\qquad m_{1|1|1}(L_i,Y_{\x},R_i)=Y_{\x'}.
      \]
    \item
      \label{Y:X}
      If $\{i,i+1\}\subset \x$, then
      \[
        m_{1|1|1}(X_i,Y_{\x},U_i)=Y_{\x}.
        \]
      \item
        \label{Y:0}
        If $1\in \x$, then
        \[
        m_{1|1|1}(X_1,Y_{\x},U_1)=Y_{\x}.
        \]
      \item \label{Y:m}
        If $m-1\in \x$, then
      \[        m_{1|1|1}(X_m,Y_{\x},U_m)=Y_{\x}.
      \]
    \item \label{Y:graded} The bimodule $\Ymod$ is supported in grading $\gr=0$ and $\vec{v}=0$.
  \end{enumerate}
\end{thm}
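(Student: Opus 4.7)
The bimodule $\Ymod$ will be built from a bordered Heegaard-type diagram $\HD$ whose boundary decomposes into two pieces: a \emph{pong-bordered} portion carrying the actions of $\Quot{m}{k}$ and a standard bordered portion (in the sense of~\cite{HolKnot}) carrying the actions of $\Clg(m,k)$. Following the general scheme of~\cite{InvPair,TorusMod}, I would take $\HD$ to be the simplest such diagram in which the intersection points of the $\alpha$- and $\beta$-curves are in one-to-one correspondence with the idempotent states $\x\subset\{1,\dots,m-1\}$ of size $k$. This yields a distinguished generator $Y_\x$ for each $\x$ satisfying~\ref{Y:Idempotents}, with coinciding left and right idempotents.

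Actions $m_{p|1|q}(a_1,\dots,a_p,Y_\x,b_1,\dots,b_q)$ are then defined by counting rigid embedded holomorphic curves in $\Sigma\times[0,1]\times\R$ whose Reeb-chord asymptotics on the two boundaries are prescribed by the atomic factorizations (cf.\ Proposition~\ref{prop:PongGen}) of the $a_i\in\Quot{m}{k}$ and the $b_j\in\Clg(m,k)$. In practice these counts reduce to enumerations of combinatorial strand-like domains on $\HD$, much as in~\cite{InvPair}. Strict unitality is built in at the level of the chord-matching conventions.

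Properties~\ref{Y:LR}--\ref{Y:m} then amount to local $m_{1|1|1}$ calculations, one for each atomic generator of $\Quot{m}{k}$. Each $R_{i,j}$, $L_{j,i}$, $X_{i,j}$, $X_{0,j}$, or $X_{i,m}$ is matched with an elementary rigid domain on $\HD$ whose other boundary carries exactly the product of $R$'s, $L$'s, or $U$'s on the $\Clg(m,k)$-side with the same weight vector, i.e.\ precisely the value of the assignment $f$ used in Theorem~\ref{thm:DDmod}. These elementary domains are visible directly from the model. Property~\ref{Y:graded} reduces to observing that the two boundaries of every domain carry equal weight, and that the $\Ngr$-grading of Equation~\eqref{eq:Ngr} is normalized so that a rigid domain has index one, which balances~\eqref{eq:GradedBimodule} with $\gr(Y_\x)=0$.

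The principal obstacle is the $A_\infty$ bimodule relation. I would verify it by analyzing codimension-one ends of the moduli spaces of index-two curves: each end contributes either a composition of two rigid actions, a rigid action paired with the pong differential (compatible with Lemma~\ref{lem:dAtomic}), or a rigid action in which two consecutive inputs have been multiplied in $\Quot{m}{k}$ or $\Clg(m,k)$. The required pairwise cancellations are dictated by the algebraic relations in these two algebras, namely the same relations enumerated in Lemma~\ref{lem:DDmodProd} that forced the structure equation for the $DD$ bimodule in Theorem~\ref{thm:DDmod}. Making the enumeration of degenerations precise, and checking that it exhausts the boundary of every relevant moduli space, is the substantive technical step, and it is where the present plan leans hardest on the bordered Floer template of~\cite{InvPair,TorusMod,HolKnot}.
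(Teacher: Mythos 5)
Your overall architecture is the paper's: the half-identity $\alpha$-$\beta$-bordered diagram with one generator per idempotent state, actions defined by counting rigid holomorphic curves with Reeb-chord asymptotics on the two boundary pieces, the $A_\infty$ relation from codimension-one ends (two-story, join-curve, shuffle-curve, and collision ends, adapted from~\cite{InvPair}), and the grading from an index/weight computation on domains. On that level the plan is sound, though note that ruling out $\beta$-boundary degenerations is not automatic here (the $\beta$-arcs are not homologically independent); the paper handles this by observing that every region supporting such a degeneration meets $\partial_\alpha$, so the degeneration would force closed Reeb orbits on the $\alpha$-side, which are excluded from the moduli spaces considered.

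The genuine gap is your claim that Properties~\ref{Y:LR}--\ref{Y:m} are all ``local $m_{1|1|1}$ calculations'' realized by ``elementary domains visible directly from the model.'' That is true for~\ref{Y:LR},~\ref{Y:0}, and~\ref{Y:m}, which are carried by bigons, but it fails for~\ref{Y:X}: the domain supporting $m_{1|1|1}(X_i,Y_{\x},U_i)$ is the annular region $B_i$ between $Z_i$ and $\partial_\beta$, and the relevant curves form a one-parameter family (parameterized by a cut along $\alpha_i$) \emph{before} imposing the boundary-monotonicity constraint $t(q_1)=t(q_2)$ on the two $\beta$-punctures labelled $R_i$ and $L_i$. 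The count is therefore not a rigid-domain enumeration. The paper establishes it by gluing two such one-parameter moduli spaces ${\mathfrak M}^1$ and ${\mathfrak M}^2$ (corresponding to the two orderings $R_iL_i$ and $L_iR_i$ of the $\alpha$-chord) along their common join-curve end, checking the signs of $t(u(q_1))-t(u(q_2))$ at the remaining ends (where the annulus degenerates into two bigons), and concluding that $u\mapsto t(u(q_1))-t(u(q_2))$ is proper of odd degree near $0$. Without some version of this argument, or an equivalent explicit identification of the conformal moduli of annuli, the coefficient of $Y_{\x}$ in $m_{1|1|1}(X_i,Y_{\x},U_i)$ is not determined, and this is precisely the action needed later to show $\phi(X_i)=U_i$ and hence quasi-invertibility of the $DD$ bimodule.
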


See Equations~\eqref{eq:GradedBimodule}
and~\eqref{eq:AlexanderGrading} for the grading conventions on
bimodules used here.

\begin{remark}
  We have assumed throughout that $0<k<m$. In the case where $k=0$,
  $\Clg(m,0)\cong \Field$, and $U_i=0$, so the above statement does
  not make any sense. Although the following discussion does work
  $k=m-1$, in the applications, we will not need
  Theorem~\ref{thm:ConstructY} in that case;
  see~\ref{subsec:Extremes}.
\end{remark}

The above bimodule is constructed as follows. First, we construct
bimodules associated to certain types of Heegaard diagrams by suitably
adapting bordered methods (cf.~\cite{InvPair,HolKnot}).  The bimodule
is then associated to a simple ``half-identity'' Heegaard diagram (in
the spirit of~\cite{HomPairing}). The verification of the properties
from Theorem~\ref{thm:ConstructY} is fairly routine.

Most of this section is devoted to importing bordered methods; the
proof of Theorem~\ref{thm:ConstructY} is given at the end of this
section.

\subsection{Lifted partial permutations and mirror-matched circles}

\begin{defn}
  Fix an integer $m\geq 2$.  Let $P$ be a set of $2m-2$ points in the
  circle, equipped with a fixed-point free involution $M$ (called a matching)
  that extends to an 
  orientation-reversing involution of the circle.  The data $(S^1,P,M)$
  is called a {\em mirror-matched circle} with $m-1$ pairs.
\end{defn}

\begin{defn}
A path $\rho\colon [0,1]\to S^1$ is called {\em Reeb-like} if its
local degrees away from $\rho(0)$ and $\rho(1)$ are non-negative. We
view two Reeb-like paths as equivalent if they are homotopic relative
to their endpoints.  The initial point $\rho(0)$ is denoted $\rho^-$
and the terminal point $\rho(1)$ is denoted $\rho^+$.  A set $\rhos$
of Reeb-like paths is called {\em consistent} if for any two distinct
$\rho_1,\rho_2\in \rhos$, the set $\{\rho_1^-, \rho_2^-,
M(\rho_1^-),M(\rho_2^-)\}$ consists of four (distinct) points; and
also $\{\rho_1^+, \rho_2^+, M(\rho_2^+),M(\rho_2^+)\}$ consists of
four points. (Note that if $\rhos$ consists of a single Reeb-like
path, then it is automatically consistent.)  Finally, if $\rhos$ is a
set of Reeb-like paths, let $\rhos^-$ resp. $\rhos^+$ denote the set of initial
resp. terminal points of each path in $\rhos$,
\end{defn}

\begin{defn}
  Let $(S^1,P,M)$ be a mirror-matched circle with $m-1$ pairs.
  For each  integer $1\leq k\leq m-1$, an {\em $M$-consistent constraint packet} consists of a pair of 
  subsets $S,T\subset P/M$ and a
  consistent set $\rhos$
  of Reeb-like paths $\rhos$ with the following properties:
  \begin{itemize}
  \item $\rhos^-/M \subset S$
  \item $\rhos^+/M\subset T$
  \item $|S|=|T|=k$
  \item $S\setminus (\rho^-/M)=T\setminus (\rhos^+/M)$.
  \end{itemize}
\end{defn}

We write an explicit model for a mirror-matched circle, as
follows. The circle will be thought of as the quotient $\R/2m \Z$, the
involution $r$ is induced by the map $x\mapsto -x$ (thought of as in
involution of $\R$), and $P=\{i+\OneHalf\}_{i=-m}^{m-1}$.

For the above parametrization of the mirror-matched circle, a set of
homotopy classes of Reeb-like paths corresponds to a subset $S\subset
P$ with $S\cap -S=\emptyset$, and a function $f\colon S\to \OneHalf +
\Z$, with the property that $s\leq f(s)$ for all $s\in S$.

Let $({\widetilde S},{\widetilde f})$ be a lifted partial permutation.
There is a corresponding $M$-consistent constraint packet defined as follows:
\begin{itemize}
\item $S\subset{\widetilde S}/2m \Z$ is the image of those
  $\sigma\in {\widetilde S}$ with $\sigma\leq {\widetilde f}(\sigma)$
\item $T={\widetilde f}({\widetilde S}/2m\Z)$
  consists of the equivalence class of those ${\widetilde f}(\sigma)\in\R$
  with the property that $\sigma\leq {\widetilde f}(\sigma)$.
\end{itemize}

\begin{prop}
  \label{prop:PacketToLiftPartialPermutation}
  The above map sets up a one-to-one correspondence between lifted
  partial permutations on $k$ letters and $M$-consistent constraint
  packets.
\end{prop}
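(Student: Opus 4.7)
The plan is to verify the map is well-defined and then construct an explicit inverse.

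First I would check well-definedness. Fix a lifted partial permutation $(\LiftS, \Liftf)$. Using the identification of the mirror-matched circle with $\R/2m\Z$ (so that the quotient $P/M$ coincides with $(\OneHalf+\Z)/G_m$, and the subgroup $G_m$ corresponds exactly to the symmetry extending the involution $M$), each $G_m$-orbit in $\LiftS$ contributes its contribution to $S$ via the point $[\sigma]\in P/M$. For each orbit, I can choose a unique representative $\sigma\in \LiftS$ with $\sigma \leq \Liftf(\sigma)$ (using the reflection at $\OneHalf$ if necessary), yielding a Reeb-like path in $\R/2m\Z$ from $[\sigma]$ to $[\Liftf(\sigma)]$; orbits where every representative satisfies $\sigma = \Liftf(\sigma)$ contribute no path and account exactly for the difference set $S\cap T$ in the packet axioms. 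The resulting $(S,T,\rhos)$ then has $|S|=|T|=k$ (equal to $|\LiftS/G_m|$), and $S\setminus(\rhos^-/M) = T\setminus(\rhos^+/M)$ consists of the classes of stationary orbits.

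Next I would verify $M$-consistency of $\rhos$. For two distinct $\rho_1,\rho_2\in \rhos$ coming from orbit representatives $\sigma_1,\sigma_2$, the four initial-point data $\{\rho_i^-, M(\rho_i^-)\}$ in $P$ correspond to the intersection of the orbits $G_m\cdot\sigma_1$ and $G_m\cdot\sigma_2$ with a fundamental domain and its reflection. Distinctness of these four points reduces to $[\sigma_1]\neq [\sigma_2]$ in $\LiftS/G_m$, which holds by assumption. Distinctness of the terminal four points uses in addition that the induced map $\Liftf/G_m$ is injective, which is part of the definition of a lifted partial permutation.

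For the inverse, given an $M$-consistent constraint packet $(S,T,\rhos)$, I lift each $\rho\in\rhos$ to a unique path in $\R$ starting at some representative $\sigma\in \OneHalf+\Z$ of $\rho^-$ and ending at $\Liftf(\sigma)\geq \sigma$. I take $\LiftS$ to be the $G_m$-orbit of $\{\sigma : \rho\in\rhos\} \cup S_{\mathrm{stat}}$, where $S_{\mathrm{stat}}\subset P/M$ is the common set $S\setminus(\rhos^-/M)=T\setminus(\rhos^+/M)$ (each element contributing via the stationary rule $\Liftf(\sigma)=\sigma$), and extend $\Liftf$ by $G_m$-equivariance. The four-distinct-points conditions then translate back into: $|\LiftS/G_m|=k$, and $\Liftf/G_m$ is injective on $\LiftS/G_m$. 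Finally, these two maps are visibly mutual inverses on both sides.

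The only real subtlety is the bookkeeping where the lift $\sigma$ of $\rho^-$ is ambiguous, namely when $[\rho^-]$ lies ``next to'' one of the fixed walls at $\OneHalf$ or $m-\OneHalf$; the $M$-consistency hypothesis (four distinct points) is exactly strong enough to guarantee that these lifts can be chosen compatibly within each orbit without introducing extra identifications, so the main obstacle reduces to careful case-analysis near the walls rather than any substantive difficulty.
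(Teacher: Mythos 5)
Your proposal is correct and follows the same route as the paper: the paper's entire proof is the single observation that, by $G_m$-equivariance, $\widetilde f(\sigma)<\sigma$ if and only if $\widetilde f(-\sigma)>-\sigma$, which is precisely your device of selecting in each orbit the unique representative with $\sigma\leq\widetilde f(\sigma)$ (stationary orbits aside). The rest of your write-up (consistency from injectivity of $Q\circ f$, and the explicit inverse by lifting Reeb-like paths) is the detail the paper leaves as "straightforward."
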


\begin{proof}
  The argument is straightforward, amounting to the observation
  that ${\widetilde f}(\sigma)<\sigma$ if and only if
  ${\widetilde f}(-\sigma)>-\sigma$.
\end{proof}

On the level of pong diagrams, the above one-to-one correspondence can
be described as follows. Start from a pong diagram for a lifted partial
permutation, and reflect it once (to obtain two fundamental domains
for the $G_m$ action). Next, remove all the strands that are moving
down and to the left. Finally, view the resulting picture as taking place
on $S^1\times [0,1]$ (where $S^1=\R/2m\Z$).

See Figure~\ref{fig:PongToReeb}.

\begin{figure}[ht]
\input{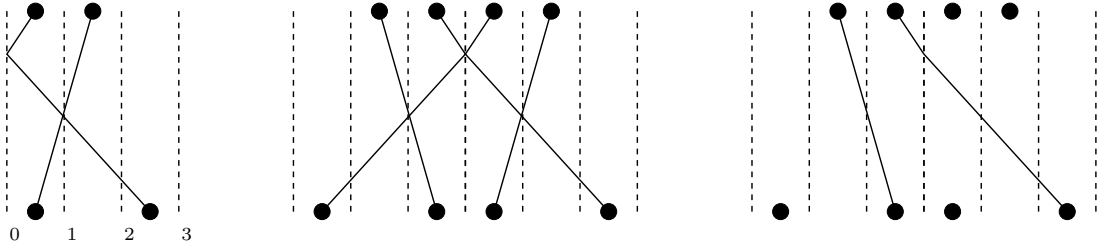}
\caption{\label{fig:PongToReeb}
{\bf{Pong diagrams to Reeb-like chord packets.}}}
\end{figure}

\begin{defn}
  \label{def:PongAlgElt}
  If $\sigmas$ is an $M$-consistent constraint packet, and
  $|\sigmas|\leq k$, we define the corresponding pong algebra element
  $a(\sigmas)$ as follows.  When $|\sigmas|=k$, let $a(\sigmas)$ be
  the element of $\Pong{m}{k}$ associated to the corresponding lifted
  partial permutation as in
  Proposition~\ref{prop:PacketToLiftPartialPermutation}.  When
  $|\sigmas|<k$, we sum over all lifted partial permutations obtained
  by extending by the identity map the given lifted partial
  permutations.
\end{defn}

\subsection{$\alpha$-$\beta$ bordered diagrams}
  
\begin{defn}
  \label{def:Hab}
An {\em $\alpha$-$\beta$-bordered diagram} $\HD$ consists of the following data:
\begin{itemize}
  \item An oriented surface $\Sigma$ with genus $g$,
    whose boundary is decomposed as a disjoint union
    \[ \partial\Sigma=\partial_\alpha\coprod \partial_\beta.\]
  \item A collection $\{\alpha_1,\dots,\alpha_{m-1}\}$ 
    of $\alpha$-arcs whose boundaries lie on $\partial_\alpha$
  \item A collection 
    $\{\beta_1,\dots,\beta_{m-1}\}$ of $\beta$-arcs whose boundaries lie
    on $\partial_\beta$.
  \item A collection $\{\alpha^c_i\}_{i=1}^g$ and $\{\beta^c_i\}_{i=1}^g$ of
    pairwise disjoint, embedded circles.
  \item The boundary $\partial_\alpha$ consists of a disjoint union
    of circles $Z_1,\dots,Z_m$, labelled so that $\alpha^c_i$ is a path
    from $Z_i$ to $Z_{i+1}$.
  \item The boundary $\partial_\beta$ consists of a single
    mirror-matched circle $Z$, where the matching is given by the
    endpoints of the $\beta_i$ arcs. 
\end{itemize}
This data is also required to satisfy the following compatibility
condition.  If we cut $\Sigma$ along the along all the $\beta$-circles
and arcs, we obtain $m$ components $B_1\cup\dots\cup B_m$ with the following 
properties:
\begin{itemize}
\item The $B_i$ has a boundary component $\beta_{i-1}$
  (unless $i=1$, in which case this boundary component is empty) and
  another boundary component $\beta_i$ (unless $i=m$, in which case
  this boundary component is empty).
\item The boundary $\partial B_i$ meets $Z$ in one arc 
if $i\in \{1,m\}$, and it meets $Z$ in two arcs otherwise. 
\item  $B_i$ meets exactly one component of $\partial_\alpha$.
\end{itemize}
\end{defn}

The boundary component $\partial_\beta$ is the ``pong-bordered''
portion of the boundary of $\HD$ mentioned in the introduction.

The key example of an $\alpha$-$\beta$-bordered diagram $\HD$ --
indeed, the only one we consider in this paper -- is the {\em
  half-identity diagram} $\HD_m$, indexed by the integer $m$, defined
as follows. $\HD_m$ has genus $0$ and $m+1$ boundary components, as
pictured in Figure~\ref{fig:HalfIdentity}.

\begin{defn}
  \label{def:HalfIdentity}
  The {\em half-identity diagram} $\HD$ is obtained from a genus $0$ surface
  with $m+1$ boundary components, one of which is
  $\partial_\beta$; and the remaining $m$ boundary components are
  labelled $\partial_\alpha^1,\dots,\partial_\alpha^m$.  Draw an arc
  $\alpha_i$ from $\partial_\alpha^i$ to $\partial_\alpha^{i+1}$, and
  let $\beta_i$ be an arc from $\partial_\beta$ to itself that is disjoint from all
  $\alpha_j$ with $i\neq j$, intersecting $\alpha_i$ in a single transverse point.
\end{defn}

\begin{figure}[ht]
\input{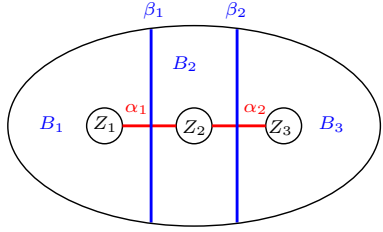}
\caption{\label{fig:HalfIdentity}
{\bf{Half identity diagram with $m=3$.}}}
\end{figure}

It follows from the definitions that to each $\alpha$-$\beta$-bordered
diagram $\HD$, there is a permutation $\sigma$ on $m$ letters with the
property that $B_i$ meets $Z_{\sigma(i)}$.  For the half-identity,
this permutation is the identity.

\subsection{Heegaard states for bordered diagrams}
\label{subsec:HeegaardStates}

\begin{defn}
  \label{def:HeegaardState}
  Let $\HD$ be an $\alpha$-$\beta$-bordered Heegaard diagram as in
  Definition~\ref{def:Hab}.  Fix an integer $0\leq k < m$.  A {\em
    $k$-fold Heegaard state} consists of the following data:
\begin{itemize}
\item A $k$-element subset $S\subset \{1,\dots,m-1\}$. 
\item A $k$-element subset $T$ of $\{1,\dots,m-1\}$.
\item a $g+k$-tuple of points $\x$ 
\end{itemize}
satisfying the following conditions:
\begin{itemize}
\item
  $\x\subset \left(\bigcup_{s\in S}\alpha_s\right)\cup
  \left(\bigcup_{i=1}^g \alpha_i^c\right)$.
\item $\x\subset \left(\bigcup_{t\in T}\beta_t\right)\cup
  \left(\bigcup_{i=1}^g \beta_i^c\right)$.
  \item Each $\alpha$-circle contains exactly one component of $\x$.
  \item Each $\beta$-circle contains exactly one component of $\x$.
  \item For each $s\in S$, the corresponding $\alpha$-arc
    $\alpha_s$ contains exactly one component of $\x$.
  \item Each $t\in T$, the corresponding $\beta$-arc 
    $\beta_t$ contains exactly one component of $\x$.
\end{itemize}
\end{defn}

Note that $S$ and $T$ are determined by $\x$.
Let $\States(\HD)$ denote the set of Heegaard states of $\HD$.

We associate to $\HD$ a type $AA$-bimodule
$\lsub{\Quot{m}{k}}\CFBA(\HD)_{\Clg(m,k)}$.

As a $\Zmod{2}$-vector space, $\CFBA(\HD)$ is generated
by Heegaard states.
The $\IdempRing{m}{k}$-bimodule structure is specified by
\[ \Idemp{S}\cdot \x\cdot \Idemp{T}=\x,\]
where the sets $S,T\subset \{1,\dots,m-1\}$ are as in
Definition~\ref{def:HeegaardState}.

Actions on the module $\CFBA(\HD)$ are defined by counting holomorphic
disks, modifying the definition from~\cite{HolKnot} slightly to take
into account the $\partial_\beta$ boundary. (See
also~\cite{Bimodules,HomPairing}.)  We pause our definition now to
describe elements of the holomorphic theory, returning to the
definition afterwards.

\subsection{Pseudo-holomorphic curves}

Following~\cite{LipshitzCyl}, we consider pseudo-holomorphic curves in
$\Sigma\times[0,1]\times \R\to \Sigma$  with respect to an admissible
almost-complex structure~\cite[Section~5]{HolKnot}.

We consider $J$-holomorphic curves
\[u \colon (\Source,\partial \Source) \to (\Sigma\times [0,1]\times \R
,(\alphas\times\{1\}\times \R)\cup(\betas\cup\{0\}\times \R)),\]
satisfying the asymptotic conditions of a pre-flowline
(cf.~\cite[Definition~5.4]{HolKnot}), with a few minor modifications:
there will now not be any punctures marked by Reeb orbits, and there will be constraints on both $\partial_\alpha\HD$ and $\partial_\beta\HD$.

We outline the construction in a little more detail presently.

\begin{defn}
  \label{def:AlgConstraintPacket}
  A {\em $\beta$-constraint packet} $\sigmas$ is a union of Reeb
  chords on $\partial_\beta\HD$. The constraint packet is called {\em
    algebraic} if
  \begin{itemize}
    \item for any pair $\sigma_a$ and $\sigma_b$ of distinct
      chords in $\sigmas$, the two initial points $\sigma_a^-$ and $\sigma_b^-$ lie on
      distinct $\beta$-arcs.
    \item for any pair $\sigma_a$ and $\sigma_b$ of distinct
      chords, the two terminal points $\sigma_a^+$ and $\sigma_b^+$ lie on
      distinct $\beta$-arcs.
  \end{itemize}
  An {\em $\alpha$-constraint packet} $\rhos$ is defined similarly.
  In that case, the constraint packet is called {\em algebraic} if
  the following third condition is also met:
  \begin{itemize}
  \item Any two distinct chords in $\rhos$ lie on distinct boundary components.
  \end{itemize}
\end{defn}

\begin{defn}
  A {\em profile} is a triple
  $(\x,\vec{\rhos},\vec{\sigmas})$ consisting of
  \begin{itemize}
  \item a Heegaard state $\x$,
  \item a sequence of $\alpha$-constraint packets
    $\vec{\rhos}=\rhos_1,\dots,\rhos_r$,
  \item a sequence of $\beta$-constraint packets
    $\vec{\sigmas}=\sigmas_1,\dots,\sigmas_s$.
  \end{itemize}
  A profile is called {\em boundary monotone},
  in which case we also define its $\alpha$-set, denoted
  $\alpha(\x,\vec{\rhos})$
  (which is independent of the $\vec{\sigmas}$);
  and its $\beta$-set, denoted $\beta(\x,\vec{\sigmas})$,
  if the  following conditions hold:
  \begin{itemize}
  \item $\alpha(\x)\supset \alpha(\sigmas_1^-)$
  \item $\beta(\x)\supset \beta(\rhos_1^-)$
  \item for $i=1,\dots,s-1$, the initial points $\sigmas_i^-$ all lie on distinct $\alpha$-arcs, the terminal points $\alpha_i^+$ all lie on distinct $\beta$-arcs, and 
    \[
      \alpha(\sigmas_i^-)\subset \alpha(\x,\{\sigmas_1,\dots,\sigmas_{i-1}\}). \]
    In this case we define the $\alpha$-set to be
    \[ \alpha(\x,\{\sigmas_1,\dots,\sigmas_i\})=
    \left(\alpha(\x,\{\sigmas_1,\dots,\sigmas_{i-1}\})\setminus\sigmas_i^-\right)\cup
    \sigmas_i^+.\]
  \item for $i=1,\dots,r-1$, the initial points $\rhos_i^-$ all lie on distinct
    $\beta$-arcs,
    the terminal points $\rhos_i^+$ all lie on distinct $\beta$-arcs, and 
    \[
      \beta(\rho_i^-)\subset \beta(\x,\{\rhos_1,\dots,\rhos_{i-1}\}). \]
    In this case we define the $\beta$-set to be
    \[ \beta(\x,\{\rhos_1,\dots,\rhos_i\})=
    \left(\beta(\x,\{\rhos_1,\dots,\rhos_{i-1}\})\setminus\rhos_i^-\right)\cup
    \rhos_i^+.\]
  \end{itemize}
\end{defn}

Note that for a boundary monotone profile
$(\x,(\sigmas_1,\dots,\sigmas_s),(\rhos_1,\dots,\rhos_r))$, all the
chord packets $\sigmas_i$ are algebraic, in the sense of
Definition~\ref{def:AlgConstraintPacket}, and we can define
corresponding algebra elements $a(\sigmas_i)$ as in Definition~\ref{def:PongAlgElt}. The packets $\sigmas_i$, however, need
not be algebraic in the sense of
Definition~\ref{def:AlgConstraintPacket} (though we will often require
them to be).  When they are, we can define algebra elements
$a(\rhos_i)$ whose weights coincide with the Reeb chords.

The map from algebraic $\beta$-constraint packets to $\Clg(m,k)$ and
the map from algebraic $\alpha$-constraint packets to $\Quot{m}{k}$
are both denoted $a$; however, the distinction should be clear from
the context.  Also note that the map from algebraic chord packets to
elements of $\Quot{m}{k}$ is injective; whereas for $\Clg(m,k)$, the
corresponding map is not injective.  For example, the algebra element
$U_i\Idemp{\x}$ when $i-1,i\in\x$, can be represented by either length
$1$ chord which covers $Z_i$.

\begin{defn}
  \label{def:DecoratedSource}
  Fix an $\alpha$-$\beta$ bordered diagram $\HD$ as in Definition~\ref{def:Hab}, and an integer $k$ with $0\leq k\leq m-1$.
  A {\em decorated source} $\Source$ is the following collection of data:
\begin{itemize}
\item a smooth oriented surface $S$ with boundary and punctures on the boundary
\item a labeling of each puncture of $\Source$ by one of $+$, $-$,
  or a Reeb chord on $\partial\HD$.
\end{itemize}
Let $P(\Source)$ resp. $Q(\Source)$ denote the set of punctures marked by chords on $\partial_\alpha\HD$ resp. $\partial_\beta\HD$.
\end{defn}
\begin{defn}
  \label{def:GenFlow}
  A {\em pre-flowline} is a map
\[ u\colon (\Source,\partial\Source)\to
(\Sigma\times[0,1]\times\R,(\alphas\times\{1\}\times \R)\cup(\betas\times\{0\}\times\R))\]
subject to the constraints:
\begin{enumerate}[label=(${\mathcal M}$-\arabic*),ref=(${\mathcal M}$-\arabic*)]
  \item 
    \label{property:First}
    $u\colon \Source\to \Sigma\times[0,1]\times \R$ is proper.
  \item 
    \label{property:ProperTwoa}
    $u$ extends to a proper map ${\overline u}\colon
    {\bSource}' \to {\overline{\Sigma}}\times[0,1]\times \R$,
    where $\bSource'$ is obtained from $\Source$ by filling in the punctures
    labelled by Reeb chords
  \item
    \label{prop:BrCover}
    $\pi_{\CDisk}\circ u$ is a $d$-fold branched cover,
    where $d=g+k$.
  \item At each $-$-puncture $q$ of $\Source$,
    $\lim_{z\goesto q}(t\circ u)(z)=-\infty$.
  \item At each $+$-puncture $q$ of $\Source$,
    $\lim_{z\goesto q}(t\circ u)(z)=+\infty$.
  \item
    For each $p\in P(\Source)\cup Q(\Source)$, 
    $\lim_{z\to q}(\pi_{\Sigma}\circ u)(z)$ is the Reeb
    chord $\rho$ labeling $p$. 
  \item \label{prop:FiniteEnergy}
    There are generalized Heegaard states $\x$ and $\y$
    with the property that as $t\goesto - \infty$,
    $\pi_{\Sigma}\circ u$ is asymptotic to $\x$  and
    as $t\goesto +\infty$, $\pi_{\Sigma}\circ u$ is asymptotic
    to $\y$.
    \setcounter{bean}{\value{enumi}}
  \item 
    \label{prop:WeakBoundaryMonotone}
    For each $t\in \R$ and $i=1,\dots,g$, 
    $u^{-1}(\beta^c_i\times\{0\}\times\{t\})$ consists of exactly one point;
    and also 
    $u^{-1}(\alpha_i^c\times\{1\}\times\{t\})$ consists of exactly one point.
  \item
    For each $t\in \R$ and $i=1,\dots,m$,
    $u^{-1}(\alpha_i\times \{1\}\times\{t\})$ consists of at most one point;
    and also 
    $u^{-1}(\beta_i\times \{0\}\times\{t\})$ consists of at most one point;
\end{enumerate}
If $u$ is a pre-flowline, the {\em associated profile} is the data
consisting of the initial state $\x$, and the sequences $\vec\sigmas$
resp. $\vec\rhos$,
consisting of the images of the $P(\Source)$ resp. $Q(\Source)$,
ordered according to
the value of $t\circ u$.
\end{defn}

Fix the following data:
\begin{itemize}
\item $\x,\y\in \States(\HD)$
\item compatible sequences of constraint packets
  $\westrhos$ and $\eastrhos$
\item a homology class class $B\in\pi_2(\x,\y)$,
\end{itemize}
and let $\ModFlow^B(\x,\y;\Source,\westrhos,\eastrhos)$ be
the moduli space of holomorphic representatives of $B$ whose
asymptotics are as specified by the profile $(\x,\westrhos,\eastrhos)$.

\subsection{A digression on the index}

We collect here properties of the index of holomorphic curves. This
discussion is mostly an adaptation
of~\cite[Section~5.7.1]{InvPair}.

Following~\cite{InvPair} (see also~\cite[Section~7]{HolKnot}), for
a packet $\rhos$ of Reeb chords, let
$\iota(\rhos)=\inv(\rhos)-m([\rhos],\rhos^-)$, $\inv(\rhos)$ denotes
the minimal number of crossings between the various chords (i.e. this
coincides with $\inv(\rhos)$ when $\rhos$ represents a lifted partial
permutation on a mirror matched circle), and $m([\rhos],\rhos^-)$
denotes the sum over all the initial points $p$ of the chords
$\rhos^-$ of the average local multiplicity of $[\rhos]$ near $p$. For
example, if the packet consists of the single chord $\rhos=\{\rho\}$,
then $-\iota(\rhos)$ is the total weight of the chord.

Suppose that $\x$ and $\y$ are two states, and $(B,\vec{\rho}=(\rhos_1,\dots,\rhos_r),
\vec{\sigmas}=(\sigmas_1,\dots,\sigmas_s))$ is strongly boundary monotone.
Let $|\vec{\rho}|=r$ and $|\vec{\sigmas}|=s$.
Define
  \begin{align}
    \chiEmb(B,\vec{\rhos},\vec{\sigmas})&= d+ e(B) - n_\x(B)-n_\y(B) -\iota(\vec{\rhos})
    -\iota(\vec{\sigmas})
    \label{eq:ChiEmbA} \\
    \ind(B,\x,\y;\vec{\rhos}) &= e(B)+n_\x(B)+n_\y(B)+|\vec{\rhos}|+|\vec{\sigmas}| 
    +\iota(\vec{\rhos})
    +\iota(\vec{\sigmas}), \label{eq:IndexFormula}
  \end{align}
  where
  \[ \iota(\vec{\rhos})=\sum_{i=1}^{\ell} \iota(\rhos_i).
  \] 

\begin{remark}
Observe that constraint packets $\rhos_i$ in
$\vec{\rhos}=(\rhos_1,\dots,\rhos_r)$ that we consider here consist
entirely of chords; whereas
in~\cite[Definition~\ref{HK:def:IndexTypeA}]{HolKnot}, the formulas
allowed for additional orbit constraints.  Also, when comparing with
the formulas from~\cite{HolKnot}, bear in mind that the Euler measure
there is taken on the compactification of the Heegaard surface; this
is why the total weight on the boundary appears there.
\end{remark}

The following proposition is proved
in~\cite[Proposition~5.69]{InvPair}~\cite[Proposition~4.2]{LipshitzCyl};
compare also~\cite[Proposition~7.11]{HolKnot}.

\begin{prop}
  \label{prop:Dimensions}
  If $\ModFlow^B(\x,\y,\Source,\vec{\rhos},\vec{\sigmas})$ is
  represented by some pseudo-holomoprhic representative $u$, then
  $\chi(\Source)=\chiEmb(B)$ if and only if $u$ is embedded. In this
  case, the expected dimension of the moduli space is given by
  $\ind(B,\x,\y,\vec{\rhos},\vec{\sigmas})$. Moreover, if a strongly
  monotone moduli space has a non-embedded holomorphic representative,
  then its expected dimension is $\leq
  \ind(B,\x,\y\vec{\rhos},\vec{\sigmas})-2$.
\end{prop}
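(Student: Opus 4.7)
The plan is to import the index and embeddedness machinery from the cylindrical formulation of bordered Floer homology and adapt it to the $\alpha$-$\beta$ bordered setting. The key point is that our boundary decomposes as $\partial_\alpha \cup \partial_\beta$, so the boundary asymptotics at a puncture contribute either to the $\vec{\rhos}$-side (on $\partial_\alpha$) or the $\vec{\sigmas}$-side (on $\partial_\beta$), and these two contributions enter the formula symmetrically, via the $\iota$-function.

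First, I would establish the $\chi(\Source)=\chiEmb(B)$ criterion by adapting the adjunction argument of Lipshitz~\cite{LipshitzCyl} (cf.\ \cite[Proposition~5.62]{InvPair} and \cite[Proposition~7.11]{HolKnot}). Given a representative $u\colon\Source\to\Sigma\times[0,1]\times\R$ of $B$, Lipshitz's adjunction formula expresses $\chi(\Source)$ in terms of $e(B)$, the corner contributions $n_\x(B)+n_\y(B)$, the boundary asymptotics, and the singularities (double points and branch points) of $u$. Sorting the asymptotic contributions by whether the Reeb chord sits on $\partial_\alpha$ or $\partial_\beta$ gives separate $\iota(\vec{\rhos})$ and $\iota(\vec{\sigmas})$ terms. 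Rearranging, one obtains
\begin{equation*}
  \chi(\Source) = \chiEmb(B) - 2\,\mathrm{sing}(u),
\end{equation*}
where $\mathrm{sing}(u)\geq 0$ measures the total failure of embeddedness. Hence $\chi(\Source)=\chiEmb(B)$ exactly when $u$ is embedded.

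Second, for the expected dimension, I would use the Riemann--Roch / Fredholm index computation for $\bar\partial$ on $\Source$ with totally real boundary conditions and Reeb-chord asymptotics, as in~\cite[Proposition~5.8]{LipshitzCyl}. The index splits as a sum of an Euler-measure term $e(B)$, a corner term $n_\x(B)+n_\y(B)$, and one $1+\iota$ contribution for each boundary puncture, with the sign/shape of the $\iota$ contribution depending only on the chord itself (and not on which boundary component it lives on, since both are totally real and treated symmetrically after the $\alpha$/$\beta$ reflection convention). Summing gives precisely Equation~\eqref{eq:IndexFormula}.

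Finally, for the non-embedded case: if $u$ is not embedded, then $\mathrm{sing}(u)\geq 1$ by the adjunction computation above, so $\chi(\Source)\leq \chiEmb(B)-2$. Substituting back into the Fredholm index formula (expressed now in terms of $\chi(\Source)$ rather than $\chiEmb(B)$, since Riemann--Roch depends on the actual topology of $\Source$) yields an expected dimension at most $\ind(B,\x,\y,\vec{\rhos},\vec{\sigmas})-2$, as claimed. The main technical obstacle I expect is bookkeeping the two boundary contributions correctly: one must verify that Lipshitz's formulas, originally stated for bordered diagrams with only $\alpha$-boundary, continue to hold when $\beta$-chords appear symmetrically, which amounts to checking that the local contribution of a puncture to both $\chiEmb$ and the index is intrinsic to the chord and its asymptotic matching conditions; this follows from the $\alpha/\beta$-symmetry of the almost-complex structure on $\Sigma\times[0,1]\times\R$ near the two totally real boundary pieces.
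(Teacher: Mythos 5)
Your route is the paper's route: the paper does not actually prove this proposition but cites it directly to \cite[Proposition~4.2]{LipshitzCyl} and \cite[Proposition~5.69]{InvPair} (compare \cite[Proposition~7.11]{HolKnot}), and your outline --- Lipshitz's adjunction argument for the embeddedness criterion, Riemann--Roch for the index, with the $\alpha$- and $\beta$-chord asymptotics entering symmetrically through separate $\iota(\vec\rhos)$ and $\iota(\vec\sigmas)$ terms --- is exactly the content of those proofs. One sign is backwards, and it matters for your last step. Resolving a double point of $u$ \emph{raises} the Euler characteristic of the source (the embedded resolution has genus $g(\Source)+\mathrm{sing}(u)$), so the correct relation is
\begin{equation*}
  \chi(\Source)=\chiEmb(B)+2\,\mathrm{sing}(u),
\end{equation*}
not minus. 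The Fredholm index in terms of the actual source is $\ind(B,\Source,\vec\rhos,\vec\sigmas)=d-\chi(\Source)+2e(B)+|\vec\rhos|+|\vec\sigmas|$ with $d=g+k$, which is \emph{decreasing} in $\chi(\Source)$; with your inequality $\chi(\Source)\leq\chiEmb(B)-2$ this would give expected dimension at least $\ind+2$, the opposite of the claim. With the corrected sign, a non-embedded curve has $\chi(\Source)\geq\chiEmb(B)+2$ and hence expected dimension at most $\ind(B,\x,\y,\vec\rhos,\vec\sigmas)-2$, and the rest of your argument goes through as written.
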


\begin{prop}
  \label{prop:BoundaryMonotoneAndDimension}
  If $(\x,\vec{\rhos},\vec{\sigmas})$ is a profile with the property that
  each of the $\sigmas_i$ and $\rhos_i$ are algebraic, then the profile is boundary monotone if and only if
  \[ a(\rhos_s)\otimes\dots\otimes a(\rhos_1)\otimes \x \otimes a(\sigmas_1)\otimes \dots\otimes a(\sigmas_r) \neq 0 \]
  in $\Quot{m}{k}^{\otimes s}\otimes \Ymod(\HD)\otimes \Clg(m,k)^{\otimes r}$,
  where all tensor products are taken over $\IdempRing{m}{k}$.
  Moreover, in this case, 
  \[
    e(B)+n_\x(B)+n_\y(B)=\ind(B,\vec{\rhos},\vec{\sigmas})-r-s+\iota(\vec{\rhos})+\iota(\vec{\sigmas}). 
    \]
\end{prop}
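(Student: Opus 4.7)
The plan is to establish the two assertions essentially independently: the equivalence of boundary monotonicity with non-vanishing of the tensor product is a direct translation into idempotent matching, and the index identity is a rearrangement of \eqref{eq:IndexFormula}.

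First I would set up the idempotent bookkeeping. By Proposition~\ref{prop:PacketToLiftPartialPermutation}, each algebraic constraint packet $\tau$ corresponds to a single lifted partial permutation, so the associated algebra element $a(\tau)$ is a pure algebra element with well-defined left and right idempotents $\Idemp{S^-(\tau)}$ and $\Idemp{S^+(\tau)}$; these indicator idempotents record precisely the arcs occupied at the starting and terminal endpoints of the chords of $\tau$. Similarly, by Definition~\ref{def:HeegaardState}, the generator of $\Ymod(\HD)$ associated to a Heegaard state $\x$ has left and right idempotents reading off the occupied $\alpha$- and $\beta$-arcs. Since every tensor product in the statement is taken over $\IdempRing{m}{k}$, the tensor is non-zero if and only if each pair of adjacent indicator idempotents agrees.

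Next I would check that the complete list of such agreement conditions, read off left to right, is exactly the inductive compatibility in the definition of a boundary monotone profile. On the $\vec{\rhos}$-side, matching the right idempotent of $a(\rhos_i)$ with the left idempotent of $a(\rhos_{i+1})$ is the containment $\alpha(\rhos_{i+1}^-)\subset \alpha(\x,\{\rhos_1,\dots,\rhos_{i}\})$, strengthened to the requirement that the $\rhos_{i+1}^-$ lie on distinct arcs (so the containment is supplied by an equality of indicators). A symmetric analysis on the $\vec{\sigmas}$-side, together with the initial conditions pairing $\x$ with $a(\rhos_1)$ and $a(\sigmas_1)$, finishes the equivalence. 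The algebraicity hypothesis of Definition~\ref{def:AlgConstraintPacket} is crucial here: it is precisely what ensures that each $a(\tau)$ carries a single indicator idempotent on each side rather than a sum; without it the non-vanishing condition would be strictly weaker than boundary monotonicity.

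The index identity is then obtained by substituting $|\vec{\rhos}|=r$ and $|\vec{\sigmas}|=s$ into \eqref{eq:IndexFormula} and solving for $e(B)+n_\x(B)+n_\y(B)$; one should take care to verify that the sign conventions for $\iota(\vec{\rhos})$ and $\iota(\vec{\sigmas})$ used here are consistent with those appearing in \eqref{eq:IndexFormula}. The principal obstacle in the whole argument is bookkeeping rather than substance: one must keep straight the $\alpha$- versus $\beta$-labellings, the two algebras $\Quot{m}{k}$ and $\Clg(m,k)$, and the fact that the idempotent homogeneity of $a(\tau)$ relies crucially on $\tau$ being algebraic in the sense of Definition~\ref{def:AlgConstraintPacket}.
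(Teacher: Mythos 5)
Your proposal follows essentially the same route as the paper's (very terse) proof: the paper disposes of the first assertion by citing the analogous non-vanishing lemma from the bordered-knots paper, whose content is exactly the idempotent-matching bookkeeping you spell out, and obtains the index identity by rearranging Equation~\eqref{eq:IndexFormula} just as you do. One small correction to your middle paragraph: when a packet $\tau$ constrains fewer than $k$ strands, Definition~\ref{def:PongAlgElt} defines $a(\tau)$ as a \emph{sum} over all extensions by the identity, so algebraicity does not by itself make $a(\tau)$ idempotent-homogeneous; the argument still goes through because the Heegaard state $\x$ carries definite idempotents, and tensoring outward from $\x$ over $\IdempRing{m}{k}$ selects a unique surviving summand from each successive factor, which is where the inductive containments of boundary monotonicity actually enter. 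Your caution about signs is also warranted: solving Equation~\eqref{eq:IndexFormula} for $e(B)+n_\x(B)+n_\y(B)$ produces $-\iota(\vec{\rhos})-\iota(\vec{\sigmas})$ rather than the $+$ signs appearing in the statement, so the proposition as printed is not literally consistent with the index formula and one of the two sign conventions must be adjusted.
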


\begin{proof}
  This follows as in~\cite[Lemma~\ref{HK:lem:NonZeroAlgElts}]{HolKnot}.
  The equation is a straightforward consequence of Equation~\eqref{eq:IndexFormula}.
\end{proof}

\subsection{Codimension one ends}

Following~\cite[Theorem~5.61]{InvPair},
we name various kinds of codimension one ends of this moduli space.
\begin{itemize}
\item    A {\em two-story end} corresponds to a decomposition
  \[\ModFlow^{B_1}(\x,{\mathbf p};\Source_1;\westrhos_1,\eastrhos_1)\times
  \ModFlow^{B_2}(\x,{\mathbf p};\Source_2;\westrhos_2,\eastrhos_2),\]
  where ${\mathbf p}\in\States(\HD)$, $B_1\in\pi_2(\x,{\mathbf p})$, $B_2\in\pi_2({\mathbf p},\y)$,
  $B=B_1+B_2$.
\item An {\em odd shuffle curve end} is an element of
  $\ModFlow^B(\x,\y;\Source',\westrhos',\eastrhos')$ obtained by
  picking two punctures $q_1$ and $q_2$ whose corresponding Reeb
  chords $\sigma_1$ and $\sigma_2$ (on $\partial_\beta\HD$) are
  constrained in the same packet and nested.
  Here, $\Source'$ is obtained by pre-gluing an odd shuffle
  component with punctures $q_1'$ and $q_2'$, labelled by interleaved
  Reeb chords $\sigma_1'$ and $\sigma_2'$.  Here, $\westpunctures'$ is
  obtained from $\westpunctures$ by replacing the chords $\sigma_1$ and $\sigma_2$
  by $\sigma_1'$ and $\sigma_2'$; while $\eastpunctures'=\eastpunctures$.
  (Note that there is no space on the cylinders attached to
  $\partial_\alpha\HD$ for shuffle curves, as in~\cite{HolKnot}.)
  See the left of Figure~\ref{fig:Ends} for a schematic.
\item A {\em join curve end}
  $\ModFlow(\x,\y;\Source',\westrhos',\eastrhos')$ is determined by
  choosing a Reeb chord label $\rho$ (which can be on $\partial_\alpha\HD$ or
  $\partial_\beta\HD$) for
  one of the punctures $q$ of $\Source$ and a decomposition
  $\sigma=\sigma_1\uplus \sigma_2$.  The decorated source $\Source'$
  has two new punctures $q_1$ and $q_2$, so that $\Source$ is obtained
  from $\Source'$ by pregluing a join component to $\Source'$ at the
  punctures $q_1$ and $q_2$. The sequences of constraint packets for
  $\westrhos'$ and $\eastrhos'$ are obtained from $\westrhos$ and $\eastrhos$
  by replacing a single copy of $\rho$ by the two chords $\rho_1$ and $\rho_2$.
  See the right of Figure~\ref{fig:Ends} for a schematic.
\item A {\em collision level} is an element
  of $\ModFlow^B(\x,\y;\Source',\westrhos',\eastrhos')$
  obtained by contracting the 
  arcs on $\partial \Source$ that connect punctures whose labels lie in two
  consecutive packets ($\rhos_i$ and $\rhos_{i+1}$ of $\westrhos'$ or 
  $\sigmas_i$ and $\sigmas_{i+1}$ of $\eastrhos$), and replacing the constraint packet by
  the one obtained by forming the join of those two constraint packets.
\end{itemize}

\begin{figure}[ht]
\input{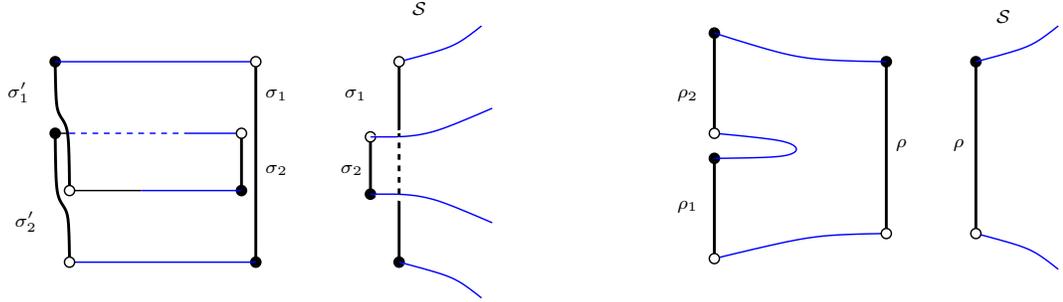}
\caption{\label{fig:Ends}
{\bf{Some codimension one ends.}} 
At the left, we have illustrated an odd shuffle curve end; at the right, a join curve end.}
\end{figure}

\begin{thm}
  \label{thm:CodimOneBoundary}
  If $(\x,\westrhos,\eastrhos)$ is strongly boundary monotone and
  $B\in\pi_2(\x,\y)$.  Fix $\Source$ so that
  $\ind(B,\Source,\westrhos,\eastrhos)=2$.  Then, the total number of
  ends of $\ModFlow=\ModFlow^B(\x,\y;\Source;\westrhos,\eastrhos)$ of
  the following kinds is even:
  \begin{itemize}
    \item two-story ends of $\ModFlow$.
    \item join curve ends of $\ModFlow$. 
    \item odd shuffle curve ends on $\ModFlow$
    \item collision of levels of $i$ and $i+1$, 
      in which case the packets in the colliding levels
      are weakly composable.
  \end{itemize}
\end{thm}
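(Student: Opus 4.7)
The plan is to follow the standard compactness-and-gluing strategy for codimension one ends of moduli spaces of pseudo-holomorphic curves in cylindrical bordered Floer theory, as developed in~\cite{LipshitzCyl,InvPair} and adapted in~\cite{HolKnot}. Concretely, I would argue that $\overline{\ModFlow^B(\x,\y;\Source;\westrhos,\eastrhos)}$ is a compact topological $1$-manifold with boundary, and then identify its boundary points with the ends listed in the theorem. Since the number of boundary points of a compact $1$-manifold is even, the claim follows.

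The first step is to enumerate, via Gromov--type compactness, all the strata that can appear as codimension one limits of sequences in the moduli space. By Proposition~\ref{prop:Dimensions} and the additivity of $\ind$ and $\chiEmb$ under all possible degenerations, the index $2$ assumption means that each such stratum is a product of moduli spaces whose indices sum to $2$, with one factor carrying the codimension one non-compactness. The candidate degenerations are: (i)~splitting into two stories along the $\R$ factor; (ii)~a join curve breaking off at a boundary puncture, along either $\partial_\alpha$ or $\partial_\beta$; (iii)~an odd shuffle curve breaking off at nested $\beta$--chords; (iv)~collision of two adjacent constraint packets $\rhos_i,\rhos_{i+1}$ or $\sigmas_i,\sigmas_{i+1}$; (v)~various degenerations that must be excluded: sphere bubbles, disk bubbles, boundary degenerations of Heegaard states, and degenerations involving Reeb orbits.

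Next I would exclude the unwanted degenerations. Sphere and disk bubbles are ruled out by the usual combination of admissibility of the almost complex structure, transversality, and the absence of sphere/disk components consistent with the index computation (as in~\cite[Section~5]{HolKnot}). Boundary degenerations (breaking off an $\alpha$- or $\beta$-boundary component) and Reeb orbit degenerations do not arise here because our $\alpha$-$\beta$-bordered diagrams carry no basepoints and the complex structure is admissible; further, the strong boundary monotonicity assumption, together with the fact that our chords do not include orbit constraints (the $\westrhos$ and $\eastrhos$ consist purely of chord packets), prevents all orbit-type limits. The remaining case-checking is purely combinatorial: one verifies that each codimension one end of the listed types gives a genuine boundary point, and conversely that each boundary point is of one of the listed types. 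For (iv), the assertion that the colliding packets are \emph{weakly composable} is forced by boundary monotonicity of the limit profile together with Proposition~\ref{prop:BoundaryMonotoneAndDimension}.

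The main obstacle is the bookkeeping for the two boundary families: because $\partial\HD = \partial_\alpha\HD \sqcup \partial_\beta\HD$ with two a priori different algebraic structures attached (pong on $\partial_\beta$ via Definition~\ref{def:PongAlgElt}, and $\Clg$ on $\partial_\alpha$), each of the degeneration types (ii)--(iv) really comes in two flavors, and one must check that they contribute to the boundary count on equal footing. For the join curve case, this amounts to verifying that a chord $\rho$ admits a splitting $\rho = \rho_1 \uplus \rho_2$ with matching asymptotics regardless of which boundary component it lies on. For the odd shuffle case, one must check that only the $\partial_\beta$ side can produce shuffle ends (since on $\partial_\alpha$ the cylindrical ends carry no room for such a degeneration, as in~\cite{HolKnot}); this asymmetry is precisely what the statement records. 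Once these case distinctions are sorted out, the parity conclusion is immediate from the compact $1$-manifold structure.
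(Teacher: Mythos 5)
Your overall strategy (compactify, identify the boundary of a $1$-manifold, conclude by parity) is the same as the paper's, which likewise defers to \cite[Theorem~5.61]{InvPair} and \cite[Theorem~7.17]{HolKnot} for the enumeration. But there are two points where your argument as written would not go through.

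First, and most seriously, your exclusion of boundary degenerations is wrong. You appeal to the ``usual combination of admissibility, transversality,'' etc., but the standard argument from \cite{InvPair} for ruling out $\beta$-boundary degenerations relies on the attaching curves being homologically linearly independent, and here the $\beta$-arcs all have endpoints on a \emph{single} mirror-matched circle $\partial_\beta$ and are \emph{not} linearly independent in homology. So there really are domains supporting potential $\beta$-boundary degenerations, and no generic perturbation argument makes them disappear. The correct argument uses the structural hypothesis in Definition~\ref{def:Hab}: cutting $\Sigma$ along the $\beta$-arcs and circles produces components $B_1,\dots,B_m$, each of which meets a component of $\partial_\alpha$. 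Hence any domain supporting a $\beta$-boundary degeneration has nonzero multiplicity at some $\alpha$-boundary circle, which would force closed Reeb orbit asymptotics on the $\alpha$-side; since the constraint packets $\westrhos$ contain no orbits, such degenerations cannot occur in the moduli spaces under consideration. ($\alpha$-boundary degenerations are ruled out more easily.) Without this step your $1$-manifold could have extra boundary components and the parity count would fail.

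Second, your claim that ``each boundary point is of one of the listed types'' is too strong: there are also \emph{even} shuffle curve ends on the $\partial_\beta$ side, which are genuine ends of $\ModFlow$ but do not appear in the theorem's list. The statement is only that the listed ends have even total count, so you must separately argue that the even shuffle ends cancel in pairs (this is \cite[Proposition~5.42]{InvPair}). As stated, your bijection between boundary points and listed ends would overcount or force you to discard these ends without justification.
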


\begin{proof}
  This follows as in~\cite[Theorem~5.61]{InvPair}, though a few
  remarks are in order.  The codimension-one phenomena on the
  $\beta$-side are those listed above, as
  in~\cite[Theorem~5.61]{InvPair}. As in that case, there are also
  possibly even shuffle-chord ends, but those cancel in pairs. (See~\cite[Proposition~5.42]{InvPair}.)

  There are also possibly codimension-one phenomena on the $\alpha$-side, which are enumerated in
  Theorem~\cite[Theorem~7.17]{HolKnot}. In that statement, there were additional curves we did not consider here --
  the  ``orbit curve ends''  from~\cite[Theorem~7.17]{HolKnot}. These do not appear, however,
  since the constraint packets we consider here do not contain Reeb orbits.

  Boundary degenerations appearing as limiting objects cannot be ruled
  out using the same logic as in~\cite{InvPair}: the $\beta$-arcs are
  not homologically linearly indendent. However, our hypotheses on
  $\HD$ (Definition~\ref{def:Hab}) ensure that all the regions which
  are obtained by cutting along $\beta$-arcs (i.e. which support
  $\beta$-boundary degenerations) also meet the
  $\alpha$-boundary. Thus, $\beta$-boundary degenerations occur in
  moduli spaces which contain closed Reeb orbits on that
  $\alpha$-side; but these are not moduli spaces we consider
  presently. A simpler argument also rules out $\alpha$-boundary degenerations.
\end{proof}

\subsection{Actions on the bimodule}

Let $\HD$ be an $\alpha$-$\beta$-bordered diagram,
and let $\lsub{\Quot{m}{k}}\CFBA(\HD)_{\Clg(m,k)}$ be its associated
$\IdempRing{m}{k}$-bimodule.
Fix a sequence ${\vec{a}}=a_1,\dots,a_r$ of lifted partial permutations
representing algebra elements in $\Quot{m}{k}$ and a sequence
$\vec{b}=b_1,\dots,b_s\in\Clg(m,k)$ be pure algebra elements.
Write
\[ \vec{a}\otimes \x\otimes \vec{b}=a_1\otimes\dots\otimes a_r\otimes \x
\otimes b_1\otimes\dots\otimes b_s.\]

If $\vec{a}\otimes\x\otimes\vec{b}=0$, define
\[ m_{r|1|s}(a_1\otimes\dots\otimes a_r\otimes \x\otimes b_1\otimes\dots\otimes \otimes b_s)=0.\]
Otherwise, if $\vec{\rhos}=\rhos_1,\dots,\rhos_r$ and
$\vec{\sigmas}=\sigmas_1,\dots,\sigmas_s$ are two sequences of constraint packets, define
\[ a(\vec{\rhos})=a(\rhos_1)\otimes\dots\otimes a(\rhos_r)
\qquad{\text{and}}\qquad
a(\vec{\sigmas})=a(\sigmas_1)\otimes\dots\otimes a(\sigmas).\]


$(\x,(\rhos_1,\dots\rhos_r),(\sigmas_1,\dots,\sigmas_s))$
be the corresponding profile with
\[ a(\rhos_1)\otimes\dots\otimes a(\rhos_r)\otimes \x\otimes
a(\sigmas_1)\otimes\dots\otimes a(\sigmas_s)=a_1\otimes\dots\otimes a_r\otimes \x\otimes b_1\otimes\dots\otimes b_s,\]
and let
\begin{align*}
  m_{r|1|s}&(a_1,\dots, a_{r},\x,b_1,\dots,b_s) \\
  &=
  \sum_{\left\{\y\in\States(\HD),\vec{\rhos},\vec{\sigmas}\big|
    \begin{array}{l}
      \ind(B,\x,\y,\vec{\rhos},\vec{\sigmas})=1\\
      a(\vec{\rhos})\otimes \x\otimes a(\vec{\sigmas})=\vec{a}\otimes\x\otimes{\vec{b}}
    \end{array}\right\}}
  \#\left(\frac{\ModFlow^B(\x,\y;\vec{\rhos},\vec{\sigmas})}{\R}\right)\cdot \y.
\end{align*}

\begin{thm}
  \label{thm:HolomorphicAAbimodule}
  If $\HD$ is an $\alpha$-$\beta$ bordered diagram, then
  $\lsub{\Quot{m}{k}}\CFBA(\HD)_{\Clg(m,k)}$ is a type $AA$ bimodule.
\end{thm}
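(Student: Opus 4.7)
The plan is to verify the structural $AA$-bimodule relations by the standard pseudo-holomorphic degeneration argument: count codimension-one ends of moduli spaces of embedded curves whose expected dimension is two, and match each end type to a term in the $AA$-relation. Since Theorem~\ref{thm:CodimOneBoundary} has already enumerated which kinds of codimension-one phenomena can occur (two-story ends, join curve ends, odd shuffle curve ends, and collisions of consecutive levels), the real work is to show that the algebraic content of each end exactly reproduces the terms of the $AA$-relation for $\lsub{\Quot{m}{k}}\CFBA(\HD)_{\Clg(m,k)}$. Strict unitality and the idempotent structure are immediate from the definition of Heegaard states, so the substance of the theorem lies in the quadratic relation for the $m_{r|1|s}$.

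First, I would fix generators $\x,\y\in \States(\HD)$, a decorated source $\Source$, a homology class $B\in\pi_2(\x,\y)$, and strongly boundary monotone profiles $\vec{\rhos},\vec{\sigmas}$ with $\ind(B,\Source,\vec{\rhos},\vec{\sigmas})=2$, so that $\ModFlow^B/\R$ is a one-manifold. Applying Theorem~\ref{thm:CodimOneBoundary} gives a parity relation. Two-story ends translate directly into the compositions $m_{r_1|1|s_1}(\ldots,m_{r_2|1|s_2}(\ldots),\ldots)$ appearing in the $AA$-relation. A collision of levels $i,i+1$ on the $\partial_\alpha$-side (resp.\ $\partial_\beta$-side) corresponds to the multiplication $a_i\cdot a_{i+1}$ in $\Quot{m}{k}$ (resp.\ $b_i\cdot b_{i+1}$ in $\Clg(m,k)$), since by Proposition~\ref{prop:BoundaryMonotoneAndDimension} the algebraic output of the fused packet is precisely the product of the two constituent algebra elements (when non-zero).

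Second, I would match the join and odd shuffle ends to the algebra differentials. On the $\partial_\beta$-side the algebra $\Clg(m,k)$ has trivial differential, so any join/shuffle degeneration with punctures on $\partial_\beta$ must cancel against other such degenerations or against two-story ends; this cancellation is forced by the classification in Lemma~\ref{lem:DDmodProd} of the only non-vanishing quadratic products on $\Clg$. On the $\partial_\alpha$-side, join curve ends at a chord $\rho$ produce all decompositions $\rho=\rho_1\uplus\rho_2$, and odd shuffle curve ends produce interleaved resolutions of nested chord pairs. The crucial identification, which I would verify chord-by-chord, is that the sum of these contributions at a packet $\rhos_i$ coincides with the pong differential $\partial a(\rhos_i)\in\Quot{m}{k}$ as computed in Lemma~\ref{lem:dAtomic}: join ends yield terms of the form $\Rgen{\ell}{j}\cdot\Rgen{i}{\ell}$, $\Lgen{\ell}{i}\cdot\Lgen{j}{\ell}$, and $\Xgen{\ell}{j}\cdot\Xgen{i}{\ell}+\Xgen{i}{\ell}\cdot\Xgen{\ell}{j}$, while odd shuffle ends account for the additional symmetric contributions to $\partial\Xgen{i}{j}$ that would otherwise be missing.

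The main obstacle is precisely this matching of combinatorial holomorphic ends on $\partial_\alpha$ with the pong differential. Two subtleties arise: (i) because $\Quot{m}{k}$ has non-trivial differential while $\Clg(m,k)$ does not, the two boundary sides contribute asymmetrically to the $AA$-relation, and one must be careful that an end involving chords on one boundary is not double-counted against an end on the other; and (ii) the odd shuffle curve ends on $\partial_\beta$ must cancel in pairs or against join curves, which requires using the $M$-consistency of algebraic constraint packets (Definition~\ref{def:AlgConstraintPacket}) together with Proposition~\ref{prop:PacketToLiftPartialPermutation}. Once these identifications are established, collecting the four types of ends from Theorem~\ref{thm:CodimOneBoundary} and setting their sum to zero modulo $2$ yields exactly the type $AA$-bimodule relation on $\CFBA(\HD)$; the grading assertions follow from Proposition~\ref{prop:Dimensions} and the formula~\eqref{eq:IndexFormula}, which gives the expected Maslov shift of $1$ under each action.
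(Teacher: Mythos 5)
Your proposal is correct and follows essentially the same route as the paper: the paper's proof consists precisely of reinterpreting the parity statement of Theorem~\ref{thm:CodimOneBoundary} as the $AA$-relations (deferring the dictionary between end types and algebraic terms to the cited arguments in~\cite{InvPair} and~\cite{HolKnot}), which is exactly the dictionary you spell out. The only caveat is a bookkeeping one inherited from the text itself: the shuffle-curve ends live only on the mirror-matched (pong-algebra) boundary and feed the nontrivial differential of $\Quot{m}{k}$, so no cancellation of shuffle ends is needed on the $\Clg(m,k)$ side.
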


\begin{proof}
  The ${\mathcal A}_{\infty}$ relations are a reinterpretation of
  Theorem~\ref{thm:CodimOneBoundary}. See also~\cite[Theorem~\ref{HK:thm:DefTypeA}]{HolKnot}, which in turn is modelled on~\cite[Proposition~7.12]{InvPair}.
\end{proof}

\subsection{The dualizing bimodule}

We now turn to the proof of Theorem~\ref{thm:ConstructY}.

First, we establish its grading properties:

\begin{prop}
  \label{prop:GradedBimodule}
  The dualizing bimodule is graded, in the sense that
  Equation~\eqref{eq:GradedBimodule} holds.
\end{prop}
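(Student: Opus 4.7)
The plan is to derive Equation~\eqref{eq:GradedBimodule} directly from the index formula~\eqref{eq:IndexFormula}. Since every bimodule generator $Y_\x$ has $\gr^M(Y_\x)=0$, and since $\Clg(m,k)$ is equipped with the trivial $\Z$-grading (so $\gr^{\Clg}(b_j)=0$), the required identity for $y=m_{p|1|q}(a_1,\dots,a_p,x,b_1,\dots,b_q)\neq 0$ collapses to
\[
  \sum_{i=1}^{p}\Ngr(a_i) \;=\; 1 - p - q
\]
for every nonzero action. This is a constraint on which tuples of algebra elements can feed into a nonzero $m_{p|1|q}$, not a comparison between $\gr^M$'s of distinct bimodule generators.

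Any such contribution comes from a holomorphic curve representing a class $B$ with $\ind(B)=1$, together with an $\alpha$-constraint sequence $\vec{\rhos}$ of length $p$ satisfying $a(\rhos_i)=a_i$ and a $\beta$-constraint sequence $\vec{\sigmas}$ of length $q$ satisfying $a(\sigmas_j)=b_j$. Substituting $\ind(B)=1$ into~\eqref{eq:IndexFormula} and rearranging gives
\[
  1-p-q \;=\; e(B)+n_\x(B)+n_\y(B)+\iota(\vec{\rhos})+\iota(\vec{\sigmas}),
\]
so the proposition reduces to the combinatorial/topological identity
\[
  \sum_{i=1}^{p}\Ngr\bigl(a(\rhos_i)\bigr) \;=\; e(B)+n_\x(B)+n_\y(B)+\iota(\vec{\rhos})+\iota(\vec{\sigmas}).
\]

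I would verify this identity by an additivity argument: both sides are additive under splitting the domain $B$ along arcs connecting punctures, so it suffices to check the identity on a small set of building blocks. For the atomic actions $m_{1|1|1}$ listed in~\ref{Y:LR}--\ref{Y:m} (corresponding to $\Rij$, $\Lji$, $\Xij$ inputs paired with their $\Clg$-counterparts), the relevant domains in $\HD_m$ are simple bigons or rectangles, for which Proposition~\ref{prop:AtomicElements} gives $\Ngr(a(\rho))=-1$; this is then easily matched against $e(B)+n_\x(B)+n_\y(B)+\iota(\rho)+\iota(\sigma)=1-1-1=-1$ by direct calculation. The higher actions are handled by decomposing their domains into such pieces.

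The main obstacle is the combinatorial bookkeeping on the $\alpha$-side. The pong-algebra quantities $\cross$ and $\weight$ are defined via lifted partial permutations on the real line, reflected at the walls $\tfrac{1}{2}$ and $m-\tfrac{1}{2}$, while the Reeb-chord quantities $\inv$ and $m([\rhos],\rhos^-)$ are defined on the mirror-matched circle. Proposition~\ref{prop:PacketToLiftPartialPermutation} sets up the bijection at the level of sets; the delicate point is to track how wall-crossings and left/right extrema (the quadruple points in lifted pong diagrams that are perturbed as in Figure~\ref{fig:PerturbPong}) contribute differently to $\cross(a(\rhos_i))$ and to $\inv(\rhos_i)$, and to check that the discrepancy between $\Ngr$ and $\iota$ is absorbed exactly by the Euler-measure and point-count contributions from the corresponding region of $\HD_m$ near that puncture.
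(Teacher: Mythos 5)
Your proposal is correct and follows essentially the same route as the paper: reduce Equation~\eqref{eq:GradedBimodule} via the index formula~\eqref{eq:IndexFormula} (with $\ind=1$, trivial gradings on $\Clg$ and on generators) to the identity $\sum_i\Ngr(a(\rhos_i))=e(B)+n_\x(B)+n_\y(B)+\iota(\vec{\rhos})+\iota(\vec{\sigmas})$, and then verify that identity by peeling off elementary pieces of the domain matching the atomic factorization of the pong inputs, with the wall-hitting contributions absorbed into the Euler measure. The paper organizes the bookkeeping slightly differently (first computing $\iota(\vec{\sigmas})=-\weight(\vec{\rhos})$ and $e(B)=-\weight(\vec{\rhos})+\tfrac{C}{2}$ before the inductive decomposition of Cases 1--3), but the substance is the same.
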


\begin{proof}
  It suffices to prove that for any boundary monotone profile $(\x,\vec{\rhos},\vec{\sigmas})$
  and compatible domain $B$, we have that
  \begin{equation}
    \label{eq:1}\weight_i(\partial_\alpha B)=\weight_i(\partial_\beta B)
    \end{equation}
  and also
  \begin{equation}
    \label{eq:2}
    \ind(B,\x,\y,\vec{\rhos},\vec{\sigmas})  = |\vec{\rhos}|+|\vec{\sigmas}|
    +\sum_{i=1}^{|\vec{\rhos}|}\Ngr(a(\vec{\rhos}))|.
  \end{equation}
  
  Equation~\eqref{eq:1} in fact holds for any domain $B$. It is a consequence of the fact
  Equation~\eqref{eq:1} holds for each elementary domain.
  In particular, it follows that
  \[ \weight(\vec{\rhos})=\weight(\vec{\sigmas})=\sum_{i=1}^{|\vec{\rhos}|} \weight(a(\rhos_i)).\]

  Equation~\eqref{eq:2} is equivalent to the condition that for any
  domain $(B,\vec{\rhos},\vec{\sigmas})$,
  \[
  e(B)+n_\x(B)+n_\y(B) +\iota(\vec{\rhos}) +\iota(\vec{\sigmas})=
  \sum_{i=1}^{|\vec{\rhos}|} \Ngr(a(\rhos_i)).\] Since each the
  various chords in the chord packets $\sigmas_i$ lie on different
  boundary components (this is the ``algebraic'' hypothesis from
  Definition~\ref{def:AlgConstraintPacket}), it is easy to see that
  \[ \iota(\vec{\sigmas})=-\weight(\vec{\rhos}).\]

  Let $C$ denote the total count of left- and right-walls hit among
  all of the pong elements $a(\rhos_1),\dots,a(\rhos_r)$.  We claim
  that $e(B)=-\weight(\vec{\rhos})+\frac{C}{2}$, This is a
  straightforward computation: cut the diagram $\HD$ along all the
  $\alpha$- and $\beta$-arcs.  The result will consist of $2m-2$
  elementary domains with $e=-1/2$, meeting
  the chords $U_1$, $L_1$, $R_1$, $L_2$, $R_2$, \dots, $L_{m-2}$,
  $R_{m-2}$, and $U_{m-1}$. 
    
  Thus, it remains to verify that 
  \begin{equation}
    \label{eq:RemainsToSee}
    n_\x + n_\y + \iota(\vec{\rhos})=-\frac{C}{2}+\sum_{i=1}^{|\vec{\rhos}|}\cross(a(\rhos_i)).
  \end{equation}
  We prove this by decomposing the domain $(B,\vec{\rhos},\vec{\sigmas})$ into elementary pieces,
  corresponding to the decomposition of pong elements from Proposition~\ref{prop:PongGen}.
  
  Consider the Reeb chords in the first packet $\rhos_1$.
  
  {\bf Case 1.} If there is a right-moving Reeb chord, consider
  the rightmost, right-moving Reeb chord $\xi_0$.  
  
  {\bf Case 1a.}
  Suppose there there is also a
  left-moving Reeb chord $\eta_0$ to the right of $\xi_0^-$, and choose the leftmost such strand.
  If the terminal points of the two strands cross (on the interval), we can factor off the two 
  oppositely-moving chords in their own packet $\rhos_0$. There is a corresponding decomposition
  \[(B,\x,\y,\{\rhos_0,\rhos_1',\rhos_1,\dots,\rhos_r\})
  =  (B_0,\x,x',\{\rhos_0\})\#(B',\x',\y,\{\rhos_1',\rhos_2,\dots,\rhos_r\})\]
  (We have suppressed here the chord packets appearing on $\partial_\alpha$, as they do not affect either
  side of the equation. Note that $\rhos_0$ contains a portion of $\xi_0$ and $\eta_0$.)
  If $\xi_0$ terminates before the initial point of $\eta_0$,
  we can factor off $\xi_0$, 
  \[(B,\x,\y,\{\rhos_0,\rhos_1',\rhos_1,\dots,\rhos_r\})
  =  (B_0,\x,\x',\{\{\rhos_0\}\})\#(B,\x',\y,\{\rhos_1',\rhos_2,\dots,\rhos_r\}),\]
  
  {\bf Case 1b.} There is no left-moving strand starting to the right of the $\xi_0-$. In that case,
  we can factor 
  \[(B,\x,\y,\{\rhos_0,\rhos_1',\rhos_1,\dots,\rhos_r\})
  =  (B,\x,\x',\{\rhos_0\})\#(B,\x',\y,\{\rhos_1',\rhos_2,\dots,\rhos_r\}),\]
  where $\rhos_0$ consists a single chord, which is a portion of $\xi_0$. 

  {\bf Case 2.} All strands are stationary.
  
  {\bf Case 3.} There are no right-moving strands.  There are two
  cases: either the leftmost left-moving strand terminates before
  hitting the wall, or it hits the wall.  See Figure~\ref{fig:Cases}
  for illustrations. (We have drawn the five non-trivial examples of
  initial domain $B_0$ in the order they appear in the text above.)

\begin{figure}[ht]
\input{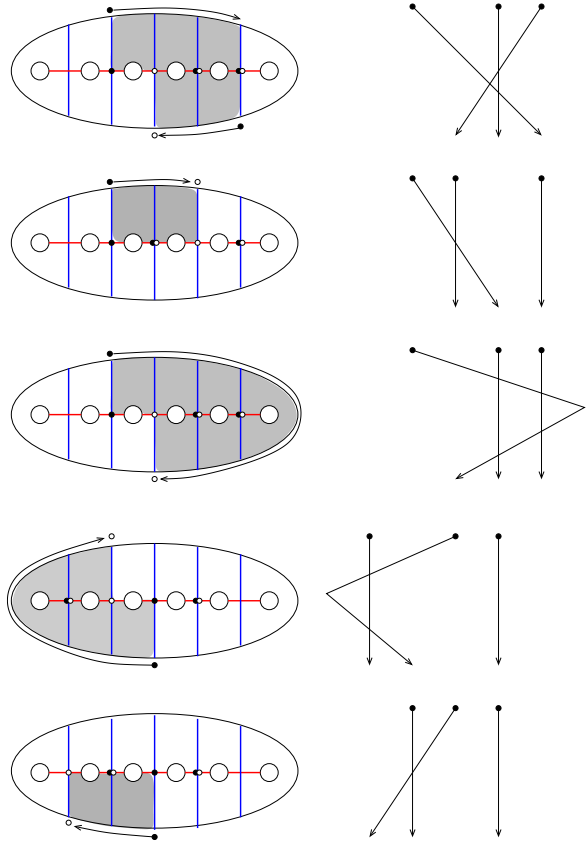}
\caption{\label{fig:Cases}
{\bf{Decomposing the domain.}}}
\end{figure}

In all cases, we have decomposed our domain as a juxtaposition of
simpler pieces $B=B_0\# B'$, the first of which has a single chord
packet on $\partial_\beta$). Verifying Equation~\eqref{eq:RemainsToSee}
for the domains $B_0$ is straightforward, and hence the equation holds in
general, by induction.
\end{proof}

\begin{proof}[Proof of Theorem~\ref{thm:ConstructY}.]
  Let $\HD_m$ denote the genus $0$ $\alpha$-$\beta$-bordered diagram
  as in Definition~\ref{def:HalfIdentity}. The module $\Ymod$ is the
  associated bimodule $\CFBA(\HD)$. It is an ${\mathcal A}_{\infty}$
  bimodule by Theorem~\ref{thm:HolomorphicAAbimodule}.
  Property~\ref{Y:Idempotents} is straightforward.  Bigons representing
  the actions from Properties~\ref{Y:LR},~\ref{Y:0},~\ref{Y:m} are
  straightforward to see; compare Figure~\ref{fig:HalfIdentityFlows}.

  \begin{figure}[ht]
\input{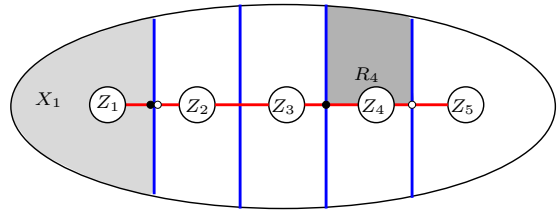}
\caption{\label{fig:HalfIdentityFlows}
  {\bf{Bigons representing actions.}}
  Consider $\HD_5$ with $k=2$. The black dots represent a generator,
  corresponding
  to the idempotent state $\x=(1,3)$, and the white dots represent $\y=(1,4)$.
  The darkly shaded bigon represents the action
  $m_{1|1|1}(L_4,\x,R_4)=\y$; the lightly shaded one represents
  $m_{1|1|1}(X_1,\x,U_1)=\x$.}
\end{figure}

The verification of Property~\ref{Y:X} is verified as follows.
Consider the moduli space ${\mathfrak M}^1$ of curves with the following properties:
\begin{itemize}
\item $\Source$ has genus zero, two boundary punctures labelled by $+$, two boundary punctures labelled by $-$, which divide the boundary into two $\alpha$-boundaries and two $\beta$-boundaries.
\item $\Source$ has a single puncture on its $\alpha$-boundary, which is labelled
with the Reeb chord $R_i L_i$.
\item $\Source$ has two punctures $q_1$ and $q_2$ on the $\beta$-boundary,
which are labelled $R_i$ and $L_i$ (on $\bdyB$).
\end{itemize}
There is another moduli space ${\mathfrak M}^2$ defined as above, only now using
the chord $L_i R_i$. 

Boundary monotone elements of ${\mathfrak M}^1\cup{\mathfrak M}^2$ are
precisely those elements for which $t(q_1)=t(q_2)$. 

The moduli space ${\mathfrak M}^1$ can be concretely understood as an
interval, parameterized by a cut parameter: cut in along $\alpha_i$
and use the Riemann mapping theorem to obtain a quadrilateral.  Each
quadrilateral has a single non-trivial involution, which in turn is
equivalent a branched cover of the disk, to obtain the desired element
of ${\mathfrak M}^1$.

The moduli space ${\mathfrak M}^1$ has a join curve end, where the
constraint packet on $Z_i$ is the set $\{L_i,R_i\}$. Explicitly, this
is realized as the cut along $\alpha_{i+1}$ extends from $\beta_i$ to
$Z_i$. Let $u_0$ denote the main component of this join curve.  Note
that this consists of a union of two bigons which meet along their
$\alpha$-boundary at a puncture. For generic choices of complex structure, 
$t(u_0(q_1))\neq t(u_0(q_2))$.

This moduli space has another end as the cut parameter goes to
$0$.  The limiting object in this case is a union of two bigons which
meet at a single point on the $\beta$-boundary, so that both
punctures $q_1$ and $q_2$ occur on the same bigon (separated by the
node). Let $u_1$ denote the bigon component containing $q_1$ and $q_2$.
Clearly, $t(u_1(q_1))>t(u_1(q_2))$. 

The moduli space ${\mathfrak
  M}^2$ also has a join curve end, whose main component is $u_0$
(i.e. it is  the main component of the join curve end in ${\mathfrak M}^1$).
It also has another end where the cut parameter goes to $0$, which is identified with a union of two bigons, one of which contains both punctures $q_1$ and $q_2$. Let $u_2$ denote this bigon. In this case, $t(u_2(q_1))<t(u_2(q_2))$.
See Figures~\ref{fig:YX} and~\ref{fig:YXmod}.

\begin{figure}[ht]
\input{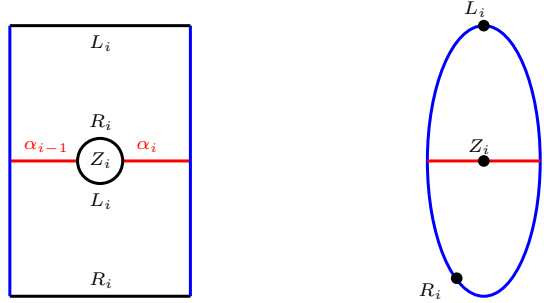}
\caption{\label{fig:YX}
{\bf{The region $B_i$}.} The region responsible for the action~\ref{Y:X}.
On the left is a combinatorial representation; on the right is a  conformally accurate model (with cylindrical ends/punctures replacing boundary).}
\end{figure}

\begin{figure}[ht]
\input{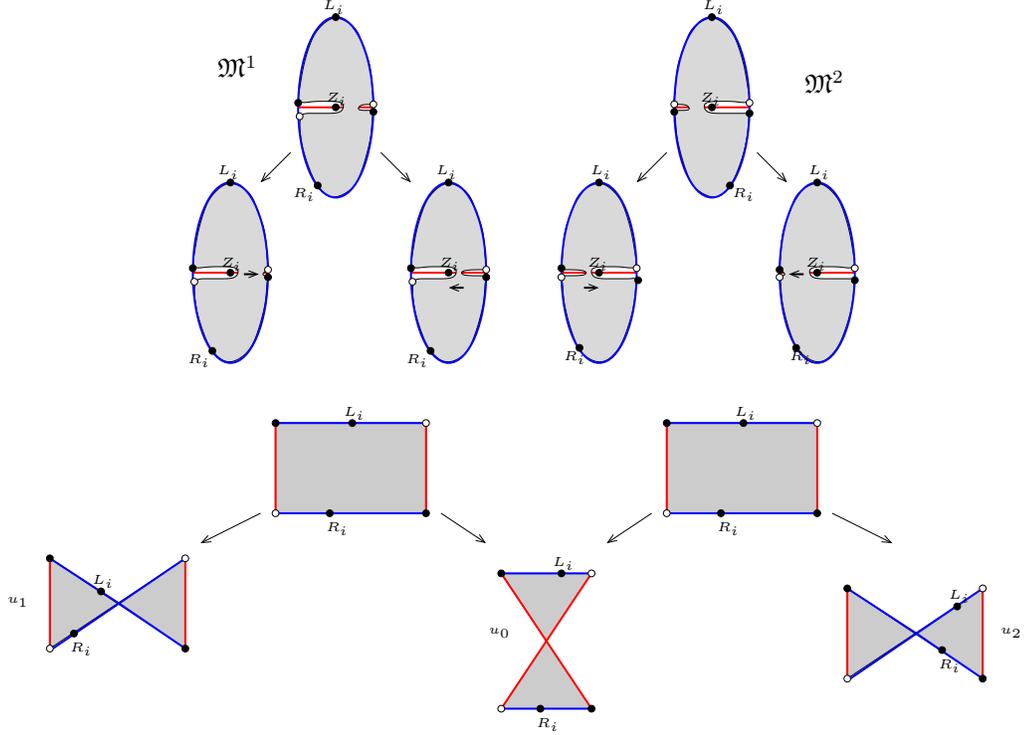}
\caption{\label{fig:YXmod}
{\bf{Verifying the Action~\ref{Y:X}}.} 
The moduli spaces ${\mathfrak M}^1$ and ${\mathfrak M}^2$ are represented
by the cuts indicated in the first line of pictures; the conformal models
of the domain curves are shown on the second line of pictures.}
\end{figure}

It follows that if we let ${\mathfrak M}$ denote the union of
${\mathfrak M}_1$ and ${\mathfrak M}_2$ glued along their join curve
end, then the map $u\mapsto t(u(q_1))-t(u(q_2))$ defines a proper map
of degree $1$ near $0$. In particular, the preimage of $0$ has an odd number of points; but this count specifies the  $Y_{\x}$-component in the
action $m_{1|1|1}(X_i,Y_{\x},U_i)$) 
\end{proof}

\section{The $DD$ bimodule is quasi-invertible}
\label{sec:Inverses}

We abbreviate $\Clg=\Clg(m,k)$ and $Q=\Quot{m}{k}$. Our aim here is to prove the following:

\begin{thm}
  \label{thm:QuasiInverses}
      The bimodules $\lsup{\Clg}\DDmod^{Q}$ from Section~\ref{sec:DDmod} 
      and $\lsub{Q}\AAmod_{\Clg}$ from Section~\ref{sec:AAmod} are
      quasi-inverses of one another.
\end{thm}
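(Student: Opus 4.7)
The plan is to compute the two box tensor products $\DDmod\boxtimes \AAmod$ and $\AAmod\boxtimes \DDmod$ as $DA$-type bimodules and identify each with the relevant identity bimodule $\lsup{\Clg}\Id_\Clg$ and $\lsub{Q}\Id^Q$, respectively. Both tensor products are well-defined because the grading supports of $\DDmod$ and $\AAmod$ (Propositions~\ref{prop:GradedDD} and~\ref{prop:GradedBimodule}) make the iterated compositions finite in each fixed weight-graded summand.

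For the first composition, $\DDmod\boxtimes\AAmod$ has, as an $\IdempRing{m}{k}$-bimodule, one generator $\gamma_\x\otimes Y_\x$ per idempotent state $\x$, hence is rank one in the sense of Lemma~\ref{lem:BimoduleOfPhi}. First I would verify that its $\delta^1_1$ vanishes: any contribution would arise from an iterated $\delta^1$ on $\gamma_\x$ whose right-hand $Q$-outputs are entirely consumed by an $m_{k|1|0}$-action, but the half-identity Heegaard diagram $\HD_m$ admits no positive-weight holomorphic curves with only $\partial_\beta$-punctures, so such actions vanish for $k\geq 1$. Lemma~\ref{lem:BimoduleOfPhi} then identifies the tensor product with the bimodule associated to some bigraded $A_\infty$ homomorphism $\phi\colon \Clg\to \Clg$. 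Since $\Clg$ has trivial differential, it suffices to check that $\phi_1$ is an isomorphism on each weight-graded summand; we compute $\phi_1$ on each atomic generator $a\in \{L_i, R_i, U_i\}$, where the relevant term of $\delta^1(\gamma_\x)$ supplied by Theorem~\ref{thm:DDmod} pairs, via Properties~\ref{Y:LR},~\ref{Y:X},~\ref{Y:0}, or~\ref{Y:m}, with a single action on $\AAmod$ to reproduce $a$ on the output side.

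The second composition is handled analogously, producing (via Lemma~\ref{lem:BimoduleOfPhi}) a bimodule associated to an $A_\infty$-homomorphism $\psi\colon Q\to Q$; this is the hard part. Because $Q$ has a nontrivial differential and its Maslov grading $\Ngr$ is not concentrated in degree zero, higher $\psi_n$ cannot be ruled out on grading grounds alone, and $\psi_1$ must itself be verified to be a quasi-isomorphism. Computing $\psi_1$ on atomic pong generators will parallel the previous paragraph, now using the symmetric side of Properties~\ref{Y:LR} and~\ref{Y:X}. To extend this to a quasi-isomorphism on all of $Q$, we will induct on the crossing number of the input (equivalently, on $\Ngr$), using the index formula (Proposition~\ref{prop:BoundaryMonotoneAndDimension}) and Property~\ref{Y:graded} to constrain the holomorphic contributions to each $\psi_n$ and the resulting homotopies. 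Combined with a parallel verification that $\delta^1_1$ agrees with the differential on $\lsub{Q}\Id^Q$, this yields $\AAmod\boxtimes\DDmod\simeq \lsub{Q}\Id^Q$, completing the proof.
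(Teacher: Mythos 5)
Your skeleton matches the paper's: establish finiteness from the weight gradings, observe that both box tensor products are rank one over $\IdempRing{m}{k}$ with $\delta^1_1=0$, invoke Lemma~\ref{lem:BimoduleOfPhi} to package them as $A_\infty$ homomorphisms, and evaluate on generators. Your first composite is handled exactly as in the paper's Lemma~\ref{lem:Morphisms}: the trivial $\Z$-grading on $\Clg$ kills all higher terms, and since $\Clg$ is generated over the idempotents by the $L_i$, $R_i$, $U_i$, the explicit $m_{1|1|1}$ actions of Theorem~\ref{thm:ConstructY} force the map to be the identity. You are also right that the composite $Q\to Q$ is the hard direction and that gradings do not kill its higher terms there.

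The gap is in how you propose to finish that direction. The difficulty is not, as you suggest, controlling the holomorphic contributions to the higher $\psi_n$ by an induction on $\Ngr$; it is that even complete knowledge of $\psi_1$ on the atomic generators does not determine the induced map on $H(Q)$. The map $\psi_1$ is a chain map but not an algebra map (only the full $A_\infty$ morphism is multiplicative up to homotopy), so its values on atomic generators do not propagate to products; and on homology, where $[\psi_1]$ \emph{is} a ring map, the ring $H(Q)$ is not generated by the classes of atomic generators: by Corollary~\ref{cor:QmSpecial} and Lemma~\ref{lem:GenerateHomologyOfQ} there is the additional polynomial generator $\Omega$, which is not a product of the $[\Xgen{i-1}{i}]$, $[L]$, $[R]$. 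The paper resolves this purely algebraically via Proposition~\ref{prop:RigidQ}: using the induced $A_\infty$ structure on $H(\Quot{m}{m-1})$ one shows $\sum_{\sigma\in{\mathfrak S}_m}\mu_m(X_{\sigma(1)},\dots,X_{\sigma(m)})=\Omega$ (Lemma~\ref{lem:AinfQ} and Proposition~\ref{prop:AinfQ}), whence any $A_\infty$ endomorphism fixing the $[X_i]$ must also fix $[\Omega]$ (Lemma~\ref{lem:RigidQm}), and then Lemma~\ref{lem:GenerateHomologyOfQ} finishes. Your plan contains no substitute for this step: an index-formula induction on crossing number constrains which moduli spaces contribute to $\psi_n$, but it cannot tell you where $[\Omega]$ goes, since $[\Omega]$ is invisible to the generator computations. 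You would need either to reproduce the $A_\infty$-relation argument for $\Omega$ or to compute $\psi_1(\Omega)$ directly, neither of which your outline addresses.
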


This will be a key step in computing the cobar algebra of $\Clg$ in the next section.
The proof will occupy the rest of this section.

As a preliminary step, we must establish the boundedness assumptions required
to ensure that $\lsub{Q}M^\Clg=\lsub{\Clg}\AAmod_Q\DT \lsup{Q}\DDmod^\Clg$ exist; i.e. we must prove that
the structure maps in the tensor products are finite.

To this end,
fix an input string
$a_1,\dots,a_p$ in $Q$, and let $c=\sum_{j=1}^p \weight(a_j)$.
The following two statements hold:
\begin{itemize}
\item If
  $m_{p|1|q}(a_1\otimes\dots\otimes a_p \otimes x \otimes b_1\otimes\dots\otimes b_q)\neq 0$, then
  $c=\sum_{j=1}^q\weight_i(b_j)$.
  (This follows from  Property~\ref{Y:graded} from Theorem~\ref{thm:ConstructY}.)
\item 
  If $\delta^{1}({\mathbf 1})=b\otimes {\mathbf 1}\otimes e$,
  then $\weight(b)\geq \OneHalf$.
\end{itemize}
Thus, if the sum of the weights of the inputs is $c$, then the
number of type $DD$ operations which can pair non-trivially is bounded above by $2c$.
    
  Consider now the $DA$ bimodule $M=\lsub{Q}\AAmod_{\Clg}\DT
  ~\lsup{\Clg}\DDmod^{Q}$.  Its generators correspond to basic
  idempotents, since the same is true for $\AAmod$ and
  $\DDmod$. Moreover, the Maslov/Alexander multi-graded bimodule on
  the tensor factors induces one on the tensor product; i.e.  if
  $s,t\in M$, and $n\geq 0$, and there is a non-trivial action
  \[ \delta^1_{n+1}(a_1,\dots,a_{n},s)=t\otimes b,\]
  then for for all $i=1,\dots,m$,
  \begin{equation}
    \label{eq:WeightPreserved}
    \weight_i(a_1)+\dots+\weight_i(a_{n})=\weight_i(b);
  \end{equation}
  and 
  \begin{equation}
    \label{eq:RespectGrading}
    \cross(a_1)+\dots+\cross(a_{n})-n+1=\cross(b).
  \end{equation}

\begin{lemma}
  \label{lem:Morphisms}
  There are two Maslov/Alexander bigraded 
  ${\mathcal A}_{\infty}$ homomorphisms 
  \[ \phi\colon Q\to Q \qquad{\text{and}}\qquad
  \psi\colon \Clg\to \Clg,\]
  so that 
  \[
  \lsub{Q}[\phi]^{Q}=\lsub{Q}\AAmod_{\Clg}\DT 
  ~\lsup{\Clg}\DDmod^{Q}\qquad\text{and}\qquad
  \lsup{\Clg}[\psi]_{\Clg}=~\lsup{\Clg}\DDmod^{Q}\DT 
  \lsub{Q}\AAmod_{\Clg}.
  \]
\end{lemma}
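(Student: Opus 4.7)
The plan is to invoke Lemma~\ref{lem:BimoduleOfPhi} for each of the two tensor products: that lemma requires the underlying $DA$ bimodule to be rank one as a $\Ground$-bimodule and to have vanishing $\delta^1_1$. The Maslov/Alexander grading properties of the resulting homomorphisms $\phi$ and $\psi$ will then follow from the characterization at the end of Section~\ref{sec:GradingConventions}, once we check that the two bimodules are Maslov/Alexander bigraded and supported in vanishing bigrading.

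Write $M := \lsub{Q}\AAmod_{\Clg}\DT \lsup{\Clg}\DDmod^{Q}$ and $N := \lsup{\Clg}\DDmod^{Q}\DT \lsub{Q}\AAmod_{\Clg}$. The rank-one condition is immediate from the idempotent structure of the factors: by Property~\ref{Y:Idempotents} of Theorem~\ref{thm:ConstructY}, $\AAmod$ has one generator $Y_\x$ per idempotent state with $\Idemp{\x}\cdot Y_\x\cdot \Idemp{\x}=Y_\x$, and $\DDmod$ has one generator $\gamma_\x$ per state with the analogous behavior by Theorem~\ref{thm:DDmod}. Box-tensoring over $\Clg$ (respectively $Q$) pairs matching idempotents, leaving a single generator per state in $M$ (respectively $N$), with $\Idemp{\x}$ acting on both sides. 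For the gradings, both factors are supported in $(\gr,\vec{v})=(0,0)$ by Property~\ref{Y:graded} and Proposition~\ref{prop:GradedDD}, so the tensor-product generators likewise lie in trivial bigrading.

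The key step will be showing $\delta^1_1 = 0$ on $M$. I will decompose $\delta^1_1(Y_\x \otimes \gamma_\x)$ into the internal contribution $m_{0|1|0}(Y_\x)\otimes \gamma_\x$ and the contributions obtained by pairing iterated $\delta^1$-outputs on $\DDmod$ with $m_{0|1|k}$-actions on $\AAmod$ for $k \geq 1$. The internal term vanishes because the half-identity diagram admits no $\beta$-bigons: each $\alpha_i$ meets $\beta_i$ in a single transverse point, so there are no moduli of Whitney disks without chord constraints. For $k \geq 1$, every $\Clg$-output produced by $\delta^1$ on $\DDmod$ (Equation~\eqref{eq:DDmodDef}) has the form $f(\alpha)$ for an atomic generator $\alpha \in Q$, and therefore has strictly positive Alexander weight by Proposition~\ref{prop:AtomicElements}. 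The bigrading constraint on the $DA$ structure then forces any nonzero output generator of $M$ to lie in nontrivial $\vec{v}$, contradicting the support of $M$ in $\vec{v}=0$. The argument for $\delta^1_1 = 0$ on $N$ is symmetric, now pairing $m_{k|1|0}$-actions on $\AAmod$ with the $Q$-outputs of iterated $\delta^1$ on $\DDmod$, again using the strict positivity of $\vec{\weight}(\alpha)$ for atomic $\alpha$.

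The main obstacle will be the careful bookkeeping of sign conventions for the Alexander grading on $Q$ across its appearances as a type $D$ algebra (where $\vec v$ is $-1$ times the weight, as in Proposition~\ref{prop:GradedDD}) and as a type $A$ algebra (whose convention must be fixed compatibly with Property~\ref{Y:LR} of Theorem~\ref{thm:ConstructY}), so that the grading inequality giving the desired contradiction points in the right direction. Once this is settled, Lemma~\ref{lem:BimoduleOfPhi} produces the $A_\infty$ homomorphisms $\phi$ and $\psi$, and supporting the bimodules in trivial bigrading furnishes the grading properties claimed.
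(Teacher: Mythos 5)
Your proposal takes essentially the same route as the paper: both reduce to Lemma~\ref{lem:BimoduleOfPhi} after noting the tensor products are rank one over the idempotents and that $\delta^1_1=0$, the latter being forced by the Maslov/Alexander gradings (Equations~\eqref{eq:WeightPreserved}--\eqref{eq:RespectGrading}), exactly as in your positivity-of-weight argument for the $k\geq 1$ terms. The only cosmetic difference is that you rule out the internal term $m_{0|1|0}$ geometrically via the absence of bigons, whereas it already vanishes for grading reasons by Property~\ref{Y:graded}.
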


\begin{proof}
  Consider $\lsub{Q}M^{Q}=\lsub{Q}\AAmod_{\Clg}\DT 
  ~\lsup{\Clg}\DDmod^{Q}$.
  From Equation~\eqref{eq:WeightPreserved}, it follows easily that
  $\delta^1_1$ vanishes. By Lemma~\ref{lem:BimoduleOfPhi} (which is~\cite[Lemma~2.2.50]{Bimodules}), it follows that there is a  Maslov/Alexander multi-graded
  $\Ainfty_\infty$ homomorphism $\phi\colon Q\to Q$
  so that $\lsub{Q}[\phi]^Q\cong \lsub{Q}M^Q$.

  The analogous statements for $[\psi]$ follow similarly
  (except that in this case, we replace the grading $\cross$ on $Q$
  with the trivial grading on $\Clg$).
\end{proof}

\begin{lemma}
  The map $\psi$ from Lemma~\ref{lem:Morphisms} is the identity map;
  i.e. 
  \[ \lsup{\Clg}\DDmod^{Q}\DT 
  \lsub{Q}\AAmod_{\Clg}=\lsup{\Clg}\Id_{\Clg}.\]
\end{lemma}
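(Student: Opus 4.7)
The strategy is to first show, by a grading obstruction, that $\psi_p = 0$ for every $p \neq 1$, and then to verify by direct inspection that $\psi_1$ acts as the identity on a generating set of $\Clg$.

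For the vanishing, $\lsup{\Clg}\DDmod^{Q}$ carries a Maslov bigrading supported in $\gr = 0$ by Proposition~\ref{prop:GradedDD}, and $\lsub{Q}\AAmod_{\Clg}$ is graded with $Y_{\x}$ lying in $\gr = 0$, $\vec{v} = 0$ by property~\ref{Y:graded} of Theorem~\ref{thm:ConstructY}. Hence the rank-one underlying bimodule $M = \lsup{\Clg}\DDmod^{Q} \DT \lsub{Q}\AAmod_{\Clg}$ has its distinguished generator $\gamma_{\x} \otimes Y_{\x}$ in grading $0$. Since $\Clg$ itself is supported in $\gr = 0$, the $DA$ grading relation
\[
\gr^M(x) + \sum_{i=1}^{p} \gr^{\Clg}(a_i) = \gr^M(y) + \gr^{\Clg}(b) + 1 - p
\]
collapses to $\gr^{\Clg}(b) = p - 1$, which has no nonzero solutions for $p \neq 1$. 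Therefore $\delta^1_{1+p}$ of the composite bimodule vanishes for every $p \neq 1$, which by Lemma~\ref{lem:BimoduleOfPhi} forces $\psi_p = 0$ for $p \neq 1$. In particular $\psi_1$ is an honest DG algebra homomorphism.

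To evaluate $\psi_1$, I unfold
\[
\delta^1_2(\gamma_{\x} \otimes Y_{\x}, a) = \sum_{\alpha} f(\alpha) \otimes \bigl(\gamma_{\x} \otimes m_{1|1|1}(\alpha, Y_{\x}, a)\bigr),
\]
where $\alpha$ ranges over atomic generators of $Q$ appearing in the defining formula~\eqref{eq:DDmodDef} for $\delta^1(\gamma_{\x})$. Properties~\ref{Y:LR}-\ref{Y:m} of Theorem~\ref{thm:ConstructY} single out exactly those pairings $(\alpha, a)$ for which $m_{1|1|1}(\alpha, Y_{\x}, a)$ is nonzero: $\alpha$ must be one of the short atomic generators $\Rgen{i}{i+1}$, $\Lgen{i+1}{i}$, $\Xgen{i-1}{i}$, $\Xgen{0}{1}$, or $\Xgen{m-1}{m}$, paired with the matching distinguished element $L_i$, $R_i$, or $U_i$ of $\Clg$ (with the appropriate idempotent compatibility). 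In every such case the definition of $f$ yields $f(\alpha) = a$; for instance $f(\Xgen{i-1}{i}) = U_i$ combines with $m_{1|1|1}(\Xgen{i-1}{i}, Y_{\x}, U_i) = Y_{\x}$ to contribute $U_i \otimes \gamma_{\x} \otimes Y_{\x}$. Strict unitality of $\AAmod$ handles idempotents. Since $\Clg$ is generated as a unital algebra by its idempotents together with the elements $L_i$, $R_i$, $U_i$, and $\psi_1$ is a ring map by the preceding paragraph, it follows that $\psi_1 = \mathrm{Id}_{\Clg}$, which is the desired identification.

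The main obstacle is the combinatorial bookkeeping in the second paragraph: confirming that for each atomic $a \in \Clg$ exactly one atomic $\alpha \in Q$ contributes a nonzero $m_{1|1|1}(\alpha, Y_{\x}, a)$ and that $f(\alpha)$ then reproduces $a$ with unit multiplicity. The grading argument of the first paragraph is precisely what rigorously eliminates contributions from longer DD sequences paired with higher $m_{k|1|n}$ actions of $\AAmod$, so no additional terms can spoil this matching.
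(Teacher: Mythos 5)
Your proof is correct and follows essentially the same route as the paper: a grading argument (everything supported in $\gr=0$, so the $DA$ grading relation forces $p=1$) kills the higher $\psi_p$, and then Properties~\ref{Y:LR}--\ref{Y:m} of Theorem~\ref{thm:ConstructY} show that the resulting algebra map fixes the generators $L_i$, $R_i$, $U_i$, hence is the identity. The only difference is that you spell out the generator-by-generator bookkeeping of the box tensor product, which the paper leaves implicit.
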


\begin{proof}
  In principle, $\psi=\{\psi_n\colon C^{\otimes n}\to
  C\}_{n=1}^{\infty}$.  Gradings ensure that $\psi_n=0$ for $n>1$.
  (Instead of Equation~\eqref{eq:RespectGrading}, we are now using the
  trivial grading on the algebra; and the analogue of that equation
  simply ensures that $n=1$.)  By Theorem~\ref{thm:ConstructY}, we
  have
  \begin{align*}
    \psi(L_i)=L_i &\qquad\forall i=1,\dots,m \\
    \psi(R_i)=R_i &\qquad\forall  i=0,\dots,m-1 \\
    \psi(U_i)=U_i &\qquad\forall  i=0,\dots,m.
  \end{align*}
  Since $\Clg$  is generated (over the idempotent algebra)
  by the $L_i$, $R_i$, and $U_i$  as  above, it follows that
  $\psi$ is the identity  map.
\end{proof}

It is easy enough to see that $\psi\circ \phi\colon \Clg\to \Clg$ is
the identity map. To see that $\phi\circ \psi\colon Q\to Q$ induces
the identity map on homology, we use the following proposition. Recall
the generating set of atomic generators $\Idemp{\x}\cdot \Xgen{i}{j}$,
$\Idemp{\x}\cdot \Rij$ and $\Idemp{\x}\cdot \Lji$
(Definition~\ref{def:Atomic}).  For $i=1,\dots,m$, let $X_i$ denote
$\Xgen{i-1}{i}$.

\begin{prop}
  \label{prop:RigidQ}
  If $f\colon \Quot{m}{k}\to \Quot{m}{k}$ is an ${\mathcal A}_{\infty}$ homomorphism
  with the following properties:
  \begin{itemize}
    \item $f(X_{i-1,i})=X_{i-1,i}$ for all $i=1,\dots,m$.
    \item $f(L_{i,i-1})=L_{i,i-1}$ for all $i=2,\dots m$.
    \item $f(R_{i,i+1})=R_{i,i+1}$ for all $i=1,\dots,m-1$.
  \end{itemize}
  Then, $f$ induces the identity
  map on homology.
\end{prop}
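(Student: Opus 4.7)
My plan is to prove Proposition~\ref{prop:RigidQ} by reducing to the homology calculation of Section~\ref{sec:HomologyPong} and then using the $A_\infty$-homomorphism relations to propagate the hypothesis $f_1 = \mathrm{Id}$ on short atomic generators to $f_* = \mathrm{Id}$ on products. The starting point is Theorem~\ref{thm:HomologyQp} together with Lemmas~\ref{lem:TwoValues} and~\ref{lem:ConstantVector}: for each compatible triple $(\x,\y;w)$ satisfying conditions~\ref{item:Interleaved} and~\ref{item:BoundedWeight}, the homology $H_*(Q'(\x,\y;w))$ is one-dimensional, generated by the canonical cycle $z(\x,\y;w)$ of Definition~\ref{def:CanonicalCycle}, explicitly an ordered product of $\mathcal{X}_{x_i, x_{i+1}}$ factors. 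The long exact sequence built from the $\Omega$-action (see the proof of Corollary~\ref{cor:QmSpecial}) reduces the $Q$ homology to that of $Q'$, so it suffices to handle the canonical cycles.

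Having a canonical cycle representative $z$, the key algebraic step is to observe that short atomic generators are $\mu_1$-cycles in $\Quot{m}{k}$, since their pong-level differentials, computed by Lemma~\ref{lem:dAtomic}, are $v$-multiples killed in passing to $\Quot{m}{k} = \Pong{m}{k}/(v_1, \dots, v_m)$. For a product of such cycles $z = a_1 \cdots a_r$ with $f_1(a_i) = a_i$, the $n=2$ identity for an $A_\infty$-homomorphism applied to $(a_i, a_{i+1})$ yields
\[
  f_1(a_i \cdot a_{i+1}) = f_1(a_i)\cdot f_1(a_{i+1}) + \mu_1(f_2(a_i,a_{i+1})) = a_i \cdot a_{i+1} + \mu_1(f_2(a_i,a_{i+1})),
\]
and iterating this via the Leibniz rule with induction on $r$ yields $f_1(z) - z = \mu_1(h)$ for some chain $h$. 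Consequently $[f_1(z)] = [z]$ in homology.

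The principal technical obstacle I anticipate is the first step: establishing that every homology class of $\Quot{m}{k}$ admits a representative that is a product of only the short atomic generators listed in the hypothesis. For canonical cycles indexed by idempotent states $\x$ whose consecutive entries are adjacent in $\{1,\dots,m-1\}$, this is immediate, since the relevant factors $\mathcal{X}_{x_i,x_{i+1}}|_\x$ are themselves short $X$-generators. For canonical cycles involving a factor $\mathcal{X}_{x_i, x_{i+1}}|_\x$ with $x_{i+1} - x_i \geq 2$ and the intermediate positions absent from $\x$, the factor is a cycle in $\Quot{m}{k}$ but is not itself a product of short atomic generators in the relevant idempotent. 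Here I would use the boundary identities of Lemma~\ref{lem:dAtomic} in the idempotent sector where the intermediate positions \emph{are} filled, together with careful idempotent bookkeeping and the vanishing of the $v_j$ in $\Quot{m}{k}$, to rewrite the long factor as a sum of products of short atomics modulo a $\mu_1$-exact error, thus exhibiting a homologous short-generator representative. Once this is in hand for every compatible triple, the second paragraph's propagation argument completes the proof.
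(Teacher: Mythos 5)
Your reduction breaks down at the element $\Omega$, and this is not a technicality but the heart of the matter. By Corollary~\ref{cor:QmSpecial} (and, for general $k$, Lemma~\ref{lem:GenerateHomologyOfQ}), $H_*(\Quot{m}{k})$ is \emph{not} generated as a ring by the classes of atomic generators: $[\Omega]$ is a genuine polynomial generator. On the chain level $\Omega=\sum_i[{\mathcal X}_{0,i},{\mathcal X}_{i,m}]$ is a sum of products of non-cycles, so $[\Omega]$ does not lie in the subring of $H_*(Q)$ generated by the classes $[X_{i-1,i}]$, $[L_{i,i-1}]$, $[R_{i,i+1}]$ (in $\Quot{m}{m-1}$ that subring is $\Field[X_1,\dots,X_m]/(X_i^2)$, which omits $\Omega$). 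Consequently your step ``the long exact sequence built from the $\Omega$-action reduces the $Q$ homology to that of $Q'$, so it suffices to handle the canonical cycles'' is circular: to lift the identity from $H(Q')$ back to $H(Q)$ along that sequence you must already know that $f_*$ commutes with multiplication by $[\Omega]$, i.e.\ that $f_*[\Omega]=[\Omega]$, and nothing in your argument — which only ever invokes $f_1$ being a ring map on homology via the $n=2$ homomorphism relation — can produce that. The Leibniz-rule induction in your second paragraph only reaches classes that are products of cycles fixed by $f_1$, and the $\Omega$-tower $\{\Omega^s\cdot b'\}_{s\geq 1}$ lies outside its scope.

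The paper closes exactly this gap using the higher $A_\infty$ structure: Lemma~\ref{lem:AinfQ} and Proposition~\ref{prop:AinfQ} show that $\sum_{\sigma\in{\mathfrak S}_m}\mu_m(X_{\sigma(1)},\dots,X_{\sigma(m)})=\Omega$ in $H_*(\Quot{m}{m-1})$, so the full $A_\infty$ homomorphism relation (not just the $n=2$ piece) forces any graded $f$ fixing the $[X_i]$ to fix $[\Omega]$ as well (Lemma~\ref{lem:RigidQm}); the general case then reduces to this one via $\Idemp{\x}\cdot Q\cdot\Idemp{\x}\cong\Quot{k+1}{k}$ and Lemma~\ref{lem:GenerateHomologyOfQ}. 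A correct proof must use the higher operations $\mu_\ell$ with $\ell>2$ in an essential way; your proposal never does. Your secondary concern about rewriting long factors ${\mathcal X}_{x_i,x_{i+1}}$ in terms of short atomics is a real issue too, but it is subsumed by the same mechanism (the longer $X_{i,j}$ are recovered from the short ones through the $A_\infty$ relations in Lemma~\ref{lem:AinfQ}), so fixing the $\Omega$ problem would also supply what you are missing there.
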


Proposition~\ref{prop:RigidQ} is proved in Section~\ref{subsec:RigidQ}.
We prove Theorem~\ref{thm:KoszulDuality}, assuming this result, presently:

\begin{proof}[Proposition~\ref{prop:RigidQ}$\Rightarrow$
Theorem~\ref{thm:QuasiInverses}]
  From
  Properties~\ref{Y:X},~\ref{Y:0}, and~\ref{Y:m},
  it follows that $\phi(X_i)=U_i$ and
  $\psi(U_i)=X_i$.  Thus, $\psi\circ \phi(X_i)=X_i$.  Thus, by
  Proposition~\ref{prop:RigidQ} implies that $\psi\circ\phi\simeq \Id$.
  It is straightforward to see that $\phi\circ \psi=\Id$.
\end{proof}

\subsection{On the homology of $\Quot{m}{k}$}
\label{subsec:RigidQ}

To complete the proof of Theorem~\ref{thm:QuasiInverses}, it remains to verify
Proposition~\ref{prop:RigidQ}. We start with the case where $k=m-1$.

We wish to glean information from the ${\mathcal A}_{\infty}$
structure on $H(\Quot{m}{m-1})$. To this end, let $\phi\colon
H(\Quot{m}{m-1})\to \Quot{m}{m-1}$ be an $\Ainfty_{\infty}$
homomorphism.
Given $T\subset \{1,\dots,m\}$, let ${\mathfrak S}(T)$ denote the set
of one-to-one correspondences $\tau\colon \{1,\dots,\ell\}\to T$.  Let
\[\Phi(T)=\sum_{\tau\in{\mathfrak S}(T)}
\phi_\ell(X_{\tau(1)},\dots,X_{\tau(\ell)}). \]

\begin{lemma}
  \label{lem:AinfQ}
  Let $\phi\colon H(\Quot{m}{m-1})\to \Quot{m}{m-1}$ be a graded ${\mathcal
    A}_{\infty}$ homomorphism with $\phi(X_i)=X_i$, and fix a subset
  $T\subset \{1,\dots,m\}$.  If $T$ is not an interval, then
  $\Phi(T)=0$.  If $T$ is an interval from $i$ to $j$ for some $i<j$,
  with $(i,j)\neq (1,m)$, then $\Phi(T)=X_{i,j}$.
\end{lemma}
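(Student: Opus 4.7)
My plan follows the standard recipe: sum the ${\mathcal A}_\infty$ homomorphism relation over all orderings of $T$ to extract a Leibniz rule for $\Phi$, then combine this with grading constraints and an induction on $|T|$.

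\textbf{Step 1 (Leibniz rule).} Set $n=|T|$ and fix a bijection $\tau\colon\{1,\dots,n\}\to T$. The ${\mathcal A}_\infty$ homomorphism relation for the input $(X_{\tau(1)},\dots,X_{\tau(n)})$ gives, in characteristic $2$,
\[
d\,\phi_n(X_{\tau(1)},\dots,X_{\tau(n)}) + \sum_{r=1}^{n-1}\phi_r(X_{\tau(1)},\dots,X_{\tau(r)})\cdot\phi_{n-r}(X_{\tau(r+1)},\dots,X_{\tau(n)}) = \sum_{r}\phi_{n-1}\bigl(\dots,X_{\tau(r)}X_{\tau(r+1)},\dots\bigr) + (\text{higher source terms}),
\]
where the "higher source terms" involve $\mu_s^H$ for $s\geq 3$ on substrings of inputs. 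Using the presentation $H\cong\Field[X_1,\dots,X_m,\Omega]/(X_i^2)$ from Corollary~\ref{cor:QmSpecial}, a weight and crossing check shows that $\mu_s^H$ applied to a substring of distinct $X_i$'s vanishes unless $s=m$ and the substring exhausts $X_1,\dots,X_m$ (producing an $\Omega$-term). That case is excluded by the hypothesis $(i,j)\neq(0,m)$, so only $\mu_2^H$ contributes. Summing the relation over all $\tau$, the terms $\phi_{n-1}(\dots,X_{\tau(r)}X_{\tau(r+1)},\dots)$ pair up under the transposition $\tau\mapsto\tau\circ(r,r+1)$ (using $X_iX_j=X_jX_i$ in $H$) and cancel in characteristic~$2$. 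We obtain the Leibniz rule
\[
d\Phi(T) = \sum_{T=T_1\sqcup T_2,\,|T_i|\geq 1}\Phi(T_1)\cdot\Phi(T_2).
\]

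\textbf{Step 2 (Grading constraint).} Because $\phi$ preserves gradings, $\Phi(T)$ lies in the component of $\Quot{m}{m-1}$ of weight $\sum_{i\in T}e_i$ and crossing grading $\cross=2|T|-1$. Every pure-algebra summand therefore satisfies $2\weight-\cross=1$, so by Proposition~\ref{prop:AtomicElements} it is atomic. In $\Quot{m}{m-1}$ the unique idempotent state is $\{1,\dots,m-1\}$, so $R_{i,j}$ and $L_{j,i}$ vanish (they demand an unoccupied position in the source or target), leaving only the $X_{i,j}$'s as candidates. Matching weights forces $T$ to be an interval $\{i+1,\dots,j\}$, and $X_{i,j}$ exists exactly when $(i,j)\neq(0,m)$. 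Hence $\Phi(T)=0$ when $T$ is not such an interval (settling the first clause), and otherwise $\Phi(T)\in\{0,X_{i,j}\}$.

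\textbf{Step 3 (Induction on $|T|$).} The case $|T|=1$ is immediate from $\Phi(\{i\})=\phi_1(X_i)=X_i=X_{i-1,i}$. For an interval $T=\{i+1,\dots,j\}$ with $|T|\geq 2$, the Leibniz rule and the inductive hypothesis (splittings where some $T_s$ is not an interval contribute $0$) reduce the right-hand side to
\[
d\Phi(T)=\sum_{i<\ell<j}\bigl(X_{i,\ell}\cdot X_{\ell,j}+X_{\ell,j}\cdot X_{i,\ell}\bigr),
\]
which by Lemma~\ref{lem:dAtomic} equals $dX_{i,j}$. This sum is non-zero in $\Quot{m}{m-1}$ because distinct $\ell$ and distinct orderings produce distinct lifted partial permutations (three-cycles on $\{i,\ell,j\}$ combined with the appropriate stationary strands). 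Combined with $\Phi(T)\in\{0,X_{i,j}\}$ from Step~2, the identity $d\Phi(T)=dX_{i,j}\neq 0$ forces $\Phi(T)=X_{i,j}$, completing the induction. The main obstacle is the argument in Step~1 that higher source operations $\mu_s^H$ drop out; once this is secured via Corollary~\ref{cor:QmSpecial}, the rest is formal bookkeeping.
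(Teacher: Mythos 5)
Your proposal is correct and follows essentially the same route as the paper's proof: sum the $\Ainfty_\infty$-homomorphism relation over all orderings of $T$, use the grading to kill all source operations $\mu_\ell$ with $2<\ell<m$ (and the hypothesis $T\neq\{1,\dots,m\}$ to exclude the $\mu_m$/$\Omega$ term), cancel the $\mu_2$ source terms in pairs via commutativity, and deduce the Leibniz rule $d\Phi(T)=\sum_{T=T_1\sqcup T_2}\Phi(T_1)\Phi(T_2)$, after which induction plus Proposition~\ref{prop:AtomicElements} pins down $\Phi(T)=X_{i,j}$. Your Step~2 (classifying the atomic elements of $\Quot{m}{m-1}$ to settle the non-interval case) and the explicit non-vanishing of $dX_{i,j}$ are slightly more detailed than the paper's terse treatment, but the argument is the same.
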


\begin{proof}
  We consider the sum of the ${\mathcal A}_{\infty}$ relations on the
  homomorphism $\phi$, where we sum over all permutations of the
  elements $X_i$ with $i\in T$
:
  \begin{itemize}
  \item for any sequence of distinct integers
    $i_1<\dots<i_\ell$ with $\ell<m$, gradings ensure that
    $\mu_\ell(X_{i_1},\dots,X_{i_\ell})=0$.
  \item for $1\leq i<j\leq m$, we have that $[X_i][X_j]=[X_j][X_i]$ in $H(\Quot{m}{m-1})$.
    There are two cases: if $j=i+1$, the homology is supplied by $X_{i-1,i+1}$.
    If $j>i+1$, then $X_i \cdot X_j = X_j \cdot X_i$.
    \end{itemize}
    It follows that
  the ${\mathcal A}_{\infty}$ relation shows that for any $T$, 
  \[ d\Phi(T)=\sum_{T=T_1\coprod T_2} \Phi(T_1)\cdot \Phi(T_2).\]
  By  the inductive hypothesis, if $T=\{i,\dots,j\}$ is an interval (different from $(1,m)$), then
  \[ d\Phi(T)=\sum_{\{k\mid i<k<j\}}X_{k,j}\cdot X_{i,k}.\]
  It follows that $\Phi(T)$ is non-zero.
  Now, $2\weight(\Phi(T))=\Cr(\Phi(T))+1$, so
  by  Proposition~\ref{prop:AtomicElements}, we can conclude
  that
  $\Phi(T)=X_{i,j}$.
\end{proof}

The above lemma has the following easy consequence:

\begin{prop}
  \label{prop:AinfQ}
  For any induced ${\mathcal A}_\infty$ structure on
  $\Field[X_1,\dots,X_{m},\Omega]/(X_i^2=0)_{i=1}^m$ induced from the 
  isomorphism of Equation~\eqref{eq:QmSpecial}, 
  we have that
  \[ \sum_{\sigma\in{\mathfrak S}_m}\mu_m(X_{\sigma(1)},\dots,X_{\sigma(m)})=\Omega.\]
\end{prop}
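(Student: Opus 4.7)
The plan is to mimic the argument of Lemma~\ref{lem:AinfQ}, now applying the summed $\mathcal{A}_\infty$ homomorphism relation to the full interval $T=\{1,\ldots,m\}$, which was the excluded case there.

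Fix an $\mathcal{A}_\infty$ quasi-isomorphism $\phi\colon(H(\Quot{m}{m-1}),\mu^H_*)\to\Quot{m}{m-1}$ realizing the induced $\mathcal{A}_\infty$ structure, chosen so that $\phi_1$ sends each generator $X_i$ and $\Omega$ to a cycle representing its class. With $\Phi(T)$ as in Lemma~\ref{lem:AinfQ}, that lemma supplies $\Phi(T)=X_{i,j}$ whenever $T=\{i+1,\ldots,j\}$ is an interval with $(i,j)\neq(0,m)$, and $\Phi(T)=0$ otherwise.

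Sum the $\mathcal{A}_\infty$ homomorphism relations for $\phi_m$ over all orderings of the input string $X_1,\ldots,X_m$. On the product side this collapses, via Lemma~\ref{lem:AinfQ}, to
\[
  \sum_{T_1\sqcup T_2=\{1,\ldots,m\}}\Phi(T_1)\cdot\Phi(T_2)=\sum_{k=1}^{m-1}\bigl(X_{0,k}\cdot X_{k,m}+X_{k,m}\cdot X_{0,k}\bigr)=\Omega,
\]
since the only non-zero contributions come from splitting into two non-empty intervals $\{1,\ldots,k\}$ and $\{k+1,\ldots,m\}$. The main step is to show the intermediate $\mu_s^H$ insertions with $2\leq s<m$ all cancel after summation. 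For $s=2$ this is characteristic-two commutativity in $H$: summing over the two orderings of any pair $\{u_1,u_2\}$ yields $X_{u_1}X_{u_2}+X_{u_2}X_{u_1}=0$, and one factors this zero out of the $\phi_{m-1}$ slot. For $3\leq s<m$, a grading count in $H(\Quot{m}{m-1})\cong\Field[X_1,\ldots,X_m,\Omega]/(X_i^2)$ shows $\mu_s^H(X_{i_1},\ldots,X_{i_s})=0$ on any tuple of distinct generators: the target Alexander weight $\sum e_{i_j}$ is not $(1,\ldots,1)$ when $s<m$, so the only element of $H$ with that weight is the monomial $X_{i_1}\cdots X_{i_s}$, whose $\cross$-degree $s$ disagrees with the required $\mu_s^H$ output degree $2s-2$.

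With these cancellations, the $\mathcal{A}_\infty$ relation reduces to
\[
  d\Phi(\{1,\ldots,m\})+\phi_1\Bigl(\sum_{\sigma\in\mathfrak{S}_m}\mu^H_m(X_{\sigma(1)},\ldots,X_{\sigma(m)})\Bigr)=\Omega,
\]
so $\phi_1\bigl(\sum_\sigma\mu^H_m(\ldots)\bigr)$ is cohomologous to $\Omega$ in $\Quot{m}{m-1}$. Since $\phi_1$ induces the identity on $H$ under our choice of representatives, this forces $\sum_{\sigma}\mu^H_m(X_{\sigma(1)},\ldots,X_{\sigma(m)})=\Omega$ in $H$. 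The principal obstacle is the grading bookkeeping that eliminates the $3\leq s<m$ contributions; once that is in hand, the rest is essentially formal.
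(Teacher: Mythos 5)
Your proposal is correct and follows essentially the same route as the paper: sum the $\mathcal{A}_\infty$ homomorphism relation for a $\phi$ inducing the identity on homology over all orderings of $X_1,\dots,X_m$, invoke Lemma~\ref{lem:AinfQ} to evaluate the $\Phi(T_1)\cdot\Phi(T_2)$ terms as $\sum_{0<\ell<m}(X_{0,\ell}X_{\ell,m}+X_{\ell,m}X_{0,\ell})=\Omega$, and conclude since $\phi_1$ is the identity on homology. The vanishing of the intermediate $\mu_s$ insertions for $2\le s<m$, which you spell out via the weight/crossing grading count, is exactly the content already used in the proof of Lemma~\ref{lem:AinfQ}.
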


\begin{proof}
  The induced  $\Ainfty_{\infty}$ structure on $H(\Quot{m}{m-1})$ can be computed
  from the homological perturbation lemma~\cite{Keller}.
  Consider any ${\mathcal A}_{\infty}$ homomorphism
  $\phi\colon H(\Quot{m}{m-1})\to \Quot{m}{m-1}$ which induces the identity
  map on homology. 
  Combining the ${\mathcal A}_{\infty}$ relation on $\phi$ with
  Lemma~\ref{lem:AinfQ}, we find that
  \begin{align*}
    \phi_1&\left(\sum_{\sigma\in{\mathfrak S}_m}\mu_m (X_{\sigma(1)},\dots,X_{\sigma(m)}) \right)
    \\&\sim 
    \sum_{0<\ell<m}\Big( \Phi(\{0,\dots,\ell\})\cdot \Phi(\{\ell+1,\dots,m\})+
    \Phi(\{\ell+1,\dots,m\})\cdot \Phi(\{0,\dots,\ell\})\Big) \\
    &=\sum_{0<\ell<m} X_{0,\ell}\cdot X_{\ell,m}+X_{\ell,m}+\cdot X_{0,\ell} \\
    &=\Omega.
  \end{align*}
  Since $\phi_1$ induces the identity map on homology, the result follows.
\end{proof}

We can now prove Proposition~\ref{prop:RigidQ} when $m=k$.

\begin{lemma}
  \label{lem:RigidQm}
  Let $\phi'\colon H(\Quot{m}{m-1})\to H(\Quot{m}{m-1})$ be a graded
  ${\mathcal A}_{\infty}$ homomorphism with $\phi'([X_i])=[X_i]$. Then
  $\phi'_1$ induces the identity map on homology.
\end{lemma}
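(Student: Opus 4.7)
Plan.

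The plan is to show that $\phi'_1$ is the identity on $H(\Quot{m}{m-1})$ by proving it is a ring endomorphism that fixes all generators.  First, the ${\mathcal A}_{\infty}$-morphism relation for $\phi'$ at arity two, combined with $\mu_1=0$ on $H:=H(\Quot{m}{m-1})$, gives $\phi'_1(\mu_2(a,b))=\mu_2(\phi'_1(a),\phi'_1(b))$, so $\phi'_1$ is a ring homomorphism.  By Corollary~\ref{cor:QmSpecial} the ring $H$ is generated by $X_1,\dots,X_m$ and $\Omega$; since $\phi'_1(X_i)=X_i$ by hypothesis, it remains only to check $\phi'_1(\Omega)=\Omega$.

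For this I would substitute the identity $\Omega=\sum_{\sigma\in{\mathfrak S}_m}\mu_m(X_{\sigma(1)},\dots,X_{\sigma(m)})$ from Proposition~\ref{prop:AinfQ} and apply the ${\mathcal A}_{\infty}$-morphism relation for $\phi'$ to the input $(X_{\sigma(1)},\dots,X_{\sigma(m)})$, then sum over $\sigma\in{\mathfrak S}_m$.  Each $X_i$ carries $\Ngr=-1$ and weight $e_i$, and the shifts are: $\phi'_n$ raises $\Ngr$ by $n-1$, $\mu_s$ raises it by $s-2$.  A direct grading check, using that the basis of $H$ in a fixed weight consists of a monomial $\prod_{i\in J}X_i$ (with $\Ngr=-|J|$) and, when the weight is the all-ones vector, also powers of $\Omega$ (with $\Ngr=-2k$), yields:
\begin{itemize}
\item $\phi'_n(X_{j_1},\dots,X_{j_n})=0$ for $n\geq 2$ and distinct $j_1,\dots,j_n$;
\item $\mu_s(X_{j_1},\dots,X_{j_s})=0$ for $3\leq s\leq m-1$ and distinct $j_i$.
\end{itemize}
Thus in the summed relation the right-hand side collapses to $\sum_\sigma\mu_m(X_{\sigma(1)},\dots,X_{\sigma(m)})=\Omega$, and the left-hand side collapses to $\phi'_1(\Omega)$ plus the single family of corrections
\[
\sum_{\sigma\in{\mathfrak S}_m}\sum_{i=1}^{m-1}\phi'_{m-1}\bigl(X_{\sigma(1)},\dots,X_{\sigma(i)}X_{\sigma(i+1)},\dots,X_{\sigma(m)}\bigr).
\]

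The final step is to show this correction vanishes.  Since $H$ is commutative, $X_{\sigma(i)}X_{\sigma(i+1)}=X_{\sigma(i+1)}X_{\sigma(i)}$, so swapping the values of $\sigma$ at positions $i$ and $i+1$ leaves the input sequence to $\phi'_{m-1}$ unchanged.  This defines a fixed-point-free involution on the indexing set ${\mathfrak S}_m\times\{1,\dots,m-1\}$, so the correction sum vanishes mod $2$ and one concludes $\phi'_1(\Omega)=\Omega$.  The main technical obstacle is organizing the arity-$m$ ${\mathcal A}_{\infty}$-morphism relation so that the grading arguments cleanly isolate the single family of surviving corrections, leaving only the elementary pairing argument at the end.
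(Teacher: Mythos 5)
Your proposal is correct and follows essentially the same route as the paper: reduce via the ring structure and the generators $X_1,\dots,X_m,\Omega$ of $H(\Quot{m}{m-1})$ to showing $\phi'_1(\Omega)=\Omega$, then extract this from the arity-$m$ ${\mathcal A}_\infty$-morphism relation summed over ${\mathfrak S}_m$, using Proposition~\ref{prop:AinfQ}, the grading vanishing of the higher $\mu_s$ and $\phi'_n$ on distinct $X_i$'s, and the commutativity-based pairing to kill the $\mu_2$-correction terms (the paper handles that cancellation implicitly via the relation $[X_i][X_j]=[X_j][X_i]$ noted in Lemma~\ref{lem:AinfQ}). The only quibble is the parenthetical value of $\Ngr$ on powers of $\Omega$ (in $\Quot{m}{m-1}$ one has $\Ngr(\Omega)=-2$), but this does not affect any step of the argument.
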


\begin{proof}
  Define
  \[\Phi'(T)=\sum_{\tau\in{\mathfrak S}(T)}
  \phi'_\ell([X_{\tau(1)}],\dots,[X_{\tau(\ell)}]). \]
  As in the proof of Proposition~\ref{prop:AinfQ},
  the ${\mathcal A}_{\infty}$ relation on $\phi'$ and Lemma~\ref{lem:AinfQ}
  ensures that
  \[ \phi_1'\left(\sum_{\sigma\in{\mathfrak S}_m} \mu_m(X_{\sigma(1)},\dots,X_{\sigma(m)}\right)=\Omega.\]
  But by Proposition~\ref{prop:AinfQ}, the left-hand-side is
  $\phi_1'(\Omega)$, so we conclude that the map on homology induced
  by $\phi_1'$ fixes $\Omega$, in addition to the $[X_i]_{i=1}^m$. But
  $\phi_1$ is an algebra map, and $H_*(\Quot{m}{m-1})$ is generated by
  $[X_i]$ and $\Omega$; so we can conclude that $\phi_1'$ fixes all of
  $H_*(\Quot{m}{m-1})$.
\end{proof}

We consider next the case of $\Quot{m}{k}$ with $k<m-1$.

\begin{lemma}
  \label{lem:GenerateHomologyOfQ}
    The ring $H(Q)$ is generated by the elements $L_{i}$,
    $R_i$, $\Omega$, and the cycles of the form ${\mathcal X}_{i,j}$.
\end{lemma}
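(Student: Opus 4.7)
The plan is to combine Theorem~\ref{thm:HomologyQp} (which classifies non-trivial homology classes) with the inductive factorization argument embedded in the proof of Lemma~\ref{lem:TwoValues}.

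First, I would observe that multiplication by $\Omega$ fits into a short exact sequence $0 \to Q \xrightarrow{\Omega} Q \to Q' \to 0$ (where $Q'$ is the mapping cone from the adjunction of $X$ with $dX=\Omega$), giving a long exact sequence that identifies every class of $H(Q)$, up to multiplication by a power of $\Omega$, with a class in $H(Q')$. By Theorem~\ref{thm:HomologyQp}, each non-trivial class in $H(Q'(\x,\y;w))$ has weight vector with every component in $\{0,1\}$, and by Lemmas~\ref{lem:TwoValues} and~\ref{lem:ConstantVector}, it is represented by the canonical cycle $z(\x,\y;w)$ (possibly multiplied by a cusp to ``turn on'' the weight). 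Thus every class in $H(Q)$ is represented by a product of the form $\Omega^{c}\cdot z(\x,\y;w)$ with $w_i\in\{0,1\}$.

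Second, I would invoke the inductive factorization from the proof of Lemma~\ref{lem:TwoValues}: choosing the minimal index $p$ with $w_p=1$ and $p\in\x$, one may write $z = z_0 \cdot z'$ with $z'$ a canonical cycle of strictly smaller total weight and $z_0$ one of three elementary blocks: a left-wall cusp $\mathcal X_{0,p}$ (which is itself a cycle of the form $\mathcal X_{i,j}$); a single strand $\mathcal R_{x_i,y_j}$ or $\mathcal L_{x_i,y_j}$; or a two-strand bouncing configuration consisting of a right-moving strand $x_i\to x_{i+1}$ and a left-moving strand $x_{i+1}\to y_{j-1}$. A symmetric argument applied from the right handles the right-wall cusps $\mathcal X_{p,m}$. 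Each multi-step $\mathcal R_{i,j}$ factors as $R_{j-1,j}\cdots R_{i,i+1}$ (and similarly for $\mathcal L_{j,i}$), expressing it as a product of the claimed $R_i$ and $L_i$ generators; and the two-strand bouncing configuration factors in the appropriate idempotents as $\mathcal R_{x_i,x_{i+1}} \cdot \mathcal L_{x_{i+1},y_{j-1}}$ (with $\mathcal L$ applied first, so the two strands are independent after pulling taut), which in turn lies in the subring generated by the $R_i$'s and $L_i$'s.

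Third, iterating this decomposition expresses every canonical cycle $z(\x,\y;w)$ with $w_i\in\{0,1\}$ as a product of $R_i$'s, $L_i$'s, and cycles $\mathcal X_{i,j}$; multiplying by $\Omega^{c}$ covers the general case. The main obstacle is the careful verification of the two-strand bouncing case, in particular checking that the idempotent matchings work out so that $z_0 = \mathcal R_{x_i,x_{i+1}} \cdot \mathcal L_{x_{i+1},y_{j-1}}$ genuinely equals the claimed two-strand diagram after pulling taut, and that the resulting $z'$ remains a canonical cycle of strictly smaller total weight so that the induction actually terminates. Once these routine checks are in hand, the conclusion follows.
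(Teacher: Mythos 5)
Your proposal is correct and follows essentially the same route as the paper: reduce to weight vectors with $w_i\in\{0,1\}$ via the long exact sequence of the mapping cone $Q\xrightarrow{\Omega}Q\to Q'$ together with Theorem~\ref{thm:HomologyQp}, and then factor the resulting canonical cycles using the inductive decomposition constructed in the proof of Lemma~\ref{lem:TwoValues} (with Lemma~\ref{lem:ConstantVector}, or equivalently the identification $\Idemp{\x}\cdot Q\cdot\Idemp{\x}\cong\Quot{k+1}{k}$ and Corollary~\ref{cor:QmSpecial}, handling the constant-weight case). The paper organizes this as an induction on $\max_i w_i$ rather than extracting the full power of $\Omega$ at once, but the content is identical.
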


\begin{proof}
  Consider $Q(\x,\y;w)$.
  Suppose $\x=\y$. Then, there is an isomorphism of rings
  $\Idemp{\x}\cdot \Quot{m}{k}\cdot\Idemp{\x}\cong\Quot{k+1}{k}$,
  which identifies the generators $X_{i,j}$ of $\Quot{k+1}{k}$ with
  ${\mathcal X}{x_i,x_j}$ in $\Quot{m}{k}$. Thus, the statement follows from
  Corollary~\ref{cor:QmSpecial}.

  Suppose that $\x\neq \y$.  Let $a\in H(Q(\x,\y;w))$ be non-zero. We
  prove that $a$ can be factored as above, by induction on the
  maximum of $w_i$.  Suppose that the maximum is $\leq 1$, but $w$ is
  non-constant.  Then such an element $a$ can be easily seen to be
  factored as a product of $L_i$, $R_i$ and 
  ${\mathcal X}_{i,j}$. (This factorization is actually constructed in
  the proof of Lemma~\ref{lem:TwoValues}.) For the inductive step,
  suppose that $w$ is a vector whose maximum is greater than
  $1$. Then, by Theorem~\ref{thm:HomologyQp}, we have that that
  $H(Q'(\x,\y;w))=0$. It follows from the long exact sequence of the
  mapping cone that $a\in \Omega\cdot H(Q(\x,\y;w-\vec{1}))$, and the
  result follows from the inductive hypothesis.
\end{proof}

\begin{proof}[Of Proposition~\ref{prop:RigidQ}]
  For any idempotent $\x$, there is a ring isomorphism
  \[ \Idemp{\x}\cdot \Quot{m}{k}\cdot \Idemp{\x}\cong \Quot{k+1}{k}.\]
  identifying ${\mathcal X}_{x_i,x_j}$ with some $\Xgen{i}{j}$, It
  follows now from Lemma~\ref{lem:RigidQm} that if $\phi$ fixes all
  the ${\mathcal X}_{i,j}$, then in fact $\phi$ fixes the entire
  subring $\sum_{\x}\Idemp{\x}\cdot\Quot{m}{k}\Idemp{\x}$. Note that
  $\Omega$ lies in this subring.
  Now, by Lemma~\ref{lem:GenerateHomologyOfQ}, $\phi$ fixes the generators of 
  $H(Q)$; and hence it must fix all of $H(Q)$.
\end{proof}

With the verification of Proposition~\ref{prop:RigidQ} in place, the
proof of Theorem~\ref{thm:QuasiInverses} is complete.

\newcommand\BigCobar{\mathbf{Cobar}}
\newcommand\kDD{\mathbb K}
\newcommand\kAA{\mathbb L}

\section{Identifying the cobar algebra}
\label{sec:Duality}

Choose integers $m$ and $k$ with $0<k<m$.
The aim of this section is to describe the cobar algebra of
$\Clg(m,k)$ in terms of the pong algebra. Before doing this, we begin
with a few general remarks about the cobar algebra of an associative
algebra.

Let $A$ be an augmented algebra, as in definition~\ref{def:AugmentedAlgebra}.
The {\em big cobar algebra} of $A$ is a graded algebra, defined as
follows. As a $\Ground$-$\Ground$ bimodule, it is given by
\[ \BigCobar(A)=\bigoplus_{n=0}^{\infty} \Hom(A_+^{\otimes n},\Ground).\]
Multipliciation is obtained by juxtaposing tensor summands.
The differential is induced by $\mu_2^*$. The summand 
$\Hom(A_+^{\otimes n},\Ground)$ is thought of as living in grading $-n$; and we
call this induced grading the {\em length grading}.

We specialize to the case of $A=\Clg(m,k)$.  
As before,  cobar
algebra has a $\Z$-valued grading, defined so that
$\Hom(A_+^{\otimes n},\Ground)$ has grading $-n$. We also have an
Alexander grading with values in $(\OneHalf \Z)^m$ which is
induced from the sums of the weights of the algebra elements in
$\Clg(m,k)$.  For example in $\Clg(4,2)$, the element $U_2^*\otimes
U_4^*$ has homological grading $-1$ and Alexander multi-grading
$(0,1,0,1)$. Moreover, its differential is the element
\[ R_2 ^* \otimes L_2 ^*\otimes U_4^* + L_2 ^*\otimes R_2^* \otimes U_4^*.\]

Let
\[ \Cobar(C)_{n,\vec{w}}=\Hom([C^{\otimes n}_+]_{\vec{w}},\Ground).\]
The {\em Maslov/Alexander graded cobar algebra}, or simply, the cobar
algebra, is defined  by
\[ \Cobar(C)=\bigoplus_{n,\vec{w}}\Cobar(C)_{n,\vec{w}}.\] (The big
cobar algebra, by contrast, would be an infinite direct product, indexed by
the various choices of weight vector.)

The algebra $\Quot{m}{k}$ has a $\Z$-grading
$\Ngr$ as specified in Equation~\eqref{eq:Ngr}.
The differential preserves the weight vector, with values in $\OneHalf \Z^m$. 

Our aim here is to prove the following:
\begin{thm}
  \label{thm:KoszulDual}
  Choose $0<k<m$.  There is a graded quasi-isomorphism
  $\Cobar(\Clg(m,k))$ to $\Quot{m}{k}$, which respects the Alexander
  gradings, and which identifies the length grading on $\Cobar$
  with $\Z$-grading $\Ngr$ on $\Quot{m}{k}$.
\end{thm}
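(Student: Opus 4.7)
The plan is to deduce Theorem~\ref{thm:KoszulDual} from the quasi-invertibility of the type $DD$ bimodule $\lsup{\Clg(m,k)}\DDmod^{\Quot{m}{k}}$ proved in Theorem~\ref{thm:QuasiInverses}, together with a general Koszul-duality principle that identifies the cobar algebra of $\Clg(m,k)$ with the dual algebra appearing in any quasi-invertible $DD$ bimodule over $\Clg(m,k)$. Concretely, I would proceed in three steps.

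First, I would recall from the framework of~\cite{HomPairing} (cf.\ the formal setup already used in~\cite{TorusAlg}) the following general fact: if $A$ is an augmented Maslov/Alexander bigraded algebra, then there is a canonical ``tautological'' quasi-invertible type $DD$ bimodule $\lsup{A}\kDD^{\Cobar(A)}$, characterized up to quasi-isomorphism by the property that $\delta^1$ is induced by the identity map $A_+\to \Cobar(A)_{-1}$, dualized. More importantly, if $\lsup{A}X^B$ is any quasi-invertible type $DD$ bimodule, with $X$ rank one over the idempotent ring on each side, then $B$ is quasi-isomorphic to $\Cobar(A)$, with the quasi-isomorphism respecting any gradings that are built into the structure map $\delta^1$.

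Second, I would apply this principle to $A=\Clg(m,k)$ and $B=\Quot{m}{k}$. The type $DD$ bimodule $\lsup{\Clg(m,k)}\DDmod^{\Quot{m}{k}}$ constructed in Theorem~\ref{thm:DDmod} is rank one over $\IdempRing{m}{k}$ on each side, and Theorem~\ref{thm:QuasiInverses} (whose proof uses the holomorphically-constructed $AA$ bimodule $\lsub{\Quot{m}{k}}\AAmod_{\Clg(m,k)}$ from Theorem~\ref{thm:ConstructY}) shows that it is quasi-invertible. Thus the general principle supplies a quasi-isomorphism of differential graded algebras $\Quot{m}{k}\simeq \Cobar(\Clg(m,k))$. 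Equivalently, one can argue directly: the structure map $\delta^1$ of $\DDmod$, written schematically as in Equation~\eqref{eq:DDmodDef}, can be repackaged as an algebra map $\Cobar(\Clg(m,k))\to \Quot{m}{k}$ sending the tensor of atomic-dual generators to the corresponding pong atomic generator, and the quasi-invertibility statement ensures this map is a quasi-isomorphism.

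Third, I would check that the gradings match. By Proposition~\ref{prop:GradedDD}, $\DDmod$ is Maslov/Alexander bigraded, supported in total grading $0$, where on the $\Quot{m}{k}$-side the grading is $\Ngr$ and the Alexander grading is $-\vec{\weight}$. Dualizing, this forces the induced map $\Cobar(\Clg(m,k))\to \Quot{m}{k}$ to send the length grading on $\Cobar$ to $\Ngr$, and to respect the Alexander grading (which on $\Cobar$ is the sum of the weights of the factors). The main obstacle is to confirm the general Koszul-duality input with sufficient care for the gradings to transport correctly; once that is set up, the theorem follows essentially formally from Theorems~\ref{thm:DDmod},~\ref{thm:ConstructY}, and~\ref{thm:QuasiInverses}. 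In particular, there is no further holomorphic or combinatorial computation required beyond those already carried out in Sections~\ref{sec:DDmod}--\ref{sec:Inverses}.
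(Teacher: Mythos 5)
Your overall strategy is the same as the paper's: deduce the theorem from the quasi-invertibility of $\lsup{\Clg(m,k)}\DDmod^{\Quot{m}{k}}$ (Theorem~\ref{thm:QuasiInverses}) via the Koszul-duality formalism of~\cite{HomPairing}, with the gradings transported by Proposition~\ref{prop:GradedDD}. One step in your first paragraph, however, is false as literally stated and is exactly the point the paper has to work around: there is no honest ``tautological'' type $DD$ bimodule $\lsup{C}\kDD^{\Cobar(C)}$ for $C=\Clg(m,k)$, because $C$ is infinitely generated over the idempotent ring (it contains all monomials in the $U_i$), so the would-be structure map $\delta^1(\One)=\sum_c c\otimes c^*$ is an infinite sum living only in a completion of $C\otimes\Cobar(C)$. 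Consequently the off-the-shelf principle ``any quasi-invertible rank-one $DD$ bimodule over $A$ identifies its other side with $\Cobar(A)$'' cannot simply be cited; it must be re-proved in a form that avoids $\kDD$. The paper does this by (i) writing down the explicit DG algebra map $\Phi\colon\Cobar(C)\to Q$, $\Phi(\gamma)=\gamma(c^{\otimes n})\cdot q^n$ where $\delta^1(\One)=c\otimes q$ --- this is precisely your ``equivalently, one can argue directly'' alternative --- and (ii) introducing a one-sided functor $K$ sending suitable $AA$ bimodules $\lsub{A}M_B$ to $DA$ bimodules $\lsub{A}K(M)^{\Cobar(B)}$ (Lemma~\ref{lem:Kfunctor}), which substitutes for tensoring with the nonexistent $\kDD$; the inverse map $\Psi$ is defined by $\lsub{Q}[\Psi]^{\Cobar(C)}=K(\lsub{Q}\AAmod_C)$, and the finiteness needed for $K$ to land in $\Cobar$ rather than its completion comes from the Alexander-grading boundedness of $\AAmod$. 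So your plan is right, but to be complete you would need to supply this $K$-construction (or an equivalent device) rather than appeal to the unmodified statement from~\cite{HomPairing}.
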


\begin{example}
  Consider the case $k=1$ and $m=2$. Then, $C=\Clg(2,1)\cong
  \Field[U_1,U_2]/U_1 U_2=0$. It is easy to see that the algebra
  $H(\Cobar(C))$ is generated by $U_1^*$ and $U_2^*$. Moreover, there
  are relations $[U_1^*]\otimes [U_1^*]=0$ and $[U_2^*]\otimes
  [U_2^*]=0$. Thus, $H(\Cobar(C))$ is isomorphic to $\Quot{m}{1}$, under
  an isomorphism that identifies $[U_1^*]$ and $\Xgen{0}{1}$ and $[U_2^*]$
  with $\Xgen{1}{2}$.
\end{example}

As usual, we abbreviate
$C=\Clg(m,k)$, $Q=\Quot{m}{k}$.

Theorem~\ref{thm:KoszulDual} will follow from Koszul duality, in
the form described in~\cite{HomPairing}, with a suitable adaptation of
some of the finiteness conditions from~\cite{TorusAlg}.
The proof follows the strategy of~\cite[Proposition~8.11]{HomPairing}.
Formally, the construction there used two
quasi-inverse bimodules between $C$ and $\Cobar(C)$,
$\lsup{\Cobar(C)}\kDD^{C}$ and $\lsub{\Cobar(C)}\kAA_{C}$.
The type $AA$ bimdodule $\lsub{\Cobar(C)}\kAA_C$, called the {\em Kronecker module},
is defined as
follows.  Its underlying $\Ground$-bimodule has rank one, and it is
equipped with the following operations.
All $m_{p|1|n}=0$  except for the two unital actions
\[ m_{1|1|0}({\mathbf 1},x)=x \qquad{\text{and}}\qquad
m_{0|1|1}(x,{\mathbf 1})=x,\]
and actions $m_{p|1|n}$ with $p=1$, specified for  $\gamma\in \Cobar(A)_n$
and $a_1\otimes \dots\otimes a_n\in (A_+)^n$ by
\[ m_{1,1,n}(\gamma\otimes{\mathbf 1}\otimes (a_1\otimes\dots\otimes a_n))=\gamma(a_1\otimes\dots\otimes a_n).\]

We would also like a bimodule
$\lsup{C}\kDD^{\Cobar(C)}$
has generators corresponding to idempotents
(which we suppress), and
\[\delta^1(\One)=\sum_{c} c\otimes c^*,\]
where the sum is taken over the basic generators $c$ of $C$.
Unlike~\cite{HomPairing}, this sum is not finite, since the algebra is
infinitely generated; rather the differential could be interpreted as
an element of some completion of $C\otimes\Cobar(C)$;
compare~\cite{TorusAlg}.
Rather than pursuing this perspective, we reformulate the proof of
Theorem~\ref{thm:KoszulDual} without explicitly referring to $\lsup{C}\kDD^{\Cobar(C)}$.

To this end,
abbreviate the differential of the type $DD$ bimodule from
Section~\ref{sec:DDmod} as $\delta^1(\One)=c\otimes q$,
so the $DD$ bimodule relation takes the form
\[ c\otimes dq + (c\cdot c) \otimes (q\cdot q)=0\]

We construct a DG algebra homomorphism $\Phi\colon \Cobar(C)\to Q$ as follows.
Given $\gamma\in \Cobar_n(C)$, let
\begin{equation}
  \label{eq:DefPhi}
  \Phi(\gamma)=\gamma(\overbrace{c\otimes\dots\otimes c}^n)\cdot q^n.
\end{equation}

\begin{lemma}
  \label{lem:DefPhi}
  The formula above defines a $DG$ algebra homomorphism
  $\Phi\colon \Cobar(C)\to Q$, where the $\Z$-grading on $\Cobar(C)$
  is given by the length grading, and the $\Z$-grading on $Q$ is 
  $\Ngr=\cross-2\Totweight$. Viewing $\Phi$ as an $A_{\infty}$ homomorphism
  with $\Phi_i=0$, the associated type $DA$ bimodule 
  $\lsub{\Cobar(C)}[\Phi]^Q$ satisfies
  \[ \lsub{\Cobar(C)}[\Phi]^Q \simeq \lsub{\Cobar(C)}\kAA_{C}\DT~ \lsup{C}\kDD^{Q}.\]
\end{lemma}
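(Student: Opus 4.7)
The plan is to regard $\delta^1(\One)=c\otimes q$ as a Maurer--Cartan element in the DG algebra $C\otimes Q$: the type $DD$ relation for $\lsup{C}\DDmod^{Q}$ says exactly that $c\otimes dq+(c\cdot c)\otimes (q\cdot q)=0$, which is the ``MC equation'' $d(c\otimes q)+(c\otimes q)^{2}=0$ (using that $C$ has zero differential). Evaluating a cobar element $\gamma\in\Cobar_{n}(C)$ on the $n$-fold tensor power of this MC element and multiplying by the corresponding $q$-side should produce an element of $Q$, and this is precisely the formula~\eqref{eq:DefPhi}. The first half of the lemma is the verification that this plugging-in process is a DG algebra map (with the advertised gradings); the second half is the essentially formal observation that $\kAA\DT\DDmod$ computes the same thing.

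First I would check well-definedness, multiplicativity and the grading statement. For a fixed pair of idempotents $\Idemp{\x},\Idemp{\y}$, finiteness of the sum defining $\Phi(\gamma)$ is automatic because $\gamma$ already selects a single tensor of generators in $C_{+}^{\otimes n}$; only the scalar $\gamma(c^{\otimes n})\in\Ground$ remains, and this multiplies $q^{n}$ in $Q$. Multiplicativity,
\[
\Phi(\gamma_{1}\cdot\gamma_{2})=\gamma_{1}(c^{\otimes n_{1}})\gamma_{2}(c^{\otimes n_{2}})\cdot q^{n_{1}+n_{2}}=\Phi(\gamma_{1})\cdot\Phi(\gamma_{2}),
\]
is just the definition of juxtaposition in $\Cobar$ together with the fact that the scalar factors land in $\Ground$ and can be absorbed into the idempotent splitting of $q^{n_{1}}\cdot q^{n_{2}}$. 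For the gradings, Proposition~\ref{prop:GradedDD} tells us that each summand of $\delta^{1}(\One)=c\otimes q$ satisfies $\weight(c)=\weight(q)$ and $\Ngr(q)=-1$, so the $n$-fold product has $\Ngr(q^{n})=-n$ and Alexander weight $\sum\weight(c_{i})$; the length grading on $\Cobar$ is $-n$, and the Alexander grading on $\Cobar$ is the sum of the weights in $C$, so both gradings match under $\Phi$.

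The key step is the Leibniz/chain-map identity $d\Phi(\gamma)=\Phi(d\gamma)$, and this is where the $DD$-relation does all the work. Write $d_{\Cobar}\gamma$ as the signed sum of insertions $\mu_{2}$ at position $i$, so $(d\gamma)(a_{1},\dots,a_{n+1})=\sum_{i}\gamma(a_{1},\dots,a_{i}\cdot a_{i+1},\dots,a_{n+1})$. Then
\[
\Phi(d\gamma)=\sum_{i}\gamma\bigl(c^{\otimes(i-1)}\otimes(c\cdot c)\otimes c^{\otimes(n-i)}\bigr)\cdot q^{n+1},
\]
while $d\Phi(\gamma)=\gamma(c^{\otimes n})\cdot d(q^{n})=\gamma(c^{\otimes n})\cdot\sum_{i}q^{i-1}\cdot dq\cdot q^{n-i}$. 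The $DD$ relation, interpreted in the tensor algebra on $\{c,q\}$ as $c\otimes dq=(c\cdot c)\otimes(q\cdot q)$, lets one rewrite each $dq$ in the Leibniz expansion in terms of $(c\cdot c)\otimes(q\cdot q)$; summing and re-evaluating against $\gamma$ yields exactly $\Phi(d\gamma)$. The main obstacle here is really just bookkeeping the idempotent factors; there are no signs since we work over $\Field$.

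Finally I would identify $\lsub{\Cobar(C)}[\Phi]^{Q}$ with the tensor product $\lsub{\Cobar(C)}\kAA_{C}\DT\lsup{C}\DDmod^{Q}$. Both bimodules have underlying $\Ground$-bimodule of rank one (on each idempotent summand). The Kronecker module $\kAA$ has vanishing $m_{p|1|n}$ for $p\neq 1$, and its only nontrivial higher action is $m_{1|1|n}(\gamma,x,a_{1},\dots,a_{n})=\gamma(a_{1}\otimes\cdots\otimes a_{n})\cdot x$. In the box tensor product, this forces the DA action to have only $\delta^{1}_{2}(\gamma,-)$ nonzero, and the definition of $\DT$ yields
\[
\delta^{1}_{2}(\gamma,x\otimes y)=\sum_{n}m_{1|1|n}(\gamma,x,c,\dots,c)\otimes y\otimes q^{n}=x\otimes y\otimes\bigl(\gamma(c^{\otimes n})\cdot q^{n}\bigr),
\]
where $n$ is the length of $\gamma$ and the iterated type $D$ structure $\delta^{n}(y)=c^{\otimes n}\otimes y\otimes q^{\otimes n}$ is used. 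The output is $x\otimes y\otimes\Phi(\gamma)$, which is the defining structure map of $\lsub{\Cobar(C)}[\Phi]^{Q}$ under Lemma~\ref{lem:BimoduleOfPhi}. The only nontrivial point is that this identification is an isomorphism, not merely a quasi-isomorphism; but since both sides have the same $\Ground$-bimodule and the same structure maps, this is immediate.
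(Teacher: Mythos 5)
Your proposal is correct and follows essentially the same route as the paper: the paper's proof checks the grading compatibility via Proposition~\ref{prop:GradedDD} exactly as you do, and declares the remaining verifications (multiplicativity, the chain-map identity via the $DD$ relation $c\otimes dq=(c\cdot c)\otimes(q\cdot q)$, and the identification of the box tensor product with $[\Phi]$) "similarly straightforward" — you have simply written out those straightforward steps in full, and correctly.
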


\begin{proof}
  To see that $\Phi$ respects the $\Z$-gradings, 
  note that if $\gamma$ is homogeneous and
  $\gamma(\overbrace{c\otimes\dots\otimes c}^n)\neq 0$, then the
  length grading of $\gamma$ is $-n$. By
  Proposition~\ref{prop:GradedDD}, $\Ngr(q)=-1$, so
  $\Ngr(q^n)=-n$. 
  
  The other properties are similarly straightforward.
\end{proof}

We describe next a way of turning (certain) type $AA$ bimodules into
type $DA$ bimodules, where the type $D$ side is a cobar algebra.  
In
the interest of clarity, we state the next lemma in slightly more
generality than we need. Specifically, we explained earlier how to
give $\Cobar(C)$ a Maslov/Alexander grading, whose $\Z$-component is
specified by the length grading. Note that the algebra $C$ has a trivial $\Z$-grading.
By contrast,  if $B$ has a non-trivial $\Z$-grading, 
\[ B= \bigoplus_{d\in \Z, \vec{w}\in \OneHalf \Z^m}B^{d,\vec{w}},\]
we endow $\Cobar(C)$ with a $\Z$-grading
so that the summand 
\[ 
(B^{d_1,\vec{w}_1}_+\otimes\dots\otimes B^{d_n,\vec{w}_n}_+)^*
\subset \Cobar(C)\]
has $\Z$-grading $\gr^{\Cobar(B)}$ given by $d_1+\dots+d_n-n$.

\begin{lemma} 
  Let $\lsub{A}M_B$ be a Maslov/Alexander graded bimodule
  with of rank one, whose generator, which we denote
  ${\bf 1}$, is supported in vanishing Maslov/Alexander bigrading.
  There is an associated
  (Maslov/Alexander-graded) type $DA$ bimodule
  $\lsub{A}{K(M)}^{\Cobar(B)}$, characterized by
  \[ \langle \delta^1_{p+1}(a_1,\dots,a_{p},{\bf 1}), b_1,\dots,b_q\rangle
  = m_{p|1|q}(a_1,\dots,a_p,{\bf 1},b_1,\dots,b_q).\]
\end{lemma}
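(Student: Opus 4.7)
The plan is to construct $K(M)$ explicitly by dualizing the $B$-actions on $M$, and then to verify that the type $AA$ structure relation for $\lsub{A}M_B$ becomes precisely the type $DA$ structure relation for $\lsub{A}K(M)^{\Cobar(B)}$.

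First I would set the underlying $(A,\Cobar(B))$-bimodule structure on $K(M)$ to agree with that of $M$: since $M$ has rank one as an $\Ground$-bimodule with generator $\mathbf{1}$ in bigrading $(0,\vec 0)$, the same holds for $K(M)$, where the right $\Cobar(B)$ action is by the ground ring summand. Next, for each $p\geq 0$ and each sequence $a_1,\dots,a_p\in A$, I would define
\[ \delta^1_{p+1}(a_1,\dots,a_p,\mathbf{1})\in \Cobar(B)\otimes K(M) \]
to be the element which, under the natural pairing of $\Cobar(B)_q=\Hom(B_+^{\otimes q},\Ground)$ against $b_1\otimes\dots\otimes b_q$, outputs the scalar $m_{p|1|q}(a_1,\dots,a_p,\mathbf{1},b_1,\dots,b_q)\in \Ground$. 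For this to land in $\Cobar(B)$ rather than its completion, I need to check a finiteness property: for each fixed Alexander weight $\vec w$, the summand $(B_+^{\otimes q})_{\vec w}$ is nonzero for only finitely many $q$, because each generator of $B_+$ has a strictly positive entry in $\vec{\weight}$ (this is the boundedness hypothesis of Definition~\ref{def:MaslovAlexAlgebra}). Combined with the Alexander-grading compatibility coming from the $AA$ structure (Equation~\eqref{eq:AlexanderGrading}), only finitely many $q$ contribute in each bigrading, so the assembled element of the direct product lies in the direct sum, i.e.\ in $\Cobar(B)$.

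The core of the verification is that the $DA$ relation
\[ \sum (\mu_\ell\otimes \Id)\circ(\Id^{\otimes (\ell-1)}\otimes \delta^1_{p+1})\circ(\ldots)=0 \]
for $K(M)$ is dual to the $AA$ relation for $M$. I would write out both sides paired against a test string $b_1,\dots,b_q$: each way of composing $\delta^1$'s in $\Cobar(B)$ and multiplying the result with $\mu_\ell$ corresponds, by the defining characterization, to a way of splitting the string $(b_1,\dots,b_q)$ into consecutive sub-blocks and applying $m_{*|1|*}$ operations, while the differential of $\Cobar(B)$ (dual to $\mu_2^B$) corresponds to inserting a pair $b_i,b_{i+1}$ with the product $b_i\cdot b_{i+1}$. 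Combined with the $A$-side compositions $\mu_2^A$ and $\mu_1^A$ acting on the inputs $a_j$, this yields term-by-term the $\Ainfty$-bimodule structure relation on $\lsub{A}M_B$, which holds by hypothesis. Thus the $DA$ relation follows.

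Finally, I would verify the Maslov/Alexander gradings: by the grading conventions, pairing with $\Cobar(B)_q$ raises the homological grading of the output by $q-1$ (from $\gr^{\Cobar(B)}$ being $\sum \gr^B(b_i)-q$), and the $\Ainfty$-bimodule grading formula~\eqref{eq:GradedBimodule} exactly compensates, so $K(M)$ inherits a Maslov/Alexander bigrading compatible with the $DA$ convention. The hardest step is the bookkeeping in the dualization of the structure relation, but it is a purely formal manipulation once the finiteness is in place; all other steps are routine.
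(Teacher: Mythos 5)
Your proposal is correct and takes essentially the same route as the paper's (very terse) proof: define $\delta^1_{p+1}$ by dualizing the right $B$-inputs, use the Alexander-grading constraint from Equation~\eqref{eq:AlexanderGrading} together with the positivity of weights on $B_+$ to bound $q$ for fixed $(a_1,\dots,a_p)$, so the output lands in $\Cobar(B)$ rather than a completion, and then observe that the $DA$ structure relation and the gradings are formal duals of the $AA$ relation and of Equation~\eqref{eq:GradedBimodule}. You in fact supply more detail than the paper, which simply asserts these points as straightforward.
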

\begin{proof}
  Note that for each fixed sequence $(a_1,\dots,a_p)$,
  the grading properties of $M$ ensure that there is an upper bound on 
  $q$ so that there is a sequence $(b_1,\dots,b_q)$ with
  $m_{p|1|q}(a_1,\dots,a_p,\One,b_1,\dots,b_q)\neq 0$. Thus,
  $\delta^1_{p+1}$ lands in $\Cobar(B)$.

  The $DA$ bimodule relation on $\lsub{A}{K(M)}^{\Cobar(B)}$ is a
  straightforward consequence of the $AA$ bimodule relation on
  $M$. The grading properties also follow easily. For example, if
  $m_{p|1|q}(a_1\otimes\dots \otimes a_p\otimes {\bf 1}\otimes b_1\otimes\dots\otimes b_q)\neq 0$, then 
  by Equation~\eqref{eq:GradedBimodule},
\begin{align*}
  \sum_{i=1}^p (\gr^A(a_i)) &=
  \left(-q -\sum_{i=1}^q \gr^B(b_i)\right)+
  1-p \\
  &= \gr^{\Cobar(B)}((b_1\otimes \dots\otimes b_q)^*) +
  1-p ;
\end{align*}
i.e. $\lsub{A}{K(M)}^{\Cobar(B)}$ is $\Z$-graded.
\end{proof}

\begin{lemma}
  \label{lem:Kfunctor}
  The above assignment $\lsub{A}M_B\Rightarrow \lsub{A}K(M)^{\Cobar(B)}$
  satisfies the following properties:
  \begin{enumerate}[label=(K-\arabic*),ref=(K-\arabic*)]
  \item
    \label{KDA}
    If $A$, $B$, and $C$ are all over the same ring of idempotents,
    and $\lsub{A}M^B$ is a type $DA$ bimodule of rank one,
    $\lsub{B}N_C$ is a type $AA$ bimodule of rank one, then
    \[ K(\lsub{A} M^{B}\DT \lsub{B}N_C)
    \simeq
        \lsub{A}M^B\DT\lsub{B} K(N)^{\Cobar(C)}.\]
      \item
        \label{Kinv1}
        If $\lsub{\Cobar(C)}\kAA_{C}$ is the Kronecker module,
        then $\lsub{\Cobar(C)}K(\kAA)^{\Cobar(C)}$ is the identity bimodule.
      \item
        \label{Kinv2}
        If $\lsub{A}Y_B$ is a type $AA$ bimodule, then
        \[ \lsub{A}K(Y)^{\Cobar(B)}\DT \lsub{\Cobar(B)}\kAA_{B}=
          \lsub{A}Y_{B}.\]
      \item
        \label{K:Map}
        if $\lsub{A}M_B$ has the property that
        $m_{0|1|r}=0$ for all $r\geq 0$, then
        there is an $A_\infty$ homomorphism
        $\Psi\colon A\to \Cobar(B)$ so that
        \[ \lsub{A}[\Psi]^{\Cobar(B)}=K(M).\]
      \item 
        \label{K:funct}
        If $M\sim M'$, then $K(M)\sim K(M')$.
  \end{enumerate}
\end{lemma}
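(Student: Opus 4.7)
The plan is to verify all five properties by a systematic unpacking, using throughout the defining pairing
\[ \langle \delta^1_{p+1}(a_1,\dots,a_p,x), b_1\otimes\dots\otimes b_q\rangle = m_{p|1|q}(a_1,\dots,a_p,x,b_1,\dots,b_q) \]
which is the defining bijection between $D$-side operations on $\Cobar(B)$ and $A$-side operations on $B$ used to define $K$. Each item then reduces to matching compositions on either side of the pairing.

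For \ref{KDA} I would pair both sides against a test sequence $(c_1,\dots,c_q)$ in $C_+^{\otimes q}$. On the left, $K(M\DT N)$ gives $m^{M\DT N}_{p|1|q}(\vec a, x\otimes y, \vec c)$, which unfolds into the usual $DA$-$AA$ box sum: iterate $\delta^M$ to distribute the $a_i$'s, feed the resulting $B_+$-outputs and $y$ into a single $m^N$, and land in $\vec c$. On the right, $M^B \DT K(N)^{\Cobar(C)}$ produces the same iterated $\delta^M$ pattern, followed by a single application of a $K(N)$ operation whose pairing against $\vec c$ is exactly that same $m^N$. Term-for-term the two sums coincide. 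For \ref{Kinv1} I compute directly: $\langle \delta^1_{p+1}(\gamma_1,\dots,\gamma_p,\One), \vec c\rangle = m^{\kAA}_{p|1|q}(\vec\gamma,\One,\vec c)$ vanishes by the definition of $\kAA$ unless $p=1$ and $\gamma_1\in\Cobar_q(C)$, in which case it equals $\gamma_1(\vec c)$; this gives $\delta^1_{1+1}(\gamma,\One)=\One\otimes \gamma$ and trivial higher operations, which is the identity bimodule. For \ref{Kinv2}, the same pairing shows that in $K(Y)^{\Cobar(B)}\DT \kAA_B$ the iterated $\delta^{K(Y)}$ outputs collapse through $\kAA$ back to a single $m^Y$ evaluation, returning exactly $m^Y_{p|1|q}(\vec a,\One,\vec b)$.

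For \ref{K:Map}, the hypothesis $m_{0|1|r}=0$ translates under the pairing to $\delta^1_1(\One)=0$ in $K(M)$; since $K(M)$ has rank one as a $\Ground$-bimodule, Lemma~\ref{lem:BimoduleOfPhi} supplies the $\mathcal A_\infty$-homomorphism $\Psi\colon A\to\Cobar(B)$ with $\lsub{A}[\Psi]^{\Cobar(B)} = K(M)$. For \ref{K:funct}, a homotopy $H\colon M\to M'$ of type $AA$ bimodules is a collection of operations $H_{p|1|q}$ satisfying the usual relations; repackaging these via $\langle K(H)_{p+1},\vec b\rangle = H_{p|1|q}$ yields a morphism $K(M)\to K(M')$ of type $DA$ bimodules, and each homotopy identity on the $AA$ side translates termwise to the corresponding identity on the $DA$ side, so $K(H)$ realizes the equivalence. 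The main obstacle will be the bookkeeping in \ref{KDA}: faithfully tracking which inputs $a_i$ feed into which instance of $\delta^M$ and which intermediate $B_+$-outputs feed into which slot of $m^N$ versus the terminal $K(N)$ operation. This is a purely combinatorial check, but it is what underpins all the other parts via functoriality, so I would write it out carefully first and then deduce \ref{Kinv1}--\ref{K:funct} as essentially formal consequences of the pairing definition.
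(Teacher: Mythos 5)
Your proposal is correct and follows the same route as the paper, which simply asserts that all five properties are "easy consequences of the definitions"; you have supplied the routine verifications the paper omits, and they check out (in particular, your observation that only a single application of $\delta^1$ contributes in \ref{Kinv2} because $\kAA$ accepts only one left input is the right point to make). No gaps.
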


\begin{proof}
  The statements are all easy consequences of the definitions.
\end{proof}

\begin{remark}
  When $B_+$ is nilpotent, then $\lsup{B}\kDD^{\Cobar}$ as
  in~\cite{HomPairing} is a type $DD$ bimodule, and
  $\lsub{A}K(M)^{\Cobar(B)}$ agrees with
  the tensor product $\lsub{A}M_{B}\DT \lsup{B}\kDD^{\Cobar(B)}$.
  In that case, the first property can then be thought of as associativity of $\DT$,
  and both properties~\ref{Kinv1} and~\ref{Kinv2} are manifestations of the
  fact that
  $\kAA$ and $\kDD$ inverses.
\end{remark}

Define $\Psi\colon Q \to \Cobar(C)$ by
\[ \lsub{Q}[\Psi]^{\Cobar(C)}=K(\lsub{Q}\AAmod_{C}).\]
This definition is valid by Property~\ref{K:Map}, and the grading properties
of $\AAmod$, which ensure that $m_{0|1|r}\equiv 0$ for all $r\geq 0$.

\begin{proof}[of Theorem~\ref{thm:KoszulDual}]
  Combining Lemma~\ref{lem:DefPhi}, Lemma~\ref{lem:Kfunctor}~Property~\ref{KDA}, associativity of $\DT$ (as in~\cite[Proposition~2.3.15]{Bimodules},
  the hypothesis that $X$ and $Y$ are quasi-inverses, and Lemma~\ref{lem:Kfunctor}~Property~\ref{Kinv1},
  we see that:
  \begin{align*}
    [\Phi\circ \Psi]&= (\lsub{\Cobar(C)}\kAA_{C}\DT~\lsup{C}\DDmod^Q)\DT
    K(\lsub{Q}\AAmod_{C}) \\
    &= K((\lsub{\Cobar(C)}\kAA_C\DT~\lsup{C}\DDmod^Q)\DT \lsub{A}\AAmod_C) \\
    &\simeq K(\lsub{\Cobar(C)}\kAA_C\DT (\lsup{C}\DDmod^Q\DT \lsub{A}\AAmod_C)) \\
    &\simeq K(\lsub{\Cobar(C)}\kAA_C) \\
    &=\lsub{\Cobar(C)}[\Id]^{\Cobar(C)}.
  \end{align*}
  Similarly, combining Lemma~\ref{lem:DefPhi}, associativity of $\DT$
  (now in the easier case of type $AA$ bimodules;
  see~\cite[Lemma~2.3.14]{Bimodules}), Property~\ref{Kinv2}, and the
  hypotheses on $X$ and $Y$, we find that:
  \begin{align*}
    [\Psi\circ \Phi]&=     K(\lsub{Q}\AAmod_{C})
    \DT (\lsub{\Cobar(C)}\kAA_{C}\DT\lsup{C}\DDmod^Q)
    \\
    &=  (K(\lsub{Q}\AAmod_C)\DT_{\Cobar(C)}\kAA_{C})\DT~\lsup{C}\DDmod^Q \\
    &= \lsub{Q}\AAmod_C\DT~\lsup{C}\DDmod^Q \\
    &\simeq \lsub{Q}\Id^Q.
  \end{align*}
  It follows that $\Phi$ and $\Psi$ are quasi-isomorphisms.
  (See for example~\cite[Lemma~8.6]{HomPairing}.)
\end{proof}

\newcommand\sHH{\mathrm{sHH}}
\newcommand\Dsmall{D^s}

\newcommand\BigHC{\mathbf{HC}}
\newcommand\BigHH{\mathbf{HH}}
\section{Computing the ${\mathcal A}_{\infty}$ action on the homology of $\Pong{m}{k}$}
\label{sec:ComputeAinf}

Consider $\Clg(m,k)$ with $1\leq k\leq m-1$. In the applications to knot Floer
homology, we will consider $m=2n$ and $k=n$ (although we could have
alternatively considered $m=2n$ and $k=n-1$). 

\smargin{Isn't it actually in $\OneHalf\Z^m$?}
Recall that $\Clg(m,k)$ is equipped with a weight grading
\[ \weight=(\weight_1,\dots,\weight_m)\in\Z^{m}.\] There is also 
very uninteresting homological grading on $\Clg(m,k)$, defined so that all 
$c\in \Clg(m,k)$ have $\gr(c)=0$.

We extend this data to the polynomial algebra $\Clg(m,k)[t]$, with
the convention that $\weight_i(t)=1$ for all $i$; and $\gr(t)=2m-2k-2$.
This is consistent with the existence of a non-trivial $\mu_{2m-2k}$.

\begin{defn}
  For a {\em graded ${\Ainfty}_{\infty}$ structure} on $\Clg(m,k)[t]$, we
  require that for each operation $\mu_\ell$, we have that
  \begin{align*}
    \weight(\mu_{\ell}(a_1,\dots,a_\ell))&=\sum_{i=1}^{\ell}\weight(a_i) \\
    \gr(\mu_{\ell}(a_1,\dots,a_\ell))&=\ell-2+ \sum_{i=1}^{\ell}\gr(a_i).
  \end{align*}
\end{defn}

\begin{remark}
  The above grading conventions are chosen so that the differential $\mu_1$
  on a differential graded algebra drops grading by one.
\end{remark}

\begin{thm}
  \label{thm:CharacterizeActions}
  There is a graded $\Ainfty_{\infty}$ structure on ${\mathcal C}(m,k)[t]$,
  extending the natural algebra structure on ${\mathcal C}(m,k)$, and
  with a non-trivial $\mu_{2m-2k}$ operation.  Moreover, any two
  such extensions are quasi-isomorphic as graded $\Ainfty_\infty$ algebras.
\end{thm}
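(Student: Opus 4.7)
The plan is to phrase the existence and uniqueness of such an $\Ainfty_\infty$ structure as a Maurer--Cartan problem in the Hochschild cochain complex of $\Clg(m,k)[t]$, and then to compute the relevant cohomology groups by Koszul duality, in the spirit of~\cite{TorusAlg}. The grading constraints are very restrictive: since each element of $\Clg(m,k)$ sits in $\gr=0$ while $\gr(t)=2m-2k-2$, the condition $\gr(\mu_d(a_1,\dots,a_d))=d-2+\sum_i \gr(a_i)$ forces the pure-$\Clg$-input contribution of $\mu_d$ to land in $t^s\cdot\Clg$ with $s(2m-2k-2)=d-2$; the smallest $s\geq 1$ occurs at $d=2m-2k$. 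So the problem reduces to controlling the bigraded pieces $HH^{d,\gr=d-2}(\Clg(m,k);\Clg(m,k))$, with the $t^s$-contribution to $\Clg(m,k)[t]$ recorded by the $\gr$-shift.

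I would next compute these Hochschild cohomology groups by Koszul duality. The Hochschild cochain complex of $\Clg(m,k)$ can be modeled, up to quasi-isomorphism, by $\Clg(m,k)$ tensored (appropriately over idempotents) with $\Cobar(\Clg(m,k))$, equipped with its standard brace-type differential. Theorem~\ref{thm:KoszulDuality} permits me to replace $\Cobar(\Clg(m,k))$ by the much smaller, combinatorially explicit $\Quot{m}{k}$, under which the length grading on $\Cobar$ corresponds to the renormalized grading $\Ngr$ on $\Quot{m}{k}$. Using the explicit atomic generators from Section~\ref{sec:PongGen} together with the homology computation of Theorem~\ref{thm:HomologyPong} (in which a distinguished class $\Omega$ naturally appears), I expect to show that the portion of Alexander weight zero in bidegree $(d,\gr=d-2)$ is concentrated at the arities $d=2+s(2m-2k-2)$ for integer $s\geq 0$, and is one-dimensional there, generated by $\Omega^s$.

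With this Hochschild input, both parts of the theorem follow formally. For existence, one proceeds by obstruction theory: the obstruction to extending a partial $\Ainfty_\infty$ structure at arity $d$ lives in $HH^{d+1,\gr=d-1}$, which vanishes outside the critical arities $d+1=2+s(2m-2k-2)$; at those arities, turning on the class of $\Omega^s$ (with coefficient $t^{s-1}$ in $\Clg(m,k)[t]$) closes up the Maurer--Cartan equation and in particular supplies a non-trivial $\mu_{2m-2k}$. For uniqueness, any two such extensions differ by a class in the one-dimensional graded space spanned by $\{\Omega^s\}_{s\geq 0}$; Hochschild gauge equivalence absorbs the exact part, while the scalar relating two choices of $\mu_{2m-2k}$ is killed by rescaling $t$, producing a quasi-isomorphism of graded $\Ainfty_\infty$ algebras.

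The main obstacle is executing the bigraded Hochschild cohomology computation cleanly: one must verify that $\Clg(m,k)\otimes\Quot{m}{k}$, with the brace differential transported from the Koszul-dual side, really does have its cohomology in the relevant bidegrees concentrated on the expected ``$\Omega$-diagonal,'' and of the correct rank. This is exactly where the detailed combinatorial description of the pong algebra developed in Sections~\ref{sec:DefPong}--\ref{sec:HomologyPong}, and in particular the explicit form of the $\Omega$-class and the weight-grading constraints on pure algebra elements, do the actual work.
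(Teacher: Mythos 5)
Your overall strategy is the paper's: both arguments run deformation theory in a bigraded Hochschild complex for $\Clg(m,k)[t]$ and compute that complex via the Koszul-dual small model $C[t]\otimes \Quot{m}{k}$ supplied by Theorem~\ref{thm:KoszulDual}. But the outcome you predict for the computation is wrong in a way that breaks your uniqueness argument. You assert that the deformation-relevant line of Hochschild cohomology is one-dimensional, generated by $\Omega^s$, at every arity $d=2+s(2m-2k-2)$ with $s\geq 0$. In fact the class $(t^s,\Omega^s)$ sits in arity $n=2\Totweight(\Omega^s)-\Cr(\Omega^s)=s(2m-2k)$ and internal degree $1-2s$, so for $s\geq 2$ it lies in degree $\leq -3$, not in the degree $-1$ line where $\mu_d$ lives; the two arities you are conflating, $s(2m-2k)$ and $2+s(2m-2k-2)$, agree only when $s=1$. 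More generally, the inequality $2\Totweight(b)-\Cr(b)\geq 0$ (Proposition~\ref{prop:AtomicElements}) forces $\HH^{n,-1}=0$ for all $n>2m-2k$ and $\HH^{n,-2}=0$ for all $n>2m-2k+1$; this is the content of Lemmas~\ref{lem:HomologyE} and~\ref{lem:ComputeComplex}.

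This matters because your uniqueness step is designed to cope with the (nonexistent) higher classes: you propose to absorb the discrepancy between two extensions by gauge equivalence plus a rescaling of $t$. Rescaling $t$ can normalize at most the single class at arity $2m-2k$ (and over $\Field=\Zmod{2}$ even that is vacuous, since a one-dimensional space has a unique nonzero element); if there genuinely were independent one-dimensional deformation spaces at arities $4m-4k-2$, $6m-6k-4$, \dots, no further freedom would remain to identify the resulting structures, and uniqueness would fail by your own accounting. The correct argument is that uniqueness follows from the \emph{vanishing} $\HH^{n+1,-1}=0$ for $n\geq 2m-2k$ (Proposition~\ref{prop:DeformationTheory}), together with the separate checks that $\HH^{2m-2k,-1}\cong\Field$ (which requires the idempotent-propagation argument of Lemma~\ref{lem:ClosedUnderSuccession} to pin the kernel of $d_1$ down to the single class $(t,\Omega)$) and that $\HC^{2m-2k-1,0}=0$, so that cohomologous choices of $\mu_{2m-2k}$ are actually equal. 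So the gap is not in the framework but in the computation you are deferring: done correctly, it yields vanishing where you expect classes, and it is precisely that vanishing that proves the theorem.
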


\begin{remark}
  The above result is not true when $k=0$: in that case, ${\mathcal
    C}(m,0)\cong \Field$, and there is no room for an
  $\mu_{2m}$-operation. 
\end{remark}

An explicit realization of the ${\mathcal A}_{\infty}$-algebra
characterized above is supplied by the homology of a pong algebra;
see Theorem~\ref{thm:HomologyPongAinfP} below.

Theorem~\ref{thm:CharacterizeActions} is proved using Hochschild
cohomology.  A small model for the Hochschild cohomology can be
obtained using a duality theorem, proved in Section~\ref{sec:Duality}.
The relevant computation is then given in
Section~\ref{sec:ComputeAinf}.

\subsection{Deformation theory of graded algebras}

We review here the deformation theory of graded algebras used in our
proof of Theorem~\ref{thm:HomologyPongAinf}. We follow here the
perspective from~\cite{TorusAlg}, with a somewhat simplified exposition,
owing to the fact that we are here in a $\Z$-graded setting.

Let $A=A_*$ be a graded algebra -- i.e. an algebra equipped with
a graded multiplication. Denote that $\Z$-grading by $\gr$.
(We assume for simplicity that $\mu_1=0$;
the case we will be interested in is a polynomial algebra over $\Clg(m,k)$.)
The {\em Hochschild complex of the $\Z$-graded graded algebra} is a bigraded chain complex associated to $A$.
$\BigHC^{n,d}(A)$ consists of $\Ground-\Ground$ bimodule maps
\[ \phi\colon A_+^{\otimes n} \to A, \]
(where $A_+$ is as in Definition~\ref{def:AugmentedAlgebra})
with
\begin{equation}
  \label{eq:Grphi}
  \gr(\phi(a_1,\dots,a_n))=d + n-1 + \left(\sum_i \gr(a_i)\right).
\end{equation}
The differential of the complex
\[ \partial \colon \BigHC^{n,d}\to \BigHC^{n+1,d-1} \]
is defined by 
\begin{align*}
  \partial \phi(a_1,\dots,a_{n+1})&= a_1\cdot \phi(a_2,\dots,a_{n+1})+\phi(a_1,\dots,a_n)\cdot a_{n+1}\\
  &+ \sum_{i=1}^{n} \phi(a_1,\dots,a_i \cdot a_{i+1}, \dots a_{n+1}).
\end{align*}

The (bigraded) homology of this complex is the {\em Hochschild cohomology}
\[ \BigHH^{*,*}=\bigoplus_{n,d}\BigHH^{n,d}.\]

An {\em ${\mathcal A}_n$ algebra}
is $A$ equipped with operations
\[ \mu_\ell \colon A_+^{\otimes \ell} \to A \in \BigHC^{\ell,-1}(A)\]
for $\ell\leq n$, 
satisfying the ${\mathcal A}_{\infty}$ relations with 
up to $n+1$ inputs.

The following is well-known:

\begin{prop}
  \label{prop:BigDefTheory}
  Each ${\mathcal A}_{n}$-algebra structure on $A$ can be extended to an
  $\Ainfty_\infty$-algebra structure if $\BigHH^{\ell+2,-2}(A)=0$ for
  all $\ell\geq n$.  The set of isomorphism classes of ${\mathcal A}_{n+1}$
  extensions of a given ${\mathcal A}_{n}$ structure is parameterized by
  $\BigHH^{n+1,-1}(A)$. Indeed, if $\BigHH^{\ell+1,-1}(A)=0$ for all
  $\ell\geq n$, there is a unique isomorphism class of
  $\Ainfty_\infty$-algebra structure extending any given ${\mathcal A}_n$ structure on
  $A$.
\end{prop}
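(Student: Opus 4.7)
The plan is to argue by obstruction theory, inductively extending an ${\mathcal A}_{\ell}$-algebra structure one step at a time, starting from $\ell = n$. Since $\mu_1 = 0$ throughout (as $A$ is an ordinary graded algebra), at each stage from $\ell$ to $\ell+1$ the new operation $\mu_{\ell+1}$ first appears in the $\Ainfty_\infty$-relation with $\ell+2$ inputs, via the terms $\mu_2(\dots,\mu_{\ell+1}(\dots),\dots)$ and $\mu_{\ell+1}(\dots,\mu_2(\dots),\dots)$. Collecting these on one side and the Gerstenhaber compositions of the previously defined $\mu_3,\dots,\mu_\ell$ on the other produces an equation of the shape
\[ \partial \mu_{\ell+1} = \mathcal{O}_{\ell+1}, \]
where $\partial$ is the Hochschild differential. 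A direct degree count gives the bookkeeping: each $\mu_i$ lies in $\BigHC^{i,-1}(A)$, and the composition $\mu_i \circ_k \mu_j$ has $i+j-1$ inputs and lands in $\BigHC^{i+j-1,-2}(A)$ (per the convention of Equation~\eqref{eq:Grphi}); thus $\mathcal{O}_{\ell+1}$ and $\partial\mu_{\ell+1}$ both live in $\BigHC^{\ell+2,-2}(A)$.

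Next, I would verify that $\mathcal{O}_{\ell+1}$ is a Hochschild cocycle. This is the classical calculation that the obstruction at each stage is automatically closed, obtained by expanding $\partial \mathcal{O}_{\ell+1}$ and repeatedly applying the $\Ainfty_\infty$-relations that hold through order $\ell$. The obstruction to extending ${\mathcal A}_\ell$ to ${\mathcal A}_{\ell+1}$ is therefore exactly the class $[\mathcal{O}_{\ell+1}] \in \BigHH^{\ell+2,-2}(A)$. Under the vanishing hypothesis $\BigHH^{\ell+2,-2}(A) = 0$ for all $\ell \geq n$, one may solve for $\mu_{\ell+1}$ at every stage, yielding the first claim.

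For the classification, any two solutions of the extension equation differ by a cocycle in $\BigHC^{\ell+1,-1}(A)$. To pass from cocycles to cohomology classes, I would check that two ${\mathcal A}_{\ell+1}$-extensions that are related by an $\Ainfty_\infty$-isomorphism of the form $f = (\Id, 0, \dots, 0, f_\ell, 0, \dots)$, equal to the identity through order $\ell$, have their $\mu_{\ell+1}$'s differing by $\partial f_\ell$ with $f_\ell \in \BigHC^{\ell,0}(A)$. This identifies the set of isomorphism classes of ${\mathcal A}_{\ell+1}$-extensions of a fixed ${\mathcal A}_\ell$-structure with $\BigHH^{\ell+1,-1}(A)$, which gives the middle assertion (taking $\ell = n$).

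The final claim is then obtained by iterating: under the vanishing $\BigHH^{\ell+1,-1}(A) = 0$ for $\ell \geq n$ (together with the obstruction-vanishing needed for existence at each stage, implicit in the intended application), every single-step extension exists and is unique up to isomorphism, so the colimit of these extensions produces a unique isomorphism class of $\Ainfty_\infty$-structure on $A$ restricting to the given ${\mathcal A}_n$-structure. The one genuinely technical point I expect to wrestle with is the verification that $\mathcal{O}_{\ell+1}$ is closed and the matching of the infinitesimal gauge action with the Hochschild coboundary map; both are standard but notationally heavy expansions, and care is needed to keep the bidegree bookkeeping of Equation~\eqref{eq:Grphi} consistent throughout.
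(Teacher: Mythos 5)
Your proposal is correct: it is the standard obstruction-theoretic argument, with the bidegree bookkeeping for $\BigHC^{*,*}$ checking out ($\mu_i\circ_k\mu_j$ landing in $\BigHC^{i+j-1,-2}$, gauge transformations acting through $\partial\colon\BigHC^{\ell,0}\to\BigHC^{\ell+1,-1}$). The paper itself does not spell out a proof but defers to the literature (\cite[Corollary~5.25]{TorusAlg}), and your argument is precisely the well-known one being cited.
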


\begin{proof}
  This is well-known. A proof is given in~\cite[Corollary~5.25]{TorusAlg}.
\end{proof}

We will be interested in $\Ainfty_\infty$ operations that preserve a further
Alexander grading $\weight$ on $A$ (with values in $(\OneHalf \Z)^m$); i.e.
\begin{equation}
  \label{eq:PreserveWeight}
  \weight(\mu_\ell(a_1\otimes\dots\otimes a_\ell))=\sum_{i=1}^\ell \weight(a_i).
\end{equation}
To this end, we consider the subcomplex of $\HC\subset \BigHC$
consisting of $\phi$ satisfying Equation~\eqref{eq:PreserveWeight}
(with $\phi_n$ in place of $\mu_n$) in addition to Equation~\eqref{eq:Grphi}.
Its homology $\HH$ is the Hochschild homology which we will use henceforth.

The graded analogue of Proposition~\ref{prop:BigDefTheory} is the following

\begin{prop}
  \label{prop:DeformationTheory}
  Each graded ${\mathcal A}_{n}$-algebra structure can be extended to a graded $\Ainfty_{\infty}$ structure
  if $\HH^{\ell+2,-2}(A)=0$ for all $\ell\geq n$. Moreover, 
  the 
  set of isomorphism classes of ${\mathcal A}_{n+1}$ extensions of a given
  ${\mathcal A}_{n}$ structure is parameterized by $\HH^{n+1,-1}(A)$. Indeed, 
  if  $\HH^{\ell+1,-1}(A)=0$ for all $\ell\geq n$, there is a unique
  isomorphism class of $\Ainfty_\infty$-algebra extending any given 
  ${\mathcal A}_n$-algebra structure on $A$.
\end{prop}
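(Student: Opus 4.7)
The plan is to reduce to the (standard, cited) ungraded statement, Proposition~\ref{prop:BigDefTheory}, by showing that the obstruction and parametrization classes which arise in the deformation-theoretic argument automatically land in the weight-preserving summand $\HC \subset \BigHC$.

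First I would record the observation that the Alexander grading $\vec\weight$ gives a direct-sum decomposition of the full Hochschild complex
\[
\BigHC = \bigoplus_{\vec v \in \frac12\Z^m} \BigHC[\vec v],
\]
where $\BigHC[\vec v]$ consists of those $\phi \in \BigHC$ satisfying $\vec\weight(\phi(a_1,\dots,a_n)) - \sum_i \vec\weight(a_i) = \vec v$. Because the algebra multiplication on $A$ preserves the Alexander grading, the Hochschild differential preserves this decomposition. Hence $\HC = \BigHC[\vec 0]$ is a direct summand, and $\HH^{*,*}(A) = \BigHH^{*,*}(A)[\vec 0]$.

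Next I would recall the obstruction formalism. Given a graded ${\mathcal A}_n$-structure $(\mu_1,\dots,\mu_n)$ on $A$ (with $\mu_1 = 0$), the candidate obstruction to extending it to an ${\mathcal A}_{n+1}$-structure is the element
\[
O_{n+1} \;=\; \sum_{\substack{p+q = n+2 \\ 2 \leq p,\,q \leq n}} \mu_p \circ \mu_q \;\in\; \BigHC^{n+2,-2}(A),
\]
with $\circ$ denoting the usual sum over insertion positions. The standard argument (using the ${\mathcal A}_n$-relations already assumed to hold) shows $\partial O_{n+1} = 0$ in $\BigHC$, and that an extension $\mu_{n+1}$ exists if and only if $O_{n+1} = \partial(\mu_{n+1})$ for some $\mu_{n+1}$; this is the content of Proposition~\ref{prop:BigDefTheory} in the ungraded setting. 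The key graded observation is now immediate: since every $\mu_p$ entering the definition of $O_{n+1}$ is weight-preserving by hypothesis, so is any composition $\mu_p \circ \mu_q$. Thus $O_{n+1} \in \HC^{n+2,-2}(A) = \BigHC^{n+2,-2}(A)[\vec 0]$, and therefore $[O_{n+1}] \in \HH^{n+2,-2}(A)$. If this class vanishes, then $O_{n+1} = \partial(\mu_{n+1})$ for some \emph{weight-preserving} cochain $\mu_{n+1} \in \HC^{n+1,-1}(A)$, which yields a graded ${\mathcal A}_{n+1}$-extension. Iterating gives the first assertion.

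Finally I would address the parametrization and uniqueness. Given two extensions $\mu_{n+1}, \mu_{n+1}'$ of the same ${\mathcal A}_n$-structure, their difference $\mu_{n+1} - \mu_{n+1}'$ is a cocycle in $\HC^{n+1,-1}(A)$, and conversely adding any such cocycle to $\mu_{n+1}$ produces another valid extension; so the set of extensions is an affine space over $Z^{n+1,-1}(\HC)$. Two ${\mathcal A}_{n+1}$-extensions are equivalent via a graded ${\mathcal A}_{n+1}$-isomorphism with identity linear part precisely when $\mu_{n+1} - \mu_{n+1}'$ is a Hochschild coboundary (the isomorphism being implemented by an element $f_{n+1} \in \HC^{n+1,0}$ with $\partial f_{n+1} = \mu_{n+1} - \mu_{n+1}'$); this part of the argument, too, carries over from the ungraded version verbatim once we restrict to the weight-preserving summand. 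Therefore the set of isomorphism classes is a torsor over $\HH^{n+1,-1}(A)$, and if that group vanishes for all $\ell \geq n$ simultaneously with the existence hypothesis, the extension is unique up to isomorphism at every stage, giving the last assertion.

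The only non-routine step is the second paragraph's observation that the obstruction cycle $O_{n+1}$ lies in $\HC$ and not merely in $\BigHC$; this is where the graded hypothesis is genuinely used, but it follows formally from the weight-additivity of the existing $\mu_p$'s. All other points are pure transport of structure from Proposition~\ref{prop:BigDefTheory} along the inclusion $\HC \hookrightarrow \BigHC$.
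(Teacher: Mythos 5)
Your reduction is exactly what the paper intends: Proposition~\ref{prop:DeformationTheory} is stated without proof as the ``graded analogue'' of Proposition~\ref{prop:BigDefTheory}, and its content is precisely your observation that the obstruction cocycles, the differences of extensions, and the gauge parameters are all built from weight-preserving operations and hence live in the subcomplex $\HC\subset\BigHC$, so that the hypotheses on $\HH$ suffice. Do fix two off-by-one slips in the writeup: the obstruction to defining $\mu_{n+1}$ is $\sum_{p+q=n+3,\;3\le p,q\le n}\mu_p\circ\mu_q$, which has $n+2$ inputs and lies in $\HC^{n+2,-2}$ consistently with your degree count (your condition $p+q=n+2$ instead describes the relation with $n+1$ inputs, already satisfied by the given ${\mathcal A}_n$-structure, so your $O_{n+1}$ as written is identically zero); and the gauge parameter identifying two choices of $\mu_{n+1}$ is $f_n\in\HC^{n,0}$ with $\partial f_n\in\HC^{n+1,-1}$, not $f_{n+1}\in\HC^{n+1,0}$, whose coboundary would land in the wrong bidegree.
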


We give the graded Hochschild complex the following explicit description.
Consider the following graded algebra associated to $A$,
\[ {\mathcal E}^*=\bigoplus_{n=1}^{\infty} {\mathcal E}^n,\] where
\[{\mathcal E}^{n}= \prod_{\{s\in \z,\vec{w}\in|(\OneHalf \Z)^m\}} [t^s A]_{\vec{w}}\otimes_{\Ground\otimes\Ground} [A_+^{\otimes n}]^*_{\vec{w}}.\]
The tensor product here is shorthand for the the condition that
we are considering the span of
\[
b\otimes (a_1\otimes\dots\otimes a_n)^*\in [A]_{\vec{w}}\otimes
[A_+^{\otimes n}]^*_{\vec{w}} \]
so that the left idempotent of $b$ agrees with
the right idempotent of $(a_1\otimes\dots\otimes a_n)^*$;
and the right idempotent of $b$ agrees with the left idempotent of
$(a_1\otimes\dots\otimes a_n)^*=a_n^*\otimes\dots\otimes a_n^*$.

The cobar differential induces a differential (which could be denoted
$\Id_A\otimes \partial$; but instead we simplify notation a little)
\[ \partial\colon {\mathcal E}^n\to {\mathcal E}^{n+1}.\]
There is an element $E\in {\mathcal E}$ given by
\[ E=\sum_{e} e\otimes e^*,\]
where the sum is taken over a homogeneous generating set of $A$.
(In the case we consider, $A=\Clg(m,k)$, and the sum is taken 
over all pure algebra generators $e$.)

Let
$D\colon {\mathcal E}^n \to {\mathcal E}^{n+1}$ be the map defined by
\[ D(x)=\partial x + E\cdot x + x \cdot E.\]

\begin{lemma}
  \label{lem:HochschildDifferential}
  The map $D$ defined above is a differential on ${\mathcal E}$.
  Moreover, 
  there is an isomorphism between the graded Hochschild complex
  and ${\mathcal E}$, equipped with the differential $D$.
\end{lemma}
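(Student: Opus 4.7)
The plan is to prove both statements simultaneously by producing the natural identification and then reading off the Hochschild differential term by term.

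First, I would set up the identification of the graded pieces. For each $n\geq 1$, fix a homogeneous $\Ground$-basis of $A = \Clg(m,k)[t]$ consisting of pure algebra elements; an element of ${\mathcal E}^n$ is a $\Ground$-$\Ground$ bimodule map
\[
\Phi = \sum b \otimes (a_1\otimes\dots\otimes a_n)^{*}
\]
whose bigrading restricts to the claimed $(s,\vec{w})$-summands. The dual tensor $(a_1\otimes\dots\otimes a_n)^{*}$ is viewed as the $\Ground$-bilinear functional that picks out this sequence of basis elements. Then the assignment $\Phi \mapsto \phi$ with $\phi(a_1,\dots,a_n) = b$ (extended linearly) gives a bimodule map $\phi\colon A_{+}^{\otimes n}\to A$; the bigrading constraint on ${\mathcal E}^n$ matches Equation~\eqref{eq:Grphi} (using $\gr(t)=2m-2k-2$ encoded by the $t^s$ factor) together with the weight-preservation Equation~\eqref{eq:PreserveWeight}. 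This is a bijection onto $\HC^{n}$.

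Second, I would verify that the three summands of the Hochschild differential correspond to $\partial$, left multiplication by $E$, and right multiplication by $E$. The cobar differential $\partial$ on the second tensor factor dualizes the multiplication $A_{+}\otimes A_{+}\to A_{+}$, so $\partial(a_1\otimes\dots\otimes a_{n+1})^{*} = \sum_i (a_1\otimes\dots\otimes(a_i\cdot a_{i+1})\otimes\dots\otimes a_{n+1})^{*}$, matching the internal terms $\sum_i \phi(a_1,\dots,a_i\cdot a_{i+1},\dots,a_{n+1})$. For the outer terms, compute
\[
E\cdot\bigl(b\otimes(a_2\otimes\dots\otimes a_{n+1})^{*}\bigr) = \sum_{e} (e\cdot b)\otimes(e\otimes a_2\otimes\dots\otimes a_{n+1})^{*},
\]
which applied to $a_1\otimes\dots\otimes a_{n+1}$ returns $a_1\cdot b = a_1\cdot \phi(a_2,\dots,a_{n+1})$. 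The analogous computation with $x\cdot E$ yields $\phi(a_1,\dots,a_n)\cdot a_{n+1}$. Adding the three contributions gives exactly $\partial \phi$ from the Hochschild complex.

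Having established the isomorphism, both statements follow. In particular, $D^2=0$ on ${\mathcal E}$ is equivalent to the Hochschild differential squaring to zero, which is standard. Alternatively, one can verify $D^2=0$ directly: $\partial^2=0$ is the coassociativity of the cobar differential; the mixed terms $\partial(E\cdot x) + E\cdot\partial x$ and $\partial(x\cdot E) + (\partial x)\cdot E$ vanish by the Leibniz rule over $\Field = \Zmod{2}$; and the quadratic terms $E\cdot E\cdot x + x\cdot E\cdot E + 2\,E\cdot x\cdot E$ combine with $\partial E \cdot x + x \cdot \partial E$ via the Maurer-Cartan-type identity $\partial E + E\cdot E = 0$, which is precisely the dual encoding of the associativity of the product on $A$ (summing $\mu_2$ dualized against all triples of pure generators).

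The only subtlety is checking that the products $E\cdot x$ and $x\cdot E$ are well defined as elements of ${\mathcal E}^{n+1}$: because $\mathcal{E}$ is built from the bigraded completion, and because each weight bidegree $(s,\vec{w})$ of $A$ is a finitely generated $\Ground$-bimodule (by the finiteness hypothesis in Definition~\ref{def:MaslovAlexAlgebra}), the sums defining $E\cdot x$ and $x\cdot E$ are finite in each fixed bidegree. This is the one place where the finiteness hypothesis is essential; once it is checked, everything else is formal.
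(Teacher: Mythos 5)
Your proposal is correct and carries out exactly the straightforward verification that the paper omits (its proof of this lemma is literally ``This is straightforward''): the identification of $\Hom(A_+^{\otimes n},A)$ with the completed tensor product $A\otimes [A_+^{\otimes n}]^*$, the term-by-term matching of the three summands of the Hochschild differential with $\partial$, $E\cdot(-)$, and $(-)\cdot E$, the identity $\partial E + E\cdot E=0$, and the finiteness check using the boundedness hypothesis of Definition~\ref{def:MaslovAlexAlgebra}. The only nitpick is your parenthetical: $\partial E+E\cdot E=0$ is not an encoding of \emph{associativity} but the tautological statement that the cobar differential is $\mu_2^*$ (so that $\partial E$ and $E\cdot E$ both evaluate to $a_1\cdot a_2$ on $a_1\otimes a_2$); associativity is what you need for $\partial^2=0$.
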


\begin{proof}
  This is straightforward. 
\end{proof}

\subsection{Computing in small models}

Fix $m$ and $k$, and abbreviate $\Clg(m,k)$ by $C$
and $\QuotPong(m,k)$ by $Q$.

Consider the algebra
\[ {\mathcal S}=\bigoplus_{\x,\y}\prod_{\{\vec{w}\in(\OneHalf\Z)^m\}}
(\Idemp{\x}\cdot C[t]\cdot \Idemp{\y})_{\vec{w}}\otimes
(\Idemp{\x}\cdot Q\cdot\Idemp{\y})_{\vec{w}}.\]
In words, ${\mathcal S}$ is generated sequences (indexed by weight vectors)
of elements 
$t^s\alpha\otimes b$ where $\alpha$ is a pure algebra element in $C$,
$b$ is a pure algebra element in $Q$,
\begin{itemize}
\item $\weight_i(\alpha)+s=\weight_i(b)$ for $i=1,\dots,m$.
\item The left idempotents of $\alpha$ and $b$ agree.
\item The right idempotents of $\alpha$ and $b$ agree.
\end{itemize}

The algebra ${\mathcal S}={\mathcal S}^{n,d}$ inherits a bigrading 
where
each pure element $t^s \cdot a\otimes b\in {\mathcal S}$ has
\[ n = 2\Totweight(b)-\Cr(b)\qquad{\text{and}}\qquad
  d=1-2 \Totweight+\Cr(b)+2s(m-k-1).\]
(As we shall see below, in Proposition~\ref{prop:SmallModels},
${\mathcal S}$ is a small model for the bigraded
Hochschild complex; compare also~\cite[Definition~5.41]{TorusAlg}.)

Turn the $DD$ bimodule $\lsup{C}\DDmod^{Q}$ into a left bimodule
over $C\otimes Q^{\op}\cong C\otimes Q$.
The differential $\delta^1$ corresponds to the projection onto
${\mathcal S}$ of the element of $C\otimes Q$
given by
\begin{align*}
  S=
&\left(\sum_{1\leq i<j\leq m-1}
f(\Rij)\otimes\Rij
+ f(\Lji) \otimes\Lji\right) \\
      & +
\left(\sum_{0\leq i<j\leq m}
f(\Xij)\otimes \Xij\right).
\end{align*}
(Compare~\eqref{eq:DDmodDef}. Note that under the identification
$Q^{\op}\cong Q$, the elements $\Rij^\op$ is identified with $\Lji$.)

Let
$\Dsmall\colon {\mathcal S}\to {\mathcal S}$
be the endomorphism defined by
\[ \Dsmall(x)=(\Id\otimes \partial_Q)(x) + S\cdot x + x \cdot S.\]
Note that, like the Hochschild differential, $\Dsmall$ respects the bigradings in the sense that
$\Dsmall\colon {\mathcal S}^{n,d}\to {\mathcal S}^{n+1,d-1}$.

\begin{prop}
  \label{prop:SmallModels}
  There is a bigraded quasi-isomorphism
  ${\mathcal S}^{*,*}\simeq \HC^{*,*}$.
\end{prop}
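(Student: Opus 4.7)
The plan is to construct an explicit chain map $\Psi\colon\mathcal{E}\to\mathcal{S}$ from the Koszul-duality map $\Phi\colon\Cobar(C)\to Q$ of Theorem~\ref{thm:KoszulDual}, where $C=\Clg(m,k)$, and then to verify that it is a quasi-isomorphism by means of the Alexander grading.

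The starting point is the formal parallel between the two differentials: $D=\partial+E\cdot(-)+(-)\cdot E$ on $\mathcal{E}$ (by Lemma~\ref{lem:HochschildDifferential}), and $\Dsmall=(\Id\otimes\partial_Q)+S\cdot(-)+(-)\cdot S$ on $\mathcal{S}$. In each case the second term is a commutator with a distinguished Maurer--Cartan-type element encoding a $DD$-type pairing. Comparing the formula~\eqref{eq:DDmodDef} for the differential $\delta^1$ of $\lsup{C}\DDmod^{Q}$ with the definition~\eqref{eq:DefPhi} of $\Phi$, one checks that for each distinguished generator $e\in\{L_i,R_i,U_i\}\subset C$, the element $\Phi(e^*)\in Q$ is precisely the atomic generator of $Q$ appearing alongside $e$ in $\delta^1$; consequently $(\Id_C\otimes\Phi)$, applied to the restriction of $E$ to $C\otimes\Cobar(C)$, recovers $S$.

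I would define $\Psi$ by tensoring $\Phi$ with the identity on the $C[t]$-factor, using the centrality of $t\in C[t]$ (with weight $(1,\dots,1)$ and $\gr=2m-2k-2=2(m-k-1)$) to reduce the bar resolution of $C[t]$ to that of $C$ tensored with the polynomial enhancement in $t$. That $\Psi$ is a chain map then follows from the identity $(\Id_C\otimes\Phi)(E)=S$ above, together with Lemma~\ref{lem:DefPhi}, which states that $\Phi$ intertwines the cobar differential with $\partial_Q$.

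The main obstacle will be verifying that $\Psi$ is a quasi-isomorphism despite the infinite products appearing in the definitions of $\mathcal{E}$ and $\mathcal{S}$. I would filter both complexes by the Alexander multi-grading $\vec w\in(\OneHalf\Z)^m$: both $D$ and $\Dsmall$ preserve $\vec w$; each $\vec w$-summand is finite-dimensional in every homological degree by the boundedness hypothesis of Definition~\ref{def:MaslovAlexAlgebra}; and $\Psi$ restricted to each $\vec w$-summand is the tensor product of the Koszul quasi-isomorphism from Theorem~\ref{thm:KoszulDual} with the identity on the free $C[t]$-factor, hence a quasi-isomorphism. Assembling these pointwise quasi-isomorphisms across the product over $\vec w$ then yields the global quasi-isomorphism and establishes Proposition~\ref{prop:SmallModels}.
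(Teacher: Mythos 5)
Your proposal is essentially the paper's proof: the paper likewise takes the quasi-isomorphism $\Phi\colon\Cobar(C)\to Q$ of Theorem~\ref{thm:KoszulDual}, induces ${\widetilde \Phi}\colon {\mathcal E}\to{\mathcal S}$ by tensoring with the identity on the $C[t]$-factor, verifies ${\widetilde\Phi}(E)=S$ so that ${\widetilde\Phi}\circ D=\Dsmall\circ{\widetilde\Phi}$, and concludes via the weight-graded finiteness. The one imprecision is your final step: on a fixed $\vec{w}$-summand the differentials $D$ and $\Dsmall$ are \emph{not} tensor-product differentials (they carry the perturbation terms $E\cdot x+x\cdot E$ and $S\cdot x+x\cdot S$ coupling the two factors), so ``tensor product of a quasi-isomorphism with the identity'' is not literally a justification; one needs an auxiliary finite filtration --- e.g.\ by the weight of the $C[t]$-factor, which the perturbation terms strictly increase --- whose associated graded map is $\Id\otimes\Phi$, a point the paper's own proof also leaves implicit.
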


\begin{proof}
Let $\Phi\colon \Cobar(C)\to Q$
be the algebra quasi-isomorphism from Theorem~\ref{thm:KoszulDual}.
(See Lemma~\ref{lem:DefPhi}.)
There is an induced algebra quasi-isomorphism
\[ {\widetilde \Phi}\colon {\mathcal E}\to {\mathcal S}.\]

Since $\Phi_i=0$ for all $i>1$ (Lemma~\ref{lem:DefPhi}), it follows that
$\Phi$, and hence ${\widetilde \Phi}$ is a homomorphism of DG algebras.
Write $S=c\otimes q$ for 
$c\in \Clg$ and $q\in Q$.  We have that
\[
  {\widetilde \Phi}(E)
   = \sum_e e \otimes \Phi(e^*) = \sum_e  e\otimes \langle e^*,c\rangle \otimes q 
   = c\otimes q =  S.\]
   It follows at once that ${\widetilde \Phi}\circ D = \Dsmall\circ {\widetilde \Phi}$; i.e. ${\widetilde \Phi}$ induces a bigraded quasi-isomorphism
   from $\HC^{*,*}$ (in view of Lemma~\ref{lem:HochschildDifferential})
   to ${\mathcal S}^{*,*}$
\end{proof}

\begin{lemma}
  \label{lem:HomologyE}
  For the bigraded chain complex ${\mathcal E}={\mathcal E}^{*,*}$, we
  have that $H({\mathcal E}^{n,-1})=0$, except when $n=2$ or
  $n=2m-2k$. Moreover, if $H(\Idemp{\x}\cdot {\mathcal E}^{2m-2k,-1}\cdot
  \Idemp{\y})\neq 0$, then $\x=\y$; and indeed
  \begin{equation}
  H(\Idemp{\x}\cdot {\mathcal E}^{2m-2k,-1}\cdot\Idemp{\x})=
  \begin{cases}
    \label{eq:ComputeEComplex}
    \Field & {\text{if $k>1$}} \\
    \Field\oplus \Field & {\text{if $k=1$.}}
  \end{cases}
  \end{equation}
  Also, if $H({\mathcal E}^{n,-2})\neq 0$, then
  $n\in\{3,2m-2k+1\}$.
\end{lemma}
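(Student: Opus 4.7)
The plan is to use Proposition~\ref{prop:SmallModels} to replace $\mathcal{E}$ with the bigraded quasi-isomorphic small model $\mathcal{S}$. A pure generator $t^s\alpha\otimes b \in \mathcal{S}$ carries bigrading $n = 2\Totweight(b) - \cross(b)$ and $d = 1 - 2\Totweight(b) + \cross(b) + 2s(m-k-1)$, so that $n+d-1 = 2s(m-k-1)$. Consequently each bigrading $(n,d)$ pins down a unique $s\geq 0$, namely $s = (n+d-1)/(2(m-k-1))$, and $\mathcal{S}^{n,d} = 0$ unless this quotient is a non-negative integer. For $d=-1$ the admissible values are $n\in\{2+2j(m-k-1)\}_{j\geq 0}$ (so $\{2,2m-2k\}$ corresponds to $s\in\{0,1\}$), while for $d=-2$ they are $\{3+2j(m-k-1)\}_{j\geq 0}$. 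Thus every bigrading appearing in the vanishing claim has $s\geq 2$, and the exceptional homology at $(n,d)=(2m-2k,-1)$ lives in the $s=1$ stratum.

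For the computation at $(n,d)=(2m-2k,-1)$ with $s=1$, a generator of lowest $\Totweight(\alpha)$ takes the form $\Idemp{\x'}\otimes b$, forcing $\cross(b) = 2k$ and $\weight(b)=\vec 1$. The cycle $\Omega\in Q$ from Section~\ref{sec:HomologyPong} has precisely these invariants and is $d_Q$-closed, so $\Idemp{\x'}\otimes\Idemp{\x}\cdot\Omega\cdot\Idemp{\y}$ is a candidate cycle whenever $\Idemp{\x}\cdot\Omega\cdot\Idemp{\y}\neq 0$. An idempotent analysis shows $\Idemp{\x}\cdot\Omega\cdot\Idemp{\y}=0$ unless $\x=\y$, and a direct verification (using the Koszul twist $S\cdot - + (-)\cdot S$) shows that every cycle in an $(\x,\y)$-block with $\x\neq\y$ is a boundary. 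For $\x=\y$, when $k>1$ a twisted boundary built from a non-atomic element of $C$ joins the two constituent terms $\mathcal{X}_{0,i}\mathcal{X}_{i,m}$ and $\mathcal{X}_{i,m}\mathcal{X}_{0,i}$ of $\Idemp{\x}\cdot\Omega\cdot\Idemp{\x}$ into a single homology class, whereas for $k=1$ no such identifying cycle exists in the smaller algebra $\Clg(m,1)$ and the two terms persist as independent classes, giving $\Field\oplus\Field$.

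The vanishing at $s\geq 2$ is the main obstacle. Here every generator satisfies $\weight_i(b) = \weight_i(\alpha)+s \geq 2$ for all $i$. My plan is to filter $\mathcal{S}_s$ by $-\Totweight(\alpha)$: the internal differential $\Id\otimes d_Q$ preserves the filtration, while each twisting term $S\cdot(-)$ and $(-)\cdot S$ strictly lowers it (every contribution $c_a\otimes q_a$ to $S$ has $\weight(c_a)\geq \OneHalf$). The $E_1$ page is $C\otimes H(Q,d_Q)$ restricted to weight offset $s\vec 1$, which by Theorem~\ref{thm:HomologyPong} equals the corresponding weight-stratum of $C\otimes \Clg(m,m-k-1)[\Omega]$; the induced $d_1$ is identified via Theorem~\ref{thm:KoszulDual} with the Koszul differential for the pair $(\Clg(m,k),\Clg(m,m-k-1))$. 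The $E_1$ page is not itself acyclic (for example $\alpha\otimes\Omega^s$ survives), so the vanishing must come from the higher $d_r$. I expect the cleanest route is an explicit null-homotopy in the spirit of Lemma~\ref{lem:Excessive}/Lemma~\ref{lem:TwoValues}: when every $\weight_i(b)\geq 2$, there is a doubly-excessive pair of strands in $b$ near some wall, and a contraction of such a pair, propagated across the $C$-tensor factor to respect the Koszul twist, should trivialize $H(\mathcal{S}_s^{n,d})$ in the claimed bigradings. The $d=-2$ vanishing then follows by the same argument.
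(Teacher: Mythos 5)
There is a genuine gap, and it begins with which differential the lemma concerns. Here $H(\mathcal{E}^{n,d})$ is taken with respect to the internal (cobar) differential $\partial$ alone: the lemma is exactly the computation of the associated graded object for the weight filtration that is run afterwards in Lemma~\ref{lem:ComputeComplex}, where the twisting terms $S\cdot(-)+(-)\cdot S$ first enter as the differential $d_1$ of that spectral sequence. Your repeated appeals to the Koszul twist --- to kill cycles in off-diagonal $(\x,\y)$-blocks, and to ``join'' the two halves of $\Omega$ when $k>1$ --- are therefore computing a different object; applied consistently they would also collapse the $k=1$ answer to $\Field$ (which is what happens one lemma later, where $\sHH^{2m-2,-1}\cong\Field$), contradicting the $\Field\oplus\Field$ you are asked to prove. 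The actual source of the dichotomy is the structure of $H(\Quot{m}{k})$ in weight $\vec{1}$ with fixed diagonal idempotent: it is two-dimensional for every $k$, but the second generator has crossing number $k+1$ rather than $2k$, so it sits in bidegree $(2m-k-1,-1-k)$ and coincides with $(2m-2k,-1)$ only when $k=1$.

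Second, the essential content --- vanishing in the strata with $s\geq 2$ --- is left as a plan rather than a proof, and the plan is misdirected. You observe that classes such as $\alpha\otimes\Omega^{s}$ survive and conclude they must be killed by higher differentials or an ad hoc null-homotopy; in fact they live in internal degree $d=1-2s\leq -3$ and are simply not in the bigradings at issue. The argument your setup is missing is a pure grading count: by Theorem~\ref{thm:HomologyQp}, every nontrivial class of $H(\Quot{m}{k})$ of weight $\vec{w}$ has the form $\Omega^{s}\cdot b'$ with $s=\min_i w_i$ (plus, in constant weight, one special class with $\cross=k+1$), and since $2\Totweight(b')-\cross(b')\geq 0$ with equality only for idempotents (Proposition~\ref{prop:AtomicElements}), any such class lies in degree $d=1-2s-\bigl(2\Totweight(b')-\cross(b')\bigr)\leq 1-2s$. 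Hence $d=-1$ forces $s\leq 1$, with $b'$ an idempotent when $s=1$ (which is also what gives $\x=\y$ and the rank count), and $d=-2$ likewise forces $s\leq 1$. Your correct bookkeeping $n+d-1=2s(m-k-1)$ reduces the vanishing claims to these strata, but without the $\Omega$-divisibility of $H(\Quot{m}{k})$ and the inequality from Proposition~\ref{prop:AtomicElements} --- or a completed substitute for them --- the lemma is not established.
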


\begin{proof}
  Let $\vec{w}$ be a weight vector, and choose $s=\min w_i$. Let
  $w'_i=w_i-s$. The only non-zero element of $C[t]$ with weight
  $\vec{w}$ is $t^s a$, where $a\in C$ has weight $\vec{w'}$.

  Assume first that $k>1$.
  By Theorem~\ref{thm:HomologyQp}, if $b\in H(\Idemp{\x}\cdot Q\cdot
  \Idemp{\y})$ is a homologically non-trivial element with weight
  $\vec{w}$, then $b=\Omega^s\cdot b'$, where $b'\in H(\Idemp{\x}\cdot
  Q\cdot\Idemp{\y})$ or, in the special case where $\vec{w}$ is
  constant, there is another possibility of
  $b=\Omega^{s-1}\cdot b'$, where $b'$ is the homologically
  non-trivial element with $\cross(b')=k+1$ and $\weight_i(b')=1$.  We
  call the latter type of element {\em special}, and the other types
  {\em generic}.

  Thus, a special element has the form $(t^s,\Omega^{s-1}\cdot b')$ with $s\geq 1$. The bigrading
  of a special element is given by
  \[ (n,d)=(2ms-k(2s-1)-1,
  2-k-2s)\]
  When $d=-2$, we have $k=2$ and $s=1$, so $n=2m-3=2m-2k+1$.

  Next,  
  we compute the bigrading $(n,d)$ of the element $(a t^s,b)$.
  In the generic case,
  \[ d=1-(2\Totweight(b)-\cross(b))+2(m-k-1)s 
    = -(2\Totweight(b')-\cross(b'))-2s+1 \]
    
    So if $d=-1$ (in the generic case), then $2\Totweight(b')-\cross(b')=2-2s$.
    Since for any algebra element $b'$, 
    $2\Totweight(b')-\cross(b')\geq 0$
    (Proposition~\ref{prop:AtomicElements}), we have that $s=0$ or $1$. If $s=0$, then $b=b'$, $n=2$, $d=-1$.
    If $s=1$, then $2\Totweight(b)-\cross(b)=0$,
    so $b'$ is an idempotent
    (again, by Proposition~\ref{prop:AtomicElements}), $b=\Idemp{\x}\cdot \Omega$ (and, in particular, $\x=\y$),
    and $n=2m-2k$.
    
    If $d=-2$, then $2\Totweight(b')-\cross(b')=3-2s$ and $s=0$ or $1$. If
    $s=1$, then $1=2\Totweight(b')-\cross(b')$, and 
    $n=2m-2k+1$. If $s=0$, then $n=3$.

    When $k=1$, Theorem~\ref{thm:HomologyQp} once again proves that
    $b=\Omega^s\cdot b'$, where $b'\in H(\Idemp{\x}\cdot Q\cdot
    \Idemp{\y})$, provided that $\vec{w}$ is a non-constant
    vector. Thus, when ${\vec{w}}$ is non-constant the above
    computations establish all of the claims of the lemma, except
    Equation~\eqref{eq:ComputeEComplex}. 

    When $k=1$,
    Theorem~\ref{thm:HomologyQp} implies that for each $s\geq 1$,
    $H(\Idemp{\x}\cdot Q(\x,\x;(s,\dots,s))\cdot \Idemp{\x})$ is
    two-dimensional, spanned by $\Omega_+^s$ and $\Omega_-^s$, where
    $\Omega_+=\Xgen{x,m}\cdot \Xgen{0}{x}$ and 
    $\Omega_-=\Xgen{0}{x}\cdot \Xgen{x}{m}$ are the two components of $\Omega$.
    The bigrading of $(t^s,\Omega_+^s)$ is given by
    \[ (n,d)=(2s(m-1),1-2s(m-1)+2s(m-2))=(2s(m-1), 1-2s).\]
    Again, $d=-1$ implies that $s=1$, verifying
    Equation~\eqref{eq:ComputeEComplex}; the two homology generators
    being $(t,\Omega_+)$ and $(t,\Omega_-)$.
\end{proof}

\begin{defn}
  Let $\x=\{x_i\}_{i=1}^k$ and $\y=\{y_i\}_{i=1}^k$ be two idempotent states.
  We say that $\x\leq \y$ if for all $t\in\{1,\dots,k\}$, $x_t \leq y_t$.
  We say that $\y$ is an {\em immediate successor} of $\x$ if 
  there is some $s\in\{1,\dots,k\}$ so that $x_t=y_t$ for all $t\neq s$ and 
  $y_s=x_s+1$; i.e. $\Idemp{\y}\cdot L_s\cdot \Idemp{\x}\neq 0$.
  We also say that $\x$ is an {\em immediate predecessor} of $\y$.
\end{defn}

\begin{lemma}
  \label{lem:ClosedUnderSuccession}
  Let $S$ be a non-empty set of idempotent states.  Suppose that if
  $\x\in S$, then every immediate successor and every immediate
  predecessor of $\x$ is in $S$, then $S$ is the set of all idempotent
  states.
\end{lemma}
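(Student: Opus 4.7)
The plan is to show that the graph on idempotent states whose edges are the immediate successor/predecessor relations is connected. Once this is established, the conclusion is immediate: since $S$ is non-empty and closed under passing to both immediate successors and immediate predecessors, $S$ must contain the entire connected component of any of its members, hence all idempotent states.

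To prove connectivity, I would show that every idempotent state $\x = \{x_1 < \cdots < x_k\}$ can be connected to the minimal state $\x_0 = \{1,2,\dots,k\}$ by a chain of immediate predecessors. Define $\sigma(\x) = \sum_{i=1}^{k}(x_i - i)$, a non-negative integer that vanishes precisely on $\x_0$. I would then induct on $\sigma(\x)$. If $\sigma(\x) > 0$, let $i$ be the smallest index with $x_i > i$; by minimality of $i$, we have $x_{i-1} = i-1$ (or $i = 1$), so the inequality $x_i - 1 \geq i > x_{i-1}$ shows that replacing $x_i$ by $x_i - 1$ yields a valid idempotent state $\x'$. By construction $\x'$ is an immediate predecessor of $\x$ and $\sigma(\x') = \sigma(\x) - 1$, so the inductive hypothesis finishes the argument.

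Combining these pieces: given any $\x \in S$, the chain $\x, \x', \x'', \dots, \x_0$ of immediate predecessors is contained in $S$ by hypothesis, so $\x_0 \in S$. For an arbitrary idempotent state $\y$, the analogous chain from $\y$ down to $\x_0$ can be reversed to give a chain $\x_0, \dots, \y$ in which each successive state is an immediate successor of the previous one; hypothesis then forces $\y \in S$.

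There is no serious obstacle here; the statement is essentially the observation that the covering relations of the componentwise partial order on increasing $k$-tuples from $\{1,\dots,m-1\}$ connect any tuple to the minimum $\x_0$. The only small point to be careful about is verifying that decreasing $x_i$ by $1$ indeed produces a legitimate idempotent state (distinct entries still in $\{1,\dots,m-1\}$), which is exactly what the choice of minimal $i$ with $x_i > i$ ensures.
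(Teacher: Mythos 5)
Your proof is correct, and it is essentially the same argument as the paper's: both exploit that the immediate successor/predecessor relations are exactly the covering relations of the componentwise partial order, and then walk any state to an extremal one (you descend to the minimum $\{1,\dots,k\}$; the paper phrases it as upward/downward closure together with the unique maximal and minimal states). Your explicit statistic $\sigma(\x)=\sum_i(x_i-i)$ and the check that decrementing the minimal out-of-place coordinate stays a valid state just make the same induction more concrete.
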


\begin{proof}
  The fact that $S$ is closed under immediate successors implies that
  for all $\x\in S$, if $\x\leq \y$, then $\y\in S$. The fact that it
  is closed under immediate predecessors implies that if $\x\in S$ and
  $\y\leq \x$, then $\y\in S$. Since there is a unique maximal and
  unique minimal idempotent state, the result follows.
\end{proof}

\begin{lemma}
  \label{lem:ComputeComplex}
  The homology $\sHH^{*,*}$ of ${\mathcal S}^{*,*}$ has
  $\sHH^{n,-1}=0$ unless $n=2$ or $n=2m-2k$;
  $\sHH^{n,-2}=0$ unless $n=3$ or $n=2m-2k+1$. 
  Moreover, $\sHH^{2m-2k,-1}\cong \Field$.
\end{lemma}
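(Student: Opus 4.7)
The plan is to translate the problem via Proposition~\ref{prop:SmallModels}, which provides a bigraded quasi-isomorphism $\mathcal{S}^{*,*} \simeq \HC^{*,*}$, hence $\sHH^{*,*} \cong \HH^{*,*} = H(\mathcal{E}, D)$.  Since the Hochschild differential decomposes as $D = \partial + [E, -]$, I will use Lemma~\ref{lem:HomologyE} (which computes the $\partial$-cohomology of $\mathcal{E}$) as the $E_1$-page of a spectral sequence converging to $\HH$.  The vanishing assertions of the present lemma---that $\sHH^{n,-1} = 0$ for $n \notin \{2, 2m-2k\}$ and $\sHH^{n,-2} = 0$ for $n \notin \{3, 2m-2k+1\}$---then follow immediately, because vanishing of the $E_1$-page forces vanishing of $E_\infty$.

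The refined claim $\sHH^{2m-2k,-1} \cong \Field$ requires more.  By Lemma~\ref{lem:HomologyE}, the $E_1$-page in this bidegree consists of $\binom{m-1}{k}$ copies of $\Field$, one for each idempotent state $\x$, generated by the class of $\Idemp{\x} \otimes (\Idemp{\x}\cdot \Omega)^*$; when $k = 1$ there are instead $2(m-1)$ copies, coming from the two components $\Omega_+$ and $\Omega_-$ of $\Omega$.  The aim is to show that all of these classes become identified on $E_\infty$, leaving exactly a single $\Field$.  By Lemma~\ref{lem:ClosedUnderSuccession}, it is enough to verify that for any pair of idempotent states $\x,\y$ related by an immediate succession $y_s = x_s + 1$, the associated generators are cohomologous under the full differential $D$.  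The approach will be to exhibit an explicit primitive $\beta \in \mathcal{E}^{2m-2k-1,\,0}$ with $D\beta$ equal to the difference of the two generators; this $\beta$ is constructed by inserting the Koszul-dual atomic pair $L_{y_s} \otimes \Rgen{x_s}{y_s}$ (or its $R$-$L$ partner) into the tensor string at a suitable position, so that the $[E,-]$-contributions combine with $\partial$ to yield precisely the desired boundary, with all other terms cancelling against one another.

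The main obstacle will be the explicit construction of $\beta$ and the verification that $D\beta$ produces exactly the desired difference.  The required bookkeeping rests on the multiplication relations among atomic generators from Lemma~\ref{lem:DDmodProd}, which govern how iterated $[E,-]$-actions behave on tensor strings in $\mathcal{E}$; these relations are precisely what force the cross-terms to cancel.  For $k = 1$, an additional argument is needed within a single idempotent slice to show $[\Omega_+] = [\Omega_-]$: here one exploits the cobar-level identity $d X_{x_1,m,\,0,x_1}^{?} = X_{x_1,m}\cdot X_{0,x_1} + X_{0,x_1}\cdot X_{x_1,m}$ (equivalently, the relation $d\Xgen{0}{m}\sim\Omega_+ +\Omega_-$ at the chain level of the pong side) to produce a primitive connecting the two classes, thereby completing the identification.
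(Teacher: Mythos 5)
Your reduction to the weight filtration on $\mathcal S$, with $E_1$ given by Lemma~\ref{lem:HomologyE}, is the same as the paper's, and the vanishing statements do follow exactly as you say. The gap is in the computation of $\sHH^{2m-2k,-1}$: the mechanism you propose — that the $\binom{m-1}{k}$ classes $(t\,\Idemp{\x},\Omega\cdot\Idemp{\x})$ all become \emph{identified} because their pairwise differences are boundaries $D\beta$ for some $\beta\in{\mathcal E}^{2m-2k-1,0}$ — cannot work. Any boundary is a cycle, but the sum (over $\Field$, the ``difference'') of the generators attached to two adjacent idempotents $\x,\y$ is \emph{not} a cycle for the full differential: $\Dsmall(t\,\Idemp{\x},\Omega\cdot\Idemp{\x})$ consists of the terms $(L\cdot\Idemp{\x}t, L\cdot\Omega\cdot\Idemp{\x})$, $(\Idemp{\x}t\cdot L,\Omega\cdot\Idemp{\x}\cdot L)$ and their $R$-analogues, one for each immediate successor/predecessor of $\x$, and these cancel in a sum $\sum_{\x\in S}$ only when $S$ is closed under immediate succession and predecession — which a two-element set $\{\x,\y\}$ is not (Lemma~\ref{lem:ClosedUnderSuccession} forces such an $S$ to be \emph{everything}). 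Equivalently, all spectral-sequence differentials into total bidegree $(2m-2k,-1)$ originate in bidegree $(2m-2k-1,0)$, and the computation underlying Lemma~\ref{lem:HomologyE} (via Proposition~\ref{prop:AtomicElements}, $2\Totweight(b)-\Cr(b)\geq 0$ with equality only for idempotents) shows $H({\mathcal E}^{n,0})=0$ unless $n=1$; so there are no incoming differentials whatsoever, and no primitive $\beta$ exists.

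The correct argument runs in the opposite direction: one computes the \emph{outgoing} differential $d_1\colon H({\mathcal E}^{2m-2k,-1})\to H({\mathcal E}^{2m-2k+1,-2})$ induced by $x\mapsto S\cdot x + x\cdot S$, observes that a linear combination $\sum_{\x\in S}(t\,\Idemp{\x},\Omega\cdot\Idemp{\x})$ is in its kernel precisely when $S$ is closed under immediate succession and predecession, and concludes from Lemma~\ref{lem:ClosedUnderSuccession} that the kernel is one-dimensional, spanned by $(t,\Omega)=\sum_{\x}(t\,\Idemp{\x},\Omega\cdot\Idemp{\x})$; since nothing maps in, this gives $\sHH^{2m-2k,-1}\cong\Field$. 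Your treatment of $k=1$ has the same defect: $\Omega_i^{\pm}$ individually have nonzero $\Dsmall$ (hitting $U_1+L_2$, $L_i+L_{i+1}$, etc.), so they are not cycles to begin with, and the element $\Xgen{0}{m}$ you invoke to connect $\Omega_+$ and $\Omega_-$ is excluded from the algebra (the pair $(i,j)=(0,m)$ is disallowed in the definition of the $\Xij$); here too only the total sum $\sum_i \Omega_i^-+\Omega_i^+$ survives, as a kernel element rather than as a quotient class.
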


\begin{proof}
  Consider the chain complex ${\mathcal S}$, filtered by the weight of $b$ in 
  $(a,b)$. The homology of the associated graded object is
  $H({\mathcal E})$. The vanishing statements in the lemma now follow.
  
  It remains to compute $\sHH^{2m-2k,-1}$. We start with the case $k>1$.
  To this end, we 
  compute the next differential in the spectral sequence
  associated to the filtration; i.e. the map
  \[ d_1\colon H({\mathcal E}^{2m-2k,-1})\to H({\mathcal E})^{2m-2k-1,-2}\]
  induced by $\Dsmall$.
  By Lemma~\ref{lem:HomologyE}, $H({\mathcal E}^{2m-2k,-1})$ has rank
  $\left(\begin{array}{c} m\\k \end{array}\right)$,
  with generators of the form $(t\Idemp{\x},\Omega\cdot \Idemp{\x})$, uniquely determined by the idempotent states $\x$. 

The map $d_1$ is specified by 
  \begin{align*}
    d_1(\Idemp{\x}\cdot t,\Omega\cdot \Idemp{\x})&= 
    (L\cdot \Idemp{\x}\cdot t ,L\cdot \Omega \cdot \Idemp{\x})+ 
    (\Idemp{\x}\cdot L \cdot \Idemp{\x}, \Omega\cdot \Idemp{\x}\cdot L) \\
    &+ 
    (R\cdot \Idemp{\x}\cdot t ,R\cdot \Omega \cdot \Idemp{\x})+ 
  (\Idemp{\x}\cdot R \cdot \Idemp{\x}, \Omega\cdot \Idemp{\x}\cdot R)
\end{align*}
  Clearly, 
  \[ (t,\Omega)=\sum_{\x}(\x\cdot t, \Omega\cdot \Idemp{\x})\]
  is in the kernel of $d_1$. Moreover, if there is a non-empty $S$
  so that $\sum_{\x\in S}(t,\Omega\cdot \Idemp{\x})$ in the kernel of $d_1$,
  then the form of $d_1$ implies that $S$ is closed under immediate
  successors and predecessors. From
  Lemma~\ref{lem:ClosedUnderSuccession}, it follows that $S$ consists
  of all idempotent states; i.e. $(t,\Omega)$ is the only element of
  the kernel of $d_1$.  The fact that $(t,\Omega)$ is not in the image
  of $D$ is an easy consequence of the form od $\Dsmall$, and the fact
  that $\Omega$ maximizes $\cross$ among all elements with weight
  vector identically $1$. 
  It follows readily that $\sHH^{2m-2k,-1}\cong \Field$, when $k>1$.

  For the case $k=1$, we have the following explicit description of
  ${\mathcal S}^{2m-2,-1}$ and ${\mathcal S}^{2m-2,-2}$, and the differential.  As in
  the proof of Lemma~\ref{lem:HomologyE}, the space ${\mathcal S}^{2m-2,-1}$
  has dimension $2m-2$, with basis given by $\{(t,{\mathcal X}_{0,i}
  \cdot {\mathcal X}_{i,m})\}_{i=1,m-1}$ and $\{(t,{\mathcal
    X}_{i,m}\cdot {\mathcal X}_{0,i}) \}_{i=1,m-1}$.  We abbreviate
  these generators $\{\Omega^-_i\}_{i=1}^{m-1}$ and
  $\{\Omega^+_i\}_{i=1}^{m-1}$.  (Note that Lemma~\ref{lem:HomologyE}
  states that $H(\Idemp{\x}\cdot {\mathcal
    E}^{2m-2,-1}\cdot\Idemp{\x})$ is two-dimensional; but the proof
  in fact shows that $\Idemp{\x}\cdot {\mathcal E}^{2m-2,-1}\cdot
  \Idemp{\x}$ is two-dimensional.)

  This follows as in the proof of Lemma~\ref{lem:HomologyE}: the
  generators in bigrading $(2m-1,-2)$ have the form $(t b', \Omega
  \cdot b')$, where $2\Totweight(b')-\cross(b)=1$.  Explicitly, the
  space ${\mathcal S}^{2m-1,-2}$ has dimension $2m-2$, with basis given by
  \begin{align*}
    \{(t L_{i}, L_{i}\cdot {\mathcal X}_{0,i-1} {\mathcal X}_{i-1,m}) \}_{i=2}^{m-1}
    &\qquad \bigcup\qquad
    \{(t R_{i}, R_{i}\cdot {\mathcal X}_{i+1,m} {\mathcal X}_{0,i+1}) \}_{i=1}^{m-2} \\
    \bigcup\{(t U_1, {\mathcal X}_{0,1}\cdot {\mathcal X}_{1,m}\cdot {\mathcal X}_{0,1}),&~(t U_m, {\mathcal X}_{m-1,m}\cdot {\mathcal X}_{0,m-1}\cdot {\mathcal X}_{m-1,m})\}.
  \end{align*}

We abbreviate basis vectors from these three sets
$\{L_i\}_{i=2}^{m-1}$, $\{R_i\}_{i=1}^{m-2}$, and $\{U_1,U_m\}$ respectively.
  The differential has the form
  \begin{align*}
    \Dsmall \Omega^-_i &= \begin{cases}
      U_1 + L_2 & {\text{for $i=1$}} \\
    L_{i}+ L_{i+1} & {\text{for $1<i<m-1$}} \\
    L_{m-1}+ U_m &{\text{for $i=m-1$}}
    \end{cases} \\
    \Dsmall \Omega^+_i &= \begin{cases} U_1+R_2 &{\text{for $i=1$}} \\
      R_i+R_{i+1}& {\text{for $1<i<m-1$}} \\
        R_{m-1}+ U_m &\text{for $i=m-1$.}
      \end{cases}
  \end{align*}
  It follows at once, that the kernel of $\Dsmall$ (when $k=1$)
  is one-dimensional, spanned by
  \[ \Omega=\sum_{i=1}^{m-1} \Omega^-_i + \Omega^+_i.\]
    \end{proof}

\begin{remark}
  It follows from the above proof that 
  the cycle $(t,\Omega)\in {\mathcal S}^{2m-2k,-1}$, in fact generates $\sHH^{2m-2k,-1}$.
\end{remark}

\begin{proof}[Of Theorem~\ref{thm:CharacterizeActions}]
  Any Hochschild cochain in
  $\HC^{2m-2k,-1}(\Clg)$ determines an $A_{2m-2k}$-algebra structure on
  $\Clg[t]$.  By Proposition~\ref{prop:SmallModels} and
  Lemma~\ref{lem:ComputeComplex}.  since $\HH^{n,-2}=0$ for $n\geq
  2m-2k+2$; so by Proposition~\ref{prop:DeformationTheory}, this
  $A_{2m-2k}$-algebra structure can be extended to an $A_{\infty}$ structure.
  Moreover, since $\HH^{n+1,-1}=0$ for all $n\geq 2m-2k$, this
  $A_\infty$ structure is unique up to isomorphism.

  We claim that there is a unique non-trivial possibility for the
  operation $\mu_{2m-2k}$ giving $\Clg[t]$ the structure of an
  $A_{2m-2k}$-algebra.  Such an operation is necessarily a cocycle in
  $\HC^{2m-2k,-1}$.
  Since $\HH^{2m-2k,-1}\cong\Field$ by Proposition~\ref{prop:SmallModels}
  and Lemma~\ref{lem:ComputeComplex}, we conclude that any two such
  operations $\mu_{2m-2k}$ and $\mu'_{2m-2k}$ are cohomologous. But
  $\HC^{2m-2k-1,0}=0$; which, in turn, is an easy consequence of
  Equation~\eqref{eq:Grphi} and the fact the grading of any
  homogeneous element of $C[t]$ is a multiple of $2m-2k-2$.
  Thus, $\mu_{2m-2k}=\mu_{2m-2k}'$.
\end{proof}

\section{The ${\mathcal A}_{\infty}$ structure on the homology of the
  Pong algbra}
\label{sec:ComputeAinfty}

The aim of this section is to determine the homology of
$\Pong{m}{m-k-1}$ with its $A_{\infty}$ structure, as follows:

\begin{thm}
  \label{thm:HomologyPongAinfP}
  For integers $0<k<m-1$, 
  the homology of $\Pong{m}{m-k-1}$ with its induced $A_\infty$
  structure is quasi-isomorphic to the $A_{\infty}$ algebra from
  Theorem~\ref{thm:CharacterizeActions}.
\end{thm}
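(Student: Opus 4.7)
The plan is to reduce everything to Theorem~\ref{thm:CharacterizeActions} (which characterizes up to quasi-isomorphism the graded $A_\infty$ structure on $\Clg(m,k)[t]$ with a nonzero $\mu_{2m-2k}$). By Theorem~\ref{thm:HomologyPong}, we already have an isomorphism of algebras $H(\Pong{m}{m-k-1}) \cong \Clg(m,k)[\Omega]$; we identify this with $\Clg(m,k)[t]$ via $t \leftrightarrow \Omega$. The Maslov and Alexander gradings on $\Pong{m}{m-k-1}$ were arranged so as to descend to the specified bigradings on $\Clg(m,k)[t]$ (with $\gr(t)=2m-2k-2$ and $\weight_i(t)=1$), and the standard homological perturbation lemma produces $A_\infty$ operations on the homology that respect the bigrading. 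Thus the first three bulleted properties of Theorem~\ref{thm:HomologyPongAinf} hold automatically, and it remains to verify the fourth: that the induced $\mu_{2m-2k}$ on $H(\Pong{m}{m-k-1})$ is non-trivial. Once this is done, the uniqueness statement of Theorem~\ref{thm:CharacterizeActions} delivers the desired quasi-isomorphism.

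The hard part is showing $\mu_{2m-2k} \neq 0$. The cleanest packaging uses the Hochschild-cohomological framework from Section~\ref{sec:ComputeAinf}: the induced $A_\infty$ structure on $H(\Pong{m}{m-k-1})$ determines a class in $HH^{2m-2k,-1}(\Clg(m,k)[t])$, and by Proposition~\ref{prop:SmallModels} combined with Lemma~\ref{lem:ComputeComplex}, this target is one-dimensional, represented in the small model $\mathcal{S}$ by the cycle $(t,\Omega)$. The assertion $\mu_{2m-2k}\neq 0$ is therefore equivalent to the assertion that the associated Hochschild class is non-zero, i.e., that $\Pong{m}{m-k-1}$ is not formal as an $A_\infty$ algebra.

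To detect non-formality, the plan is to exhibit a single explicit non-trivial $(2m-2k)$-fold Massey product. The model here is Proposition~\ref{prop:AinfQ}, where the analogous computation on $\Quot{m}{m-1}$ showed that $\sum_\sigma \mu_m(X_{\sigma(1)},\dots,X_{\sigma(m)})=\Omega$. The approach I would adopt is to select $2m-2k$ atomic generators $a_1,\dots,a_{2m-2k}$ of $\Clg(m,k)$ — essentially a sequence of $X$-type elements corresponding to the factors of the canonical cycles from Definition~\ref{def:CanonicalCycle} — whose pairwise products vanish or can be iteratively trivialized via the chain-level relations $d\Xgen{i-1}{i}=U_i$ and $d\Xgen{i}{j}=\sum_{i<\ell<j}\Xgen{\ell}{j}\Xgen{i}{\ell}+\Xgen{i}{\ell}\Xgen{\ell}{j}$ from Lemma~\ref{lem:dAtomic}. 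Feeding these primitives into the homological perturbation formula and carefully tracking the telescoping sum then produces $\Omega$ (times the appropriate idempotent) as the output of $\mu_{2m-2k}(a_1,\dots,a_{2m-2k})$.

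The main obstacle is the combinatorial bookkeeping in this Massey-product computation. Unlike the situation in Proposition~\ref{prop:AinfQ}, where only the $X_i$'s were involved and the idempotent structure was trivial, the present computation has to accommodate the full set of atomic generators of $\Pong{m}{m-k-1}$ together with the idempotent constraints coming from $\Clg(m,k)$. Fortunately, since $HH^{2m-2k,-1}$ has rank one, it is enough to verify non-triviality after restricting to any single choice of idempotent $\Idemp{\x}$; by an appropriate choice of $\x$ (e.g., one for which the canonical-cycle decomposition of $\Omega \cdot \Idemp{\x}$ is maximally simple), the relevant Massey product reduces to a finite check of the kind already carried out in Section~\ref{sec:HomologyPong}, yielding the required non-vanishing and completing the proof.
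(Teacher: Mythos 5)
Your overall architecture is the same as the paper's: reduce via Theorem~\ref{thm:HomologyPong} and the uniqueness clause of Theorem~\ref{thm:CharacterizeActions} to exhibiting a single non-vanishing instance of $\mu_{2m-2k}$ on $H(\Pong{m}{m-k-1})\cong\Clg(m,k)[\Omega]$, and then compute an explicit Massey product. The first reduction is carried out correctly. The gap is in the second step, which is where all the content of the proof lives: you never produce an actual input sequence, and the one you gesture at --- ``$2m-2k$ $X$-type elements corresponding to the factors of the canonical cycles'' --- does not work. The grading constraints force $\mu_{2m-2k}(a_1,\dots,a_{2m-2k})=t\cdot c$ with $\weight(c)=\sum_j\weight(a_j)-(1,\dots,1)$, and each $X$-type class has total weight at least $1$, so $2m-2k$ of them carry total weight at least $2m-2k$; whenever $k<m/2$ this exceeds $m$, so the output cannot be $\Omega\cdot\Idemp{\x}$ itself and the telescoping identity you are modelling on Proposition~\ref{prop:AinfQ} does not apply. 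Note also that Proposition~\ref{prop:AinfQ} computes $\mu_m$ (not $\mu_{2m-2k}$) in the quotient $\Quot{m}{m-1}$, which corresponds to the degenerate case $k=0$ excluded from the theorem, so it is not a reliable template here.

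To land exactly on $\Omega\cdot\Idemp{\x}$ with $2m-2k$ inputs one must use the half-integer-weight classes. The paper takes the sequence $(\Idemp{\x}\cdot v_1\cdots v_k,\,L_{k+1},\dots,L_{m-1},\,v_m,\,R_{m-1},\dots,R_{k+1})$ with $\x=\{k+1,\dots,m-1\}$: this has exactly $2m-2k$ entries whose weights sum to $(1,\dots,1)$. It then writes down explicit Massey-product primitives $a_i*\dots*a_j$ (e.g. $v_1*L_2*\dots*L_i=\Lgenx{i}{1}X_{0,1}$, etc.), verifies the defining relations for them in the case $k=1$, checks that the resulting sum of products telescopes to $\Omega\cdot\Idemp{\x}$, and obtains the general $k$ by substituting $v_1\cdots v_k$ for $v_1$ and shifting indices. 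Your proposal would be complete once you replace the $X$-type input sequence by one of this form and actually carry out (rather than assert) the telescoping computation; as written, the non-vanishing of $\mu_{2m-2k}$ has not been established.
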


\begin{proof}
  By Theorem~\ref{thm:HomologyPong},
  \[ H_*(\Pong{m}{m-k-1})\cong H_*(\Clg(m,k)[\Omega]),\]
  where $\gr(\Omega)=2m-2k-2$. 
  Thus, 
  by Theorem~\ref{thm:CharacterizeActions}, it suffices to compute
  a single non-zero $\mu_{2m-2k}$ action.
  To this end, consider  the algebra sequence
  \[ (\Idemp{\x}\cdot a_1,\dots,a_{2m-2k})=
  (\Idemp{\x}\cdot v_1\cdots v_k, L_{k+1},\dots,L_{m-1},v_m,R_{m-1},\dots,R_{k+1})\]
  with starting idempotent $\x=\{k+1,\dots,m-1\}$. Note that in $\Clg(m,k)$, this corresponds to the idempotent $\{1,\dots,k\}$ and the
  corresponding algebra sequence, 
  \[ 
  (v_1\cdots v_k, R_{k+1},\dots,R_{m-1},v_m,L_{m-1},\dots,L_{k+1}).\]
  Each $a_i$ is a cycle; and indeed, each $a_i$ represents a
  homologically non-trivial element.  We claim that
  \[ \mu_{2m-2k}(a_1,\dots,a_{2m-2k})=\Omega\cdot \Idemp{\x}.\]

  We consider first the case where $k=1$, after introducing some
  notation. Let $a_i*\dots*a_j$ be an element that satisfies
  \[d(a_i*\dots*a_j)=\sum_{i<k<j} (a_i*\dots* a_k)\cdot (a_{k+1}*\dots * a_j).\]
  
  It is straightforward to check the following identities (in the relevant idempotents):
  \begin{align*}
    v_1*L_2*\dots*L_i &= \Lgenx{i}{1} X_{0,1} \\
    L_{i+1} *\dots * L_{m-1} v_m R_{m-1}*\dots *R_2 &= \Rgenx{1}{i} X_{i,m} \\
    v_m*R_{m-1}*\dots * R_2 &= \Rgenx{1}{m-1} X_{m-1,m} \\
    v_1*L_2*\dots* L_{m-1}* v_m&= X_{m-1,m} \Lgenx{m-1}{1} X_{0,1} \\
    v_1*L_2\dots *L_{m-1}* v_m* R_{m-1}*\dots * R_i &=
    X_{i-1,m} \Lgenx{i-1}{1} X_{0,1} \\
    R_{i-1}*\dots * R_2 &= \Rgenx{1}{i-1}.
  \end{align*}
  (See Example~\ref{ex:PongMu6}.)
  
  It follows readily that
  \[ \mu_{2m-2}(\Idemp{\x}\cdot v_1,L_2,\dots,L_{m-1},v_m,R_{m-1},\dots,R_2)=\Omega\cdot \Idemp{\x}.\]
  The general case follows by substituting $v_1\cdots v_k$ for $v_1$,
  and shifting indices.
\end{proof}

\begin{example}
  \label{ex:PongMu6}
  Consider $\Pong{4}{2}$. We have shown the chains used in the
  computation of $\mu_6(v_1,L_2,L_3,v_4,R_3,R_2)$ in
  Figure~\ref{fig:PongMu6}. Observe  that
  \begin{align*}
    \mu_6&(v_1,L_2,L_3,v_4,R_3,R_2) \\&=
    (v_1* L_2)\cdot (L_3* v_4*R_3*R_2) +
    (v_1*L_2* L_3) \cdot (v_4*R_3*R_2) \\
    &\qquad + (v_1* L_2 * L_3* v_4)\cdot (R_3*R_2) 
    +(v_1* L_2 * L_3* v_4*R_3)\cdot R_2 \\
    &=\Omega.
  \end{align*}
\end{example}

\begin{figure}[ht]
\input{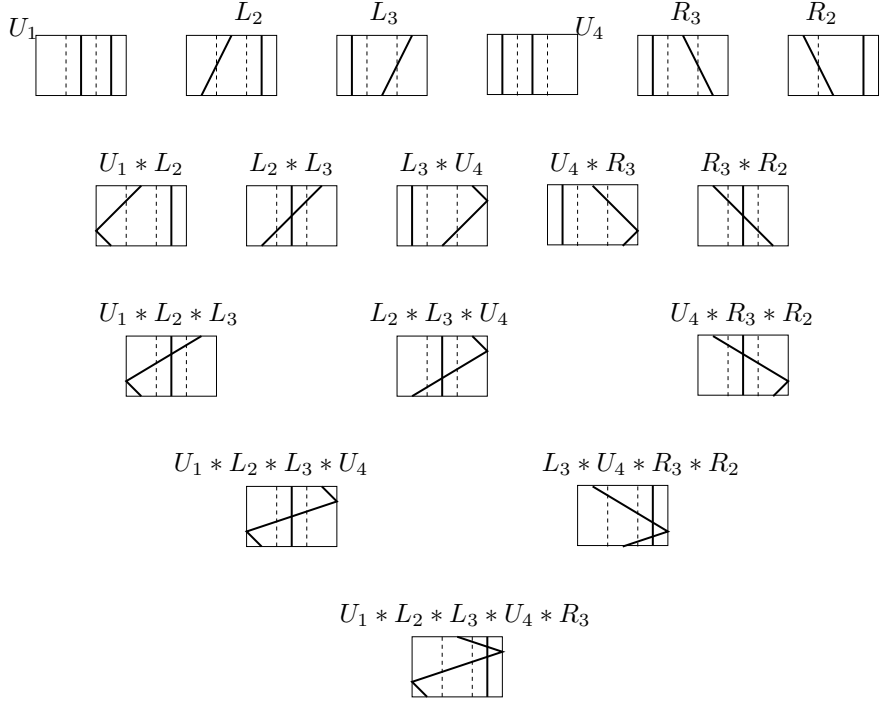}
\caption{\label{fig:PongMu6} {\bf{Computing $\mu_6$.}}
    In $H_*(\Pong{4}{2})$, we have shown steps in the verification that
    $\mu_6(v_1,L_2,L_3,v_4,R_3,R_2)=\Omega$.
\label{fig:ComputeM6}}
\end{figure}

Together, Theorems~\ref{thm:HomologyPong} and~\ref{thm:HomologyPongAinfP}
give Theorem~\ref{thm:HomologyPongAinf}.

\subsection{Degenerate cases}
\label{subsec:Extremes}

The reader may wonder to what extent Theorem~\ref{thm:HomologyPongAinf}
holds when $k=0$ or $k = m-1$.

When $k=m-1$, we have that
          \begin{align*}
          \Clg(m,m-1)&\cong \Field[v_1,\dots,v_m]/(v_1\cdots v_m)
          \\ \Pong{m}{0}=H(\Pong{m}{0})&\cong \Field[v_1,\dots,v_m]
          \end{align*}
Thus,
$H_*(\Pong{m}{0})\cong \Clg(m,m-1)[t]$,
provided we introduce to $\Clg(m,m-1)[t]$  new $\mu_2$ operations, such as
$\mu_2(v_1\cdots v_i,v_{i+1}\cdots v_m)=t$ for all $1<i<m$.

When $k=0$ we have $\Clg(m,0)\cong\Field$, while $H_*(\Pong{m}{m-1})\cong
\Field[\Omega]$.
Thus, Theorem~\ref{thm:HomologyPongAinf} holds on
additively (i.e. Theorem~\ref{thm:HomologyPong} holds), but there
cannot be a non-trivial $\mu_{2m}$-action.

Note that Theorem~\ref{thm:KoszulDuality} applies when $k=m-1$.
When $k=0$, the theorem is also true, but it is a tautology: in that case,
$\Quot{m}{0}\cong\Field$, $\Clg(m,k)=0$, so $\Cobar(\Clg(m,k))\cong\Field$.

\bibliographystyle{plain}
\bibliography{biblio}

\end{document}